\documentclass[11pt,a4paper]{article}
\usepackage[utf8]{inputenc}
\usepackage[T1]{fontenc}
\usepackage{amsmath}
\usepackage{amscd}
\usepackage{amsfonts}
\usepackage{amsthm}
\usepackage{amssymb}
\usepackage{cite}
\usepackage{ctex}
\usepackage{color}
\usepackage{extarrows}
\usepackage{geometry}
\usepackage{graphicx}
\usepackage{mathrsfs}
\usepackage{indentfirst}
\usepackage{xcolor}
\usepackage[colorlinks,linkcolor=teal,citecolor=teal]{hyperref}
\geometry{left=2cm,right=2cm,top=2.6cm,bottom=2.6cm}
\bibliographystyle{plain}

\newtheorem{cor}{\bf Corollary}[subsection]
\newtheorem{lemma}{\bf Lemma}[subsection]
\newtheorem{prop}{\bf Proposition}[subsection]
\newtheorem{thm}{\bf Theorem}[subsection]
\newtheorem{definition}{Definition}
\newtheorem{remark}{Remark}

\begin{document}
	\title{Basic (Dolbeault) Cohomology of Foliated Manifolds with Boundary}
	\author{Qingchun Ji \ \ Jun Yao}
	\date{}
	\maketitle
\renewcommand{\abstractname}{\bf Abstract}	
\begin{abstract}
	In this paper, we develop $L^2$ theory for Riemannian and Hermitian foliations on manifolds with basic boundary. We establish a decomposition theorem, various vanishing theorems, a twisted duality theorem for basic cohomologies and an extension theorem for basic forms of induced Riemannian foliation on the boundary. We prove the complex analogues for Hermitian foliations. To show the Dolbeault decomposition of basic forms, we extend Morrey's basic estimate to foliation version. We also investigate the global regularity for $\bar{\partial}_B$-equations.
\end{abstract}

\section{Introduction}
The theory of basic cohomology for foliations has attracted attention by many researchers. In general, the basic cohomology groups can be infinite-dimensional, and they may not satisfy Poincaré duality (see \cite{Cy84}). However, if $\left(M,\mathcal{F}\right)$ is a transversally oriented Riemannian foliation over a compact oriented smooth manifold, then there are plenty analytic and geometric properties like de Rham cohomologies on a compact Riemannian manifold. Due to this reason, it has been studied extensively by many researchers. The authors (see \cite{EN93}) proved that basic cohomology is a topological invariant of the foliation $ \mathcal{F} $. F. W. Kamber and P. Tondeur (see \cite{KT83D}) established the twisted duality theorem of basic cohomology. In \cite{EH86} and \cite{ESH85}, the authors proved the Hodge decomposition theorem by Molino's structure theorem for Riemannian foliations, and the finite-dimensionality of basic cohomology was deduced as a corollary. Then Kamber-Tondeur (see \cite{KT87}) explained basic Laplacian $\Delta_B:={\rm d}_B\delta_B+\delta_B{\rm d}_B$ (where $\delta_B$ is the formal adjoint of ${\rm d}_B$) as the restriction of a strongly elliptic operator on $M$, and eventually proved the Hodge decomposition of smooth basic forms under the assumption that the mean curvature form $\kappa\in\Omega_{B}^1\left(M\right)$. Later, J. Álvarez López in \cite{ALja92} showed that the arguments in \cite{KT87} still valid by using the basic component $\kappa_B$ to replace $\kappa$. In addition, D. Dom\'{i}nguez (see \cite{Dd98}) proved that for any Riemannian foliation $\mathcal{F}$ on a closed manifold, there is a bundle-like metric for $\mathcal{F}$ with basic mean curvature form, which also indicates that the assumption $\kappa\in\Omega_B^1\left(M\right)$ in \cite{KT87} is redundant. Moreover, E. Park and K. Richardson (see \cite{PR96}) proved that Hodge decomposition holds for arbitrary mean curvature by an innovative method. According to Hodge theorem, \cite{Hjj86} and \cite{MRT91} studied the relationships of positivity of curvature operator and vanishing of basic cohomology. Apart from decomposition of basic forms, in \cite{AT91} the authors proved leafwise Hodge decomposition. In addition, many researchers defined various new cohomologies for basic complex, they proved the topological invariance of new cohomologies, and discussed the conditions for the vanishing of these cohomologies (see \cite{HR13}, \cite{HR19} and \cite{OS16}).

Furthermore, we also investigate foliations that admit a transversely Hermitian structure. For such a Hermitian foliation, one can define the basic Dolbeault complex and its cohomology which is similar to basic de Rham cohomology. There are many progress on basic Dolbeault cohomology on closed manifold. The author in \cite{EKAa90} proved the basic Dolbeault decomposition theorem, which gives the finite-dimensionality of basic Dolbeault cohomology. In addition, if $\mathcal{F}$ is minimal, then Kodaira-Serre duality holds for basic Dolbeault cohomology. Moreover, if we assume that $\mathcal{F}$ is a transversely K{\"a}hler foliation, then this special structure appears naturally in many situations, and such structures play the same role as K{\"a}hler manifolds in general complex manifolds. For example, in \cite{CDY15} the authors proved hard Lefschetz theorem on compact Sasaki manifolds. El Kacimi-Gmira (see \cite{EG97}) studied the stability of transversely K{\"a}hler foliation. In \cite{JR21}, S. D. Jung and K. Richardson finded a new Weitzenb{\"o}ck formulas on a transversely K{\"a}hler foliation and investigated some curvature conditions which can impose restrictions on basic Dolbeault cohomology. The authors in \cite{CW91} proved the Fr{\"o}licher spectral sequence of basic Dolbeault cohomology collapses at the first level. Then S. D. Jung studied the twisted basic Dolbeault cohomology and transverse hard Lefschetz theorem on a transversely Kähler foliation in \cite{Jsd22}. Also, the authors in \cite{GNT16} established the rigidity and vanishing theorems of basic Dolbeault
cohomology of Sasakian manifolds.

In this paper, we focus on the transversally oriented Riemannian (Hermitian) foliation $\mathcal{F}$ over a compact oriented manifold $\overline{M}$ with basic mean curvature form and smooth basic boundary $\partial M$, i.e., the boundary defining function $\rho$ is basic. Roughly speaking, the definitions of basic forms and Riemannian (Hermitian) foliations are defined to ensure that basic (Dolbeault) cohomology theory behaves like de Rham (Dolbeault) theory on the leaf space, that's the reason why we require that $\rho$ is basic. Without loss of generality, we shall assume that $M$ is imbedded in a slightly larger open manifold $M'$. Here $\mathcal{F}$ over $\overline{M}$ means the restriction of Riemannian (Hermitian) foliation over $M'$.

Our first main result establishes the decomposition theorem:

\begin{thm}
	$(=$ {\rm Theorem} $\ref{Hodge decomposition theorem},\ref{Hodge decomposition theorem for dolbeault})$ For $ 0\leq r\leq q $, let $ \mathcal{F} $ be a transversally oriented Riemannian foliation on a compact oriented manifold $ \overline{M} $ with smooth basic boundary $ \partial M $, i.e., $ \rho $ is a basic function. Assume $ g_M $ is a bundle-like metric with basic mean curvature form. There is an $L^2$-orthogonal decomposition
	\begin{equation*}
		H_{B,0}^r={\rm Range}\left(\Delta_{F_B}\right)\oplus\mathcal{H}_B^r={\rm d}_B\delta_B{\rm Dom}\left(F_B\right)\oplus\delta_B{\rm d}_B{\rm Dom}\left(F_B\right)\oplus\mathcal{H}_B^r,
	\end{equation*}
	where the kernel $ \mathcal{H}_B^r $ of $ \Delta_{F_B} $ is a finite-dimensional space. Moreover, we have
	\begin{equation*}
		\Omega_B^r\left(\overline{M}\right)={\rm Im}{\rm d}_B\oplus{\rm Im}\delta_B\oplus\mathcal{H}_B^r.
	\end{equation*}
	
	If $\mathcal{F}$ is a Hermitian foliation, and we further assume that basic estimate holds in $\mathcal{D}_B^{\cdot,r}$, then the corresponding decomposition exists.
\end{thm}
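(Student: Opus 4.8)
\quad The plan is to transplant the Hodge-theory-with-boundary machinery into the basic (transverse) setting, with the foliation version of Morrey's basic estimate as the source of coercivity. First I would form the closed nonnegative Dirichlet form $Q_B(\omega,\eta):=({\rm d}_B\omega,{\rm d}_B\eta)+(\delta_B\omega,\delta_B\eta)$ on the space of basic $L^2$ $r$-forms lying in ${\rm Dom}({\rm d}_B)\cap{\rm Dom}(\delta_B)$ --- so that the essential (``free'') condition $\iota_\nu\omega|_{\partial M}=0$ is imposed and the remaining boundary conditions are the natural ones --- and let $\Delta_{F_B}$ be the associated Friedrichs self-adjoint operator, with ${\rm Dom}(F_B):={\rm Dom}(\Delta_{F_B})$ its operator domain; then for $\beta\in{\rm Dom}(F_B)$ one has $\delta_B\beta\in{\rm Dom}({\rm d}_B)$, ${\rm d}_B\beta\in{\rm Dom}(\delta_B)$ and $\Delta_{F_B}\beta={\rm d}_B\delta_B\beta+\delta_B{\rm d}_B\beta$.

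The analytic heart is the coercive estimate. Since $g_M$ is bundle-like with basic mean curvature form, the foliated Morrey basic estimate established earlier in the paper gives $\|\omega\|_1^2\lesssim Q_B(\omega,\omega)+\|\omega\|^2$ for $\omega$ in the form domain, the basic mean curvature hypothesis being precisely what absorbs the boundary integral produced by the second fundamental form. Together with the Rellich-type compactness of the relevant basic Sobolev embedding, this makes $(\Delta_{F_B}+1)^{-1}$ compact; hence $\Delta_{F_B}$ has discrete spectrum, $\mathcal{H}_B^r=\ker\Delta_{F_B}$ is finite-dimensional, $0$ is isolated in the spectrum, ${\rm Range}(\Delta_{F_B})$ is closed, and the $L^2$-orthogonal splitting $H_{B,0}^r={\rm Range}(\Delta_{F_B})\oplus\mathcal{H}_B^r$ follows.

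Next I would split the range. Each of ${\rm d}_B\delta_B{\rm Dom}(F_B)$ and $\delta_B{\rm d}_B{\rm Dom}(F_B)$ lies inside ${\rm Range}(\Delta_{F_B})$ (they are orthogonal to $\mathcal{H}_B^r$ and the range is closed), together they span it, and they are mutually orthogonal: for $\beta,\gamma\in{\rm Dom}(F_B)$ one checks $({\rm d}_B\delta_B\beta,\delta_B{\rm d}_B\gamma)=0$, which a $W^1$-approximation reduces to ${\rm d}_B^2=\delta_B^2=0$ together with the vanishing of the boundary term enforced by the condition defining ${\rm Dom}(F_B)$ --- this gives the second displayed equality. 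For the smooth statement, take $\omega\in\Omega_B^r(\overline{M})$ and write $\omega={\rm d}_B\delta_B\beta+\delta_B{\rm d}_B\beta+h$ with $h\in\mathcal{H}_B^r$ and $\beta\in{\rm Dom}(F_B)$. Then $\Delta_{F_B}\beta=\omega-h$ is smooth ($h$ being smooth by interior and boundary regularity for the finite-energy harmonic forms), so by the global regularity theory for the basic Laplacian boundary value problem --- the de Rham analogue of the $\bar\partial_B$-regularity treated later, itself a consequence of realizing $\Delta_B$ as the restriction of a strongly elliptic operator on $\overline{M}$ plus elliptic boundary regularity --- $\beta$, and hence $\delta_B\beta$ and ${\rm d}_B\beta$, are smooth basic forms; since the three summands remain $L^2$-orthogonal we obtain $\Omega_B^r(\overline{M})={\rm Im}\,{\rm d}_B\oplus{\rm Im}\,\delta_B\oplus\mathcal{H}_B^r$.

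For a Hermitian foliation the argument runs verbatim with $({\rm d}_B,\delta_B,\Delta_B)$ replaced by $(\bar\partial_B,\bar\partial_B^\ast,\Box_B:=\bar\partial_B\bar\partial_B^\ast+\bar\partial_B^\ast\bar\partial_B)$ and the form domain by $\mathcal{D}_B^{\cdot,r}$: the single place where the Riemannian geometric hypothesis intervened --- to produce the coercive estimate --- is now supplied by the standing assumption that the basic estimate holds in $\mathcal{D}_B^{\cdot,r}$, which again yields a compact resolvent, a finite-dimensional $\Box_B$-harmonic space, closed range, the two-term splitting from $\bar\partial_B^2=0$, and, through the corresponding global regularity, the smooth Dolbeault decomposition. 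The main obstacle throughout is precisely this estimate up to the boundary: in the Riemannian case it is the foliated Morrey estimate, whose delicate point is the boundary term controlled by the basic mean curvature assumption; in the Dolbeault setting such an estimate can genuinely fail --- exactly as for the classical $\bar\partial$-Neumann problem --- which is why it must be hypothesized, and even granting it one still has to propagate it to higher Sobolev orders to secure global regularity up to $\partial M$.
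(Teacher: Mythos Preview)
Your proposal is correct and follows essentially the same route as the paper: Friedrichs extension of the quadratic form, the coercive estimate (the paper's Corollary~\ref{H1e}, derived from the Bochner formula) yielding compact resolvent and hence discrete spectrum, closed range and finite-dimensional kernel, orthogonality of the two range pieces from ${\rm d}_B^2=0$, and the regularity theorem (Theorem~\ref{main theorem}) for the smooth decomposition; in the Hermitian case the paper likewise rests on the hypothesized basic estimate, with Kohn--Nirenberg elliptic regularization supplying the missing regularity. One small correction: the basic mean curvature hypothesis is needed so that $\delta_B$ preserves basic forms (see the Remark after Proposition~\ref{the reprensentation of formal adjoint}), not to absorb the boundary integral in the coercive estimate---that boundary term is the transverse Hessian of $\rho$ and is controlled by a trace inequality (Stokes plus Cauchy--Schwarz), independently of the mean curvature.
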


We want to remark that the proof of the decomposition theorem for Hermitian foliations depends on the assumption that basic estimate holds in $\mathcal{D}_B^{\cdot,r}$ (see Definition \ref{befd}), while the assumption holds true automatically for Riemannian foliations since the boundary condition of $u\in\mathcal{D}_B^{\cdot,r}$ is more involved. By establishing the Bochner formula (\ref{Bochner formula for dolbeault}) we have the following geometric characterization for this assumption.

\begin{thm}
	$(=$ {\rm Theorem} $\ref{geometric condition for basic estimate for dolbeault})$ For $ 0\leq r\leq q $, basic estimate holds in $ \mathcal{D}_B^{\cdot,r} $ if and only if $ \partial M $ satisfies $ Z_r $-condition.
\end{thm}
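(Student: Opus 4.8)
The strategy is to run the foliated analogue of H\"ormander's analysis of the $Z(q)$-condition for the $\bar{\partial}$-Neumann problem, with the Morrey-type identity (\ref{Bochner formula for dolbeault}) playing the role of the Kohn--Morrey--H\"ormander formula. First I would reduce the basic estimate to a local statement: using a basic partition of unity subordinate to a finite cover of $\overline{M}$ by foliated charts, it suffices to prove, for every $u\in\mathcal{D}_B^{\cdot,r}$ supported in a small neighbourhood $U$ of a point $x_0\in\partial M$,
\begin{equation*}
	\|u\|^2\le C\bigl(\|\bar{\partial}_B u\|^2+\|\bar{\partial}_B^{*}u\|^2+\|u\|_{-1}^2\bigr),
\end{equation*}
the interior pieces being controlled by transverse ellipticity. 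In such a chart choose a transverse unitary $(1,0)$-coframe $\theta^1,\dots,\theta^q$ with $\theta^q$ a unit multiple of $\partial_B\rho$ near $\partial M$, so that the transverse Levi form of $\rho$ is represented by the Hermitian matrix $(c_{j\bar k})_{1\le j,k\le q-1}$. Feeding $u$ into (\ref{Bochner formula for dolbeault}) and using that the $\bar{\theta}^q$-component of $u$ vanishes on $\partial M$ (the defining boundary condition of $\mathcal{D}_B^{\cdot,r}$), the normal-derivative contributions recombine into $\bar{\partial}_B u$ and $\bar{\partial}_B^{*}u$, and one is left with an identity of the shape
\begin{equation*}
	\|\bar{\partial}_B u\|^2+\|\bar{\partial}_B^{*}u\|^2=\|\bar{\nabla}u\|^2+\int_{\partial M}\Theta_{\partial M}(u,u)\,dS+\mathcal{R}(u),
\end{equation*}
where $\bar{\nabla}$ denotes the transverse $(0,1)$ covariant derivatives, $\Theta_{\partial M}$ is the quadratic form on $(\cdot,r)$-covectors built from $(c_{j\bar k})$, and $\mathcal{R}(u)$ gathers the transverse curvature terms and the terms coming from the basic mean curvature form $\kappa$. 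Since $g_M$ has basic mean curvature form, $\kappa=\kappa_B$ is a bounded basic $1$-form, so $\mathcal{R}$ is of order $\le 1$ and, after shrinking $U$, is absorbed into the left-hand side plus $C\|u\|_{-1}^2$.

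\emph{Sufficiency of $Z_r$.} Assume $\partial M$ satisfies $Z_r$, i.e.\ at every boundary point the $q-1$ eigenvalues $\lambda_1,\dots,\lambda_{q-1}$ of the transverse Levi form include at least $q-r$ positive ones or at least $r+1$ negative ones. Diagonalizing $(c_{j\bar k})$ by a further unitary rotation of $\theta^1,\dots,\theta^{q-1}$ and splitting each coefficient of $u$ according to the sign of $\sum_{j\in J}\lambda_j-\sum_{j\notin J}\lambda_j$ over the index sets $J$ with $|J|=r$, one sees exactly as in H\"ormander's argument that, after a tangential microlocalization of $u$ near $\partial M$ into its elliptic, $(+)$- and $(-)$-components, the sum $\Theta_{\partial M}(u,u)+|\bar{\nabla}u|^2$ pointwise dominates $|u|^2$ modulo lower order. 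Integrating, combining with the trace theorem, and absorbing $\mathcal{R}$ gives the subelliptic half-estimate
\begin{equation*}
	\|u\|_{1/2}^2\le C\bigl(\|\bar{\partial}_B u\|^2+\|\bar{\partial}_B^{*}u\|^2+\|u\|^2\bigr),
\end{equation*}
and the basic estimate follows by the standard interpolation $\|u\|^2\le\varepsilon\|u\|_{1/2}^2+C_\varepsilon\|u\|_{-1/2}^2$ together with absorption.

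\emph{Necessity of $Z_r$.} Conversely, suppose the basic estimate holds but $Z_r$ fails at some $x_0\in\partial M$; then the transverse Levi form at $x_0$ has at most $q-r-1$ positive and at most $r$ negative eigenvalues, so there is an index set $J_0$ with $|J_0|=r$ for which $\Theta_{\partial M}(u_0,u_0)(x_0)\le 0$, where $u_0$ is the elementary $(\cdot,r)$-covector carried by $\bar{\theta}^{J_0}$. Working in the transverse model of the chart around $x_0$ (where basic forms on $U$ correspond to forms on a transversal disc $T$ through $x_0$), I would imitate H\"ormander's construction of a violating sequence $u_\nu=\chi\bigl(\sqrt{\nu}\,\cdot\bigr)e^{i\nu\psi}\,u_0$ on $T$, with $\chi$ a fixed bump and $\psi$ a real phase calibrated so that the leading $O(\nu)$ parts of $\bar{\partial}_B u_\nu$ and $\bar{\partial}_B^{*}u_\nu$ cancel. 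Lifting $u_\nu$ to $U$ to be constant along the plaques and multiplying by a basic cutoff, one checks, using the identity above (whose boundary term is $\le o(1)$ because $\Theta_{\partial M}(u_0,u_0)(x_0)\le 0$), that $\|u_\nu\|$ stays bounded away from $0$ while $\|\bar{\partial}_B u_\nu\|^2+\|\bar{\partial}_B^{*}u_\nu\|^2\to0$ and $\|u_\nu\|_{-1}\to0$ as $\nu\to\infty$. This contradicts the basic estimate, so $Z_r$ must hold.

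\emph{Main obstacle.} The delicate point is the necessity direction: concentrating a test form near $x_0$ conflicts with basicness, since the leaf through $x_0$ need not be closed and may be dense. The remedy is to carry out the oscillatory construction on the transversal $T$ of a foliated chart, extend the result to be leafwise constant on the chart, and then truncate by a basic function supported in $U$; the errors produced by this cutoff, and by the curvature of $g_M$ and the non-flatness of $\partial M$, are of strictly lower order and disappear in the limit $\nu\to\infty$. A subsidiary technical issue, in the sufficiency direction, is the careful absorption of the zeroth-order terms hidden in $\mathcal{R}(u)$: this is precisely where the hypothesis that $g_M$ has basic mean curvature form (so that $\kappa$ coincides with its basic part $\kappa_B$ and is a genuine bounded basic $1$-form) is needed to make those terms absorbable.
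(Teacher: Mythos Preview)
Your outline follows H\"ormander's scheme, but it diverges from the paper in two places that matter, one of them a genuine gap.

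First, the ``basic estimate'' of Definition~\ref{befd} is not the compactness inequality $\|u\|^2\le C(\|\bar{\partial}_Bu\|^2+\|\bar{\partial}_B^{*}u\|^2+\|u\|_{-1}^2)$ you aim for. It is formulated on the larger space $\mathcal{D}_Q^{r',r}$ (arbitrary smooth $Q$-valued forms satisfying the Neumann boundary condition), and asserts control of $E_U(v)^2=\sum_\alpha\|\bar{V}_\alpha v_{I,J}\|^2+\int_{\partial M}|v|^2+\|v\|^2$ by $\bar{G}_Q(v,v)$, modulo leafwise error terms $O(\sum_a\|X_av\|\|v\|)$. The boundary trace is part of the conclusion, not an intermediate step.

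This distinction is decisive in the necessity direction, and here your plan has a gap. You propose to build oscillating test forms on a transversal, extend them leafwise-constant, and localize by a basic cutoff. But in a distinguished chart $U\cong U_{\text{leaf}}\times U_{\text{transv}}$ every basic function is constant along $U_{\text{leaf}}$, so a basic cutoff cannot localize in the leaf direction; your test forms will not be supported in $U$ unless the leaves are closed and contained in the chart. The paper sidesteps this entirely by exploiting that the estimate is stated on $\mathcal{D}_Q$: its test forms are $v^t_{I,J}=\psi_1(x)\,\psi_2(tz)\,e^{it^2z_q}$ with an explicit \emph{non-basic} leafwise cutoff $\psi_1(x)\in C_c^\infty(\mathbb{R}^p)$, so compact support is immediate. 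The price is the leafwise error $O(\sum_a\|X_av^t\|\|v^t\|)$, but after the rescaling $y'\mapsto ty'$, $y_{2q}\mapsto t^2y_{2q}$ this term is $O(t^{-2})$ (equation~(\ref{g3})) and disappears in the limit. The limiting inequality then involves only transverse quantities and, via H\"ormander's Lemma~3.2.2, forces the eigenvalue condition.

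For sufficiency your outline is in the right spirit but takes a detour. The paper does not microlocalize or pass through a $\tfrac12$-subelliptic estimate. It reads the Bochner identity~(\ref{Bochner formula for dolbeault}) with $\phi=0$, keeps an $\varepsilon$-fraction of $\sum_\alpha\|\bar{V}_\alpha v_{I,J}\|^2$, and for the negative-eigenvalue directions $\alpha\le\mu$ applies the integration-by-parts inequality of Lemma~\ref{integration by parts for first order term for dolbeault} to trade $(1-\varepsilon)\|\bar{V}_\alpha v_{I,J}\|^2$ for $-(1-\varepsilon)\int_{\partial M}\rho_{\alpha\bar\alpha}|v_{I,J}|^2$. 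After diagonalizing at $p$ the resulting boundary integrand carries the coefficient $A_J(\varepsilon;\lambda)=(1-\varepsilon)\sum_{\alpha\notin J}\lambda_\alpha^{-}+\sum_{\alpha\in J}\lambda_\alpha^{+}$, which is strictly positive under $Z_r$; this immediately yields $\int_{\partial M}|v|^2\lesssim \bar{G}_Q(v,v)$, and the $\bar{V}_\alpha$-derivatives are already supplied by the identity itself.
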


Apart from proving above Theorem, we also give some vanishing theorems for basic cohomologies by the Bochner formulas (\ref{Bochner-Kodaira formula}) and (\ref{Bochner formula for dolbeault}) respectively. The most representative vanishing theorems are:

\begin{thm}
	$(=$ {\rm Theorem} $\ref{existence theorem 3},\ref{existence theorem 3 for dolbeault})$ For $ 1\leq r\leq q $, let $ M $ be an oriented manifold with a transversally oriented Riemannian foliation, and $ g_M $ is a bundle-like metric with basic mean curvature form. If there exists a strictly transversal $ r $-convex exhaustion basic function $ \varphi $, then for any $ \omega\in L^2_{loc}\left(M,\Omega_B^r\right) $ with $ {\rm d}_B\omega=0 $, there is a basic form $ u\in L^2_{loc}\left(M,\Omega_B^{r-1}\right) $ such that $ {\rm d}_Bu=\omega $.
	
	The $L^2$-existence of the equation ${\bar{\partial}_B}u=\omega$ is valid for Hermitian foliations under the corresponding convexity.
\end{thm}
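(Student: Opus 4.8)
The plan is to deduce this non-compact statement from the compact-boundary case treated above, by exhausting $M$ by relatively compact sublevel sets of (a reparametrisation of) $\varphi$, solving the equation on each piece with a \emph{uniform} weighted $L^2$ bound, and then passing to a weak limit.

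\textbf{Step 1 (exhaustion and geometry of the level sets).} Let $\chi\colon\mathbb{R}\to\mathbb{R}$ be smooth, increasing and convex; then $\chi\circ\varphi$ is again a basic exhaustion function and, being a convex increasing reparametrisation, is again strictly transversal $r$-convex. For a dense set of values $c$ the sublevel set $M_c:=\{\varphi<c\}$ is relatively compact with smooth basic boundary $\partial M_c=\{\varphi=c\}$ (a Sard-type argument for basic functions); thus $\overline{M_c}$ is a compact oriented manifold with smooth basic boundary to which the results above apply. Strict transversal $r$-convexity of $\varphi$ controls the transverse second fundamental form of $\partial M_c$, which up to a positive factor is the restriction of $\mathrm{Hess}_\nabla\varphi$ to the transverse tangent space of $\partial M_c$; this is exactly what is needed for the boundary terms in the Bochner formula to have the right sign, i.e.\ the $Z_r$-condition on $\partial M_c$ in the Hermitian case (so that, by Theorem \ref{geometric condition for basic estimate for dolbeault}, the basic estimate holds in $\mathcal{D}_B^{\cdot,r}$ on $\overline{M_c}$) and the analogous positivity in the Riemannian case (where the basic estimate is automatic). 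Hence Theorem \ref{Hodge decomposition theorem} / Theorem \ref{Hodge decomposition theorem for dolbeault} applies on each $\overline{M_c}$; moreover strict convexity makes the curvature term in the Bochner formula (\ref{Bochner-Kodaira formula}) (resp.\ (\ref{Bochner formula for dolbeault})) strictly positive, so $\mathcal{H}_B^r\left(\overline{M_c}\right)=0$ and every ${\rm d}_B$-closed (resp.\ $\bar{\partial}_B$-closed) $\omega\in L^2\left(M_c,\Omega_B^r\right)$ is ${\rm d}_B$-exact (resp.\ $\bar{\partial}_B$-exact) on $M_c$.

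\textbf{Step 2 (a uniform weighted estimate).} To make the solution on $M_c$ controllable independently of $c$ we insert a weight $e^{-\phi}$ with $\phi=\chi\circ\varphi$ for $\chi$ convex increasing; since each $\{\varphi\le c\}$ is relatively compact we may take $\chi$ to grow fast enough that $\int_M|\omega|^2e^{-\phi}<\infty$. Running the Bochner/basic-estimate argument on $\overline{M_c}$ with this weight, the term $\mathrm{Hess}_\nabla\phi$ adds to the transverse curvature and, by strict $r$-convexity of $\varphi$ (enlarging $\chi$ if necessary), can be arranged to be $\geq 1$ on transverse $r$-vectors; together with the favourable boundary term from Step 1 this yields, for every $u$ in the domain of the weighted operator that is $L^2$-orthogonal to the (trivial) harmonic space,
\begin{equation*}
	\int_{M_c}|u|^2e^{-\phi}\ \leq\ \|{\rm d}_Bu\|_\phi^2+\|\delta_{B,\phi}u\|_\phi^2.
\end{equation*}
Taking $u_c$ to be the minimal-norm solution of ${\rm d}_Bu_c=\omega$ (resp.\ $\bar{\partial}_Bu_c=\omega$) on $M_c$, we get $\int_{M_c}|u_c|^2e^{-\phi}\leq\int_{M_c}|\omega|^2e^{-\phi}\leq\int_M|\omega|^2e^{-\phi}=:C<\infty$, a bound independent of $c$.

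\textbf{Step 3 (passage to the limit).} Fix values $c_1<c_2<\cdots\to\infty$ as in Step 1. For each $k$ the restrictions $\{u_{c_j}|_{M_{c_k}}:c_j>c_k\}$ are bounded in $L^2\left(M_{c_k},\Omega_B^{r-1},e^{-\phi}\right)$; extracting weakly convergent subsequences and diagonalising over $k$ produces one subsequence with $u_{c_j}\rightharpoonup u$ weakly in $L^2_{loc}\left(M,\Omega_B^{r-1}\right)$, and $u$ is again basic. For any compactly supported smooth basic test form $\psi$ one has $\langle u_{c_j},\delta_B\psi\rangle=\langle\omega,\psi\rangle$ (no boundary contribution against interior test forms), and letting $j\to\infty$ gives $\langle u,\delta_B\psi\rangle=\langle\omega,\psi\rangle$, i.e.\ ${\rm d}_Bu=\omega$ in the distributional sense on every $M_{c_k}$, hence on all of $M$. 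This proves the Riemannian statement; replacing ${\rm d}_B$ by $\bar{\partial}_B$ and (\ref{Bochner-Kodaira formula}) by (\ref{Bochner formula for dolbeault}) throughout, and using that the convexity of $\varphi$ supplies the $Z_r$-condition on the level sets, gives the Hermitian statement. Alternatively one may patch the $u_c$ together by a Mittag--Leffler type induction, correcting the ${\rm d}_B$-closed differences $u_{c_{k+1}}-u_{c_k}$ on $M_{c_k}$ by the already-established exactness there.

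\textbf{Main obstacle.} The functional-analytic part (minimal solutions, weak compactness, diagonal extraction, weak closedness of ${\rm d}_B$) is the routine Hörmander-type $L^2$ machinery. The real work is Step 1: proving that strict transversal $r$-convexity of $\varphi$, after composition with a suitable convex increasing function, yields exactly the $Z_r$-condition on $\{\varphi=c\}$ (the correct number of positive eigenvalues of the transverse Levi form / second fundamental form) and simultaneously that the weighted curvature term in the Bochner formula can be forced to be uniformly $\geq 1$ — a transverse-Hessian computation on the normal bundle of the level hypersurface, made delicate by the presence of the basic mean curvature form $\kappa_B$ and the requirement that every object in sight stay basic. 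A secondary point requiring care is the choice of $c$ as a regular value adapted to the transverse structure, and the low-degree case $r=1$, where the relevant harmonic space in degree $0$ is spanned by basic constants.
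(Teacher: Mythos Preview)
Your overall strategy—weighted Bochner estimate on sublevel sets of $\varphi$, uniform $L^2_\phi$ bound on the minimal solutions, weak limit—is exactly the paper's approach, and Steps 2 and 3 are correct. The paper is more economical, however, and your Step 1 contains an unjustified claim that Step 2 silently repairs.

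The assertion in Step 1 that ``strict convexity makes the curvature term in the Bochner formula strictly positive, so $\mathcal{H}_B^r(\overline{M_c})=0$'' is wrong as written: the interior curvature terms $R^\nabla$, $Ric^\nabla$, $\mathcal{A}_\tau$ in \eqref{Bochner-Kodaira formula} depend only on $(M,g_M)$ and are not controlled by $\varphi$; they may be very negative. Strict $r$-convexity of $\varphi$ controls only (i) the boundary term on $\partial M_c$ (since $\rho=\varphi-c$) and (ii) the extra Hessian term that appears once you insert the weight $e^{-\chi(\varphi)}$. It is (ii)—your Step 2—that dominates the bad interior curvature and yields the coercive estimate; unweighted vanishing of $\mathcal{H}_B^r$ does not follow from convexity alone and is in any case unnecessary. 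The paper therefore omits your Step 1 entirely: from the weighted a priori estimate \eqref{hormander's priori estimate} it invokes Lemma~\ref{functional analysis} directly to produce a solution on each $M_\nu$ with $\|u_\nu\|_{\chi(\varphi)}\le\|\omega\|_{\chi(\varphi)}$, then takes a weak limit. No Hodge decomposition on the pieces is used.

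The ``transverse-Hessian computation'' you flag as the main obstacle is handled in the paper by a clean trick you do not mention: the leafwise conformal change $\hat g_M=e^{-2\varphi/p}g_L\oplus g_Q$ leaves $g_Q$, $\nabla$, $R^\nabla$ unchanged but shifts the mean curvature to $\hat\tau=\tau+\mathrm{grad}\,\varphi$ (see \eqref{H'}), so that $d\hat V=e^{-\varphi}\,dV$ and the \emph{unweighted} Bochner formula for $\hat g_M$ is exactly the $\varphi$-weighted formula for $g_M$, with the Hessian of $\varphi$ entering through $\mathcal{A}_{\hat\tau}$ as in \eqref{kappa^}. Basicness is preserved automatically because $\varphi$ is basic, and $\kappa$ enters only through the bounded zero-order operator $\mathcal{A}_\tau$, so the computation is less delicate than you anticipate.
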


In addition, we prove the following global regularity theorem for $\bar{\partial}_B$-equations.

\begin{thm}
	$(=$ {\rm Theorem} $\ref{grt.})$ For $ 1\leq r\leq q $, let $ M $ be an oriented Riemannian manifold with compact basic boundary $ \partial M $ and $\mathcal{F}$ is a transversally oriented Hermitian foliation. assume that there is an exhaustion basic function $ \varphi' $ satisfying $ Z_r $ and $Z_{r-1}$-condition outside a compact $ K\Subset M $. If the equation $ \bar{\partial}_Bu=\omega $ is solvable for $ \omega\in H_{B,0}^{\cdot,r} $, then the solution $ u\in\Omega_B^{\cdot,r-1}\left(\overline{M}\right) $ whenever $ \omega\in\Omega_B^{\cdot,r}\left(\overline{M}\right) $.
\end{thm}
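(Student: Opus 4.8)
The plan is to reduce the global regularity of the $\bar\partial_B$-equation to the interior (local) regularity plus the boundary regularity coming from the basic estimate, exactly as in the classical Kohn–Hörmander theory, but carried out equivariantly for basic forms. First I would introduce the self-adjoint extension $F_B=\bar\partial_B\bar\partial_B^\ast+\bar\partial_B^\ast\bar\partial_B$ associated to the $L^2$-domain $\mathcal{D}_B^{\cdot,r}$, and observe that since $\varphi'$ satisfies $Z_r$ and $Z_{r-1}$ outside a compact set $K$, Theorem \ref{geometric condition for basic estimate for dolbeault} gives the basic estimate in $\mathcal{D}_B^{\cdot,r}$ and $\mathcal{D}_B^{\cdot,r-1}$ away from $K$; consequently $F_B$ is, microlocally away from $K$, comparable to a transversally strongly elliptic operator and has closed range with finite-dimensional kernel on those degrees (this is precisely the content feeding into Theorem \ref{Hodge decomposition theorem for dolbeault}). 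The key point is that the basic estimate upgrades to an a priori estimate for all Sobolev orders: $\|u\|_{s+1,B}\lesssim \|F_Bu\|_{s,B}+\|u\|_{0,B}$ on the region where $Z_r$ holds, which is the foliated analogue of the elliptic regularization estimate.

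The main steps, in order, are: (1) Write $\omega=\bar\partial_B u$ with $u\in H_{B,0}^{\cdot,r-1}$ the solution provided by hypothesis; after subtracting the $\mathcal{H}_B$-component we may assume $u\perp\ker F_B$, and then $u=F_B N\omega$ for the basic Neumann operator $N$ on degree $r-1$ (which exists because, in degrees $r-1$ and $r$, the basic estimate gives closed range, hence $N$ bounded). (2) Interior regularity: away from $\partial M$, $\bar\partial_B u=\omega$ together with the fact that $u$ lies in the $L^2$-domain and $\omega$ is smooth basic gives, by the transverse ellipticity of $\Box_B$ on the leaf space (Morrey's basic estimate, extended in this paper), that $u$ is smooth basic in the interior; this is the standard "a single equation plus membership in the domain forces smoothness" argument transplanted to basic forms via a partition of unity subordinate to foliation charts. (3) Boundary regularity on $\partial M\setminus K$: here the $Z_{r-1}$-condition gives the basic estimate in $\mathcal{D}_B^{\cdot,r-1}$, so one runs the difference-quotient / elliptic-regularization scheme in directions tangential to $\partial M$ and transverse to $\mathcal{F}$, using the Bochner formula (\ref{Bochner formula for dolbeault}) to control the one problematic normal derivative, and concludes $Nu\in\Omega_B^{\cdot,r-1}(\overline M)$ near $\partial M\setminus K$, hence $u=F_BN\omega$ is smooth basic up to that part of the boundary. (4) Since $K$ is compact and interior, combining (2) and (3) with a basic cutoff finishes the proof: $u\in\Omega_B^{\cdot,r-1}(\overline M)$.

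I expect the main obstacle to be Step (3), the boundary regularity, and specifically making the elliptic-regularization argument respect the foliated structure: one must choose the regularizing tangential mollifiers so that they commute (up to lower-order basic errors) with the projection onto basic forms, and one must verify that the boundary condition defining $\mathcal{D}_B^{\cdot,r-1}$ — which, as the authors note, is more subtle than in the Riemannian case — is preserved under these operations. The Bochner identity (\ref{Bochner formula for dolbeault}) is what converts the $Z_{r-1}$-convexity into the positivity needed to absorb the boundary integrals, but keeping track of the mean-curvature (and its basic component) contributions, so that the bundle-like metric with basic mean curvature form is genuinely used, is where the bookkeeping is heaviest. A secondary, more routine difficulty is patching the three regions (interior, $\partial M\setminus K$, near $K$) and checking that the Neumann operator $N$ on degree $r-1$ is the right object to differentiate — i.e. that $u$ and $F_BN\omega$ agree after the harmonic projection, which requires the closed-range statement in \emph{both} degrees $r-1$ and $r$, exactly the two degrees for which $Z$-conditions are assumed.
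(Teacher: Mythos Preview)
Your proposal has a genuine gap at its very first move. You invoke Theorem \ref{geometric condition for basic estimate for dolbeault} to conclude that the basic estimate holds in $\mathcal{D}_B^{\cdot,r}$ and $\mathcal{D}_B^{\cdot,r-1}$ away from $K$, but that theorem characterizes the basic estimate in terms of the Levi form of the \emph{boundary defining function} $\rho$ on $\partial M$. The hypothesis of Theorem \ref{grt.} says nothing about $\rho$: the $Z_r$ and $Z_{r-1}$ conditions are imposed on the \emph{exhaustion function} $\varphi'$ outside $K$, which is a statement about the Levi form of $\varphi'$ on the level sets $\{\varphi'=c\}$, not on $\partial M$. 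Consequently you have no basic estimate on $\partial M$ at all, the unweighted Neumann operator $N$ need not be regularizing up to the boundary, and your Step (3) collapses. This is precisely the situation (think of weakly pseudoconvex domains in $\mathbb{C}^n$) in which the unweighted $\bar\partial$-Neumann problem can fail to gain derivatives.

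The paper avoids this by following Kohn's weighted method. The $Z_r$-condition on $\varphi'$ is fed into the \emph{weighted} Bochner formula (\ref{Bochner formula for dolbeault}) with weight $e^{-T\varphi}$ (Proposition \ref{weaker condition of priori estimate}), which produces a priori estimates with a gain of $T$; for each Sobolev level $s$ one then chooses $T>T_s$ large enough that the weighted Neumann operator $N_B^{T\varphi}$ and $\vartheta_B^{T\varphi}N_B^{T\varphi}$ are bounded on $H_{B,s}$ (Propositions \ref{continuity of N_B} and \ref{continuity of related operators}), yielding a solution $u_T\in H_{B,s}^{\cdot,r-1}$ (Theorem \ref{smooth existence theorem}). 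These are \emph{different} operators for different $s$, so one does not get a single smooth solution directly. The $Z_{r-1}$-condition enters only now: it makes $R_B^{T\varphi}={\rm Id}-\vartheta_B^{T\varphi}\bar\partial_BN_B^{T\varphi}$ a bounded projector onto $\ker\bar\partial_B$ in each $H_{B,s}^{\cdot,r-1}$, hence $\ker\bar\partial_B\cap H_{B,s_1}$ is dense in $\ker\bar\partial_B\cap H_{B,s_2}$ for $s_1\ge s_2$, and a telescoping (Mittag--Leffler type) correction of the sequence $\{u_s\}$ produces a single $u\in\bigcap_s H_{B,s}^{\cdot,r-1}=\Omega_B^{\cdot,r-1}(\overline M)$. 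Your outline never introduces the weight $T\varphi$, so the mechanism by which the hypothesis on $\varphi'$ actually enters the argument is missing.
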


Moreover, we establish the twisted duality theorem for basic cohomology.

\begin{thm}
	$(=$ {\rm Theorem} $\ref{twisted duality},\ref{twisted duality for dolbeault})$ For $0\leq r\leq q$, then we have
	$$H_B^r\left(\overline{M},\mathcal{C}\right)\cong\left(H_{B,\kappa}^{q-r}\left(\overline{M},\mathcal{F}\right)\right)^*$$
	for Riemannian foliations satisfying the conditions in Theorem \ref{Hodge decomposition theorem}.
	
	For the case of Hermitian foliations, the corresponding twisted duality holds provided that $\partial M$ satisfies $Z_{q-r}$-condition.
\end{thm}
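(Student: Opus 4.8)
The plan is to deduce the duality from the transverse Hodge-star operator $\bar{*}$ together with the Hodge decomposition of Theorem \ref{Hodge decomposition theorem}. Write ${\rm d}_\kappa:={\rm d}_B-\kappa_B\wedge(\cdot)$ for the mean-curvature-twisted differential; since $g_M$ has basic mean curvature form, $\kappa_B$ is closed, so ${\rm d}_\kappa^2=0$ and the twisted basic complex $(\Omega_B^\bullet(\overline{M}),{\rm d}_\kappa)$ computing $H_{B,\kappa}^{\bullet}(\overline{M},\mathcal{F})$ is well defined. The algebraic input is that, with respect to the $L^2$-product of $g_M$, the formal adjoint $\delta_B$ of ${\rm d}_B$ on basic forms is conjugate through $\bar{*}$ to ${\rm d}_\kappa$ (up to the standard degree-dependent sign); equivalently $\bar{*}$ carries $\Delta_{F_B}$ on $\Omega_B^r$ to the twisted basic Laplacian $\Delta_\kappa$ on $\Omega_B^{q-r}$, and carries the domain ${\rm Dom}(F_B)$, together with the boundary condition built into it, to the domain of the twisted operator with the complementary boundary condition. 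Recall that $H_B^r(\overline{M},\mathcal{C})$ and $H_{B,\kappa}^{q-r}(\overline{M},\mathcal{F})$ are computed by basic forms subject to complementary boundary conditions that are interchanged by $\bar{*}$. Conjugating the whole $L^2$-setup of Theorem \ref{Hodge decomposition theorem} by the isomorphism $\bar{*}\colon\Omega_B^r\to\Omega_B^{q-r}$ therefore produces a twisted Hodge decomposition and hence canonical isomorphisms
$$H_B^r(\overline{M},\mathcal{C})\cong\mathcal{H}_B^r,\qquad H_{B,\kappa}^{q-r}(\overline{M},\mathcal{F})\cong\mathcal{H}_\kappa^{q-r},$$
where $\mathcal{H}_\kappa^{q-r}$ is the (finite-dimensional) kernel of $\Delta_\kappa$, and $\bar{*}$ restricts to an isomorphism $\mathcal{H}_B^r\to\mathcal{H}_\kappa^{q-r}$.

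Next I would set up the pairing. Let $\chi_{\mathcal{F}}$ denote the characteristic form of $\mathcal{F}$; Rummler's formula reads ${\rm d}\chi_{\mathcal{F}}=-\kappa\wedge\chi_{\mathcal{F}}+\varphi_0$ with $\varphi_0$ at least $2$-transverse, so that $\gamma\wedge\varphi_0=0$ for every basic form $\gamma$ of degree $\geq q-1$. For $\alpha\in\Omega_B^r(\overline{M})$ and $\beta\in\Omega_B^{q-r}(\overline{M})$ put $P(\alpha,\beta):=\int_{\overline{M}}\alpha\wedge\beta\wedge\chi_{\mathcal{F}}$. Applying Stokes' theorem to ${\rm d}(\alpha'\wedge\beta\wedge\chi_{\mathcal{F}})$ and substituting Rummler's formula yields, for basic forms of complementary degree,
$$\int_{\partial M}\alpha'\wedge\beta\wedge\chi_{\mathcal{F}}=\int_{\overline{M}}{\rm d}_B\alpha'\wedge\beta\wedge\chi_{\mathcal{F}}\ \pm\ \int_{\overline{M}}\alpha'\wedge{\rm d}_\kappa\beta\wedge\chi_{\mathcal{F}},$$
and a symmetric identity with the roles of the two factors exchanged. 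Hence $P({\rm d}_B\alpha',\beta)=0$ whenever $\beta$ is ${\rm d}_\kappa$-closed and $\alpha'$ lies in the class $\mathcal{C}$ (so that the pullback of $\alpha'\wedge\beta\wedge\chi_{\mathcal{F}}$ to $\partial M$ vanishes), and $P(\alpha,{\rm d}_\kappa\beta')=0$ whenever $\alpha$ is ${\rm d}_B$-closed and $\beta'$ lies in the complementary class. Thus $P$ descends to a bilinear pairing on $H_B^r(\overline{M},\mathcal{C})\times H_{B,\kappa}^{q-r}(\overline{M},\mathcal{F})$, i.e. to a linear map $\Phi\colon H_B^r(\overline{M},\mathcal{C})\to(H_{B,\kappa}^{q-r}(\overline{M},\mathcal{F}))^*$.

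It then remains to show $\Phi$ is an isomorphism, and here the Hodge decomposition does the work. Represent a class on the left by its harmonic form $\alpha\in\mathcal{H}_B^r$; then $\bar{*}\alpha\in\mathcal{H}_\kappa^{q-r}$ is the harmonic representative of a class on the right, and
$$P(\alpha,\bar{*}\alpha)=\int_{\overline{M}}\alpha\wedge\bar{*}\alpha\wedge\chi_{\mathcal{F}}=\int_{\overline{M}}|\alpha|^2\,{\rm vol}_{g_M}=\|\alpha\|_{L^2}^2,$$
which is positive unless $\alpha=0$. So the composite of $\bar{*}\colon\mathcal{H}_B^r\to\mathcal{H}_\kappa^{q-r}$ with "evaluate $\Phi(\cdot)$" is the positive-definite $L^2$-product on $\mathcal{H}_B^r$; in particular $\Phi$ is injective, and since $\bar{*}$ already identifies the two finite-dimensional spaces $\mathcal{H}_B^r$ and $\mathcal{H}_\kappa^{q-r}$, their duals have equal dimension and injectivity upgrades to bijectivity. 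This proves the statement for Riemannian foliations satisfying the hypotheses of Theorem \ref{Hodge decomposition theorem}. For Hermitian foliations the identical scheme runs with ${\rm d}_B,\delta_B$ replaced by $\bar{\partial}_B$ and its formal adjoint, ${\rm d}_\kappa$ by the Dolbeault twist by $\kappa_B^{0,1}$, and $\bar{*}$ by the conjugate-linear transverse Hodge star (with the evident bidegree bookkeeping, pairing $\alpha$ with $\overline{\bar{*}\alpha}$); the only additional ingredient is the twisted Hodge decomposition in the relevant bidegree, which by Theorem \ref{geometric condition for basic estimate for dolbeault} is available precisely when $\partial M$ satisfies the $Z_{q-r}$-condition.

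I expect the genuine difficulty to lie not in the Stokes--Rummler computation or the conjugation identities — these are routine once the conventions for $\bar{*}$, $\delta_B$ and the $L^2$-product are fixed — but in two bookkeeping points: first, verifying that $\bar{*}$ really interchanges the boundary condition defining ${\rm Dom}(F_B)$ with the one defining the class $\mathcal{C}$, so that the conjugated Hodge decomposition genuinely computes $H_{B,\kappa}^{q-r}(\overline{M},\mathcal{F})$ rather than some other cohomology; and second, in the Hermitian case, confirming that the star operator transports the basic (subelliptic) estimate from one bidegree to its complement, which is why the correct hypothesis is the $Z_{q-r}$-condition and not the $Z_r$-condition. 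Both points are controlled by the boundary analysis already developed earlier in the paper.
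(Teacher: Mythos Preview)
Your overall scheme---define the pairing $P(\alpha,\beta)=\int_{\overline M}\alpha\wedge\beta\wedge\chi_{\mathcal F}$, check well-definedness via Stokes and Rummler, then use Hodge theory and the transverse star to prove nondegeneracy---is exactly the paper's, and your Stokes/Rummler computation and the positivity $P(\alpha,*_Q\alpha)=\|\alpha\|^2$ are correct. The Hermitian remark about $Z_{q-r}$ is also right.

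There is, however, a genuine bookkeeping slip in your first paragraph. Conjugating Theorem~\ref{Hodge decomposition theorem} (the Neumann theory for ${\rm d}_B$) by $*_Q$ does \emph{not} produce the Neumann theory for ${\rm d}_{B,\kappa}$; because $*_Q$ swaps $\mathcal{D}_B^{\,\cdot}\leftrightarrow\mathcal{C}_B^{\,\cdot}$ and intertwines ${\rm d}_B\leftrightarrow\delta_{B,\kappa}$, what you obtain is the \emph{Dirichlet} theory for ${\rm d}_{B,\kappa}$. Neither of your displayed isomorphisms follows from that. What you actually need is the Neumann Hodge decomposition for ${\rm d}_{B,\kappa}$ itself (this is Proposition~\ref{hodge and existence theorem 1 for d_kappa} in the paper, proved by rerunning the whole machinery with ${\rm d}_{B,\kappa}$ in place of ${\rm d}_B$; the Bochner formula changes only by a zeroth-order term). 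That proposition gives $H_{B,\kappa}^{q-r}(\overline M,\mathcal F)\cong\mathcal H_\kappa^{q-r}$ directly, and conjugating \emph{it} by $*_Q$ gives the Dirichlet theory for ${\rm d}_B$, hence an identification of $H_B^r(\overline M,\mathcal C)$ with $*_Q^{-1}\mathcal H_\kappa^{q-r}$. (Your symbol $\mathcal H_B^r$ for that Dirichlet harmonic space clashes with the paper's notation, where $\mathcal H_B^r$ is the Neumann kernel of $\Delta_{F_B}$.) With this correction your dimension-count-plus-positivity argument goes through.

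By comparison, the paper is slightly more economical: it never sets up a Dirichlet Hodge theory or identifies $H_B^r(\overline M,\mathcal C)$ with a harmonic space at all. It uses only Proposition~\ref{hodge and existence theorem 1 for d_kappa} and argues injectivity and surjectivity of $\Phi$ directly: if $\Phi_u=0$ then $*_Qu\perp\ker{\rm d}_{B,\kappa}$, so $*_Qu=\delta_{B,\kappa}u'$ with $u'\in\mathcal D_B^{q-r+1}$, whence $u={\rm d}_B(*_Qu')$ with $*_Qu'\in\mathcal C_B^{r-1}$; and if $v\perp\Phi(H_B^r(\overline M,\mathcal C))$ then $v\perp\ker{\rm d}_{B,\kappa}^*$, so $v\in\mathrm{Im}\,{\rm d}_{B,\kappa}$. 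Your route reaches the same destination but requires you to state (or at least invoke) the twisted decomposition explicitly rather than derive it by conjugation from Theorem~\ref{Hodge decomposition theorem}.
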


An outline of this paper is as follows. In Chapter 2, we will discuss the Riemannian foliations: In the first section, we will review some formulae and facts concerning with basic forms and present some results of functional analysis preliminaries. In section 2, we give the Bochner formula for the purpose to show the vanishing theorems. section 3 is devoted to establishing the regularity theorem of the equation $F_Bu=\omega$ using the well-known general techniques for coercive boundary value problems (see \cite{ADN5964}).
From this result, we could obtain the de Rham-Hodge decomposition theorem for Riemannian foliations. Various vanishing theorems for basic cohomology will be established in section 4. In the last section, we introduce the induced boundary complex and then prove the twisted duality theorem for basic cohomology, which generalizes the duality theorem in \cite{KT83D}. As an application, we obtain the extension theorem of the induced boundary forms. In Chapter 3, we mainly concern the Hermitian foliations: In section 1, we will recall some definitions of Hermitian foliations and prove some results of functional analysis needed in the proof of basic Dolbeault decomposition. In section 2, we will give the decomposition theorem for tansversely Hermitian foliations under the assumption that basic estimate holds in $\mathcal{D}_B^{\cdot,r}$ by Kohn-Nirenberg's $L^2$-method (see \cite{FK72} and \cite{KN65N}). Section 3 is devoted to giving a geometric characteristic of basic estimate by the arguments originally owing to L. H{\"o}rmander (see \cite{Hlv65JL}). In section 4 and section 5, we will establish vanishing theorems and global regularity theorem of basic Dolbeault cohomology by weighted $L^2$-method. In section 6, we also introduce the induced boundary complex on the boundary $\partial M$ and obtain the extension theorem of the induced boundary forms in the sense of Kohn-Rossi in \cite{KR65}. In addition, we establish the twisted duality for basic Dolbeault cohomology.

Here and throughout this paper, the convention is adopted for summation over pairs of repeated indices. We fix the notations for indices as follows: $ 1\leq a,b,c,\cdots\leq p $, $ p+1\leq\alpha,\beta,\gamma,\cdots\leq n $ (with the additional allowance of $ 1\leq\alpha,\beta,\gamma,\cdots\leq q $) and $ 1\leq i,j,k\cdots\leq n $. We denote multi-indices by $ I,J,K\cdots $.
For any functions $ g $ and $ h $, we use the notation $ g\lesssim h $ or $ g=O\left(h\right) $ to signify the existence of a constant $ C>0 $ such that $ |g|\leq C|h| $.

\section{Riemannian Foliations}
\subsection{Preliminaries of Riemannian foliations}
In this section, we will review some facts and give some functional analysis preparations of Riemannian foliations. The ensuing results will be crucial for our subsequent investigations.

Let $ \left(M',g_{M'}\right) $ be a smooth $ n $ dimensional Riemannian manifold with a foliation $ \mathcal{F} $ of dimension $ p $. It's well-known that foliations are in one-to-one correspondence with integrable subbundles $ L\subseteq TM' $ by Frobenius' theorem. Hence the foliation $ \mathcal{F} $ can be expressed by the exact sequence of vector bundles:
\begin{equation}\label{exacts}
	0\longrightarrow L\longrightarrow TM'\stackrel{\pi}{\longrightarrow} Q\longrightarrow0,
\end{equation}
where $ Q $ is a quotient bundle of dimension $ q:=n-p $, $ \pi $ is a quotient map and we denoted by $\varpi$ the isomorphism from $Q$ to $L^\perp$ induced by the metric $g_{M'}$. The metric $g_{M'}$ on $TM'$ is then a direct sum
$$g_{M'}=g_L\oplus g_{L^\perp}.$$

\begin{definition}
	The foliation $\left(M',\mathcal{F}\right)$ is said to be a Riemannian foliation provided that there is a bundle-like metric on $ M' $, i.e., $L_V\varpi^*g_{L^\perp}=0$ for $V\in\Gamma L$, where $ L_V $ denotes Lie derivative along $ V $.
\end{definition}

The basic forms (introduced by R. L. Reinhart, see \cite{Rbl58,Rbl59F,Rbl59H}) of the foliation $ \mathcal{F} $ are given by
\begin{equation*}
	\Omega_B^\cdot\left(M'\right)\equiv\Omega_B^\cdot\left(M',\mathcal{F}\right)=\lbrace\omega\in\Omega^\cdot\left(M\right)\:|\:V\lrcorner\omega=L_V\omega=0,\ \forall V\in\Gamma L\rbrace,
\end{equation*}
where $ \lrcorner $ means the interior product. The exterior differential d preserves basic forms, since $ V\lrcorner {\rm d}\omega=L_V\omega-{\rm d}\left(V\lrcorner\omega\right)=0 $ and $ L_V{\rm d}\omega={\rm d}L_V\omega=0 $ for all $ V\in\Gamma L $. Hence, one can restrict d on general de Rham complex for $ \Omega^\cdot\left(M'\right) $ to obtain a subcomplex 
\begin{equation*}
	0\longrightarrow\Omega_B^1\left(M'\right)\stackrel{{\rm d}_B}{\longrightarrow}\cdots\stackrel{{\rm d}_B}{\longrightarrow}\Omega_B^r\left(M'\right)\stackrel{{\rm d}_B}{\longrightarrow}\Omega_B^{r+1}\left(M'\right)\stackrel{{\rm d}_B}{\longrightarrow}\cdots\stackrel{{\rm d}_B}{\longrightarrow}\Omega_B^q\left(M'\right)\longrightarrow0.
\end{equation*}
The cohomology
\begin{equation*}
	H^\cdot_B\left(M',\mathcal{F}\right)=H\left(\Omega_B^\cdot\left(M'\right),{\rm d}_B\right)
\end{equation*} 
is called the basic cohomology of $ \mathcal{F} $.

In what follows, we assume that $\left(M',g_{M'},\mathcal{F}\right)$ is a Riemannian foliation and we restict ourselves to the compact manifold $\overline{M}\subseteq M'$ with basic boundary $\partial M$. The notation $\Omega_B^\cdot\left(\overline{M}\right)$  is defined by
\begin{align*}
	\Omega_B^\cdot\left(\overline{M}\right)=\lbrace\omega\in\Omega_B^\cdot\left(M\right)\:|\:\omega\ \text{can be extended smoothly to $M'$}\rbrace.
\end{align*}

Let $ \nabla^M $ denote the Levi-Civita connection associated to the bundle-like Riemannian metric $ g_M:=g_{M'}|_{\overline{M}} $ on $ \overline{M} $. Then there is a canonical metric and torsion free connection $ \nabla $ for $ Y\in\Gamma Q $ defined by (see \cite{Mp71}, \cite{Tp88} and \cite{Tp97})
\begin{equation*}
	\nabla_XY=
 	\left\{
 	\begin{aligned}
 		&\pi\left[X,\varpi\left(Y\right)\right]\ &{\rm for}\ X\in\Gamma L,\\
 		&\pi\left(\nabla^M_X\varpi\left(Y\right)\right)\ &{\rm for}\ X\in\Gamma Q.
 	\end{aligned}
 	\right.
\end{equation*}

Let $ \Omega_Q^\cdot\left(\overline{M}\right) $ be sections of $ \Lambda^\cdot Q^* $ that can be extended smoothly to the boundary, and we denoted by $ \Omega_{Q,c}^\cdot\left(M\right) $ the set of elements of $ \Omega_Q^\cdot\left(\overline{M}\right) $ with compact support disjoint from $ \partial M $. The Sobolev norms on $ \Omega_Q^\cdot\left(\overline{M}\right) $ can be defined as
\begin{equation}\label{norms on Q}
	\|u\|_s^2=\sum_{k=0}^{s}\|\nabla^ku\|^2,\quad s\geq0.
\end{equation}
The global scalar product $ \left(\cdot,\cdot\right) $ for forms restricts on $ \Omega_B^\cdot\left(\overline{M}\right) $, gives rise to a natural scalar product denoted by $ \left(\cdot,\cdot\right)_B $. Then it allows us to define the Sobolev norms on $ \Omega_B^\cdot\left(\overline{M}\right) $ similarly as following
\begin{equation}\label{norms on B}
	\|u\|_{B,s}^2=\sum_{k=0}^{s}\|\nabla^ku\|_B^2,\quad s\geq0,
\end{equation}
where $ \|\cdot\|_B $ is associated to the inner product $ \left(\cdot,\cdot\right)_B $. In fact, we can define the Sobolev norms for any $ s\in\mathbb{R} $ on $ \Omega_Q^\cdot\left(\overline{M}\right) $ and $ \Omega_B^\cdot\left(\overline{M}\right) $ as general definitions of Sobolev norms on manifolds (see \cite{Tme11}).

In particular, it's more convenient in applications for us to give equivalent norms on $ \Omega_B^\cdot\left(\overline{M}\right) $ as follows: Let $ \lbrace\left(x^l,U_l\right)\rbrace_{l=1}^L $ be local coordinate charts, $ \lbrace\eta_l\rbrace_{l=1}^L $ be a $ C^\infty $ partition of unity subordinate to $ \lbrace U_l\rbrace_{l=1}^L $, and let $ D_l^I:=\left(\frac{\partial}{\partial x^l}\right)^I $ act on the components of forms in $ U_l $. Then for any $ u\in\Omega_B^\cdot\left(\overline{M}\right) $, we define
\begin{equation}\label{norm}
	'\|u\|_{B,s}^{2}=\sum_{l=1}^{L}\sum_{|I|\leq s}\|D_l^I\eta_lu\|^2,\quad s\geq0.
\end{equation}
It's easy to prove that $ '\|\cdot\|_{B,s}\thicksim\|\cdot\|_{B,s} $. Thus we will make use of both $ \|\cdot\|_{B,s} $ and $ '\|\cdot\|_{B,s} $ in $ H_{B,s}^\cdot $. For the sake of simplicity, we may omit the superscripts of $ x^l $ and subscripts of $ D_l^I $ for all $ 1\leq l\leq L $ from now on.

We therefore obtain Sobolev spaces $ H_{B,s}^\cdot $ and $ H_s^\cdot $ defined as completions of $ \Omega_B^\cdot\left(\overline{M}\right) $ and $ \Omega_Q^\cdot\left(\overline{M}\right) $ on $ \overline{M} $ w.r.t. the norms $ \|\cdot\|_{B,s} $ and $ \|\cdot\|_s $ respectively. It's evident that $ H_{B,s}^\cdot $ are topologically equivalent to the closure of $ \Omega_B^\cdot\left(\overline{M}\right) $ in $ H_s^\cdot $ with respect to $ \|\cdot\|_s $ for all $ s\geq0 $. Kamber-Tondeur (see \cite{KT87}) proved the Rellich and Sobolev properties for the Sobolev chain $ \lbrace H_{B,s}^\cdot\rbrace_{s\geq0} $ as stated in the following proposition.

\begin{prop}\label{Rellich and Sobolev lemma}
	If $ s>t\geq0 $, then the inclusions $ i:H_{B,s}^\cdot\longrightarrow H_{B,t}^\cdot $ are compact and dense. If s,t are positive integers, and $ u\in H_{B,s}^\cdot\cap H_{s+t}^\cdot $, then for $ t>t_0=\left[n/2\right]+1 $, we have $ u\in\Omega_{B,s+t-t_0}^\cdot\left(\overline{M}\right) $, where $ \Omega_{B,s+t-t_0}^\cdot\left(\overline{M}\right)  $ is the subspace consists of elements in $ H_{B,s+t-t_0}^\cdot $ with continuous partial derivartives to order $ s+t-t_0 $.
\end{prop}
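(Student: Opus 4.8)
The plan is to reduce both assertions to the corresponding classical facts for the ambient Sobolev spaces $H_s^\cdot$ on the compact manifold‑with‑boundary $\overline{M}$, exploiting the observation recorded just above the proposition that $H_{B,s}^\cdot$ is, topologically, the closure of $\Omega_B^\cdot\left(\overline{M}\right)$ inside $H_s^\cdot$. The argument parallels Kamber--Tondeur's treatment of the closed case in \cite{KT87}, with the modifications needed near the basic boundary $\partial M$.

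\emph{Compactness and density.} Since $\|\cdot\|_t\lesssim\|\cdot\|_s$ on $\Omega_B^\cdot\left(\overline{M}\right)$ when $s>t$, every sequence that is Cauchy for $\|\cdot\|_{B,s}$ is Cauchy for $\|\cdot\|_{B,t}$, so $i\colon H_{B,s}^\cdot\to H_{B,t}^\cdot$ is a well‑defined bounded inclusion and $H_{B,s}^\cdot$ is a closed subspace of $H_s^\cdot$. I would then factor $i$ as the inclusion of this closed subspace into $H_s^\cdot$ followed by the Rellich--Kondrachov inclusion $H_s^\cdot\hookrightarrow H_t^\cdot$, which is compact because $\overline{M}$ is compact; the image lands in $H_{B,t}^\cdot$ because an $H_s^\cdot$‑limit of basic forms is also their $H_t^\cdot$‑limit. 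Hence $i$ is compact. Density is immediate: $\Omega_B^\cdot\left(\overline{M}\right)$ sits inside $H_{B,s}^\cdot$ and is dense in $H_{B,t}^\cdot$ because the latter is by construction a completion of it.

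\emph{The Sobolev property.} The crucial point here is the intrinsic description
\[
H_{B,m}^\cdot=\bigl\{\,u\in H_m^\cdot\ :\ V\lrcorner u=0\ \text{and}\ L_Vu=0\ \text{for all}\ V\in\Gamma L\,\bigr\},
\]
where the two conditions are read distributionally; they make sense because contraction is bounded on $H_m^\cdot$ and, by Cartan's formula, $L_Vu={\rm d}\left(V\lrcorner u\right)+V\lrcorner {\rm d}u\in H_{m-1}^\cdot$. The inclusion $\subseteq$ is clear since both conditions are closed; the reverse inclusion amounts to approximating a distributionally basic $H_m^\cdot$‑form by smooth basic forms. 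I would obtain this by a transverse mollification: first extend $u$ to the slightly larger manifold $M'$ in a way compatible with $\mathcal{F}$ — possible precisely because $\partial M$ is basic, $\rho\in\Omega_B^0$ — and then apply a smoothing operator $S_\varepsilon$ built from the local Riemannian‑submersion model of $\mathcal{F}$ (Molino), which maps into $\Omega_B^\cdot$, converges to the identity in every $H_m^\cdot$‑norm on basic forms, and is uniformly bounded on each $H_m^\cdot$. Granting this, if $u\in H_{B,s}^\cdot\cap H_{s+t}^\cdot$ then $u$ satisfies the basic conditions and lies in $H_{s+t}^\cdot\subseteq H_{s+t-t_0}^\cdot$, hence $u\in H_{B,s+t-t_0}^\cdot$; and since $t>t_0=\left[n/2\right]+1$ we have $\left(s+t\right)-\left(s+t-t_0\right)=t_0>n/2$, so the classical Sobolev embedding theorem on the compact $\overline{M}$ gives that $u$ has continuous partial derivatives up to order $s+t-t_0$. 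Combining the two, $u\in\Omega_{B,s+t-t_0}^\cdot\left(\overline{M}\right)$.

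\emph{Main obstacle.} Everything except the mollification is a transcription of the closed‑manifold argument together with the standard Rellich and Sobolev theorems on a compact manifold with boundary. The real work is the construction of $S_\varepsilon$: one must smooth in the transverse directions only, keep the output basic, and do so uniformly up to and across $\partial M$; verifying that the foliated extension to $M'$ and the transverse smoothing preserve the relations $V\lrcorner u=0$ and $L_Vu=0$ near the boundary is exactly the step where the hypothesis that $\rho$ is basic is used.
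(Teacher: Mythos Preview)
The paper does not actually prove this proposition: it is stated with attribution to Kamber--Tondeur \cite{KT87} and no proof is given in the text. Your outline is exactly the Kamber--Tondeur argument transported to the compact manifold-with-boundary setting, and you have correctly isolated the one place where genuine work beyond \cite{KT87} is required, namely the construction of a basic-form-preserving smoothing operator that behaves well up to the basic boundary. Both the compactness/density part and the Sobolev part are handled as one would expect, so your proposal is in line with the intended (cited) proof.
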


Let $ \left(x=\left(x',x''\right),U\right) $ be a local distinguished coordinate (i.e., for any leaf $\mathcal{L}$, the connected components of $\mathcal{L}\cap U$ can be defined by the equations $x_{p+1}=const.,\cdots,x_n=const.$) of $ M $, where $ x'=\left(x_1,\cdots,x_p\right) $ and $ x''=\left(x_{p+1} ,\dots,x_n\right) $. We choose local frames $ X_a=\frac{\partial}{\partial x_a} $, $ X_\alpha=\frac{\partial}{\partial x_\alpha}+\phi_\alpha^a\left(x\right)\frac{\partial}{\partial x_a} $ where the functions $ \phi_\alpha^a\left(x\right) $ are chosen such that
$$ X_\alpha\perp X_a,\quad\forall a,\alpha, $$
with respect to bundle like metric $ g_M $, moreover,
$$ L|_U\ \text{is spanned by the vector fields}\  X_1,\dots,X_p,\ \text{and}\  Q|_U\xrightarrow[\varpi]{\backsimeq}{\rm span}\lbrace X_\alpha\rbrace_{\alpha=p+1}^n, $$
recalling $L$ is the subbundle associated to the foliation $\mathcal{F}$. It's obvious that $ \lbrace\theta^a:={\rm d}x^a-\phi_\alpha^a{\rm d}x^\alpha\rbrace_{a=1}^p $ and $ \lbrace\theta^\alpha:={\rm d}x^\alpha\rbrace_{\alpha=p+1}^n $ are frames over $ U $ for the dual bundles $ L^* $ and $ Q^* $ respectively. In this chart, the bundle-like metric could be written as
\begin{equation*}
	{\rm d}s^2=\sum_{a,b=1}^pg_{ab}\left(x\right)\theta^a\theta^b+\sum_{\alpha,\beta=1}^qg_{\alpha\beta}\left(x''\right){\rm d}x_\alpha{\rm d}x_\beta.
\end{equation*}

Let $ \Gamma_{ij}^k $ be the coefficients of the connection $ \nabla^M $, i.e., $ \nabla^M_{X_i}X_j=\Gamma_{ij}^kX_k $. 
We denote $ g_{ij}=g_M\left(X_i,X_j\right) $ and $ g^{ij}=g_M\left(\theta^i,\theta^j\right) $. It's clear that the transversal metric $ g_Q:=\varpi^*g_{L^\perp} $ completely
determines the connection $ \nabla $, i.e., $ \nabla_{X_\alpha}X_\beta=\Gamma_{\alpha\beta}^{\gamma}X_\gamma $. For later use, we need the following relationships of the connection coefficients of $ \nabla^M $.
\begin{prop}\label{coefficients of conection}
	$ \Gamma_{ab}^i=\Gamma_{ba}^i,\ \Gamma_{a \alpha}^\beta=\Gamma_{\alpha a}^\beta,\ \Gamma_{\alpha\beta}^\gamma=\Gamma_{\beta\alpha}^\gamma,\ \Gamma_{\alpha\beta}^a=-\Gamma_{\beta\alpha}^a $ and
	\begin{align}\label{expression of some Gamma}
		\Gamma_{ab}^\alpha=-\frac{1}{2}g^{\alpha\beta}\left(X_\beta g_{ab}+g_{ac}\frac{\partial\phi_\beta^c}{\partial x_b}+g_{bc}\frac{\partial\phi_\beta^c}{\partial x_a}\right).
	\end{align}
\end{prop}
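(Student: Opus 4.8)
The plan is to work entirely in the distinguished chart $\left(x=\left(x',x''\right),U\right)$ with the adapted orthogonal frame $\left\{X_a,X_\alpha\right\}$ introduced above, and to extract every identity from the two defining properties of $\nabla^M$: it is torsion free and metric compatible with $g_M$. The bundle-like hypothesis enters only through the block form of the metric recorded above, namely $g_{a\alpha}=0$, $g_{ab}=g_{ab}\left(x\right)$ and $g_{\alpha\beta}=g_{\alpha\beta}\left(x''\right)$; in particular one uses freely that $X_a g_{\alpha\beta}=0$ and that $\left(g^{\alpha\beta}\right)$ is the inverse of the transversal block $\left(g_{\alpha\beta}\right)$ (this is where the orthogonality condition $X_\alpha\perp X_a$ on the $\phi_\alpha^a$ is needed, so that the metric is genuinely block diagonal in this frame).

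First I would compute the Lie brackets of the frame fields. Since the $X_a=\partial/\partial x_a$ are coordinate fields, $\left[X_a,X_b\right]=0$; a short computation gives $\left[X_a,X_\alpha\right]=\frac{\partial\phi_\alpha^c}{\partial x_a}X_c$ and $\left[X_\alpha,X_\beta\right]=\left(X_\alpha\phi_\beta^c-X_\beta\phi_\alpha^c\right)X_c$, so all three brackets lie in $\Gamma L$ and have no $Q$-component. Writing the torsion-free identity $\nabla^M_{X_i}X_j-\nabla^M_{X_j}X_i=\left[X_i,X_j\right]$ in the frame $\left\{X_k\right\}$ and comparing components then yields at once $\Gamma_{ab}^i=\Gamma_{ba}^i$ from $\left[X_a,X_b\right]=0$, together with $\Gamma_{a\alpha}^\beta=\Gamma_{\alpha a}^\beta$ and $\Gamma_{\alpha\beta}^\gamma=\Gamma_{\beta\alpha}^\gamma$ from the vanishing of the $Q$-components of $\left[X_a,X_\alpha\right]$ and $\left[X_\alpha,X_\beta\right]$.

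The remaining symmetry $\Gamma_{\alpha\beta}^a=-\Gamma_{\beta\alpha}^a$ is the one place where torsion-freeness alone does not suffice, and here I would invoke metric compatibility. Differentiating $g_{\alpha b}=0$ along $X_\beta$ gives $\Gamma_{\beta\alpha}^c g_{cb}+\Gamma_{\beta b}^\gamma g_{\alpha\gamma}=0$; symmetrizing this in $\alpha,\beta$ rewrites $\left(\Gamma_{\alpha\beta}^c+\Gamma_{\beta\alpha}^c\right)g_{cb}$ as $-\left(\Gamma_{\alpha b}^\gamma g_{\beta\gamma}+\Gamma_{\beta b}^\gamma g_{\alpha\gamma}\right)$, and using $\Gamma_{\alpha b}^\gamma=\Gamma_{b\alpha}^\gamma$ together with the identity $\Gamma_{b\alpha}^\gamma g_{\gamma\beta}+\Gamma_{b\beta}^\gamma g_{\gamma\alpha}=0$ — which is exactly what compatibility applied to $X_b g_{\alpha\beta}=0$ produces, and this is the step that uses bundle-likeness — shows the right-hand side vanishes. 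Hence $\left(\Gamma_{\alpha\beta}^a+\Gamma_{\beta\alpha}^a\right)g_{ab}=0$ for all $b$, and invertibility of $\left(g_{ab}\right)$ gives $\Gamma_{\alpha\beta}^a=-\Gamma_{\beta\alpha}^a$. Finally, for the explicit formula $(\ref{expression of some Gamma})$ I would apply the Koszul formula to $2g_M\left(\nabla^M_{X_a}X_b,X_\beta\right)$: the terms $X_a g_{b\beta}$, $X_b g_{a\beta}$ and $g_M\left(\left[X_a,X_b\right],X_\beta\right)$ vanish, $-X_\beta g_{ab}$ survives, and the two remaining bracket terms contribute $-g_{ac}\frac{\partial\phi_\beta^c}{\partial x_b}$ and $-g_{bc}\frac{\partial\phi_\beta^c}{\partial x_a}$; since $g_{c\beta}=0$ one has $g_M\left(\nabla^M_{X_a}X_b,X_\beta\right)=\Gamma_{ab}^\gamma g_{\gamma\beta}$, so contracting with $g^{\alpha\beta}$ gives $(\ref{expression of some Gamma})$.

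I expect the only genuine obstacle to be the index bookkeeping in the third step — keeping careful track of which indices are leafwise and which are transversal while chaining together the several metric-compatibility relations — whereas the other symmetries and the closed formula for $\Gamma_{ab}^\alpha$ are mechanical consequences of the bracket computations and the Koszul formula.
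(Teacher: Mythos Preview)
Your proposal is correct. The treatment of the first three symmetries is identical to the paper's, via the bracket computations and torsion-freeness. For the antisymmetry $\Gamma_{\alpha\beta}^a=-\Gamma_{\beta\alpha}^a$ the paper takes a slightly different (and shorter) path: it differentiates $g_{\alpha\beta}$ along $X_a$, replaces $\nabla^M_{X_a}X_\alpha$ by $\nabla^M_{X_\alpha}X_a+[X_a,X_\alpha]$ using torsion-freeness, and uses that the brackets are leafwise so their pairings with $X_\beta,X_\alpha$ vanish, arriving directly at $(\Gamma_{\alpha\beta}^b+\Gamma_{\beta\alpha}^b)g_{ab}=0$. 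Your route via $X_\beta g_{\alpha b}=0$ symmetrized and combined with $X_b g_{\alpha\beta}=0$ reaches the same conclusion and is equally valid; both hinge on the bundle-like condition $X_a g_{\alpha\beta}=0$.

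For the explicit formula~\eqref{expression of some Gamma} your approach is genuinely more direct: you apply the Koszul formula to $2g_M(\nabla^M_{X_a}X_b,X_\beta)$ and read off the three surviving terms. The paper instead derives it by first obtaining a closed expression for $\Gamma_{a\alpha}^b$ through a chain of compatibility relations (differentiating $g_{ac}$, $g_{c\alpha}$ along various frame fields and solving the resulting linear system), and only then substitutes back into $X_a\langle X_c,X_\alpha\rangle=0$ to extract $\Gamma_{ab}^\alpha$. Your Koszul argument bypasses this detour entirely, at the cost of not producing the auxiliary formula for $\Gamma_{a\alpha}^b$ along the way; since that formula is not used elsewhere in the paper, nothing is lost.
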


\renewcommand{\proofname}{\bf $ Proof $}
\begin{proof}
	The first term is clear from
	\begin{equation}\label{coe. 1}
		0=\left[X_a,X_b\right]=\nabla^M_{X_a}X_b-\nabla^M_{X_b}X_a=\left(\Gamma_{ab}^i-\Gamma_{ba}^i\right)X_i.
	\end{equation}
    
    For the second identity, we know by the definition
    \begin{align}\label{coe. 2'}
    	\left[X_a,X_\alpha\right]=\left[\frac{\partial}{\partial x_a},\phi_\alpha^b\frac{\partial}{\partial x_b}\right]=\frac{\partial\phi_\alpha^b}{\partial x_a}X_b.
    \end{align}
    On the other hand,
    \begin{equation*}
    	\left[X_a,X_\alpha\right]=\nabla^M_{X_a}X_\alpha-\nabla^M_{X_\alpha}X_a=\left(\Gamma_{a \alpha}^b-\Gamma_{\alpha a}^b\right)X_b+\left(\Gamma_{a \alpha}^\beta-\Gamma_{\alpha a}^\beta\right)X_\beta.
    \end{equation*}
    Thus, $ \Gamma_{a \alpha}^\beta=\Gamma_{\alpha a}^\beta $ and
    \begin{align}\label{coe. 2}
    	\Gamma_{a \alpha}^b-\Gamma_{\alpha a}^b=\frac{\partial\phi_\alpha^b}{\partial x_a}.
    \end{align}
    
    For the third one, since $ \left[X_\alpha,X_\beta\right]=\left(\Gamma_{\alpha\beta}^a-\Gamma_{\beta\alpha}^a\right)X_a+\left(\Gamma_{\alpha\beta}^\gamma-\Gamma_{\beta\alpha}^\gamma\right)X_\gamma $ and
    \begin{align*}
    	\left[X_\alpha,X_\beta\right]&=\left[\frac{\partial}{\partial x^\alpha},\phi_\beta^b\frac{\partial}{\partial x^b}\right]+\left[\phi_\alpha^a\frac{\partial}{\partial x^a},\frac{\partial}{\partial x^\beta}\right]+\left[\phi_\alpha^a\frac{\partial}{\partial x^a},\phi_\beta^b\frac{\partial}{\partial x^b}\right]\\
    	&=\left(\frac{\partial\phi_\beta^a}{\partial x^\alpha}-\frac{\partial\phi_\alpha^a}{\partial x^\beta}+\phi_\alpha^b\frac{\partial\phi_\beta^a}{\partial x^b}-\phi_\beta^b\frac{\partial\phi_\alpha^a}{\partial x^b}\right)X_a,
    \end{align*}
    it follows that $ \Gamma_{\alpha\beta}^\gamma=\Gamma_{\beta\alpha}^\gamma $, $ \Gamma_{\alpha\beta}^a-\Gamma_{\beta\alpha}^a=\frac{\partial\phi_\beta^a}{\partial x^\alpha}-\frac{\partial\phi_\alpha^a}{\partial x^\beta}+\phi_\alpha^b\frac{\partial\phi_\beta^a}{\partial x^b}-\phi_\beta^b\frac{\partial\phi_\alpha^a}{\partial x^b} $.
    
    For the fourth relationship, by $ (\ref{coe. 2'}) $ we have
    \begin{align*}
    	0&=X_a\left<X_\alpha,X_\beta\right>\\
    	&=\left<\nabla_{X_a}^MX_\alpha,X_\beta\right>+\left<X_\alpha,\nabla_{X_a}^MX_\beta\right>\\
    	&=\left<\nabla_{X_\alpha}^MX_a+\left[X_a,X_\alpha\right],X_\beta\right>+\left<X_\alpha,\nabla_{X_\beta}^MX_a+\left[X_a,X_\beta\right]\right>\\
    	&=\left<\nabla_{X_\alpha}^MX_a,X_\beta\right>+\left<X_\alpha,\nabla_{X_\beta}^MX_a\right>\\
    	&=-\left(\Gamma_{\alpha\beta}^b+\Gamma_{\beta\alpha}^b\right)g_{ab}.
    \end{align*}
    Then we multiply both sides of above equality by $ g^{ac} $ and summing over the index $ a $, it turns out that $ \Gamma_{\alpha\beta}^c+\Gamma_{\beta\alpha}^c=0 $.
    
    Although the proof of the expression $ (\ref{expression of some Gamma}) $ is gory, we will present all the details in the following for clarity.. The compatibility of connection $\nabla^M$ and metrci $g_M$ gives
    \begin{align}
    	X_\alpha g_{ac}&=X_\alpha\left<X_a,X_c\right>=\Gamma_{\alpha a}^bg_{bc}+\Gamma_{\alpha c}^bg_{ab},\label{coe. 3}\\
    	0&=X_a\left<X_c,X_\alpha\right>=\Gamma_{ac}^\beta g_{\beta\alpha}+\Gamma_{a\alpha}^bg_{cb},\label{coe. 4}\\
    	0&=X_c\left<X_\alpha,X_a\right>=\Gamma_{c\alpha}^bg_{ba}+\Gamma_{ca}^\beta g_{\alpha\beta}.\label{coe. 5}
    \end{align}
    Calculating $ (\ref{coe. 3})+(\ref{coe. 4})-(\ref{coe. 5}) $ and making use of $ (\ref{coe. 1}) $ we obtain
    \begin{align*}
    	X_\alpha g_{ac}=\left(\Gamma_{\alpha a}^b+\Gamma_{a\alpha}^b\right)g_{bc}+\left(\Gamma_{\alpha c}^b-\Gamma_{c\alpha}^b\right)g_{ab}.
    \end{align*}
    Then by $ (\ref{coe. 2}) $ we have
    \begin{align}\label{coe. 6}
    	\Gamma_{\alpha a}^d+\Gamma_{a\alpha}^d=g^{cd}X_\alpha g_{ac}+g^{cd}g_{ab}\frac{\partial\phi_\alpha^b}{\partial x_c}.
    \end{align}
    Again by the equalities $ (\ref{coe. 2}) $ and $ (\ref{coe. 6}) $ we can solve the expression of $ \Gamma_{a\alpha}^b $ as follows
    \begin{align}\label{coe. 7}
    	\Gamma_{a\alpha}^b=\frac{1}{2}\left(g^{bc}X_\alpha g_{ac}+g^{bc}g_{ad}\frac{\partial\phi_\alpha^d}{\partial x_c}+\frac{\partial\phi_\alpha^b}{\partial x_a}\right).
    \end{align}
   Substituting the expression $ (\ref{coe. 7}) $ into the equality $ (\ref{coe. 4}) $ implies the conclusion.
\end{proof}

The difference of the metric connection $ \nabla $ and the Levi Civita connection $ \nabla^M $ is giving by the following proposition.
\begin{prop}\label{difference of connection}
		Let $ 0\leq r\leq q $, then $ \nabla^M_{X_\alpha}w=\nabla_{X_\alpha}w-\Gamma_{\alpha a}^{\beta}\theta^a\wedge X_\beta\lrcorner w $ for all $ w\in\Omega_Q^r\left(M\right) $.
\end{prop}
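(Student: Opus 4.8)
The plan is to compute both connections acting on a transversal form $w \in \Omega_Q^r(M)$ in terms of the local coframe $\{\theta^\alpha\}$, and to isolate the discrepancy. First I would expand $w = \frac{1}{r!}\,w_{\alpha_1\cdots\alpha_r}\,\theta^{\alpha_1}\wedge\cdots\wedge\theta^{\alpha_r}$ and recall that both $\nabla$ and $\nabla^M$ are derivations on forms, so each is determined by its action on functions (where they agree) and on the coframe. For $\nabla$ we have $\nabla_{X_\alpha}\theta^\gamma = -\Gamma^\gamma_{\alpha\beta}\,\theta^\beta$ since $\nabla$ is the metric-and-torsion-free connection on $Q$ determined by $g_Q$; here I use that $\Gamma^\gamma_{\alpha\beta}$ (for the purely transversal indices) are the Christoffel symbols of $g_{\alpha\beta}(x'')$, which coincide for $\nabla$ and $\nabla^M$ by Proposition~\ref{coefficients of conection} together with the block form of $ds^2$.

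Next I would compute $\nabla^M_{X_\alpha}$ applied to the dual transversal frame. The key point is that $\nabla^M_{X_\alpha}X_\beta = \Gamma^\gamma_{\alpha\beta}X_\gamma + \Gamma^a_{\alpha\beta}X_a$ has a leafwise component $\Gamma^a_{\alpha\beta}X_a$, and dually $\nabla^M_{X_\alpha}\theta^\beta$ acquires a term involving $\theta^a$. Precisely, pairing $\nabla^M_{X_\alpha}\theta^\beta$ against $X_i$ and using $0 = X_\alpha\langle\theta^\beta, X_i\rangle = \langle\nabla^M_{X_\alpha}\theta^\beta, X_i\rangle + \langle\theta^\beta, \nabla^M_{X_\alpha}X_i\rangle$, one finds $\nabla^M_{X_\alpha}\theta^\beta = -\Gamma^\beta_{\alpha\gamma}\theta^\gamma - \Gamma^\beta_{\alpha a}\theta^a$, where the extra term comes from $\nabla^M_{X_\alpha}X_a$ having a $\Gamma^\beta_{\alpha a}X_\beta$ component (and $\Gamma^\beta_{\alpha a} = \Gamma^\beta_{a\alpha}$ by Proposition~\ref{coefficients of conection}). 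Applying the derivation property, the purely transversal Christoffel terms reproduce $\nabla_{X_\alpha}w$ exactly, while the $r$ extra terms $-\Gamma^\beta_{\alpha a}\theta^a$ (one for each wedge factor) assemble, via the standard identity $\sum_k \theta^{\alpha_1}\wedge\cdots\wedge(\text{replace }\theta^{\alpha_k}\text{ by }\theta^a)\wedge\cdots = \theta^a\wedge(X_\beta\lrcorner w)$ after relabeling, into $-\Gamma^\beta_{\alpha a}\,\theta^a\wedge(X_\beta\lrcorner w)$. This yields the claimed formula.

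The main obstacle, and the step requiring care, is bookkeeping the contraction: one must verify that summing the $r$ "one factor replaced" terms over the wedge positions, together with summing over the dummy index $\beta$, reorganizes correctly into $\Gamma^\beta_{\alpha a}\,\theta^a\wedge(X_\beta\lrcorner w)$ with the right sign — this is the combinatorial identity $\theta^a \wedge (X_\beta \lrcorner w) = \sum_k (-1)^{k-1} w_{\alpha_1\cdots\beta\cdots\alpha_r}\big|_{\text{pos }k}\,\theta^a\wedge\theta^{\alpha_1}\wedge\cdots\widehat{\theta^{\alpha_k}}\cdots$, applied with $w$'s coefficients. A secondary point to check is that no leafwise differentiation of $w$ intrudes: since $w$ is basic we have $X_a\lrcorner w = 0$ and $L_{X_a}w = 0$, and $\nabla^M_{X_\alpha}$ differentiates only the transversal coefficients $w_{\alpha_1\cdots\alpha_r}$ along $X_\alpha$, matching $\nabla_{X_\alpha}$; the only genuine difference is the algebraic term above. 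Once these are in hand the identity follows by linearity over $\Omega_Q^r(M)$ (or by a partition-of-unity patching of the local coordinate computation).
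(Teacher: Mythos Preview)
Your argument is correct and follows essentially the same route as the paper: expand $w$ in the coframe $\{\theta^\alpha\}$, compute $\nabla^M_{X_\alpha}\theta^\beta = -\Gamma^\beta_{\alpha\gamma}\theta^\gamma - \Gamma^\beta_{\alpha a}\theta^a$, observe that the transversal part reproduces $\nabla_{X_\alpha}w$, and repackage the leafwise remainder via the contraction identity as $-\Gamma^\beta_{\alpha a}\,\theta^a\wedge(X_\beta\lrcorner w)$. The paper compresses your combinatorial step into the single line $\nabla^M_{X_\alpha}\theta^I = -\Gamma^\beta_{\alpha i}\,\theta^i\wedge X_\beta\lrcorner\theta^I$, but the content is identical.

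One small slip: you write ``since $w$ is basic'' to rule out leafwise differentiation, but the proposition is stated for $w\in\Omega_Q^r(M)$, not $\Omega_B^r(M)$, so $w$ need not be basic. Fortunately this assumption is never used: you are differentiating along $X_\alpha$, so both $\nabla^M_{X_\alpha}$ and $\nabla_{X_\alpha}$ act on the coefficient functions simply by $X_\alpha(w_I)$, and the only discrepancy is the algebraic coframe term you already identified. Just delete the basicness remark.
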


\begin{proof}
	Let $ w=w_I\theta^I\in\Omega_Q^r\left(M\right) $, we have
	\begin{align*}
		\nabla_{X_\alpha}^Mw=&X_{\alpha}\left(w_I\right)\theta^I+w_I\nabla^M_{X_\alpha}\theta^I\\
		=&X_{\alpha}\left(w_I\right)\theta^I-w_I\Gamma_{\alpha i}^\beta\theta^i\wedge X_{\beta}\lrcorner\theta^I\\
		=&X_{\alpha}\left(w_I\right)\theta^I-w_I\Gamma_{\alpha \gamma}^\beta\theta^\gamma\wedge X_{\beta}\lrcorner\theta^I-w_I\Gamma_{\alpha a}^\beta\theta^a\wedge X_{\beta}\lrcorner\theta^I\\
		=&\nabla_{X_\alpha}w-\Gamma_{\alpha a}^{\beta}\theta^a\wedge X_\beta\lrcorner w.
	\end{align*}
    The proof is thus complete.
\end{proof}

Next we express $ {\rm d}_B $ in terms of the connection $\nabla$.
\begin{prop}\label{the local expression of d_B}
	Let $ 0\leq r\leq q $, if $ u\in\Omega_B^r\left(M\right) $, then $ {\rm d}_Bu=\theta^\alpha\wedge\nabla_{X_\alpha}u $.
\end{prop}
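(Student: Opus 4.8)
The plan is to work in a local distinguished chart $\left(x=\left(x',x''\right),U\right)$ with the adapted coframe $\lbrace\theta^a\rbrace_{a=1}^p\cup\lbrace\theta^\alpha\rbrace_{\alpha=p+1}^n$ introduced above, and to compare the two sides of the claimed identity componentwise. Since both ${\rm d}_B$ and the operator $\theta^\alpha\wedge\nabla_{X_\alpha}$ are $\mathbb{R}$-linear and local, it suffices to verify the formula on $U$ for a basic form written as $u=u_I\theta^I$, where the sum runs over increasing transversal multi-indices $I\subseteq\lbrace p+1,\dots,n\rbrace$ and $u_I$ are the (necessarily basic, hence $x'$-independent after the $\phi$-correction) component functions.

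First I would expand the right-hand side using the Leibniz rule for $\nabla$: $\theta^\alpha\wedge\nabla_{X_\alpha}u=\theta^\alpha\wedge\bigl(X_\alpha\left(u_I\right)\theta^I+u_I\nabla_{X_\alpha}\theta^I\bigr)$. The first term is $X_\alpha\left(u_I\right)\theta^\alpha\wedge\theta^I$. For the second term I would use that $\nabla$ is the transversal connection, so $\nabla_{X_\alpha}\theta^\gamma=-\Gamma_{\alpha\delta}^\gamma\theta^\delta$ only involves transversal Christoffel symbols $\Gamma_{\alpha\delta}^\gamma$; expanding $\nabla_{X_\alpha}\theta^I$ by Leibniz produces a sum of terms each of the form $-u_I\Gamma_{\alpha\delta}^\gamma\theta^\alpha\wedge(\text{something})$ that, upon antisymmetrization in the $\theta^\alpha\wedge$ slot, vanishes exactly because $\Gamma_{\alpha\delta}^\gamma=\Gamma_{\delta\alpha}^\gamma$ is symmetric in its lower transversal indices (established in Proposition \ref{coefficients of conection}). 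Hence $\theta^\alpha\wedge\nabla_{X_\alpha}u=X_\alpha\left(u_I\right)\theta^\alpha\wedge\theta^I$.

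Next I would compute the left-hand side directly from the definition of the exterior derivative on the coframe. Because $u$ is basic, $X_a\left(u_I\right)=0$ is \emph{not} literally true in these coordinates; rather, the basic condition $L_{X_a}u=0$ together with $X_a\lrcorner u=0$ must be used. The clean way is: ${\rm d}_Bu={\rm d}u$ on basic forms, and ${\rm d}u={\rm d}u_I\wedge\theta^I+u_I\,{\rm d}\theta^I$. Here ${\rm d}u_I=X_i(u_I)\theta^i$ in the adapted coframe would need care since $\theta^a={\rm d}x^a-\phi_\alpha^a{\rm d}x^\alpha$; instead I would write ${\rm d}u_I=\frac{\partial u_I}{\partial x^i}{\rm d}x^i$ and regroup. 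The key simplifications are (i) $u_I$ depends only on $x''$ up to the leafwise ambiguity, so after pairing with $\theta^I={\rm d}x^I$ the leafwise pieces drop; and (ii) ${\rm d}\theta^\alpha=0$ while the ${\rm d}\theta^a$ terms, when wedged against $\theta^I$ and combined with the interior-product-zero condition $X_a\lrcorner u=0$, contribute nothing to a basic output. Carrying this out yields ${\rm d}_Bu=X_\alpha\left(u_I\right)\theta^\alpha\wedge\theta^I$, matching the right-hand side.

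I expect the main obstacle to be the bookkeeping on the left-hand side: one must be careful that the naive coordinate derivative $\frac{\partial u_I}{\partial x^a}$ need not vanish (only the invariant Lie-derivative condition does), so the cancellation of leafwise contributions must be argued through ${\rm d}\theta^a=-\frac{\partial\phi_\alpha^a}{\partial x^j}{\rm d}x^j\wedge{\rm d}x^\alpha$ and the basicness of $u$, rather than by fiat. An alternative, perhaps cleaner route that sidesteps this is to invoke the torsion-freeness of $\nabla^M$ and Proposition \ref{difference of connection}: since $\theta^a\wedge X_\beta\lrcorner u=0$ for $u\in\Omega_B^r$ (as $u$ has no leafwise components, $X_\beta\lrcorner u\in\Omega_Q^{r-1}$ has none either, but the wedge with $\theta^a$ produces a non-basic form which must be discarded when projecting $ {\rm d}u$ back to $\Omega_B$), one gets $\nabla^M_{X_\alpha}u=\nabla_{X_\alpha}u$ as far as the basic part is concerned, and then ${\rm d}_Bu$ is the basic projection of ${\rm d}u=\theta^i\wedge\nabla^M_{X_i}u$, whose only surviving piece is $\theta^\alpha\wedge\nabla_{X_\alpha}u$ by the antisymmetry/torsion-free argument above. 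I would present whichever of these is shorter once the constants are pinned down.
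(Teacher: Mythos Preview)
Your alternative route—writing ${\rm d}u=\theta^i\wedge\nabla^M_{X_i}u$, splitting $i$ into leafwise and transversal indices, and invoking Propositions \ref{coefficients of conection} and \ref{difference of connection}—is exactly the paper's proof. Your first route (reducing both sides to $X_\alpha(u_I)\,\theta^\alpha\wedge\theta^I$) is also valid and arguably shorter, since $\theta^\alpha={\rm d}x^\alpha$ is closed, so ${\rm d}\theta^I=0$ and the left-hand side collapses immediately.

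However, you have one genuine confusion that you should fix before writing either argument: in the distinguished chart with coframe $\theta^\alpha={\rm d}x^\alpha$, the statement $X_a(u_I)=\partial u_I/\partial x^a=0$ \emph{is} literally true for basic forms. Indeed, $X_a\lrcorner u=0$ forces $u=u_I\,{\rm d}x^I$ with $I$ transversal, and then $L_{X_a}u=(\partial u_I/\partial x^a)\,{\rm d}x^I=0$ forces $u_I=u_I(x'')$ (the paper uses this explicitly in Remark \ref{Bochner for basic form}). There is no ``leafwise ambiguity,'' and no ${\rm d}\theta^a$ term ever enters the computation of ${\rm d}u$ since $u$ has no $\theta^a$ factors. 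Once you accept this, your first route is a two-line proof: ${\rm d}u=(\partial u_I/\partial x^\alpha)\,{\rm d}x^\alpha\wedge{\rm d}x^I=X_\alpha(u_I)\,\theta^\alpha\wedge\theta^I$, and the right-hand side reduces to the same expression by the torsion symmetry $\Gamma^\gamma_{\alpha\delta}=\Gamma^\gamma_{\delta\alpha}$. The hedging you added is unnecessary and obscures a clean argument.
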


\begin{proof}
	Let $ u=u_I\theta^I\in\Omega_B^r\left(M\right) $, then 
	\begin{align*}
		{\rm d}u=&\theta^i\wedge\nabla^M_{X_i}u\\
		=&\theta^a\wedge\nabla_{X_a}^Mu+\theta^\alpha\wedge\nabla_{X_\alpha}^Mu\\
		=&\theta^a\wedge X_a\left(u_I\right)\theta^I+\theta^a\wedge u_I\nabla_{X_a}^M\theta^I+\theta^\alpha\wedge\nabla_{X_\alpha}^Mu\\
		=&-\theta^a\wedge\Gamma_{aj}^\alpha\theta^j\wedge X_\alpha\lrcorner u+\theta^\alpha\wedge\nabla_{X_\alpha}u-\theta^\alpha\wedge\Gamma_{\alpha a}^\beta\theta^a\wedge X_\beta\lrcorner u\\
		=&\theta^\alpha\wedge\nabla_{X_\alpha}u.
	\end{align*}
	The validity of the second to the last equality arises from the property of basic forms and Proposition $ \ref{difference of connection} $, and the last line holds true due to Proposition $ \ref{coefficients of conection} $.
\end{proof}

Now, we calculate the local expression of the formal adjoint $ \delta_B $ of $ {\rm d}_B $ with repect to inner product $ \left(\cdot,\cdot\right)_B $. Without loss of generality, we assume that the boundary defining function $\rho$ satisfies $ \rho<0 $ inside $ M $, $ \rho>0 $ outside $ \overline{M} $ and $ |{\rm d}\rho|=1 $ on $ \partial M $.
\begin{prop}\label{the reprensentation of formal adjoint}
	Let $ 1\leq r\leq q $, for all $ u\in\Omega_B^{r-1}\left(\overline{M}\right) $, $ v\in\Omega_B^r\left(\overline{M}\right) $,
	\begin{equation}\label{divergence formula}
		\left({\rm d}_Bu,v\right)_B=\left(u,\delta_Bv\right)_B+\int_{\partial M}\left<u,{\rm grad}\rho\lrcorner v\right>,
	\end{equation}
    where $ \delta_Bv=-g^{\alpha\beta}X_\alpha\lrcorner\nabla_{X_\beta}v+\tau\lrcorner v $, and $ \tau:=\pi\left(g^{ab}\nabla^M_{X_a}X_b\right) $ is the mean curvature vector field of the foliation $ \mathcal{F} $. In particular,
    \begin{align*}
    	\delta v=\delta_Bv+S\left(v\right),
    \end{align*}
    where $\delta$ is the formal adjoint operator of ${\rm d}$ w.r.t. $\left(\cdot,\cdot\right)$ and $S$ is a zero order operator on $\Omega_B^0\left(\overline{M}\right)$.
\end{prop}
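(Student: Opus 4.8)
The plan is to compute $\left({\rm d}_Bu,v\right)_B$ directly in local distinguished coordinates using the expression ${\rm d}_Bu=\theta^\alpha\wedge\nabla_{X_\alpha}u$ from Proposition \ref{the local expression of d_B}, and to isolate the boundary term by an integration-by-parts argument on the transversal variables. First I would write, using the pointwise adjunction between $\theta^\alpha\wedge\cdot$ and $X_\alpha\lrcorner\cdot$ with respect to $g_Q$, the integrand $\left<{\rm d}_Bu,v\right>=\left<\theta^\alpha\wedge\nabla_{X_\alpha}u,v\right>=g^{\alpha\beta}\left<\nabla_{X_\beta}u,X_\alpha\lrcorner v\right>$. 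Since $\nabla$ is metric, $\left<\nabla_{X_\beta}u,X_\alpha\lrcorner v\right>=X_\beta\left<u,X_\alpha\lrcorner v\right>-\left<u,\nabla_{X_\beta}\left(X_\alpha\lrcorner v\right)\right>$, and the second term rearranges, again by metric-compatibility of $\nabla$ and the fact that $X_\alpha\lrcorner\nabla_{X_\beta}v=\nabla_{X_\beta}\left(X_\alpha\lrcorner v\right)-\left(\nabla_{X_\beta}X_\alpha\right)\lrcorner v$, into $\left<u,X_\alpha\lrcorner\nabla_{X_\beta}v\right>$ plus connection-coefficient terms. Collecting the coefficient terms together with the Jacobian factor coming from $\mathrm{dvol}$ is exactly what should produce the mean-curvature contraction $\tau\lrcorner v$; here one invokes Proposition \ref{coefficients of conection}, in particular the formula (\ref{expression of some Gamma}) for $\Gamma_{ab}^\alpha$ together with the standard identity $\partial_{x_i}\log\sqrt{\det g}=\Gamma_{ij}^j$, to recognize that the leftover first-order/zeroth-order terms assemble into $-g^{\alpha\beta}X_\alpha\lrcorner\nabla_{X_\beta}v+\tau\lrcorner v$.

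The boundary term arises from the total-divergence piece $g^{\alpha\beta}X_\beta\left<u,X_\alpha\lrcorner v\right>$: integrating this over $\overline M$ against the volume form and applying the divergence theorem yields $\int_{\partial M}\left<u,X_\alpha\lrcorner v\right>g^{\alpha\beta}\nu_\beta$, where $\nu$ is the outward unit conormal. Because $\rho$ is a basic defining function with $|{\rm d}\rho|=1$ on $\partial M$, the conormal is ${\rm d}\rho$ and $\mathrm{grad}\,\rho=g^{\alpha\beta}\left(X_\beta\rho\right)X_\alpha$ is transversal (its leafwise part vanishes since $X_a\rho=0$), so $g^{\alpha\beta}\nu_\beta X_\alpha\lrcorner v={\rm grad}\,\rho\lrcorner v$, giving the stated boundary integral $\int_{\partial M}\left<u,{\rm grad}\,\rho\lrcorner v\right>$. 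One should check that all these local computations glue: partition of unity subordinate to distinguished charts, with the key point that both sides are intrinsically defined so the chart-dependent pieces (the $\phi_\alpha^a$) cancel — this is already guaranteed by Propositions \ref{difference of connection} and \ref{the local expression of d_B}.

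For the last assertion, I would simply compare with the analogous formula for the full de Rham codifferential: $\delta v=-g^{ij}X_i\lrcorner\nabla^M_{X_j}v$ (the Riemannian manifold has no boundary contribution issue here since we are comparing formal adjoints of the operators as differential operators). Splitting the sum over $i,j$ into transversal and leafwise indices and using Proposition \ref{difference of connection} to convert $\nabla^M_{X_\alpha}$ into $\nabla_{X_\alpha}$, the transversal part reproduces $-g^{\alpha\beta}X_\alpha\lrcorner\nabla_{X_\beta}v$ up to terms of the form $\Gamma_{\alpha a}^\beta\theta^a\wedge(\cdots)$; since $v$ is basic these $\theta^a$-factors get contracted away or absorbed, and the leafwise part $-g^{ab}X_a\lrcorner\nabla^M_{X_b}v$ together with the residual terms is a first-order operator whose only surviving effect on a basic form is algebraic — on degree reasons it contributes $\tau\lrcorner v$ plus a genuinely zeroth-order piece. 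Thus $\delta v=\delta_B v+S(v)$ with $S$ of order zero; one should note $S$ need not vanish, reflecting that $\delta_B$ is \emph{not} the restriction of $\delta$, which is exactly the phenomenon that makes the basic theory subtle.

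The main obstacle I anticipate is the bookkeeping in the first paragraph: correctly tracking which connection coefficients ($\Gamma_{ab}^\alpha$ versus $\Gamma_{\alpha a}^\beta$ versus the leafwise $\Gamma_{ab}^c$) survive after using all the symmetries in Proposition \ref{coefficients of conection}, and verifying that precisely the combination defining the mean curvature vector $\tau=\pi(g^{ab}\nabla^M_{X_a}X_b)$ emerges — rather than, say, $\tau$ with a spurious extra term. This is a finite but delicate computation; the conceptual content (metric compatibility of $\nabla$, divergence theorem, basic-ness of $\rho$) is straightforward, and the identification of the boundary term is essentially forced once one knows ${\rm grad}\,\rho$ is transversal.
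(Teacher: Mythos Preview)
Your proposal is correct and follows essentially the same route as the paper: start from ${\rm d}_Bu=\theta^\alpha\wedge\nabla_{X_\alpha}u$, use the pointwise adjunction and metric compatibility of $\nabla$ to write $\langle {\rm d}_Bu,v\rangle$ as a divergence of the vector field $g^{\alpha\beta}\langle u,X_\beta\lrcorner v\rangle X_\alpha$ plus interior terms, apply the divergence theorem, and track the connection-coefficient residues to recognize $\tau\lrcorner v$. The paper organizes the divergence step slightly differently---it expands ${\rm div}^M(g^{\alpha\beta}\langle u,X_\beta\lrcorner v\rangle X_\alpha)$ directly via $X_\alpha(g^{\alpha\beta})$ and ${\rm div}^M X_\alpha$, rather than through your Jacobian identity $\partial_{x_i}\log\sqrt{\det g}=\Gamma_{ij}^j$---but this is the same computation, and the paper does not actually need~(\ref{expression of some Gamma}) here. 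For the second assertion the paper does exactly what you describe: split $\delta v=-g^{ij}X_i\lrcorner\nabla^M_{X_j}v$ into transversal and leafwise blocks and invoke Proposition~\ref{difference of connection}; the zero-order remainder comes out as $S(v)=-g^{\alpha\beta}\Gamma_{\alpha a}^{\gamma}\theta^a\wedge X_\beta\lrcorner X_\gamma\lrcorner v$. Your remark about partition of unity is unnecessary, since the identity is pointwise and both sides are intrinsic.
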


\begin{proof}
	Since $\nabla$ is metric connection, we know by Proposition \ref{the local expression of d_B}
	\begin{align*}
		\left<{\rm d}_Bu,v\right>=&g^{\alpha\beta}\left<\nabla_{X_\alpha}u,X_\beta\lrcorner v\right>\\
		=&g^{\alpha\beta}X_\alpha\left<u,X_\beta\lrcorner v\right>-g^{\alpha\beta}\left<u,\nabla_{X_\alpha}\left(X_\beta\lrcorner v\right)\right>\\
		=&{\rm div}^M\left(g^{\alpha\beta}\left<u,X_\beta\lrcorner v\right>X_\alpha\right)-X_\alpha\left(g^{\alpha\beta}\right)\left<u,X_\beta\lrcorner v\right>-g^{\alpha\beta}\left<u,X_\beta\lrcorner v\right>{\rm div}^MX_\alpha-g^{\alpha\beta}\left<u,\nabla_{X_\alpha}\left(X_\beta\lrcorner v\right)\right>\\	
		=&{\rm div}^M\left(g^{\alpha\beta}\left<u,X_\beta\lrcorner v\right>X_\alpha\right)+\left(\Gamma_{\alpha\gamma}^\alpha g^{\beta\gamma}+\Gamma_{\alpha\gamma}^\beta g^{\alpha\gamma}\right)\left<u,X_\beta\lrcorner v\right>+g^{\alpha\beta}\left<u,\left<g^{ab}\nabla_{X_a}^MX_b,X_\alpha\right>X_\beta\lrcorner v\right>\\
		&-g^{\alpha\beta}\Gamma_{\gamma\alpha}^{\gamma}\left<u,X_\beta\lrcorner v\right>-g^{\alpha\beta}\left<u,\Gamma_{\alpha\beta}^\gamma X_\gamma\lrcorner v\right>-\left<u,g^{\alpha\beta}X_\beta\lrcorner\nabla_{X_\alpha}v\right>\\
		=&{\rm div}^M\left(g^{\alpha\beta}\left<u,X_\beta\lrcorner v\right>X_\alpha\right)-\left<u,g^{\alpha\beta}X_\beta\lrcorner\nabla_{X_\alpha}v\right>+\left<u,\tau\lrcorner v\right>\\
		=&:{\rm div}^M\left(g^{\alpha\beta}\left<u,X_\beta\lrcorner v\right>X_\alpha\right)+\left<u,\delta_Bv\right>.
	\end{align*}
    Integrating both sides over M and using the divergence theorem, it follows that
    \begin{align*}
    	\left({\rm d}_Bu,v\right)_B=\int_{M}\left<{\rm d}_Bu,v\right>=&\int_M{\rm div}^M\left(g^{\alpha\beta}\left<u,X_\beta\lrcorner v\right>X_\alpha\right)+\int_M\left<u,\delta_Bv\right>\\
    	=&\int_{\partial M}g^{\alpha\beta}\left<u,X_\beta\lrcorner v\right>X_\alpha\rho+\int_M\left<u,\delta_Bv\right>\\
    	=&\left(u,\delta_Bv\right)_B+\int_{\partial M}\left<u,{\rm grad}\rho\lrcorner v\right>.
    \end{align*}
    
    For the second conclusion, according to Proposition \ref{difference of connection} we have
    \begin{align*}
    	\delta v:=&-g^{ij}X_i\lrcorner\nabla_{X_j}^Mv\\
    	=&-g^{\alpha\beta}X_\alpha\lrcorner\nabla_{X_\beta}v+g^{\alpha\beta}X_\alpha\lrcorner\left(\nabla_{X_\beta}-\nabla_{X_\beta}^M\right)v-g^{ab}X_a\lrcorner\nabla_{X_b}^Mv\\
    	=&-g^{\alpha\beta}X_\alpha\lrcorner\nabla_{X_\beta}v+\tau\lrcorner v-g^{\alpha\beta}\Gamma_{\alpha a}^{\gamma}\theta^a\wedge X_\beta\lrcorner X_\gamma\lrcorner v\\
    	=&:\delta_Bv+S\left(v\right).
    \end{align*}
    This the proof is complete.
\end{proof}

\begin{remark}
	We require that the dual form $\kappa$ of the mean curvature vector field $\tau$ of $\mathcal{F}$ is a basic form, it means that $\tau$ is projectable $($i.e., for all vector fields $V\in\Gamma L$ the Lie bracket $\left[V,\tau\right]\in\Gamma L)$, thus $\delta_B$ is well defined. 
\end{remark}

As an application, we find the difference of general Laplacian $ \Delta:={\rm d}\delta+\delta {\rm d} $ and basic Laplacian $ \Delta_B:={\rm d}_B\delta_B+\delta_B{\rm d}_B $, which is already known in \cite{KT87}.
\begin{prop}\label{difference of two Laplacian}
	Let $ 0\leq r\leq q $, for $ u\in\Omega_B^r\left(\overline{M}\right) $,
	\begin{equation*}
		\Delta u=\Delta_Bu+S'\left(u\right),
	\end{equation*}
    where $ S'\left(u\right):={\rm d}S\left(u\right)+S\left({\rm d}u\right) $. Thus $ \Delta_B $ is the restriction of a strongly elliptic operator.
\end{prop}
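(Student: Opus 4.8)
The plan is to read off the identity directly from Proposition \ref{the reprensentation of formal adjoint} by careful bookkeeping. Two preliminary facts are needed. First, as observed at the beginning of this section, the exterior derivative ${\rm d}$ restricts to ${\rm d}_B$ on basic forms, so ${\rm d}u={\rm d}_Bu$ and moreover ${\rm d}_Bu\in\Omega_B^{r+1}\left(\overline{M}\right)$ is again basic. Second, $\delta_B$ is a well-defined operator carrying $\Omega_B^r\left(\overline{M}\right)$ into $\Omega_B^{r-1}\left(\overline{M}\right)$ (this uses that $\kappa$ is basic, cf. the Remark after Proposition \ref{the reprensentation of formal adjoint}), so $\delta_Bu$ is basic as well. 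By contrast, the correction term $S\left(u\right)$ supplied by Proposition \ref{the reprensentation of formal adjoint} is \emph{not} basic: its local expression carries a leafwise factor $\theta^a$, so ${\rm d}S\left(u\right)$ below must be read as the full exterior derivative, not ${\rm d}_B$.

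With these in hand the computation is immediate. Applying the pointwise relation $\delta v=\delta_Bv+S\left(v\right)$ of Proposition \ref{the reprensentation of formal adjoint} to the basic forms $u$ and ${\rm d}_Bu$ gives $\delta u=\delta_Bu+S\left(u\right)$ and $\delta\left({\rm d}_Bu\right)=\delta_B{\rm d}_Bu+S\left({\rm d}_Bu\right)$. Hence
\[
\Delta u={\rm d}\delta u+\delta{\rm d}u={\rm d}\left(\delta_Bu\right)+{\rm d}S\left(u\right)+\delta\left({\rm d}_Bu\right)={\rm d}_B\delta_Bu+\delta_B{\rm d}_Bu+{\rm d}S\left(u\right)+S\left({\rm d}_Bu\right),
\]
where we used ${\rm d}\left(\delta_Bu\right)={\rm d}_B\left(\delta_Bu\right)$ since $\delta_Bu$ is basic, and ${\rm d}u={\rm d}_Bu$ in the last summand. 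Thus $\Delta u=\Delta_Bu+{\rm d}S\left(u\right)+S\left({\rm d}u\right)=\Delta_Bu+S'\left(u\right)$, which is the asserted formula; the endpoint cases $r=0$ and $r=q$ are automatically covered, since there the corresponding terms of $\Delta_B$ simply vanish.

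For the final assertion, recall that ${\rm d}\delta+\delta{\rm d}$ is the Hodge Laplacian on the compact manifold $\overline{M}$, which is strongly elliptic with principal symbol $|\xi|^2$ on $\Lambda^rT^*M$. Using the orthogonal splitting $\Lambda^rT^*M=\bigoplus_{j+k=r}\Lambda^jL^*\otimes\Lambda^kQ^*$ one may extend the bundle map $S\colon\Lambda^rQ^*\to\Lambda^rT^*M$ to a bundle endomorphism $\widetilde{S}$ of $\Lambda^rT^*M$; then $\widetilde{\Delta}:=\Delta-{\rm d}\widetilde{S}-\widetilde{S}{\rm d}$ is a second-order differential operator on $\Omega^r\left(\overline{M}\right)$ differing from $\Delta$ by terms of order $\leq1$, hence with the same principal symbol, hence strongly elliptic, and by the identity just proved its restriction to $\Omega_B^\cdot\left(\overline{M}\right)$ is $\Delta_B$. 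The computation itself presents no real obstacle; the only points that demand care are keeping track of exactly which operations preserve basicness — in particular that $S\left(u\right)$ is not basic — and making the extension $\widetilde{S}$ explicit enough that the symbol comparison is legitimate, and I would flag the latter as the one place where a word of justification is genuinely required.
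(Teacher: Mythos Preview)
Your argument is correct and follows essentially the same route as the paper: apply the identity $\delta=\delta_B+S$ from Proposition~\ref{the reprensentation of formal adjoint} to the basic forms $u$ and ${\rm d}u={\rm d}_Bu$, then use ${\rm d}(\delta_Bu)={\rm d}_B(\delta_Bu)$ to recover $\Delta_B$ plus the first-order correction $S'={\rm d}S+S{\rm d}$. Your treatment is in fact slightly more careful than the paper's, which simply writes the one-line computation and asserts the existence of an extension $\tilde{S}'$ without the explicit bundle-splitting discussion you supply.
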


\begin{proof}
	For $ u\in\Omega_B^r\left(\overline{M}\right) $, then owing to Proposition \ref{the reprensentation of formal adjoint},
	\begin{equation*}
		\Delta u=\left({\rm d}\delta+\delta {\rm d}\right)u={\rm d}\left(\delta_B+S\right)u+\left(\delta_B+S\right){\rm d}_Bu=\Delta_Bu+{\rm d}S\left(u\right)+S\left({\rm d}u\right).
	\end{equation*}
    Since $ S':={\rm d}S+S{\rm d} $ is of first order, the above identity implies that $ \Delta_B $ is the restriction of a strongly elliptic operator $ \Delta-\tilde{S}' $ on $ \Omega_B^r\left(\overline{M}\right) $, where $ \tilde{S}' $ is the extension of $ S' $.
\end{proof}

To be used later, we introduce the operators
\begin{align}\label{Q-operator}
	{\rm d}_Q:=\theta^\alpha\wedge\nabla_{X_\alpha},\quad\delta_Q:=-g^{\alpha\beta}X_\alpha\lrcorner\nabla_{X_\beta}+\tau\lrcorner
\end{align}
and the transversal Laplacian $ \Delta_Q:={\rm d}_Q\delta_Q+\delta_Q{\rm d}_Q $ which are all well-defined on $ \Omega_Q^\cdot\left(M\right) $. As the $\Omega_B^\cdot\left(M\right)$ is preserved by $ {\rm d}_Q $ and $ \delta_Q $ we have $ {\rm d}_Q|_{\Omega_B^\cdot\left(M\right)}={\rm d}_B $ and $ \delta_Q|_{\Omega_B^\cdot\left(M\right)}=\delta_B $. The formula ($ \ref{divergence formula} $) for operators $ {\rm d}_Q $ and $ \delta_Q $ also holds on $ \Omega_Q^\cdot\left(\overline{M}\right) $ by the same calculation in the proof of Proposition $ \ref{the reprensentation of formal adjoint} $. 

We need to extend the domain of basic exterior differential $ {\rm d}_B $ as follows.
\begin{prop}
	The closure of $ {\rm d}_B|_{\Omega_B^r\left(\overline{M}\right)} $ with respect to $ H_{B,0}^r $ $($i.e., the operator whose graph
	is the closure of the graph of $ {\rm d}_B|_{\Omega_B^r\left(\overline{M}\right)} $ in $H_{B,0}^r \times H_{B,0}^{r+1})$ is well-defined for any $ 0\leq r\leq q $, which we shall use the same symbol $ {\rm d}_B $ to denote its closure.
\end{prop}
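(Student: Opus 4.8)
The plan is to verify that $ {\rm d}_B|_{\Omega_B^r\left(\overline{M}\right)} $, regarded as a densely defined operator from $H_{B,0}^r$ to $H_{B,0}^{r+1}$ with domain $\Omega_B^r\left(\overline{M}\right)$, is closable; this is precisely the assertion that its graph closure is the graph of a single-valued operator. By the standard closability criterion it suffices to show: whenever $u_n\in\Omega_B^r\left(\overline{M}\right)$ with $\|u_n\|_{B,0}\to 0$ and $ {\rm d}_Bu_n\to w $ in $H_{B,0}^{r+1}$, one has $w=0$. So first I would fix such a sequence $(u_n)$ together with its candidate limit $w\in H_{B,0}^{r+1}$.

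The key input is the divergence formula $(\ref{divergence formula})$, valid also for the pair $ {\rm d}_Q,\delta_Q $ of $(\ref{Q-operator})$ on $\Omega_Q^\cdot\left(\overline{M}\right)$ as noted there. Let $v\in\Omega_{Q,c}^{r+1}\left(M\right)$ be any smooth section of $\Lambda^{r+1}Q^*$ with compact support disjoint from $\partial M$. Since each $u_n$ is basic we have $ {\rm d}_Bu_n={\rm d}_Qu_n $, and since $v$ vanishes near $\partial M$ the boundary integral in $(\ref{divergence formula})$ disappears, so
\begin{equation*}
	\left(w,v\right)=\lim_{n\to\infty}\left({\rm d}_Bu_n,v\right)=\lim_{n\to\infty}\left({\rm d}_Qu_n,v\right)=\lim_{n\to\infty}\left(u_n,\delta_Qv\right)=0,
\end{equation*}
using the Cauchy--Schwarz inequality together with $\delta_Qv\in H_0^r$ and $\|u_n\|_{B,0}\to 0$. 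Now $\Omega_{Q,c}^{r+1}\left(M\right)$ is dense in $H_0^{r+1}$, and $H_{B,0}^{r+1}$ coincides with the closure of $\Omega_B^{r+1}\left(\overline{M}\right)$ inside $H_0^{r+1}$ for the restricted inner product, hence is a closed subspace of it. Therefore $w=0$ in $H_0^{r+1}$, and so $w=0$ in $H_{B,0}^{r+1}$.

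This proves that $ {\rm d}_B|_{\Omega_B^r\left(\overline{M}\right)} $ is closable; its closure, still denoted $ {\rm d}_B $, is a well-defined operator with domain $ {\rm Dom}\left({\rm d}_B\right)\subseteq H_{B,0}^r $ containing the dense subspace $\Omega_B^r\left(\overline{M}\right)$. I do not anticipate a genuine obstacle here; the one point that deserves a line of care is the identification of $H_{B,0}^{r+1}$ with a closed subspace of $H_0^{r+1}$ equipped with the ambient inner product, since it is this that legitimises testing $w$ against the larger, not-necessarily-basic family $\Omega_{Q,c}^{r+1}\left(M\right)$ and thereby avoids having to establish $L^2$-density of compactly supported basic forms in $H_{B,0}^{r+1}$. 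Alternatively one may test only against compactly supported basic forms using $\delta_B$ from Proposition $\ref{the reprensentation of formal adjoint}$, at the price of first constructing basic cut-off functions near $\partial M$, which exist because $\rho$ is basic.
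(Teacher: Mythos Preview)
Your proposal is correct and follows essentially the same route as the paper: test the limit against compactly supported (not necessarily basic) sections of $\Lambda^{\cdot}Q^*$, integrate by parts so the boundary term drops out, and conclude vanishing by density in the ambient $L^2$ space. The only cosmetic difference is that the paper phrases the integration by parts via the full operators ${\rm d},\delta$ on $M$ rather than ${\rm d}_Q,\delta_Q$, and your write-up is more careful with the degree indices and with justifying why testing against non-basic forms suffices.
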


\begin{proof}
	If $ \{u_\nu\}_{\nu=1}^\infty\subseteq\Omega_B^r\left(\overline{M}\right) $ such that $ u_\nu\stackrel{\nu\rightarrow\infty}{\longrightarrow}0 $ and $ {\rm d}_Bu_\nu\stackrel{\nu\rightarrow\infty}{\longrightarrow}v $ in $ H_{B,0}^r $, then for all $ w\in\Omega_{Q,c}^r\left(M\right) $ we have
	\begin{equation*}
		\left(v,w\right)=\lim_{n\rightarrow\infty}\left({\rm d}_Bu_n,w\right)=\lim_{n\rightarrow\infty}\left({\rm d}u_n,w\right)=\lim_{n\rightarrow\infty}\left(u_n,\delta w\right)=0.
	\end{equation*} 
	It means that $ v=0 $, which turns out that the closure of $ {\rm d}_B $ is well-defined.
\end{proof}

For $ 0\leq r\leq q $, we denote by $ {\rm d}_B^* $ the Hilbert space adjoint of $ {\rm d}_B $ on $ H_{B,0}^r $, and define 
$$ \mathcal{D}_B^r={\rm Dom}\left({\rm d}_B^*\right)\cap\Omega_B^r\left(\overline{M}\right). $$
Henceforth, we always require that boundary defining function $ \rho $ is basic. It's apparent that $\partial M$ is a saturated set on $M'$ (i.e., if $p\in\partial M$, then $\partial M$ contains
the leaf passing through $p$), provided that $\rho$ is a basic function. Furthermore, we have the following:

\begin{prop}
	If $M'$ has a complete Riemannian metric which is bundle-like with respect to $\mathcal{F}$ and all leaves of $\mathcal{F}$ are closed, then $\rho$ is basic when $\partial M$ is saturated on $M'$.
\end{prop}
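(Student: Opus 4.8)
The plan is to show that the signed Riemannian distance to $\partial M$ is a basic boundary defining function. Since $\overline{M}$ is compact, $\partial M$ is a compact embedded hypersurface of $M'$, and completeness of the bundle-like metric $g_M$ guarantees that for small $\varepsilon>0$ the normal exponential map $\exp^\perp\colon\partial M\times(-\varepsilon,\varepsilon)\to U$, $(z,t)\mapsto\exp_z(t\nu_z)$, is a diffeomorphism onto a tubular neighborhood $U$ of $\partial M$, where $\nu_z$ denotes the unit normal pointing out of $M$. Let $\rho$ be the associated signed distance function on $U$, so that $\rho^{-1}(0)=\partial M$, $\rho<0$ in $M$, $\rho>0$ outside $\overline{M}$, $|{\rm d}\rho|\equiv1$ on $U$, and the integral curves of $X:=\nabla^M\rho$ are exactly the unit-speed geodesics $\gamma_z(t):=\exp_z(t\nu_z)$, with $X_{\gamma_z(t)}=\dot\gamma_z(t)$ (Gauss lemma). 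Since $\rho$ is a function, it is basic if and only if $V\rho=0$ for every $V\in\Gamma L$, equivalently if and only if $X=\nabla^M\rho$ is a section of $L^\perp$.

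The heart of the matter is that each normal geodesic $\gamma_z$ is horizontal. Because $\partial M$ is saturated, the leaf $\mathcal{L}_z$ through $z$ is contained in $\partial M$, hence $T_z\mathcal{L}_z\subseteq T_z\partial M$ and so the unit normal $\nu_z=\dot\gamma_z(0)$ lies in $L^\perp_z$. By Reinhart's characterization of bundle-like metrics (see \cite{Rbl59F} or \cite{Tp97}), a geodesic that is orthogonal to a leaf at one of its points is orthogonal to every leaf it meets; therefore $\dot\gamma_z(t)\in L^\perp_{\gamma_z(t)}$ for all $t\in(-\varepsilon,\varepsilon)$. Consequently $X\in\Gamma L^\perp$ throughout $U$, so $V\rho=g_M(V,X)=0$ for every $V\in\Gamma L$, i.e. $\rho$ is basic. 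As a boundary defining function only matters near $\partial M$, this $\rho$ (extended arbitrarily away from $U$) is the sought basic defining function.

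Completeness and closedness of the leaves are what make this construction legitimate at the level of the foliation rather than merely pointwise: they place us in Molino's setting, in which the leaf space $W=M'/\mathcal{F}$ is a metric space and $\pi\colon M'\to W$ a submetry, so $\partial M$ descends to a hypersurface $\pi(\partial M)\subseteq W$, the collar $U$ may be taken to be a union of leaves, and one may alternatively argue that the $M'$-distance to $\partial M$ coincides with the pullback under $\pi$ of the $W$-distance to $\pi(\partial M)$, which is manifestly basic. I expect the one point requiring care to be precisely this compatibility of the collar with $\mathcal{F}$ — that the normal exponential flow out of $\partial M$ carries leaves to leaves, equivalently that the diffeomorphism $\partial M\times(-\varepsilon,\varepsilon)\cong U$ intertwines $\mathcal{F}|_{\partial M}\times\{{\rm pt}\}$ with $\mathcal{F}|_U$. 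Horizontality of the flow lines (Reinhart) gives one half; the local equidistance of the leaves of a Riemannian foliation, a standard consequence of the bundle-like condition (see \cite{Tp97}), gives the other, and with all leaves closed this local description globalizes along the compact hypersurface $\partial M$.
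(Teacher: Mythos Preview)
Your proof is correct and is built on the same underlying idea as the paper's: the signed distance to the saturated hypersurface $\partial M$ is constant along the leaves, hence basic. The paper's argument is shorter and more metric in flavor: it defines $\rho(p)=\inf_\alpha\mathrm{dist}(p,\mathcal{L}_\alpha)$ over the leaves $\mathcal{L}_\alpha\subseteq\partial M$ and invokes Hermann's lemma \cite{Hr60} to the effect that $\mathrm{dist}(p,\mathcal{L}_\alpha)=\mathrm{dist}(\mathcal{L}_p,\mathcal{L}_\alpha)$, so $\rho$ is leafwise constant. You instead work infinitesimally, using Reinhart's characterization that geodesics which start horizontal remain horizontal, and conclude $\nabla^M\rho\in\Gamma L^\perp$ via the normal exponential collar. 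These are two faces of the same equidistance property of bundle-like metrics; your route has the minor advantage of making the smoothness of $\rho$ explicit through the tubular neighborhood, which the paper's infimum definition leaves implicit. Your closing remarks about the role of completeness and closed leaves (via Molino/submetry) are accurate but, as you suspect, somewhat more than the local collar argument actually needs.
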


\begin{proof}
	Since $\partial M$ is a saturated set, $\partial M=\cup_\alpha\mathcal{L}_\alpha$ where $\mathcal{L}_\alpha$ is the leaf of $\mathcal{F}$. For any point $p\in M$, we define a function $\rho\left(p\right):=\inf_\alpha {\rm dist}\left(p,\mathcal{L}_\alpha\right)$ where ${\rm dist}\left(p,\mathcal{L}_\alpha\right)$ is the greatest lower bound of the length of all paths joining $p$ to $\mathcal{L}_\alpha$. According to Lemma 4.3 in \cite{Hr60}, we have ${\rm dist}\left(\mathcal{L}_p,\mathcal{L}_\alpha\right)={\rm dist}\left(p,\mathcal{L}_\alpha\right)$ which gives $\rho\left(\mathcal{L}_p\right)=\rho\left(p\right)$. It means that $\rho$ is basic.
\end{proof}

\begin{prop}\label{boundary condition}
	For $ 0\leq r\leq q $, $ \mathcal{D}_B^r=\lbrace u\in\Omega_B^r\left(\overline{M}\right)\ |\;{\rm grad}\rho\lrcorner u=0\ {\rm on}\ \partial M\rbrace $, and $ {\rm d}_B^*=\delta_B $ on $ \mathcal{D}_B^r $.
\end{prop}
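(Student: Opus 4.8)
The plan is to characterize $\mathcal{D}_B^r = \mathrm{Dom}(\mathrm{d}_B^*) \cap \Omega_B^r(\overline{M})$ by using the divergence formula (\ref{divergence formula}) as the sole source of boundary information. First I would recall that for $v \in \mathcal{D}_B^r$, by definition of the Hilbert space adjoint, there is $\mathrm{d}_B^* v \in H_{B,0}^{r-1}$ with $(\mathrm{d}_B u, v)_B = (u, \mathrm{d}_B^* v)_B$ for all $u$ in the domain of the closure $\mathrm{d}_B$; in particular this holds for all $u \in \Omega_B^{r-1}(\overline{M})$ and, more tellingly, for all $u \in \Omega_{B,c}^{r-1}(M)$ with support away from $\partial M$. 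On the latter class, formula (\ref{divergence formula}) has no boundary term, so $(u, \mathrm{d}_B^* v)_B = (\mathrm{d}_B u, v)_B = (u, \delta_B v)_B$; since such $u$ are dense in $H_{B,0}^{r-1}$ (this uses the density statement for $\Omega_{Q,c}^\cdot(M)$ and the fact that $\delta_B = \delta_Q|_{\Omega_B}$), we conclude $\mathrm{d}_B^* v = \delta_B v$ as an $L^2$ element. This already proves the second assertion, $\mathrm{d}_B^* = \delta_B$ on $\mathcal{D}_B^r$, once we know $v$ lies in the set on the right.

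Next I would feed this back into (\ref{divergence formula}) for \emph{arbitrary} $u \in \Omega_B^{r-1}(\overline{M})$: since $(\mathrm{d}_B u, v)_B = (u, \mathrm{d}_B^* v)_B = (u, \delta_B v)_B$, the divergence formula forces $\int_{\partial M}\langle u, \mathrm{grad}\,\rho \lrcorner v\rangle = 0$ for every such $u$. The boundary integrand is the pairing of $u|_{\partial M}$, which can be an arbitrary basic $(r-1)$-form along $\partial M$ (here I would invoke that restriction $\Omega_B^{r-1}(\overline{M}) \to (\text{basic forms on }\partial M)$ is surjective onto the relevant space — $\partial M$ being saturated, basic forms on the ambient slightly larger manifold restrict freely), with the tangential part of $\mathrm{grad}\,\rho \lrcorner v$. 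By the usual fundamental-lemma-of-the-calculus-of-variations argument on $\partial M$ (choosing $u$ to be a bump times the conjugate of $\mathrm{grad}\,\rho \lrcorner v$), vanishing of the integral for all $u$ yields $\mathrm{grad}\,\rho \lrcorner v = 0$ on $\partial M$. This gives the inclusion $\mathcal{D}_B^r \subseteq \{u \in \Omega_B^r(\overline{M}) : \mathrm{grad}\,\rho \lrcorner u = 0 \text{ on } \partial M\}$.

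For the reverse inclusion, suppose $v \in \Omega_B^r(\overline{M})$ satisfies $\mathrm{grad}\,\rho \lrcorner v = 0$ on $\partial M$. Then for every $u \in \Omega_B^{r-1}(\overline{M})$, (\ref{divergence formula}) gives $(\mathrm{d}_B u, v)_B = (u, \delta_B v)_B$ with no boundary contribution. Since $\Omega_B^{r-1}(\overline{M})$ is a core for the closed operator $\mathrm{d}_B$ (by the very construction of $\mathrm{d}_B$ as the graph closure) and $\delta_B v \in H_{B,0}^{r-1}$, this identity extends to all $u \in \mathrm{Dom}(\mathrm{d}_B)$, which is precisely the statement $v \in \mathrm{Dom}(\mathrm{d}_B^*)$ with $\mathrm{d}_B^* v = \delta_B v$; combined with $v \in \Omega_B^r(\overline{M})$ we get $v \in \mathcal{D}_B^r$.

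I expect the main obstacle to be the surjectivity/density input in the middle step: one must know that the restriction of $u \in \Omega_B^{r-1}(\overline{M})$ to $\partial M$ exhausts enough basic forms along $\partial M$ to run the variational argument, and dually that compactly-supported basic forms are dense in $H_{B,0}^{r-1}$. The first is where the hypothesis that $\rho$ is basic (hence $\partial M$ saturated, and locally a transverse level set in distinguished coordinates) is genuinely used — in a distinguished chart near $\partial M$ one writes $\rho = \rho(x'')$ and constructs the requisite basic bump forms by hand. Everything else is a bookkeeping exercise with formula (\ref{divergence formula}) and the definition of the Hilbert-space adjoint.
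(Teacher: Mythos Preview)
Your proof is correct and follows the same overall architecture as the paper's: use the divergence formula to first identify $\mathrm{d}_B^* = \delta_B$ on $\mathcal{D}_B^r$, then feed this back to force the boundary term to vanish. The difference lies in how you establish $\mathrm{d}_B^* v = \delta_B v$. You test against compactly supported basic forms and invoke their density in $H_{B,0}^{r-1}$, which does require the basic-cutoff construction you sketch at the end. The paper instead tests against $\rho w$ for $w \in \Omega_B^{r-1}(\overline{M})$: since $\rho$ is basic and vanishes on $\partial M$, the boundary term drops out, giving $((\delta_B - \mathrm{d}_B^*)v,\, \rho w)_B = 0$; choosing $w = (\delta_B - \mathrm{d}_B^*)v$ and using $\rho < 0$ in $M$ yields $\delta_B v = \mathrm{d}_B^* v$ directly. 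This sidesteps the density issue entirely and is a bit more self-contained. Your route is the standard one from the classical $\bar\partial$-Neumann literature; the paper's is a small trick exploiting the availability of a global basic defining function of fixed sign. Both arguments arrive at the same final step---deducing $\mathrm{grad}\,\rho \lrcorner v = 0$ from the vanishing of $\int_{\partial M}\langle u,\, \mathrm{grad}\,\rho \lrcorner v\rangle$ for all basic $u$---and both are equally terse about it.
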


\begin{proof}
	Recalling $ {\rm Dom}\left({\rm d}_B^*\right)=\lbrace u\in H_{B,0}^r\ |\ \exists C_u>0\ s.t.\ |\left(u,{\rm d}_Bv\right)_B|\leq C_u\|v\|_B,\ \forall v\in\Omega_B^{r-1}\left(\overline{M}\right)\rbrace $. Clearly, the direction $ "\supseteq" $ is trivial. On the other hand, if $ u\in\Omega_B^r\left(\overline{M}\right) $ and $ v\in\Omega_B^{r-1}\left(\overline{M}\right) $, it follows from Proposition $ \ref{the reprensentation of formal adjoint} $ that 
	\begin{equation*}
		\left(u,{\rm d}_Bv\right)_B=\left(\delta_Bu,v\right)_B+\int_{\partial M}\left<{\rm grad}\rho\lrcorner u,v\right>.
	\end{equation*}
    Thus, $ u\in\mathcal{D}_B^r $ forces that 
    \begin{equation*}
    	\left(\left(\delta_B-{\rm d}_B^*\right)u,\rho v\right)_B+\int_{\partial M}\left<{\rm grad}\rho\lrcorner u,\rho v\right>=\left(\left(\delta_B-{\rm d}_B^*\right)u,\rho v\right)_B=0
    \end{equation*}
    for all $ v\in\Omega_B^r\left(\overline{M}\right) $  since $ \rho $ is a basic function and $ \rho|_{\partial M}\equiv0 $. Then the above integration formula gives $ \delta_Bu={\rm d}_B^*u $ if we choose $ v $ to be the basic form $ \left(\delta_B-{\rm d}_B^*\right)u $ because $ \rho<0 $ in $ M $. It turns out that the boundary term should vanish for all $ v\in\Omega_B^{r-1}\left(\overline{M}\right)$, i.e., $ {\rm grad}\rho\lrcorner u=0 $ on $ \partial M $.
\end{proof}

For the purpose of studying the basic Laplacian operator $ \Delta_B $, we need the following well-known Friedrichs Extension Theorem (see \cite{Fko44}).

\begin{thm}\label{Friedrichs extension theorem}
	Let $ \left(H,\|\cdot\|\right) $ be a Hilbert space, D is a dense subspace of H with Hermitian form G satisfying $ G\left(\phi,\phi\right)\geq\|\phi\|^2 $ for $ \phi\in D $. Suppose $ \left(D,G\right) $ is also a Hilbert space, then there is a unique self-adjoint operator F with $ {\rm Dom}\left(F\right)\subseteq D $, such that $ G\left(\phi,\psi\right)=\left(F\phi,\psi\right) $ for $ \forall\phi\in{\rm Dom}\left(F\right) $ and $ \forall\psi\in D $.
\end{thm}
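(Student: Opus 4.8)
The plan is to obtain $F$ as the inverse of a bounded positive operator $T$ furnished by the Riesz representation theorem applied to the auxiliary Hilbert space $\left(D,G\right)$. First I would note that the bound $G\left(\phi,\phi\right)\geq\|\phi\|^2$ makes the inclusion $\left(D,G\right)\hookrightarrow\left(H,\|\cdot\|\right)$ continuous with operator norm at most $1$; hence for each $f\in H$ the map $\phi\mapsto\left(f,\phi\right)$ is a bounded linear functional on $\left(D,G\right)$, and since $\left(D,G\right)$ is a Hilbert space by hypothesis the Riesz representation theorem yields a unique $Tf\in D$ with
\begin{equation*}
	G\left(Tf,\phi\right)=\left(f,\phi\right),\qquad\forall\phi\in D.
\end{equation*}
This defines a linear operator $T\colon H\to D\subseteq H$; putting $\phi=Tf$ gives $\|Tf\|^2\leq G\left(Tf,Tf\right)=\left(f,Tf\right)\leq\|f\|\,\|Tf\|$, so $T$ is bounded on $H$ with $\|T\|\leq1$.

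Next I would establish the structural properties of $T$. Using that $G$ is Hermitian, $\left(Tf,g\right)=\overline{\left(g,Tf\right)}=\overline{G\left(Tg,Tf\right)}=G\left(Tf,Tg\right)=\left(f,Tg\right)$, so $T$ is self-adjoint on $H$; moreover $\left(Tf,f\right)=G\left(Tf,Tf\right)\geq\|Tf\|^2\geq0$, so $T$ is positive, and it is injective because $Tf=0$ forces $\left(f,\phi\right)=0$ for all $\phi$ in the dense subspace $D$, hence $f=0$. Being injective and self-adjoint, $T$ has dense range, since $\overline{{\rm Range}\left(T\right)}={\rm Ker}\left(T\right)^\perp=H$. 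I would then define $F:=T^{-1}$ with ${\rm Dom}\left(F\right)={\rm Range}\left(T\right)\subseteq D$. The inverse of an injective self-adjoint operator with dense range is again self-adjoint, which gives the self-adjointness of $F$; and for $\phi=Tf\in{\rm Dom}\left(F\right)$ and any $\psi\in D$ one has $G\left(\phi,\psi\right)=G\left(Tf,\psi\right)=\left(f,\psi\right)=\left(F\phi,\psi\right)$, the required identity.

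For uniqueness I would argue as follows. Let $F'$ be self-adjoint with ${\rm Dom}\left(F'\right)\subseteq D$ and $G\left(\phi,\psi\right)=\left(F'\phi,\psi\right)$ for all $\phi\in{\rm Dom}\left(F'\right)$, $\psi\in D$. For $\phi\in{\rm Dom}\left(F'\right)$ and $\psi\in{\rm Dom}\left(F\right)\subseteq D$, the Hermitian symmetry of $G$ together with the two representation formulas gives $\left(F'\phi,\psi\right)=G\left(\phi,\psi\right)=\overline{G\left(\psi,\phi\right)}=\overline{\left(F\psi,\phi\right)}=\left(\phi,F\psi\right)$; since this holds for all $\psi\in{\rm Dom}\left(F\right)$ we conclude $\phi\in{\rm Dom}\left(F^*\right)$ and $F'\phi=F^*\phi=F\phi$, i.e. $F'\subseteq F$. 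As a self-adjoint operator admits no proper symmetric extension, $F'=F$.

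The only non-formal points lie in the passage from the bounded operator $T$ to the unbounded operator $F$: one must confirm that ${\rm Range}\left(T\right)$ is dense so that $F$ is densely defined, and that the inverse of an injective self-adjoint operator is genuinely self-adjoint rather than merely symmetric. Both are standard consequences of spectral theory (equivalently, of the adjoint calculus), and I expect this to be the main, though mild, obstacle; the Riesz step, the norm estimate, and the Hermitian bookkeeping in the uniqueness argument are entirely routine.
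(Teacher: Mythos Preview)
Your proof is correct and is the standard Riesz-representation construction of the Friedrichs extension. Note, however, that the paper does not actually prove this theorem: it is stated as the well-known Friedrichs Extension Theorem with a reference to \cite{Fko44}, and is then used as a black box to produce the operators $F_B$, $F_Q$, $\bar{F}_B$, etc. So there is no ``paper's own proof'' to compare against; your argument simply supplies what the paper takes for granted.
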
 

In this context, for $ 0\leq r\leq q $, we define $ G_B $ on $ \mathcal{D}_B^r $ by
\begin{equation*}
	G_B\left(u,v\right)=\left({\rm d}_Bu,{\rm d}_Bv\right)_B+\left(\delta_Bu,\delta_Bv\right)_B+\left(u,v\right)_B,
\end{equation*} 
let $ \tilde{\mathcal{D}}_B^r $ be the completion of $ \mathcal{D}_B^r $ with respect to $ G_B $. In order to apply Theorem $ \ref{Friedrichs extension theorem} $, we need to show that $ \tilde{\mathcal{D}}_B^r $ can be treated as a dense subspace of $ H_{B,0}^r $.

\begin{prop}\label{verification of Friedrichs extension theorem}
	For $ 0\leq r\leq q $, the map $ i:\tilde{\mathcal{D}}_B^r\longrightarrow H_{B,0}^r $ induced by $ \mathcal{D}_B^r\longrightarrow H_{B,0}^r $ is injective, and $ \tilde{\mathcal{D}}_B^r $ is dense in $ H_{B,0}^r $.
\end{prop}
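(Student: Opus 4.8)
The plan is to establish the two assertions — injectivity of $i$ and density of $\tilde{\mathcal D}_B^r$ in $H_{B,0}^r$ — separately, with density being essentially immediate and injectivity being the real content.

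For density, I would simply note that $\Omega_B^r(\overline M)$ is dense in $H_{B,0}^r$ by definition of the latter as a completion, and that every $u\in\Omega_B^r(\overline M)$ can be approximated in $H_{B,0}^r$ by elements of $\mathcal D_B^r$: using a cutoff function $\chi_\varepsilon$ equal to $1$ outside an $\varepsilon$-collar of $\partial M$ and vanishing near $\partial M$, the forms $\chi_\varepsilon u$ lie in $\mathcal D_B^r$ (the condition ${\rm grad}\,\rho\lrcorner u=0$ on $\partial M$ from Proposition \ref{boundary condition} is satisfied trivially since $\chi_\varepsilon u$ vanishes there) and $\chi_\varepsilon u\to u$ in $H_{B,0}^r$. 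Since $\mathcal D_B^r$ is dense in $\tilde{\mathcal D}_B^r$ by construction, and the composite $\mathcal D_B^r\hookrightarrow\tilde{\mathcal D}_B^r\xrightarrow{i}H_{B,0}^r$ agrees with the inclusion $\mathcal D_B^r\hookrightarrow H_{B,0}^r$ whose image is dense, the image of $i$ is dense.

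The injectivity of $i$ is the main obstacle. The issue is that $\tilde{\mathcal D}_B^r$ is an abstract completion, and an element of it is an equivalence class of $G_B$-Cauchy sequences $\{u_\nu\}\subseteq\mathcal D_B^r$; one must show that if such a sequence converges to $0$ in $H_{B,0}^r$, then it is $G_B$-null, i.e. ${\rm d}_Bu_\nu\to 0$ and $\delta_Bu_\nu\to 0$ in the appropriate $L^2$-spaces. Since $\{u_\nu\}$ is $G_B$-Cauchy, ${\rm d}_Bu_\nu$ converges to some $v\in H_{B,0}^{r+1}$ and $\delta_Bu_\nu$ converges to some $w\in H_{B,0}^{r-1}$; the task is to prove $v=w=0$. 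For $v$: pairing against a compactly supported test form $\psi\in\Omega_{Q,c}^{r+1}(M)$ and using that ${\rm d}_B$ restricted to basic forms agrees with ${\rm d}$, one gets $(v,\psi)=\lim(\,{\rm d}u_\nu,\psi)=\lim(u_\nu,\delta\psi)=0$ because $u_\nu\to 0$ in $H_{B,0}^r$ and there is no boundary term ($\psi$ has compact support disjoint from $\partial M$); hence $v=0$. For $w$: the symmetric argument works — pair $\delta_Bu_\nu$ against $\psi\in\Omega_{Q,c}^{r-1}(M)$, use that $\delta_B$ is the formal adjoint of ${\rm d}_B$ together with the fact (from Proposition \ref{the reprensentation of formal adjoint} applied to $u_\nu\in\mathcal D_B^r$, where the boundary term drops by Proposition \ref{boundary condition}, or more directly by choosing $\psi$ compactly supported) that $(\delta_Bu_\nu,\psi)=({\rm d}_B\psi',\psi)$-type identities reduce to $(u_\nu,{\rm d}_B\psi)_B\to 0$. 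Thus $w=0$, so the $G_B$-limit of $\{u_\nu\}$ is $0$, proving that the map $i$ has trivial kernel.

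I expect the only subtlety to be bookkeeping: one must make sure the test forms are taken in $\Omega_{Q,c}^\cdot(M)$ (compactly supported away from $\partial M$) so that all integrations by parts are boundary-term-free, and one must invoke the already-established closedness of ${\rm d}_B$ (and implicitly of $\delta_B=({\rm d}_B)^*$ on the relevant domain) to identify the $G_B$-limits $v,w$ as genuine $L^2$-forms in the first place. With those points in hand the argument is routine, and the density half combined with the injectivity half gives exactly the statement that $\tilde{\mathcal D}_B^r$ may be regarded as a dense subspace of $H_{B,0}^r$, which is what is needed to apply the Friedrichs Extension Theorem \ref{Friedrichs extension theorem} with $H=H_{B,0}^r$, $D=\tilde{\mathcal D}_B^r$, and $G=G_B$.
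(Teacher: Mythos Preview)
Your proposal is correct, and the injectivity argument is essentially the same as the paper's: both rely on the closedness of $d_B$ and $d_B^*$ to conclude that if $u_\nu\to 0$ in $H_{B,0}^r$ then the $L^2$-limits of $d_Bu_\nu$ and $\delta_Bu_\nu$ must vanish. The paper phrases this abstractly (``$d_B$ and $d_B^*$ are closed, hence $u=\tilde u$''), whereas you unpack it via pairing with compactly supported test forms; both are fine.

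For density, however, you take a genuinely different route and there is one point you should make explicit. The paper does not use cutoffs at all: it observes that $\rho\cdot\Omega_B^r(\overline M)\subseteq\mathcal D_B^r$ (since $\rho$ vanishes on $\partial M$ and is basic) and then shows this subspace is dense by an orthogonality argument---if $v\in\Omega_B^r(\overline M)$ is orthogonal to every $\rho u$, take $u=v$ to get $\int_M\rho|v|^2=0$, forcing $v=0$ because $\rho<0$ in $M$. Your cutoff approach also works, but you must ensure that $\chi_\varepsilon u$ remains \emph{basic}; this requires $\chi_\varepsilon$ itself to be basic, which is not automatic for an arbitrary collar cutoff. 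The fix is immediate---take $\chi_\varepsilon=\eta(\rho)$ for a suitable smooth $\eta:\mathbb R\to[0,1]$, which is basic since $\rho$ is---but it should be stated. The paper's $\rho$-multiplication trick sidesteps this issue and is slightly slicker; your version is more constructive and perhaps more intuitive.
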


\begin{proof}
	For any $ \tilde{u}\in\tilde{\mathcal{D}}_B^r $, there exists a $ G_B $-Cauchy sequence $ \lbrace u_\nu\rbrace_{\nu=1}^\infty $ in $ \mathcal{D}_B^r $ such that $ u_\nu\stackrel{G_B}{\longrightarrow}\tilde{u} $ as $ \nu\rightarrow0 $. Then $ \lbrace u_\nu\rbrace_{\nu=1}^\infty,\ \lbrace {\rm d}_Bu_\nu\rbrace_{\nu=1}^\infty $ and $ \lbrace\delta_Bu_\nu\rbrace_{\nu=1}^\infty $ are Cauchy sequences of $ H_{B,0}^r,\ H_{B,0}^{r+1} $ and $ \ H_{B,0}^{r-1} $ respectively. Let $ i\left(\tilde{u}\right):=u $ where $ u=\lim_{\nu\rightarrow\infty}u_\nu $ in $ H_{B,0}^r $. Furthermore, $ u\in{\rm Dom}\left({\rm d}_B\right)\cap{\rm Dom}\left({\rm d}_B^*\right) $ and $ G_B\left(u,u\right)=\|{\rm d}_Bu\|_B^2+\|{\rm d}_B^*u\|_B^2+\|u\|_B^2 $ because $ {\rm d}_B^* $ and $ {\rm d}_B $ are closed operators, which means that $ u=\tilde{u} $. Thus the map $ i $ is injective. It's obvious that $ \rho*\Omega_B^r\left(\overline{M}\right):=\lbrace\rho u\:|\:u\in\Omega_B^r\left(\overline{M}\right)\rbrace\subseteq\mathcal{D}_B^r $ is dense in $ \Omega_B^r\left(\overline{M}\right) $ then also dense in $ H_{B,0}^r $. In fact, if $ v\in\left(\rho*\Omega_{B}^r\left(\overline{M}\right)\right)^\perp\cap\Omega_{B}^r\left(\overline{M}\right) $, then $ 0=\left(\rho v,v\right)_B=\int_{M}\rho|v|^2 $ which implies that $ v\equiv0 $ since $ \rho<0 $ in $ M $. Therefore, $ \tilde{\mathcal{D}}_B^r $ is a dense subspace of $ H_{B,0}^r $.
\end{proof}

We will use $ F_B $ to denote the Friedrichs operator associated to the form $ G_B $. We are going to describe the boundary condition of smooth elements in Dom($ F_B $) and obtain the expression of $ F_B $. 

\begin{prop}\label{expression of $ F_B $}
	For $ 0\leq r\leq q $, if $ u\in\mathcal{D}_B^r $, then $ u\in{\rm Dom}\left(F_B\right) $ if and only if $ {\rm d}_Bu\in\mathcal{D}_B^{r+1} $, in this case we have $ F_Bu=\left(\Delta_B+{\rm Id}\right)u $.
\end{prop}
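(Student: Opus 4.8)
The plan is to characterize $\mathrm{Dom}(F_B)$ via the defining property of the Friedrichs operator, namely that $u\in\mathrm{Dom}(F_B)$ iff $u\in\tilde{\mathcal D}_B^r$ and the linear functional $v\mapsto G_B(u,v)$ is bounded on $\tilde{\mathcal D}_B^r$ with respect to the $H_{B,0}^r$-norm, and then to integrate by parts to convert the abstract boundedness into the concrete condition ${\rm d}_Bu\in\mathcal D_B^{r+1}$. First I would take $u\in\mathcal D_B^r$ (so by Proposition \ref{boundary condition} already ${\rm grad}\rho\lrcorner u=0$ on $\partial M$) and compute, for a test form $v\in\mathcal D_B^r$,
\begin{equation*}
	G_B(u,v)=\left({\rm d}_Bu,{\rm d}_Bv\right)_B+\left(\delta_Bu,\delta_Bv\right)_B+\left(u,v\right)_B.
\end{equation*}
Applying the divergence formula (\ref{divergence formula}) to each of the first two terms — once with the pair $({\rm d}_Bu,v)$ playing the role of moving $\delta_B$ onto ${\rm d}_Bu$ via ${\rm d}_B$, and once moving ${\rm d}_B$ onto $\delta_Bu$ — yields
\begin{equation*}
	G_B(u,v)=\left((\Delta_B+{\rm Id})u,v\right)_B+\int_{\partial M}\left<{\rm grad}\rho\lrcorner{\rm d}_Bu,v\right>-\int_{\partial M}\left<{\rm grad}\rho\lrcorner u,\delta_Bv\right>,
\end{equation*}
and the second boundary integral drops out because $u\in\mathcal D_B^r$. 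So $G_B(u,v)=\left((\Delta_B+{\rm Id})u,v\right)_B+\int_{\partial M}\left<{\rm grad}\rho\lrcorner{\rm d}_Bu,v\right>$.

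From here the argument is essentially the same bootstrap as in the proof of Proposition \ref{boundary condition}. If ${\rm d}_Bu\in\mathcal D_B^{r+1}$, then ${\rm grad}\rho\lrcorner{\rm d}_Bu=0$ on $\partial M$, the boundary term vanishes for all $v$, hence $v\mapsto G_B(u,v)$ is represented by the $L^2$-form $(\Delta_B+{\rm Id})u$; this is exactly the condition for $u\in\mathrm{Dom}(F_B)$, and simultaneously identifies $F_Bu=(\Delta_B+{\rm Id})u$. Conversely, if $u\in\mathcal D_B^r\cap\mathrm{Dom}(F_B)$, then $G_B(u,v)=(F_Bu,v)_B$ for all $v\in\mathcal D_B^r$; subtracting the displayed identity gives $\int_{\partial M}\left<{\rm grad}\rho\lrcorner{\rm d}_Bu,v\right>=\left((F_Bu-(\Delta_B+{\rm Id})u),v\right)_B$ for all $v\in\mathcal D_B^r$. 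Plugging in $v=\rho w$ for arbitrary $w\in\Omega_B^{r}(\overline M)$ (which lies in $\mathcal D_B^r$ since $\rho$ is basic and vanishes on $\partial M$) kills the boundary term and, using $\rho<0$ in $M$, forces $F_Bu=(\Delta_B+{\rm Id})u$ pointwise in the interior; feeding this back leaves $\int_{\partial M}\left<{\rm grad}\rho\lrcorner{\rm d}_Bu,v\right>=0$ for all $v\in\mathcal D_B^r$, and since traces of such $v$ restricted to $\partial M$ exhaust enough of $\Lambda^{r+1}Q^*|_{\partial M}$ (one checks ${\rm grad}\rho\lrcorner(\cdot)$ only sees the tangential part anyway), this yields ${\rm grad}\rho\lrcorner{\rm d}_Bu=0$ on $\partial M$, i.e. ${\rm d}_Bu\in\mathcal D_B^{r+1}$.

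The main obstacle I anticipate is the converse direction's boundary-density step: one must be sure that the trace map $\mathcal D_B^r\to \Gamma(\Lambda^rQ^*|_{\partial M})$ has image large enough that $\int_{\partial M}\langle {\rm grad}\rho\lrcorner {\rm d}_Bu, v\rangle=0$ for all admissible $v$ really does force ${\rm grad}\rho\lrcorner {\rm d}_Bu=0$ on $\partial M$. This requires a little care because $\mathcal D_B^r$ is cut out by the constraint ${\rm grad}\rho\lrcorner v=0$, so the traces available are the forms on $\partial M$ with no ${\rm grad}\rho$-component; but ${\rm grad}\rho\lrcorner{\rm d}_Bu$ is itself orthogonal to ${\rm grad}\rho$ in the appropriate sense, so pairing against such $v$ suffices — this is the foliated analogue of the classical fact that the Neumann-type boundary operator only tests against tangential forms. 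A secondary (routine) point is making the integration by parts rigorous for merely $\mathcal D_B^r$-regular $u$ rather than smooth $u$; since by definition $\mathcal D_B^r\subseteq\Omega_B^r(\overline M)$ consists of smooth forms, (\ref{divergence formula}) applies directly and no approximation is needed. Everything else — the two applications of the divergence formula, the vanishing of boundary terms, the sign argument with $\rho<0$ — is a direct computation modeled on Propositions \ref{the reprensentation of formal adjoint} and \ref{boundary condition}.
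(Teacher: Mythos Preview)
Your proof is correct and follows essentially the same route as the paper's: integrate by parts via the divergence formula (\ref{divergence formula}) to obtain $G_B(u,v)=\left((\Delta_B+\mathrm{Id})u,v\right)_B+\int_{\partial M}\langle\mathrm{grad}\rho\lrcorner\,{\rm d}_Bu,v\rangle$, then test with $v=\rho w$ to identify $F_Bu=(\Delta_B+\mathrm{Id})u$, and finally force the boundary term to vanish. The only difference is in the last step: where you argue that the traces of $\mathcal D_B^r$ on $\partial M$ are exactly the tangential forms and that $\mathrm{grad}\rho\lrcorner\,{\rm d}_Bu$ is itself tangential, the paper simply plugs in the single test form $v:=\mathrm{grad}\rho\lrcorner\,{\rm d}_Bu$, which lies in $\mathcal D_B^r$ because $(\mathrm{grad}\rho\lrcorner)^2=0$, making the boundary integral equal to $\int_{\partial M}|\mathrm{grad}\rho\lrcorner\,{\rm d}_Bu|^2$ and dispatching your ``anticipated obstacle'' in one line.
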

 
\begin{proof}
	$ "\Rightarrow" $ If $ u\in\mathcal{D}_B^r\cap{\rm Dom}\left(F_B\right) $, then
	\begin{equation}\label{expression of F_B}
		\left(F_Bu,v\right)_B=G_B\left(u,v\right)=\left({\rm d}_Bu,{\rm d}_Bv\right)_B+\left(\delta_Bu,\delta_Bv\right)_B+\left(u,v\right)_B=\left(\left(\Delta_B+{\rm Id}\right)u,v\right)_B
	\end{equation}
	for all $ v\in\rho*\Omega_B^r\left(\overline{M}\right)\subseteq\mathcal{D}_B^r $. Since $ \rho*\Omega_B^r\left(\overline{M}\right) $ is dense in $ H_{B,0}^r $, we have $ F_Bu=\left(\Delta_B+{\rm Id}\right)u $. In particular, if $ v\in\mathcal{D}_B^r $, the equality ($ \ref{expression of F_B} $) should be valid. For the first term,
	\begin{align*}
		\left({\rm d}_Bu,{\rm d}_Bv\right)_B=\left(\delta_B{\rm d}_Bu,v\right)_B+\int_{\partial M}\left<{\rm grad}\rho\lrcorner {\rm d}_Bu,v\right>.
	\end{align*}
    Thanks to $ \left({\rm grad}\rho\lrcorner\right)^2=0 $, we know that for $ v:={\rm grad}\rho\lrcorner {\rm d}_Bu\in\mathcal{D}_B^r $ the boundary term $ |{\rm grad}\rho\lrcorner {\rm d}_Bu|^2 $ must be vanished, which means $ {\rm d}_Bu\in\mathcal{D}_B^{r+1} $.
    
    The second term can be handled in a similar way,
    \begin{align*}
   	    \left(\delta_Bu,\delta_Bv\right)_B=\left({\rm d}_B\delta_Bu,v\right)_B-\int_{\partial M}\left<\delta_Bu,{\rm grad}\rho\lrcorner v\right>=\left({\rm d}_B\delta_Bu,v\right)_B.
    \end{align*}
    ''$ \Leftarrow $'' Conversely, we know that $ G_B\left(u,v\right)=\left(\left(\Delta_B+{\rm Id}\right)u,v\right) $ for all $ v\in\mathcal{D}_B^r $ if $ {\rm d}_Bu\in\mathcal{D}_B^{r+1} $, hence $ u\in{\rm Dom}\left(F_B\right) $ and $ F_Bu=\left(\Delta_B+{\rm Id}\right)u $.
\end{proof}


Similarly, for $ 0\leq r\leq q $ we can define $ {\rm d}_Q^* $, $ \mathcal{D}_Q^r:={\rm Dom}({\rm d}_Q^*)\cap\Omega_Q^r\left(\overline{M}\right) $, and define $ G_Q $ on $ \mathcal{D}_Q^r $ through replacing $ \left(\cdot,\cdot\right)_B $ by the general inner product $ \left(\cdot,\cdot\right) $ in $ M $. We also have the corresponding Friedrichs operator denoted by $ F_Q $. Then the same statements in Proposition $ \ref{boundary condition} $, Proposition $ \ref{verification of Friedrichs extension theorem} $ and Proposition $ \ref{expression of $ F_B $} $ also hold for these $ Q $-operators while the arguments are easier since $ \Omega_{Q,c}^r\left(M\right) $ is dense in $ \mathcal{D}_Q^r $. In particular, we have $ \mathcal{D}_Q^r\cap\Omega_B^r\left(\overline{M}\right)=\lbrace u\in\Omega_B^r\left(\overline{M}\right)\ |\ {\rm grad}\rho\lrcorner u=0\ {\rm on}\ \partial M\rbrace=\mathcal{D}_B^r $, it turns out that $ {\rm Dom}\left(F_Q\right)\cap\mathcal{D}_Q^r\cap\Omega_B^r\left(\overline{M}\right)={\rm Dom}\left(F_B\right)\cap\mathcal{D}_B^r $ which yields
\begin{align}\label{FQFB}
	F_Q|_{{\rm Dom}\left(F_B\right)}=F_B.
\end{align}

We will make use of above equality several times in the next section.

\subsection{Bochner formula}
For the development of the de Rham-Hodge theory and vanishing theorems for Riemannian foliations Bochner formula is the keystone. Conveniently, we will establish the Bochner formula for $u\in\mathcal{D}_Q^r$, in applications this has
the advantage that one has to estimate the Sobolev norms locally for the solution of the equation $F_Bu=\omega$ in the next section.

\begin{definition}
	Let $ w $ be a $ C^2 $ function on $\overline{M}$. The following quadratic form $ L_w $ is said to be a transversal Levi form of w associated to the Riemannian foliation $ \mathcal{F} $
	\begin{equation*}
		L_w\left(\xi,\xi\right):=w_{\alpha\beta}\xi_\alpha \xi_\beta:=\left(\nabla_{X_\alpha,X_\beta}^2w\right)\xi_\alpha \xi_\beta=\left(X_\beta X_\alpha w-\Gamma_{\beta\alpha}^\gamma X_\gamma w\right)\xi_\alpha \xi_\beta,
	\end{equation*}
	where $ \xi=\xi_\alpha X_\alpha $ is a transversal vector field and $ \Gamma_{\beta\alpha}^\gamma $'s are coefficients of the connection $ \nabla $.
\end{definition}

It's easy to see that the definition is independent of the choice of local frames. 

\begin{thm}
	For $ 0\leq r\leq q $, $ u\in\mathcal{D}_Q^r $ we have
	\begin{align}\label{Bochner-Kodaira formula}
		\|{\rm d}_Qu\|^2+\|\delta_Qu\|^2=&\int_{M}g^{\alpha\beta}\left<\nabla_{X_\alpha} u,\nabla_{X_\beta}u\right>+2\int_{M}\left<g^{\alpha\beta}\wedge X_\alpha\lrcorner\nabla_{A\left(X_\beta,X_\gamma\right)}u,g^{\gamma\sigma}\left(X_\sigma\lrcorner u\right)\right>\nonumber\\
		&+\int_{M}\left<R^\nabla_{X_\beta,X_\gamma}g^{\alpha\beta}\left(X_\alpha\lrcorner u\right),g^{\gamma\sigma}\left(X_\sigma\lrcorner u\right)\right>+\int_{M}\left<Ric^\nabla\left(X_\gamma\right)\lrcorner u,g^{\gamma\sigma}\left(X_\sigma\lrcorner u\right)\right>\nonumber\\
		&+\int_{M}\left<\mathcal{A}_\tau u,u\right>+\int_{\partial M}\left<\rho_{\beta\gamma}g^{\alpha\beta}\left(X_\alpha\lrcorner u\right),g^{\gamma\sigma}\left(X_\sigma\lrcorner u\right)\right>,		
	\end{align}
	where $ A\left(X_\beta,X_\gamma\right):=\pi^\perp\left(\nabla_{X_\beta}^MX_\gamma\right) $ is the second fundamental form of $ Q $ in $ \left(M,g_M\right) $, $ R^\nabla $ is the curvature operator of the connection $ \nabla $, $ Ric^\nabla\left(X_\gamma\right):=g^{\alpha\beta}R_{X_\gamma,X_\beta}^\nabla X_\alpha $, $ \tau $ is the mean curvature vector field of $ \mathcal{F} $, and $ \mathcal{A}_\tau u:=\theta^\alpha\wedge\left(\nabla_{X_\alpha}\tau\right)\lrcorner u $.
\end{thm}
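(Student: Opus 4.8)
The plan is to prove the Bochner--Kodaira formula (\ref{Bochner-Kodaira formula}) by integrating by parts in the local expressions ${\rm d}_Qu=\theta^\alpha\wedge\nabla_{X_\alpha}u$ and $\delta_Qu=-g^{\alpha\beta}X_\alpha\lrcorner\nabla_{X_\beta}u+\tau\lrcorner u$ from Proposition~\ref{the local expression of d_B} and (\ref{Q-operator}), and then commuting covariant derivatives to produce the curvature terms. First I would expand $\|{\rm d}_Qu\|^2$ pointwise using the algebraic identity $\langle\theta^\alpha\wedge a,\theta^\beta\wedge b\rangle=g^{\alpha\beta}\langle a,b\rangle-\langle X^\beta\lrcorner a,X^\alpha\lrcorner b\rangle$ (metric compatibility of $\nabla$ is what lets me treat the $\theta$'s and $\nabla$ as acting independently), getting
\begin{align*}
	\langle{\rm d}_Qu,{\rm d}_Qu\rangle=g^{\alpha\beta}\langle\nabla_{X_\alpha}u,\nabla_{X_\beta}u\rangle-g^{\gamma\sigma}\langle X_\sigma\lrcorner\nabla_{X_\alpha}u,X_\gamma\lrcorner\nabla_{X_\beta}u\rangle g^{\alpha\beta}\wedge(\text{indices contracted}).
\end{align*}
Similarly $\langle\delta_Qu,\delta_Qu\rangle$ expands into a term $g^{\gamma\sigma}g^{\alpha\beta}\langle X_\alpha\lrcorner\nabla_{X_\beta}u,X_\sigma\lrcorner\nabla_{X_\gamma}u\rangle$ plus cross-terms with $\tau\lrcorner u$ and the $|\tau\lrcorner u|^2$ term. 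Adding the two, the ``diagonal'' $g^{\alpha\beta}|\nabla_{X_\alpha}u|^2$ term survives, and the remaining second-derivative terms must be reorganized so that a $\nabla X_\sigma\lrcorner\nabla X_\gamma$ pattern appears in both with opposite structure, ready for a commutator.

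The central step is the integration by parts: I would move one $\nabla$ off of the ``bad'' terms. Schematically, $\int_M g^{\alpha\beta}g^{\gamma\sigma}\langle X_\sigma\lrcorner\nabla_{X_\alpha}u,X_\gamma\lrcorner\nabla_{X_\beta}u\rangle$ becomes, after integrating by parts using the divergence formula (\ref{divergence formula}) applied to the vector field built from $\langle X_\sigma\lrcorner u,X_\gamma\lrcorner\nabla_{X_\beta}u\rangle X_\alpha$, a term $-\int_M g^{\alpha\beta}g^{\gamma\sigma}\langle X_\sigma\lrcorner u,X_\gamma\lrcorner\nabla_{X_\alpha}\nabla_{X_\beta}u\rangle$ plus connection-coefficient corrections (these feed the mean-curvature term $\mathcal{A}_\tau u$, the $Ric^\nabla$ and the second-fundamental-form terms) plus a boundary integral over $\partial M$. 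Then I would antisymmetrize the resulting $\nabla_{X_\alpha}\nabla_{X_\beta}$ against its partner from the other term to produce $R^\nabla_{X_\alpha,X_\beta}$; the symmetric part $\nabla^2_{(\alpha\beta)}$ combines with the $\delta_Q\delta_Q$ pieces to recover the diagonal term, while the $\Gamma$-corrections from writing $\nabla_{X_\alpha}\nabla_{X_\beta}=\nabla^2_{X_\alpha,X_\beta}+\Gamma\text{-terms}$ are precisely what, via Proposition~\ref{coefficients of conection} and Proposition~\ref{difference of connection}, convert the Levi-Civita data into the transversal-connection data and isolate $A(X_\beta,X_\gamma)$ and the boundary Hessian $\rho_{\beta\gamma}$ of $\rho$. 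The boundary term deserves care: since $u\in\mathcal{D}_Q^r$ we have ${\rm grad}\rho\lrcorner u=0$ on $\partial M$ by (the $Q$-analogue of) Proposition~\ref{boundary condition}, so the only surviving boundary contribution is the one where a tangential derivative hits this constraint, yielding exactly $\int_{\partial M}\langle\rho_{\beta\gamma}g^{\alpha\beta}(X_\alpha\lrcorner u),g^{\gamma\sigma}(X_\sigma\lrcorner u)\rangle$ after differentiating ${\rm grad}\rho\lrcorner u=0$ along $\partial M$ and using $|{\rm d}\rho|=1$.

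I expect the main obstacle to be the bookkeeping of the first-order (connection-coefficient) terms: there are contributions from $\nabla^M$ versus $\nabla$ (Proposition~\ref{difference of connection}), from $X_\alpha(g^{\alpha\beta})$ and $\Gamma^\bullet_{\alpha\gamma}$ in the divergence (as already seen in the proof of Proposition~\ref{the reprensentation of formal adjoint}), and from the second fundamental form $A$ of $Q\hookrightarrow TM$, and these must be matched term-by-term against $\mathcal{A}_\tau u$, the $A(X_\beta,X_\gamma)$-term, and $Ric^\nabla$. The cleanest route is to first prove the formula for $u\in\Omega_{Q,c}^r(M)$ (compactly supported, no boundary term) by the above commutation argument, verifying that the non-curvature first-order terms collapse exactly using Proposition~\ref{coefficients of conection}; then extend to general $u\in\mathcal{D}_Q^r$ by a density/cutoff argument near $\partial M$, carefully tracking the single surviving boundary integral. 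A secondary subtlety is that ${\rm d}_Q,\delta_Q$ are defined on $\Omega_Q^\cdot$ (not just basic forms), so all manipulations are legitimate pointwise on $Q^*$-valued forms; the restriction to basic forms is not needed until one specializes to $F_B$ in the next section. Once the interior identity is in hand, matching the boundary term is comparatively short, so I would budget most of the effort for the interior computation and the $\Gamma$-term cancellations.
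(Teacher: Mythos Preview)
Your approach is correct but takes a different route from the paper. You propose the Morrey--Kohn style: expand $\langle{\rm d}_Qu,{\rm d}_Qu\rangle$ and $\langle\delta_Qu,\delta_Qu\rangle$ pointwise via $\langle\theta^\alpha\wedge a,\theta^\beta\wedge b\rangle=g^{\alpha\beta}\langle a,b\rangle-\langle X^\beta\lrcorner a,X^\alpha\lrcorner b\rangle$, integrate by parts on the off-diagonal pieces, and commute derivatives. The paper instead first establishes the pointwise Weitzenb\"ock identity
\[
(\delta_Q{\rm d}_Q+{\rm d}_Q\delta_Q)u=-g^{\alpha\beta}\nabla^2_{X_\alpha,X_\beta}u+g^{\alpha\beta}\theta^\gamma\wedge X_\alpha\lrcorner[\nabla_{X_\beta},\nabla_{X_\gamma}]u+L_\tau^Qu,
\]
then pairs with $u$ and integrates. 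The commutator is rewritten as $R^\nabla+2\nabla_{A(X_\beta,X_\gamma)}$ via the clean observation $\nabla_{[X_\beta,X_\gamma]}=2\nabla_{A(X_\beta,X_\gamma)}$ (an immediate consequence of $\Gamma^a_{\alpha\beta}=-\Gamma^a_{\beta\alpha}$ from Proposition~\ref{coefficients of conection}), the Ricci term is split off using $[R^\nabla_{X_\beta,X_\gamma},X_\alpha\lrcorner]=(R^\nabla_{X_\beta,X_\gamma}X_\alpha)\lrcorner$, and the mean-curvature contribution is isolated as $L_\tau^Qu-\nabla_\tau u=\mathcal{A}_\tau u$. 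What this buys over your route is that the first-order $\Gamma$-bookkeeping you flag as the main obstacle is absorbed into a few algebraic identities rather than chased through integration-by-parts corrections; conversely, your approach avoids computing the Laplacian explicitly and is closer in spirit to how the Hermitian case (\ref{Bochner formula for dolbeault}) is handled later.

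One point to drop: the reduction to $\Omega_{Q,c}^r(M)$ followed by a cutoff/density argument is unnecessary and awkward, since the boundary integral is part of the target identity and does not arise as a limit of vanishing boundary terms. The paper works directly with smooth $u\in\mathcal{D}_Q^r$: the boundary contribution appears from two sources, the divergence theorem applied to $g^{\alpha\beta}\langle\nabla_{X_\alpha}u,u\rangle X_\beta$ and the identity $\|{\rm d}_Qu\|^2=(\delta_Q{\rm d}_Qu,u)+\int_{\partial M}\langle u,{\rm grad}\rho\lrcorner{\rm d}_Qu\rangle$, and their sum is simplified exactly as you anticipate, by differentiating the constraint $g^{\alpha\beta}X_\beta(\rho)u_{\alpha I}=0$ tangentially along $\partial M$.
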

\begin{proof}
	Let $ u=u_J\theta^J\in\mathcal{D}_Q^r $, then	
	\begin{align*}
		&-g^{\alpha\beta}X_\alpha\lrcorner\nabla_{X_\beta}\left(\theta^\gamma\wedge\nabla_{X_\gamma}u\right)\\
		=&-g^{\alpha\beta}X_\alpha\lrcorner\left(-\Gamma_{\beta\sigma}^\gamma\theta^\sigma\wedge\nabla_{X_\gamma}u+\theta^\gamma\wedge\nabla_{X_\beta}\nabla_{X_\gamma}u\right)\\
		=&-g^{\alpha\beta}\left(-\Gamma_{\beta\alpha}^\gamma\nabla_{X_\gamma}u+\Gamma_{\beta\sigma}^\gamma\theta^\sigma\wedge X_{\alpha}\lrcorner\nabla_{X_\gamma}u+\nabla_{X_\beta}\nabla_{X_\alpha}u-\theta^\gamma\wedge X_\alpha\lrcorner\nabla_{X_\beta}\nabla_{X_\gamma}u\right)\\
		=&-g^{\alpha\beta}\nabla_{X_\beta}\nabla_{X_\alpha}u+g^{\alpha\beta}\theta^\gamma\wedge X_\alpha\lrcorner\nabla_{X_\beta}\nabla_{X_\gamma}u+g^{\alpha\beta}\Gamma_{\alpha\beta}^\gamma\nabla_{X_\gamma}u-g^{\alpha\beta}\Gamma_{\beta\sigma}^\gamma\theta^\sigma\wedge X_\alpha\lrcorner\nabla_{X_\gamma}u.
	\end{align*}
	On the other hand,
	\begin{align*}
		&-\theta^\gamma\wedge\nabla_{X_\gamma}\left(g^{\alpha\beta}X_\alpha\lrcorner\nabla_{X_\beta}u\right)\\
		=&-\theta^\gamma\wedge X_\gamma\left(g^{\alpha\beta}\right)X_\alpha\lrcorner\nabla_{X_\beta}u-g^{\alpha\beta}\theta^\gamma\wedge\nabla_{X_\gamma}\left(X_\alpha\lrcorner\nabla_{X_\beta}u\right)\\
		=&-\theta^\gamma\wedge X_\gamma\left(\left<\theta^\alpha,\theta^\beta\right>\right)X_\alpha\lrcorner\nabla_{X_\beta}u-g^{\alpha\beta}\theta^\gamma\wedge\nabla_{X_\gamma}X_\alpha\lrcorner\nabla_{X_\beta}u-g^{\alpha\beta}\theta^\gamma\wedge X_\alpha\lrcorner\nabla_{X_\gamma}\nabla_{X_\beta}u\\
		=&\theta^\gamma\left(\Gamma_{\gamma\sigma}^\alpha g^{\beta\sigma}+\Gamma_{\gamma\sigma}^\beta g^{\alpha\sigma}\right)X_\alpha\lrcorner\nabla_{X_\beta}u-g^{\alpha\beta}\theta^\gamma\wedge\Gamma_{\gamma\alpha}^\sigma X_\sigma\lrcorner\nabla_{X_\beta}u-g^{\alpha\beta}\theta^\gamma\wedge X_\alpha\lrcorner\nabla_{X_\gamma}\nabla_{X_\beta}u\\
		=&g^{\alpha\sigma}\Gamma_{\gamma\sigma}^\beta\theta^\gamma\wedge X_\alpha\lrcorner\nabla_{X_\beta}u-g^{\alpha\beta}\theta^\gamma\wedge X_\alpha\lrcorner\nabla_{X_\gamma}\nabla_{X_\beta}u.
	\end{align*}
	Then from (\ref{Q-operator}) we arrive at:
	\begin{align*}
		\left(\delta_Q{\rm d}_Q+{\rm d}_Q\delta_Q\right)u&=\left(-g^{\alpha\beta}X_\alpha\lrcorner\nabla_{X_\beta}+\tau\lrcorner\right)\theta^\gamma\wedge \nabla_{X_\gamma}u+\theta^\gamma\wedge \nabla_{X_\gamma}\left(-g^{\alpha\beta}X_\alpha\lrcorner\nabla_{X_\beta}+\tau\lrcorner\right)u\\
		&=\left(-g^{\alpha\beta}\nabla_{X_\beta}\nabla_{X_\alpha}+g^{\alpha\beta}\Gamma_{\alpha\beta}^\gamma\nabla_{X_\gamma}\right)u+g^{\alpha\beta}\theta^\gamma\wedge X_\alpha\lrcorner\left[\nabla_{X_\beta},\nabla_{X_\gamma}\right]+L_\tau^Qu\\
		&=-g^{\alpha\beta}\nabla_{X_\alpha,X_\beta}^2u+g^{\alpha\beta}\theta^\gamma\wedge X_\alpha\lrcorner\left[\nabla_{X_\beta},\nabla_{X_\gamma}\right]u+L_\tau^Qu,
	\end{align*}
	where $ L_\tau^Q:=\tau\lrcorner\circ{\rm d}_Q+{\rm d}_Q\circ\tau\lrcorner $. 
	
	We claim that $ \nabla_{\left[X_\beta,X_\gamma\right]}=2\nabla_{A\left(X_\beta,X_\gamma\right)} $, recalling $ A\left(X_\beta,X_\gamma\right):=\pi^\perp\left(\nabla_{X_\beta}^MX_\gamma\right) $. Indeed,   
	\begin{align*}
		\nabla_{\left[X_\beta,X_\gamma\right]}&=\left(\Gamma_{\beta\gamma}^\alpha-\Gamma_{\gamma\beta}^\alpha\right)\nabla_{X_\alpha}u+\left(\Gamma_{\beta\gamma}^a-\Gamma_{\gamma\beta}^a\right)\nabla_{X_a}=2\nabla_{\Gamma_{\beta\gamma}^aX_a}=2\nabla_{A\left(X_\beta,X_\gamma\right)}.
	\end{align*}
	The second equality follows from Proposition $ \ref{coefficients of conection} $. By
	$$ R^\nabla_{X_\beta,X_\gamma}=\left[\nabla_{X_\beta},\nabla_{X_\gamma}\right]-\nabla_{\left[X_\beta,X_\gamma\right]}=\left[\nabla_{X_\beta},\nabla_{X_\gamma}\right]-2\nabla_{A\left(X_\beta,X_\gamma\right)} $$
	we know that
	\begin{equation*}
		\left(\delta_Q{\rm d}_Q+{\rm d}_Q\delta_Q\right)u=-g^{\alpha\beta}\nabla_{X_\alpha,X_\beta}^2u+g^{\alpha\beta}\theta^\gamma\wedge X_\alpha\lrcorner R_{X_\beta,X_\gamma}^\nabla u+2g^{\alpha\beta}\theta^\gamma\wedge X_\alpha\lrcorner\nabla_{A\left(X_\beta,X_\gamma\right)}u+L_\tau^Qu.
	\end{equation*}
	
	It's clear that $ \left[R_{X_\beta,X_\gamma}^\nabla,X_\alpha\lrcorner\right]=\left(R_{X_\beta,X_\gamma}^\nabla X_\alpha\right)\lrcorner $. In fact, for $ u\in\Omega_{Q}^r\left(\overline{M}\right) $
	\begin{align*}
		\left[R_{X_\beta,X_\gamma}^\nabla,X_\alpha\lrcorner\right]u&=\left[\left[\nabla_{X_\beta},\nabla_{X_\gamma}\right],X_\alpha\lrcorner\right]u+\left[\nabla_{\left[X_\beta,X_\gamma\right]},X_\alpha\lrcorner\right]u\\
		&=\left[\nabla_{X_\beta},\nabla_{X_\gamma}\right]\left(X_\alpha\lrcorner u\right)-X_\alpha\lrcorner\left[\nabla_{X_\beta},\nabla_{X_\gamma}\right]u+\nabla_{\left[X_\beta,X_\gamma\right]}\left(X_\alpha\lrcorner u\right)-X_\alpha\lrcorner\nabla_{\left[X_\beta,X_\gamma\right]}u\\
		&=\left(\left[\nabla_{X_\beta},\nabla_{X_\gamma}\right]X_\alpha\right)\lrcorner u+\left(\nabla_{\left[X_\beta,X_\gamma\right]}X_\alpha\right)\lrcorner u\\
		&=\left(R_{X_\beta,X_\gamma}^\nabla X_\alpha\right)\lrcorner u.
	\end{align*}
	The third equality is valid since $ \nabla_{X_i}\left(X_j\lrcorner u\right)=\left(\nabla_{X_i}X_j\right)\lrcorner u+X_j\lrcorner\nabla_{X_i}u $ for all $ X_i,X_j\in\{X_k\}_{k=1}^n $.
	Thus,
	\begin{align}\label{expression of box}
		\left(\delta_Q{\rm d}_Q+{\rm d}_Q\delta_Q\right)u=&-g^{\alpha\beta}\nabla_{X_\alpha,X_\beta}^2u+g^{\alpha\beta}\theta^\gamma\wedge R_{X_\beta,X_\gamma}^\nabla\left(X_\alpha\lrcorner u\right)+\theta^\gamma\wedge Ric^\nabla\left(X_\gamma\right)\lrcorner u\nonumber\\
		&+2g^{\alpha\beta}\theta^\gamma\wedge X_\alpha\lrcorner\nabla_{A\left(X_\beta,X_\gamma\right)}u+L_\tau^Qu.
	\end{align}
	
	Now, we handle the first term in R.H.S. of $ \left(\ref{expression of box}\right) $, first note that
	\begin{align*}
		&{\rm div}^Q\left(g^{\alpha\beta}\left<\nabla_{X_\alpha}u,u\right>X_\beta\right)\nonumber\\
		=&X_\beta\left(g^{\alpha\beta}\left<\nabla_{X_\alpha}u,u\right>\right)+g^{\alpha\beta}\left<\nabla_{X_\alpha}u,u\right>{\rm div}^QX_\beta\nonumber\\
		=&X_\beta\left(\left<\theta^\alpha,\theta^\beta\right>\right)\left<\nabla_{X_\alpha}u,u\right>+g^{\alpha\beta}\left<\nabla_{X_\beta}\nabla_{X_\alpha}u,u\right>+g^{\alpha\beta}\left<\nabla_{X_\alpha}u,\nabla_{X_\beta}u\right>+g^{\alpha\beta}\left<\nabla_{X_\alpha}u,u\right>g^{\gamma\sigma}\left<\Gamma_{\gamma\beta}^\lambda X_\lambda,X_\sigma\right>\nonumber\\
		=&-\left(\Gamma_{\beta\gamma}^\alpha g^{\beta\gamma}+\Gamma_{\beta\gamma}^\beta g^{\alpha\gamma}\right)\left<\nabla_{X_\alpha}u,u\right>+g^{\alpha\beta}\left<\nabla_{X_\beta}\nabla_{X_\alpha}u,u\right>+g^{\alpha\beta}\left<\nabla_{X_\alpha} u,\nabla_{X_\beta}u\right>+\Gamma_{\gamma\beta}^\gamma g^{\alpha\beta}\left<\nabla_{X_\alpha}u,u\right>\nonumber\\
		=&\left<g^{\alpha\beta}\nabla_{X_\beta}\nabla_{X_\alpha}u-g^{\beta\gamma}\Gamma_{\beta\gamma}^\alpha\nabla_{X_\alpha} u,u\right>+g^{\alpha\beta}\left<\nabla_{X_\alpha} u,\nabla_{X_\beta}u\right>\nonumber\\
		=&\left<g^{\alpha\beta}\nabla_{X_\alpha,X_\beta}^2u,u\right>+g^{\alpha\beta}\left<\nabla_{X_\alpha} u,\nabla_{X_\beta}u\right>.
	\end{align*}
	It gives
	\begin{align}\label{handle the first term of box}
		-\left<g^{\alpha\beta}\nabla_{X_\alpha,X_\beta}^2u,u\right>&=g^{\alpha\beta}\left<\nabla_{X_\alpha} u,\nabla_{X_\beta}u\right>-{\rm div}^Q\left(g^{\alpha\beta}\left<\nabla_{X_\alpha}u,u\right>X_\beta\right)\nonumber\\
		&=g^{\alpha\beta}\left<\nabla_{X_\alpha} u,\nabla_{X_\beta}u\right>-{\rm div}^M\left(g^{\alpha\beta}\left<\nabla_{X_\alpha}u,u\right>X_\beta\right)+{\rm div}^L\left(g^{\alpha\beta}\left<\nabla_{X_\alpha}u,u\right>X_\beta\right)\nonumber\\
		&=g^{\alpha\beta}\left<\nabla_{X_\alpha} u,\nabla_{X_\beta}u\right>-{\rm div}^M\left(g^{\alpha\beta}\left<\nabla_{X_\alpha}u,u\right>X_\beta\right)-g^{\alpha\beta}\left<\nabla_{X_\alpha}u,u\right>\left<X_\beta,g^{ab}\nabla_{X_a}^MX_b\right>\nonumber\\
		&=g^{\alpha\beta}\left<\nabla_{X_\alpha} u,\nabla_{X_\beta}u\right>-{\rm div}^M\left(g^{\alpha\beta}\left<\nabla_{X_\alpha}u,u\right>X_\beta\right)-\left<\nabla_\tau u,u\right>.
	\end{align}
	
	Then we deal with the mean curvature term of $ \left(\ref{expression of box}\right) $ as follows
	\begin{align}\label{mean curvature term}
		L_\tau^Qu-\nabla_\tau u&={\rm d}_Q\circ\tau\lrcorner u+\tau\lrcorner\circ {\rm d}_Qu-\nabla_\tau u\nonumber\\
		&=\theta^\alpha\wedge\nabla_{X_\alpha}\left(\tau\lrcorner u\right)+\tau\lrcorner\left(\theta^\alpha\wedge\nabla_{X_\alpha}u\right)-\nabla_\tau u\nonumber\\
		&=\theta^\alpha\wedge\left(\nabla_{X_\alpha}\tau\right)\lrcorner u+\left(\tau\lrcorner\theta^\alpha\right)\nabla_{X_\alpha}u-\nabla_\tau u\nonumber\\
		&=\theta^\alpha\wedge\left(\nabla_{X_\alpha}\tau\right)\lrcorner u+\nabla_{\left(\tau\lrcorner\theta^\alpha\right)X_\alpha}u-\nabla_\tau u\nonumber\\
		&=\theta^\alpha\wedge\left(\nabla_{X_\alpha}\tau\right)\lrcorner u\nonumber\\
		&=:\mathcal{A}_\tau u.
	\end{align}
	
	Combining (\ref{expression of box}), (\ref{handle the first term of box}) and (\ref{mean curvature term}) and by the divergence theorem, we get
	\begin{align}\label{integration of box}
		\left(\left({\rm d}_Q\delta_Q+\delta_Q{\rm d}_Q\right)u,u\right)=&\int_{M}g^{\alpha\beta}\left<\nabla_{X_\alpha} u,\nabla_{X_\beta}u\right>+2\int_{M}\left<g^{\alpha\beta}\wedge X_\alpha\lrcorner\nabla_{A\left(X_\beta,X_\gamma\right)}u,g^{\gamma\sigma}\left(X_\sigma\lrcorner u\right)\right>\nonumber\\
		&+\int_{M}\left<R^\nabla_{X_\beta,X_\gamma}g^{\alpha\beta}\left(X_\alpha\lrcorner u\right),g^{\gamma\sigma}\left(X_\sigma\lrcorner u\right)\right>+\int_{M}\left<Ric^\nabla\left(X_\gamma\right)\lrcorner u,g^{\gamma\sigma}\left(X_\sigma\lrcorner u\right)\right>\nonumber\\
		&+\int_{M}\left<\mathcal{A}_\tau u,u\right>-\int_{\partial M}\left<\nabla_{{\rm grad}\rho}u,u\right>.
	\end{align}
	Since $ u\in\mathcal{D}_Q^r $, it follows from Proposition $ \ref{the reprensentation of formal adjoint} $ and Proposition \ref{boundary condition} that
	\begin{align}\label{interation by parts}
		\|{\rm d}_Qu\|^2+\|\delta_Qu\|^2=\left(\left({\rm d}_Q\delta_Q+\delta_Q{\rm d}_Q\right)u,u\right)+\int_{\partial M}\left<u,{\rm grad}\rho\lrcorner {\rm d}_Qu\right>.
	\end{align}
	
	We start to handle the integration on the boundary $ \partial M $. For $ u=u_J\theta^J\in\mathcal{D}_Q^r $,
	\begin{align*}
		{\rm grad}\rho\lrcorner {\rm d}_Qu&=g^{\alpha\beta}X_\beta\left(\rho\right)X_\alpha\lrcorner\left(\theta^\gamma\wedge\nabla_{X_\gamma}u\right)\\
		&=g^{\alpha\beta}X_\beta\left(\rho\right)\nabla_{X_\alpha}u-g^{\alpha\beta}X_\beta\left(\rho\right)\theta^\gamma\wedge X_\alpha\lrcorner\nabla_{X_\gamma}u\\
		&=\nabla_{{\rm grad}\rho}u-g^{\alpha\beta}X_\beta\left(\rho\right)\theta^\gamma\wedge X_\alpha\lrcorner\left(X_\gamma\left(u_J\right)\theta^J+u_J\nabla_{X_\gamma}\theta^J\right)\\
		&=\nabla_{{\rm grad}\rho}u-g^{\alpha\beta}X_\beta\left(\rho\right)\theta^\gamma\wedge \left(X_\gamma\left(u_{\alpha I}\right)\theta^{\alpha I}-u_JX_\alpha\lrcorner\Gamma_{\gamma\sigma}^\lambda\theta^\sigma\wedge X_\lambda\lrcorner\theta^J\right)\\
		&=\nabla_{{\rm grad}\rho}u-g^{\alpha\beta}X_\beta\left(\rho\right)\theta^\gamma\wedge \left(X_\gamma\left(u_{\alpha I}\right)\theta^{\alpha I}-u_J\Gamma_{\gamma\alpha}^\lambda X_\lambda\lrcorner\theta^J+u_J\Gamma_{\gamma\sigma}^\lambda\theta^\sigma\wedge X_\alpha\lrcorner X_\lambda\lrcorner\theta^J\right)\\
		&=\nabla_{{\rm grad}\rho}u-\theta^\gamma\wedge g^{\alpha\beta}X_\beta\left(\rho\right)X_\gamma\left(u_{\alpha I}\right)\theta^{\alpha I}+\theta^\gamma\wedge g^{\alpha\beta}X_\beta\left(\rho\right)\Gamma_{\gamma\alpha}^\lambda u_{\lambda I}\theta^{\lambda I}.
	\end{align*}
	The last line holds because that all $ \Gamma_{\gamma\sigma}^\lambda $ are symmetric with $ \gamma,\sigma $ by proposition $ \ref{coefficients of conection} $, while $ \theta^\gamma\wedge\theta^\sigma $ are anti-commutative.
	We therefore obtain the following integration on $ \partial M $
	\begin{align*}
		&\int_{\partial M}\left<u,{\rm grad}\rho\lrcorner {\rm d}_Qu\right>-\int_{\partial M}\left<\nabla_{{\rm grad}\rho}u,u\right>\\
		=&\int_{\partial M}\left<\theta^\gamma\wedge g^{\alpha\beta}X_\beta\left(\rho\right)X_\gamma\left(u_{\alpha I}\right)\theta^{\alpha I}+\theta^\gamma\wedge g^{\alpha\beta}X_\beta\left(\rho\right)\Gamma_{\gamma\alpha}^\lambda u_{\lambda I}\theta^{\lambda I},u\right>\\
		=&\int_{\partial M}\left<-g^{\alpha\beta}X_\beta\left(\rho\right)X_\gamma\left(u_{\alpha I}\right)+g^{\alpha\beta}X_\beta\left(\rho\right)\Gamma_{\gamma\alpha}^\lambda u_{\lambda I},g^{\gamma\sigma}u_{\sigma I'}\right>g^{II'},
	\end{align*}
	where $ g^{II'}:=\left<\theta^I,\theta^{I'}\right> $. 
	
	Due to Proposition \ref{boundary condition}, $ u\in\mathcal{D}_Q^r $ means that $ g^{\alpha\beta}X_\beta\left(\rho\right)u_{\alpha I}=0 $ on $ \partial M $ for any mult-index $ I $ with length $ r-1 $. Moreover, it implies that the tangential derivatives of $ g^{\alpha\beta}X_\beta\left(\rho\right)u_{\alpha I} $ also vanish on $\partial M$, i.e.,
	\begin{align*}
		0&\equiv g^{\sigma\gamma}u_{\sigma I'}X_\gamma\left(g^{\alpha\beta}X_\beta\left(\rho\right)u_{\alpha I}\right)\\
		&=g^{\sigma\gamma}u_{\sigma I'}X_\gamma\left(g^{\alpha\beta}u_{\alpha I}\right)X_\beta\left(\rho\right)+g^{\sigma\gamma}g^{\alpha\beta}u_{\sigma I'}u_{\alpha I}X_\gamma X_\beta\left(\rho\right)\\
		&=-g^{\sigma\gamma}u_{\sigma I'}\left(\Gamma_{\gamma\lambda}^\alpha g^{\lambda\beta}+\Gamma_{\gamma\lambda}^\beta g^{\lambda\alpha}\right)u_{\alpha I}X_\beta\left(\rho\right)+g^{\sigma\gamma}g^{\alpha\beta}u_{\sigma I'}X_\gamma\left(u_{\alpha I}\right)X_\beta\left(\rho\right)+g^{\sigma\gamma}g^{\alpha\beta}u_{\sigma I'}u_{\alpha I}X_\gamma X_\beta\left(\rho\right).
	\end{align*}
	Substituting indices yields
	\begin{align*}
		&\left<g^{\alpha\beta}X_\beta\left(\rho\right)\Gamma_{\gamma\alpha}^\lambda u_{\lambda I},g^{\gamma\sigma}u_{\sigma I'}\right>-\left<X_\beta\left(\rho\right)g^{\alpha\beta}X_\gamma\left(u_{\alpha I}\right),g^{\sigma\gamma}u_{\sigma I'}\right>\\
		=&\left<X_\gamma X_\beta\left(\rho\right)g^{\alpha\beta}u_{\alpha I},g^{\sigma\gamma}u_{\sigma I'}\right>-\left<\Gamma_{\gamma\beta}^\lambda X_\lambda\left(\rho\right)g^{\beta\alpha}u_{\alpha I},g^{\sigma\gamma}u_{\sigma I'}\right>\\
		=&\left<\nabla_{X_\beta,X_\gamma}^2\left(\rho\right)g^{\alpha\beta}u_{\alpha I},g^{\sigma\gamma}u_{\sigma I'}\right>
	\end{align*}
	on $\partial M$. Hence,
	\begin{align}\label{boundary integration}
		\int_{\partial M}\left<u,{\rm grad}\rho\lrcorner {\rm d}_Qu\right>-\int_{\partial M}\left<\nabla_{{\rm grad}\rho}u,u\right>=\int_{\partial M}\left<\rho_{\beta\gamma}g^{\alpha\beta}\left(X_\alpha\lrcorner u\right),g^{\gamma\sigma}\left(X_\sigma\lrcorner u\right)\right>.
	\end{align}
	
	At last, combining (\ref{integration of box}), (\ref{interation by parts}) and (\ref{boundary integration}), the proof is thus complete.
\end{proof}

\begin{remark}\label{Bochner for basic form}
	The term $ \int_{M}\left<g^{\alpha\beta}\wedge X_\alpha\lrcorner\nabla_{A\left(X_\beta,X_\gamma\right)}u,g^{\gamma\sigma}\left(X_\sigma\lrcorner u\right)\right> $ in the R.H.S. of $ (\ref{Bochner-Kodaira formula}) $ vanishes provided that $ u\in\mathcal{D}_B^r $. In fact, for $ u\in\mathcal{D}_B^r $ we have
	\begin{align*}
		\nabla_{A\left(X_\beta,X_\gamma\right)}u=\nabla_{\pi^\perp\left(\nabla_{X_\beta}^MX_\gamma\right)}u=\Gamma_{\beta\gamma}^a\nabla_{X_a}u.
	\end{align*}
	Let $ \lbrace X_{i_1},\cdots,X_{i_r}\rbrace $ be the subset of local basis $ \lbrace X_i\rbrace_{i=1}^{n} $ which is introduced in section 2, then
	\begin{align*}
		\nabla_{X_a}u\left(X_{i_1},\cdots,X_{i_r}\right)&=X_a\left(u\left(X_{i_1},\cdots,X_{i_r}\right)\right)-\sum_{m=1}^{r}u\left(X_{i_1},\cdots,\nabla_{X_a}X_{i_m}\cdots,X_{i_r}\right)\\
		&=\frac{\partial}{\partial x_a}u\left(X_{i_1},\cdots,X_{i_r}\right)-\sum_{m=1}^{r}u\left(X_{i_1},\cdots,\pi\left(\left[X_a,X_{i_m}\right]\right),\cdots,X_{i_r}\right)\\
		&=0.
	\end{align*}
	The last line is valid since the coefficients of basic forms are independent of $ x'=\left(x_1,\cdots,x_p\right) $ and $ \left[X_i,X_j\right]\in\Gamma L $ for all $ X_{i},X_{j}\in\{X_i\}_{i=1}^n $ which follows from the proof of Proposition $ \ref{coefficients of conection} $.
\end{remark}

Furthermore, we have an estimate relating the $H_{B,1}$-norm of $u\in\mathcal{D}_B^r$ to the $G_B$-norm as follows.
\begin{cor}\label{H1e}
	For $0\leq r\leq q$, if $u\in\tilde{\mathcal{D}}_B^r$, then there exists a constant $C>0$, depending only on the geometry of $\overline{M}$ such that 
	\begin{align*}
		\|u\|_{B,1}^2\leq CG_B\left(u,u\right).
	\end{align*}
\end{cor}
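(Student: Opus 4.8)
The goal is to bound the full Sobolev $1$-norm $\|u\|_{B,1}^2$ of a basic form $u\in\tilde{\mathcal{D}}_B^r$ by $G_B(u,u)=\|{\rm d}_Bu\|_B^2+\|\delta_Bu\|_B^2+\|u\|_B^2$. By density it suffices to prove this for $u\in\mathcal{D}_B^r$ (smooth basic forms satisfying ${\rm grad}\rho\lrcorner u=0$ on $\partial M$), since both sides extend continuously to the $G_B$-completion. The natural tool is the Bochner formula (\ref{Bochner-Kodaira formula}) applied to $u\in\mathcal{D}_B^r\subseteq\mathcal{D}_Q^r$, together with Remark \ref{Bochner for basic form}: for $u\in\mathcal{D}_B^r$ the second-fundamental-form term drops out, so
\begin{equation*}
	\|{\rm d}_Bu\|_B^2+\|\delta_Bu\|_B^2=\int_M g^{\alpha\beta}\left<\nabla_{X_\alpha}u,\nabla_{X_\beta}u\right>+\int_M\left<\mathcal{R}u,u\right>+\int_{\partial M}\left<\rho_{\beta\gamma}g^{\alpha\beta}(X_\alpha\lrcorner u),g^{\gamma\sigma}(X_\sigma\lrcorner u)\right>,
\end{equation*}
where I have lumped the curvature, Ricci and mean-curvature zero-order terms into a single bundle endomorphism $\mathcal{R}$ which is bounded in terms of the geometry of $\overline M$.

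**Key steps.** First, the leading term $\int_M g^{\alpha\beta}\langle\nabla_{X_\alpha}u,\nabla_{X_\beta}u\rangle$ is, up to a uniformly positive constant coming from ellipticity of $(g^{\alpha\beta})$, comparable to the $L^2$-norm of the transversal covariant derivative $\|\nabla^Q u\|_B^2:=\sum_\alpha\|\nabla_{X_\alpha}u\|_B^2$. Second, I must pass from the transversal derivative to the full derivative: for a basic form $u$, the leafwise derivatives $\nabla_{X_a}u$ vanish identically (this is exactly the computation already carried out in Remark \ref{Bochner for basic form}), so $\|\nabla^M u\|_B^2$ differs from $\|\nabla^Q u\|_B^2$ only by lower-order terms controlled by $\|u\|_B^2$. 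Combined with the local equivalence $'\|\cdot\|_{B,1}\thicksim\|\cdot\|_{B,1}$ of (\ref{norm}) and (\ref{norms on B}), this gives $\|u\|_{B,1}^2\lesssim\|\nabla^Q u\|_B^2+\|u\|_B^2$. Third, rearranging the Bochner identity,
\begin{equation*}
	\|\nabla^Q u\|_B^2\lesssim\|{\rm d}_Bu\|_B^2+\|\delta_Bu\|_B^2+\|u\|_B^2-\int_{\partial M}\left<\rho_{\beta\gamma}g^{\alpha\beta}(X_\alpha\lrcorner u),g^{\gamma\sigma}(X_\sigma\lrcorner u)\right>,
\end{equation*}
using that the zero-order term $\int_M\langle\mathcal{R}u,u\rangle$ is $O(\|u\|_B^2)$.

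**The main obstacle.** The delicate point is the boundary integral involving the transversal Levi form $\rho_{\beta\gamma}$ of the defining function: its sign is not controlled, so it cannot simply be discarded. The resolution is the standard one for such basic estimates: since $u\in\mathcal{D}_B^r$ satisfies ${\rm grad}\rho\lrcorner u=0$ on $\partial M$, the boundary integrand only sees the components $X_\alpha\lrcorner u$ with $X_\alpha$ \emph{tangent} to $\partial M$, so $|\int_{\partial M}\langle\rho_{\beta\gamma}g^{\alpha\beta}(X_\alpha\lrcorner u),g^{\gamma\sigma}(X_\sigma\lrcorner u)\rangle|\lesssim\int_{\partial M}|u|^2$. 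One then controls $\int_{\partial M}|u|^2$ by the interpolation/trace inequality $\int_{\partial M}|u|^2\leq\varepsilon\|\nabla^M u\|_B^2+C_\varepsilon\|u\|_B^2$ (valid for any $\varepsilon>0$), absorb the $\varepsilon\|\nabla^M u\|_B^2\lesssim\varepsilon(\|\nabla^Q u\|_B^2+\|u\|_B^2)$ term into the left-hand side by choosing $\varepsilon$ small, and conclude $\|\nabla^Q u\|_B^2\lesssim G_B(u,u)$. Putting everything together yields $\|u\|_{B,1}^2\leq C\,G_B(u,u)$ with $C$ depending only on the geometry of $\overline M$, as claimed.
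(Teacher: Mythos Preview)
Your proof is correct and follows essentially the same approach as the paper: apply the Bochner formula (\ref{Bochner-Kodaira formula}) to $u\in\mathcal{D}_B^r$, use Remark \ref{Bochner for basic form} to drop the second-fundamental-form term, bound the zero-order curvature terms by $\|u\|_B^2$, and absorb the boundary integral via the trace estimate $\int_{\partial M}|u|^2\le\varepsilon\|u\|_{B,1}^2+C_\varepsilon\|u\|_B^2$, then pass to $\tilde{\mathcal{D}}_B^r$ by density. One small simplification: since $\|\cdot\|_{B,1}$ is defined in (\ref{norms on B}) using the \emph{transversal} connection $\nabla$, the gradient term $\int_M g^{\alpha\beta}\langle\nabla_{X_\alpha}u,\nabla_{X_\beta}u\rangle$ is already (up to $\|u\|_B^2$) equal to $\|u\|_{B,1}^2$, so your detour through $\nabla^M$ and leafwise derivatives is unnecessary, though not incorrect.
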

\begin{proof}
	First we restrict ourselves to smooth basic forms $u\in\mathcal{D}_B^r$. By the Bochner formula $ \left(\ref{Bochner-Kodaira formula}\right) $ for $u\in\mathcal{D}_B^r$ and Remark \ref{Bochner for basic form} we read off
	\begin{align*}
		\|{\rm d}_Bu\|_B^2+\|\delta_Bu\|_B^2=&\int_{M}g^{\alpha\beta}\left<\nabla_{X_\alpha} u,\nabla_{X_\beta}u\right>+\int_{M}\left<\mathcal{A}_\tau u,u\right>+\int_{M}\left<R^\nabla_{X_\beta,X_\gamma}g^{\alpha\beta}\left(X_\alpha\lrcorner u\right),g^{\gamma\sigma}\left(X_\sigma\lrcorner u\right)\right>\\
		&+\int_{M}\left<Ric^\nabla\left(X_\gamma\right)\lrcorner u,g^{\gamma\sigma}\left(X_\sigma\lrcorner u\right)\right>+\int_{\partial M}\left<\rho_{\beta\gamma}g^{\alpha\beta}\left(X_\alpha\lrcorner u\right),g^{\gamma\sigma}\left(X_\sigma\lrcorner u\right)\right>\\
		=&\|u\|_{B,1}^2+O\left(\|u\|_B^2+\int_{\partial M}|u|^2\right).
	\end{align*}
	Thus, we can apply Stokes' theorem and Cauchy-Schwarz inequality to estimate
	\begin{align*}
		\int_{\partial M}|u|^2\leq\varepsilon\|u\|_{B,1}^2+C_\varepsilon\|u\|_B^2,
	\end{align*}
	which implies the desired inequality for smooth basic forms.
	
	Finally dropping the restriction of smoothness, one can conclude the assertion of the corollary for arbitrary $u\in\tilde{\mathcal{D}}_B^r$ by means of an approximating sequence $\{u_\nu\}_{\nu=1}^\infty\subseteq\mathcal{D}_B^r$ and a completeness argument.
\end{proof}

Using the terminology of Lions and Magenes \cite{LM72}, above corollary implies that the bilinear form $G_B$ is $H_{B,1}$-coercive on the space $\tilde{\mathcal{D}}_B^r$. This is a
crucial point to show the regularity for the solution of the equation $F_Bu=\omega$, while the corresponding result is false for Hermitian foliation which we will investigate in detail in the next chapter.

\subsection{The basic de Rham-Hodge decomposition}
In this section, we establish the regularity theorem of the equation $ F_Bu=\omega $ firstly. Since $ F_B=\Delta_B+{\rm Id} $ on smooth forms of ${\rm Dom}\left(F_B\right)$, we then get back to the study of basic Laplacian $ \Delta_B|_{{\rm Dom}\left(F_B\right)} $ from $ F_B $ and prove the Hodge decomposition theorem.

Since we will consider Sobolev norms locally, it's convenient for us to divide the local coordinate charts $ \{\left(x,U_l\right)\}_{l=1}^L $ into two parts $ \lbrace\left(x,U'_l\right)\rbrace_{l=1}^{L_1} $ and $ \lbrace\left(x,U''_l\right)\rbrace_{l=1}^{L_2} $, where $ U'_l\cap\partial M=\emptyset $ and $ U_l''\cap\partial M\neq\emptyset $. Let $ \lbrace\eta_l\rbrace_{l=1}^{L}:=\lbrace\eta_l'\rbrace_{l=1}^{L_1}\cup\lbrace\eta_l''\rbrace_{l=1}^{L_2} $ be a partition of unity subordinate to the covering $ \lbrace U_l\rbrace_{l=1}^{L} $, where $ \eta_l' $ and $ \eta_l'' $ with support in $ U'_l $ and $ U''_l $ respectively. For the boundary chart, we always use normal coordinate chart for convenience, i.e., the coordinate functions set is chosen as $ \lbrace x_1,\cdots,x_{n-1},\rho\rbrace $, and in this local chart we denote $ D_\rho=\frac{\partial}{\partial\rho} $, $ D_t^i=\frac{\partial}{\partial x_i} $ for $ i<n $, $ D_t^I:=\left(\frac{\partial}{\partial x}\right)^{I} $ if the $n^{th}$-component of $ I $ equals to zero.

Now, we are ready to formulate the existence and regularity theorem of the equation $ F_Bu=\omega $.
\begin{thm}\label{main theorem}
	Let $ 0\leq r\leq q $, for given $ \omega\in H_{B,0}^r $, there is a unique $ u\in\tilde{\mathcal{D}}_B^r $ such that $ F_Bu=\omega $. Moreover, if $ \omega\in\Omega_B^r\left(\overline{M}\right) $, then $ u\in\Omega_B^r\left(\overline{M}\right) $, and for each integer $ s\geq0 $, we have
	\begin{align}\label{estimate}
		\|u\|_{B,s+2}^2\lesssim\|\omega\|_{B,s}^2.
	\end{align}
\end{thm}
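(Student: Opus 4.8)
The existence and uniqueness part is immediate from the Friedrichs machinery: since $G_B$ satisfies $G_B(u,u)\geq\|u\|_B^2$ on the dense subspace $\tilde{\mathcal D}_B^r$ of $H_{B,0}^r$ (Proposition \ref{verification of Friedrichs extension theorem}), Theorem \ref{Friedrichs extension theorem} produces the self-adjoint operator $F_B$ with $\mathrm{Dom}(F_B)\subseteq\tilde{\mathcal D}_B^r$; moreover $G_B(u,u)\geq\|u\|_B^2$ forces $F_B\geq\mathrm{Id}$, so $F_B$ is invertible and for $\omega\in H_{B,0}^r$ there is a unique $u=F_B^{-1}\omega\in\tilde{\mathcal D}_B^r$. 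The whole substance of the theorem is therefore the interior and boundary elliptic regularity: if $\omega\in\Omega_B^r(\overline M)$ then $u$ is smooth up to the boundary together with the estimate (\ref{estimate}).

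The plan is to transfer the problem to the $Q$-operators and use the coercive boundary-value-problem theory of Agmon--Douglis--Nirenberg \cite{ADN5964}. By (\ref{FQFB}) we have $F_Q|_{\mathrm{Dom}(F_B)}=F_B$, and by Proposition \ref{difference of two Laplacian} (its $Q$-analogue) $F_Q=\Delta_Q+\mathrm{Id}$ is a strongly elliptic second-order operator on $\Omega_Q^r(\overline M)$. The two boundary conditions cutting out $\mathrm{Dom}(F_B)$ are $\mathrm{grad}\,\rho\lrcorner u=0$ on $\partial M$ (from $u\in\mathcal D_B^r$, Proposition \ref{boundary condition}) and $\mathrm{grad}\,\rho\lrcorner\mathrm d_B u=0$ on $\partial M$ (from $\mathrm d_B u\in\mathcal D_B^{r+1}$, Proposition \ref{expression of $ F_B $}); equivalently $\mathrm{grad}\,\rho\lrcorner u=0$ and $\mathrm{grad}\,\rho\lrcorner\mathrm d_Q u=0$. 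I would verify that this pair of boundary operators satisfies the complementing (Lopatinski--Shapiro) condition for the strongly elliptic system $F_Q$ — this is the standard check that $\{\mathrm d,\delta\}$-type relative boundary conditions are coercive, and it is exactly the content that Corollary \ref{H1e} already expresses analytically, namely that $G_B$ is $H_{B,1}$-coercive on $\tilde{\mathcal D}_B^r$. Given coercivity, the ADN a priori estimate gives, for the $Q$-problem, $\|v\|_{s+2}^2\lesssim\|F_Q v\|_s^2+\|v\|_0^2$ for $v\in\Omega_Q^r(\overline M)$ satisfying the boundary conditions, together with the regularity statement that $F_Q v\in H_s^r$ forces $v\in H_{s+2}^r$.

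The remaining—and genuinely foliation-specific—step is to show that this $Q$-regularity descends to the basic setting: one must know that the solution $u$, a priori only in $\tilde{\mathcal D}_B^r\subseteq H_{B,0}^r$, actually lies in $\Omega_Q^r(\overline M)$ with its derivatives controlled, and that the resulting smooth form is still basic. Here I would argue as follows. Localize with the partition of unity $\{\eta_l',\eta_l''\}$; in an interior chart $U_l'$ standard interior elliptic regularity for $\Delta_Q+\mathrm{Id}$ applies directly to $\eta_l' u$, using that $\|u\|_B$-norms dominate the relevant $Q$-Sobolev norms of basic forms and that $\Delta_Q u = \Delta_B u + S'(u)$ with $S'$ of first order. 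In a boundary chart $U_l''$ with normal coordinates $\{x_1,\dots,x_{n-1},\rho\}$, one runs the tangential difference-quotient argument: the tangential derivatives $D_t^I(\eta_l'' u)$ satisfy the same coercive boundary problem up to lower-order commutator terms (the coefficients and $\rho$ being basic ensures the commutators stay within basic forms and are controlled by lower Sobolev norms), giving control of all tangential derivatives; then the equation $F_Q u=\omega$ itself, being strongly elliptic, lets one solve for $D_\rho^2(\eta_l'' u)$ in terms of tangential derivatives and $\omega$, and one bootstraps in $s$. Finally, $u\in\Omega_Q^r(\overline M)$ together with $u\in H_{B,0}^r$ implies $u\in\Omega_B^r(\overline M)$ because $H_{B,s}^r$ is the closure of basic forms and, by Proposition \ref{Rellich and Sobolev lemma}, sufficiently high $H_{B,s}$-regularity yields continuous basic representatives. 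Summing the local estimates over $l$ gives (\ref{estimate}).

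The main obstacle is the boundary analysis: verifying the complementing condition for the pair $(\mathrm{grad}\,\rho\lrcorner\,\cdot\,,\ \mathrm{grad}\,\rho\lrcorner\mathrm d_Q\,\cdot\,)$ against $\Delta_Q+\mathrm{Id}$, and carrying the difference-quotient bootstrap through in a way that respects the basic structure — in particular making sure every commutator produced by differentiating $\eta_l''$, the metric coefficients, or the connection terms either stays among basic forms or is absorbed into lower-order error. Because $\rho$ is basic and the local frames $X_\alpha$ are adapted to the foliation, these commutators do not mix in the leaf directions, which is precisely what makes the descent to basic forms work; spelling this out carefully is the technical heart of the argument. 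The coercivity already recorded in Corollary \ref{H1e} is the $s=0$ seed of the whole induction.
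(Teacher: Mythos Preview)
Your plan is essentially correct and very close to the paper's own argument, but there are two points worth flagging.

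First, a framing difference. You propose to invoke the ADN machinery and verify the Lopatinski--Shapiro complementing condition for the pair $(\mathrm{grad}\,\rho\lrcorner\,\cdot,\ \mathrm{grad}\,\rho\lrcorner\,\mathrm d_Q\,\cdot)$. The paper bypasses this abstract step entirely: instead of checking the complementing condition, it reads coercivity directly off the Bochner formula (\ref{Bochner-Kodaira formula}). Concretely, for $D_t^I\eta_l''u\in\mathcal D_Q^r$ the Bochner identity gives $\|D_t^I\eta_l''u\|_1^2\lesssim G_Q(D_t^I\eta_l''u,D_t^I\eta_l''u)+\|u\|_{B,m}^2$ (Lemma \ref{main 1}), which is exactly the G\aa rding-type input you need. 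Then Lemma \ref{main 3} moves the tangential derivatives across $G_Q$ by commutator bookkeeping, and the ellipticity of $F_B$ recovers $D_\rho^2$ from tangential data. So your outline and the paper's proof share the same skeleton (coercivity $\Rightarrow$ tangential control $\Rightarrow$ normal direction via ellipticity $\Rightarrow$ bootstrap, then difference quotients for regularity); the paper just never names ADN and gets coercivity from the Bochner formula rather than from an abstract boundary-condition check. Your route would also work, but the Bochner route is more self-contained here.

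Second, a small but real imprecision in your descent step. You write that ``the coefficients and $\rho$ being basic ensures the commutators stay within basic forms.'' This is not how it works: the localized forms $D_t^I\eta_l'' u$ are \emph{not} basic in general (the cutoffs $\eta_l''$ need not be basic), and neither are the commutator terms. The paper handles this by working throughout with the $Q$-quadratic form $G_Q$ on $\mathcal D_Q^r$ (which contains $D_t^I\eta_l'' u$ since multiplication by smooth functions and tangential differentiation preserve the boundary condition), and only at the last step uses (\ref{FQFB}), i.e.\ $F_Q|_{\mathrm{Dom}(F_B)}=F_B$, to rewrite $G_Q(u,\eta_l''(D_t^I)^*D_t^I\eta_l''u)=(F_Bu,\eta_l''(D_t^I)^*D_t^I\eta_l''u)$. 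The error terms are then controlled by \emph{basic} Sobolev norms $\|u\|_{B,m}$ simply because $u$ itself is basic, not because the intermediate objects are. Your argument is repairable along exactly these lines, but as written the phrase ``commutators stay within basic forms'' would mislead a reader.
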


We will prove the estimate ($ \ref{estimate} $) for smooth basic forms at first, it suffices to prove the desired estimate locally. For this purpose, we need the following series of lemmas.

\begin{lemma}\label{main 1}
	For $ 0\leq r\leq q $, let $ I $ be a multi-index with order $ m $, and let $ D^{I} $ act on the components of forms in $ U_l' $, then 
	$$ \|D^{I}\eta_l'u\|_1^2\lesssim G_Q\left(D^{I}\eta_l'u,D^{I}\eta_l'u\right)+\|u\|_{B,m}^2 $$ for all 
	$ u\in\Omega_{B}^r\left(\overline{M}\right) $. In addition, if we replace $ D^I $ and $ \eta_l' $ by $ D_t^I $ and $ \eta_l'' $, then above estimate still holds for any $u\in\mathcal{D}_B^r$.
\end{lemma}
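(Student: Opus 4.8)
The plan is to reduce the estimate to an interior elliptic inequality plus a boundary (tangential-derivative) inequality, exploiting that $\Delta_B$ is the restriction of a strongly elliptic operator on $\overline M$ (Proposition \ref{difference of two Laplacian}) and that $G_Q$ (equivalently $G_B$) is $H_{B,1}$-coercive on the relevant domain (Corollary \ref{H1e}). For the interior charts $U_l'$, the operator $D^I\eta_l'$ only commutes with $\mathrm{d}_Q,\delta_Q$ up to a differential operator of order $|I|=m$ in $u$, so I would write $\mathrm{d}_Q(D^I\eta_l'u)=D^I\eta_l'\mathrm{d}_Qu+[\mathrm{d}_Q,D^I\eta_l']u$ and similarly for $\delta_Q$; each commutator contains at most $m$ derivatives of $u$ hitting the components, hence is controlled by $\|u\|_{B,m}$. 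Expanding $G_Q(D^I\eta_l'u,D^I\eta_l'u)=\|\mathrm{d}_Q(D^I\eta_l'u)\|^2+\|\delta_Q(D^I\eta_l'u)\|^2+\|D^I\eta_l'u\|^2$ and substituting these decompositions, the cross terms are absorbed by Cauchy–Schwarz (possibly with a small parameter $\varepsilon$), leaving $\|\mathrm{d}_Q(D^I\eta_l'u)\|^2+\|\delta_Q(D^I\eta_l'u)\|^2 \gtrsim$ (something) $-\|u\|_{B,m}^2$. Then I would invoke the Bochner formula $(\ref{Bochner-Kodaira formula})$ applied to the compactly supported form $D^I\eta_l'u\in\mathcal D_Q^r$ (here the boundary term drops since $U_l'\cap\partial M=\emptyset$, and the second-fundamental-form term is lower order): it identifies $\|\mathrm{d}_Q(D^I\eta_l'u)\|^2+\|\delta_Q(D^I\eta_l'u)\|^2$ with $\int_M g^{\alpha\beta}\langle\nabla_{X_\alpha}(D^I\eta_l'u),\nabla_{X_\beta}(D^I\eta_l'u)\rangle$ plus zeroth- and first-order curvature/mean-curvature terms, and the leading term is comparable to $\|D^I\eta_l'u\|_1^2$ modulo $O(\|D^I\eta_l'u\|^2)$. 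Collecting everything and using $\|D^I\eta_l'u\|\lesssim\|u\|_{B,m}$ gives the claimed inequality $\|D^I\eta_l'u\|_1^2\lesssim G_Q(D^I\eta_l'u,D^I\eta_l'u)+\|u\|_{B,m}^2$.

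For the boundary charts $U_l''$, I would use the normal coordinates $\{x_1,\dots,x_{n-1},\rho\}$ and the purely tangential operator $D_t^I$ (no $D_\rho$), applied to $u\in\mathcal D_B^r$. The crucial point is that $D_t^I\eta_l''$ preserves the boundary condition $\mathrm{grad}\,\rho\lrcorner u=0$ on $\partial M$ characterizing $\mathcal D_B^r$ (Proposition \ref{boundary condition}): since $\rho$ is basic and the tangential derivatives do not differentiate across $\partial M$, $D_t^I\eta_l''u$ still lies in $\mathcal D_B^r\subseteq\mathcal D_Q^r$, so the Bochner formula $(\ref{Bochner-Kodaira formula})$ is legitimately applicable to it. The new feature relative to the interior case is the boundary integral $\int_{\partial M}\langle\rho_{\beta\gamma}g^{\alpha\beta}(X_\alpha\lrcorner\,\cdot),g^{\gamma\sigma}(X_\sigma\lrcorner\,\cdot)\rangle$ in the Bochner identity; this is bounded by $C\int_{\partial M}|D_t^I\eta_l''u|^2$, which by the trace/interpolation inequality (as used at the end of the proof of Corollary \ref{H1e}) is $\le\varepsilon\|D_t^I\eta_l''u\|_1^2+C_\varepsilon\|D_t^I\eta_l''u\|^2$; taking $\varepsilon$ small absorbs it into the leading $H_1$-term. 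The commutator terms $[\mathrm{d}_Q,D_t^I\eta_l'']u$ and $[\delta_Q,D_t^I\eta_l'']u$ are handled exactly as in the interior case, each costing only $\|u\|_{B,m}$, and the zeroth/first-order terms in the Bochner formula are again absorbed with a small parameter. This yields $\|D_t^I\eta_l''u\|_1^2\lesssim G_Q(D_t^I\eta_l''u,D_t^I\eta_l''u)+\|u\|_{B,m}^2$.

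The main obstacle I anticipate is bookkeeping the commutators $[\mathrm{d}_Q,D^I\eta_l']$, $[\delta_Q,D^I\eta_l']$ and their boundary analogues: one must verify that differentiating the (variable) coefficients $g^{\alpha\beta}$, the connection coefficients $\Gamma$, and the cutoff $\eta_l'$ never produces more than $m$ derivatives falling on the components of $u$, so that all commutator contributions are genuinely $O(\|u\|_{B,m})$ and not $O(\|u\|_{B,m+1})$. A secondary subtlety is ensuring the transversal nature of the estimate is respected — i.e., that working with $D^I$ acting on the components in a distinguished chart interacts correctly with basicness, which is why one phrases everything through $G_Q$ and the transversal operators $\mathrm{d}_Q,\delta_Q$ rather than the full $\mathrm{d},\delta$; the reduction $F_Q|_{\mathrm{Dom}(F_B)}=F_B$ of $(\ref{FQFB})$ is what makes this legitimate. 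Once these are in hand, the estimate is, as the authors say, a matter of applying the Bochner formula together with Cauchy–Schwarz and the trace inequality.
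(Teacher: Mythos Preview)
Your core argument is correct and matches the paper's: apply the Bochner formula (\ref{Bochner-Kodaira formula}) directly to the form $v=D^{I}\eta_l'u$ (resp.\ $v=D_t^{I}\eta_l''u$), observe that $v\in\mathcal D_Q^r$ because it is compactly supported in the interior (resp.\ because tangential differentiation preserves the boundary condition $\mathrm{grad}\,\rho\lrcorner u=0$), read off the transversal gradient term $\|\nabla v\|^2\sim\|v\|_1^2$, bound the zero-order curvature and mean-curvature terms by $O(\|v\|^2)\lesssim\|u\|_{B,m}^2$, and in the boundary case absorb the Levi-form integral $\int_{\partial M}|v|^2$ via the trace inequality $\int_{\partial M}|v|^2\le\varepsilon\|v\|_1^2+C_\varepsilon\|v\|^2$.

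However, the whole first part of your argument --- writing $\mathrm d_Q(D^{I}\eta_l'u)=D^{I}\eta_l'\mathrm d_Qu+[\mathrm d_Q,D^{I}\eta_l']u$ and controlling commutators --- is not needed here and reflects a conflation with Lemma \ref{main 3}. The statement of Lemma \ref{main 1} has $G_Q(D^{I}\eta_l'u,D^{I}\eta_l'u)$ already sitting on the right-hand side; you are not asked to relate it to $G_B(u,u)$ or to $\|D^{I}\eta_l'\mathrm d_Qu\|^2$, so there is nothing to commute. The only place a commutator-type remark is relevant is in checking that the second-fundamental-form term $\int_M\langle\cdot,\nabla_{A(X_\beta,X_\gamma)}v\rangle$ (a leaf derivative of $v$ paired with $v$) is $O(\|u\|_{B,m}^2)$; since $u$ is basic, that leaf derivative falls on $\eta_l'$ and the chart coefficients only, keeping the total order in $u$ at $m$. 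Once you strip out the unnecessary commutator paragraph, your proof is exactly the paper's.
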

\begin{proof}
	Since the interior case is simpler, we only prove the estimate on a normal coodinate chart $U''$. Since $ D_t^I\eta''u\in\mathcal{D}_Q^r $, reading the Bochner formula $ \left(\ref{Bochner-Kodaira formula}\right) $ for $D_t^I\eta''u$ yields that for any $ 0<\varepsilon<<1 $, there exists constant $C_\varepsilon>0$ such that
	\begin{align*}
		G_Q\left(D_t^I\eta''u,D_t^I\eta''u\right)&=\|\nabla D_t^I\eta''u\|^2+\int_{\partial M}\left<\rho_{\beta\gamma}g^{\alpha\beta}\left(X_\alpha\lrcorner D_t^I\eta''u\right),g^{\gamma\sigma}\left(X_\sigma\lrcorner D_t^I\eta''u\right)\right>+O\left(\|u\|_{B,m}^2\right)\\
		&=\|D_t^I\eta''u\|_1^2+O\left(\int_{\partial M}|D_t^I\eta''u|^2+\|u\|_{B,m}^2\right)\nonumber\\
		&\geq\left(1-\varepsilon\right)\|D_t^I\eta''u\|_1^2+C_\varepsilon\|u\|_{B,m}^2,
	\end{align*}
	where the last line follows from Stokes' theorem and Cauchy-Schwarz inequality.
\end{proof}

\begin{lemma}\label{main 3}
	For $ 0\leq r\leq q $. Let $ I $ be multi-index with order $ m $, and let $ D^{I} $ act on the components of forms, then for all $ u\in\Omega_B^r\left(\overline{M}\right) $ we have
	\begin{align}\label{integration by prats for G_Q}
		G_Q\left(D^{I}\eta_l' u,D^{I}\eta_l' u\right)=G_Q\left(u,\eta_l' \left(D^{I}\right)^*D^{I}\eta_l' u\right)+O\left(\|u\|_{B,m}\|u\|_{B,m+1}\right),
	\end{align}
    where $ \left(D^{I}\right)^* $ is formal adjoint of $ D^{I} $. Additionally, if we replace $ D^I $ and $ \eta_l' $ by $ D_t^I $ and $ \eta_l'' $, then equality $ (\ref{integration by prats for G_Q}) $ still hold for all $ u\in\Omega_B^r\left(\overline{M}\right) $.
\end{lemma}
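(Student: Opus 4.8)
The plan is to expand both sides of (\ref{integration by prats for G_Q}) using the defining formula for $G_Q$, namely $G_Q(v,v') = ({\rm d}_Qv,{\rm d}_Qv') + (\delta_Qv,\delta_Qv') + (v,v')$, and to match the leading-order (highest-derivative) terms, absorbing the remainder into $O(\|u\|_{B,m}\|u\|_{B,m+1})$. The mechanism is the usual one for such ``integration by parts in the tangential direction'' arguments: the operators $D^I$ (resp.\ $D_t^I$) commute with ${\rm d}_Q$ and $\delta_Q$ \emph{up to lower-order terms}, because ${\rm d}_Q = \theta^\alpha\wedge\nabla_{X_\alpha}$ and $\delta_Q = -g^{\alpha\beta}X_\alpha\lrcorner\nabla_{X_\beta} + \tau\lrcorner$ have coefficients ($g^{\alpha\beta}$, Christoffel symbols $\Gamma^\gamma_{\alpha\beta}$, the mean curvature $\tau$) that are smooth functions on $\overline{M}$; each time $D^I$ is pushed past one of these it produces a commutator whose order is $\le m$ in $u$ and which is therefore controlled by $\|u\|_{B,m}$. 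Similarly the cutoff $\eta_l'$ (resp.\ $\eta_l''$) is smooth and compactly supported, so $[\eta_l', {\rm d}_Q]$ and $[\eta_l',\delta_Q]$ are zeroth-order.

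First I would treat the interior case with $D^I$ and $\eta_l'$. Write $v := D^I\eta_l' u$. For the ${\rm d}_Q$-term,
\begin{align*}
	({\rm d}_Q v, {\rm d}_Q v) = ({\rm d}_Q D^I\eta_l' u, {\rm d}_Q D^I\eta_l' u) = (D^I\eta_l'{\rm d}_Q u + [\,{\rm d}_Q, D^I\eta_l'\,]u,\ {\rm d}_Q D^I\eta_l' u),
\end{align*}
and since the commutator $[\,{\rm d}_Q, D^I\eta_l'\,]u$ has order $m$ in $u$ while ${\rm d}_Q D^I\eta_l' u$ has order $m+1$, that cross term is $O(\|u\|_{B,m}\|u\|_{B,m+1})$. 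Next move one copy of $D^I$ across the $L^2$ pairing to its formal adjoint $(D^I)^*$, picking up only lower-order boundary-free terms (the chart $U_l'$ is disjoint from $\partial M$, so there is no boundary contribution at all here), and then commute it back past ${\rm d}_Q$ and $\eta_l'$, again at the cost of $O(\|u\|_{B,m}\|u\|_{B,m+1})$; this reassembles $({\rm d}_Q u, {\rm d}_Q\, \eta_l'(D^I)^* D^I\eta_l' u)$. The $\delta_Q$-term and the zeroth-order term $(v,v)$ are handled identically (the latter trivially, since it is already of order $2m$ and re-pairing costs nothing beyond smooth coefficients). Summing the three pieces yields the claimed identity.

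For the boundary case, replace $D^I$ by the purely tangential operator $D_t^I$ and $\eta_l'$ by $\eta_l''$ in a normal coordinate chart. The only subtlety is that now $U_l''\cap\partial M\ne\emptyset$, so moving $D_t^I$ across the $L^2$ inner product via integration by parts could in principle produce boundary integrals on $\partial M = \{\rho = 0\}$; but because $D_t^I$ differentiates only in the coordinate directions $x_1,\dots,x_{n-1}$ tangent to $\partial M$ (its $n$th component being zero, by the convention fixed just before the theorem), integration by parts in each such direction is boundary-free, exactly as in the interior case. Hence the same commutation-and-re-pairing argument goes through verbatim, and the identity holds for all $u\in\Omega_B^r(\overline{M})$. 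The main point to be careful about — and the only real obstacle — is bookkeeping the orders of the various commutators to confirm that every error term is genuinely bounded by the product $\|u\|_{B,m}\|u\|_{B,m+1}$ rather than by $\|u\|_{B,m+1}^2$; this is where one uses that each commutator kills exactly one derivative, together with the equivalence $'\|\cdot\|_{B,s}\thicksim\|\cdot\|_{B,s}$ so that the local chart norms assemble into the global Sobolev norms.
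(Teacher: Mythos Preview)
Your proposal is correct and follows essentially the same approach as the paper: commute $D^I\eta_l'$ (resp.\ $D_t^I\eta_l''$) past ${\rm d}_Q$ and $\delta_Q$, integrate by parts (boundary-free in both the interior and purely tangential cases), and commute back, absorbing all commutator terms into $O(\|u\|_{B,m}\|u\|_{B,m+1})$. The one step the paper makes explicit that you only flag as ``the main point to be careful about'' is the estimate of $({\rm d}_Q u,\,[\eta_l'(D^I)^*,{\rm d}_Q]D^I\eta_l' u)$, which it bounds by $\|{\rm d}_Q u\|_m\cdot\|[\eta_l'(D^I)^*,{\rm d}_Q]D^I\eta_l' u\|_{-m}\lesssim\|u\|_{B,m+1}\|u\|_{B,m}$ via the $H_m$--$H_{-m}$ duality; without this redistribution of derivatives the naive count gives $\|u\|_{B,1}\|u\|_{B,2m}$, which is not enough.
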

\begin{proof}
	For $ u\in\Omega_B^r\left(\overline{M}\right) $, then
	\begin{align*}
		&\left({\rm d}_QD^{I}\eta_l' u,{\rm d}_QD^{I}\eta_l' u\right)\nonumber\\
		=&\left(D^{I}\eta_l' {\rm d}_Qu,{\rm d}_QD^{I}\eta_l' u\right)+O\left(\|u\|_{B,m}\|u\|_{B,m+1}\right)\\
		=&\left({\rm d}_Qu,\eta_l'\left(D^{I}\right)^*{\rm d}_QD^{I}\eta_l' u\right)+O\left(\|u\|_{B,m}\|u\|_{B,m+1}\right)\\
		=&\left({\rm d}_Qu,{\rm d}_Q\eta_l'\left(D^{I}\right)^*D^{I}\eta_l' u\right)+\left({\rm d}_Qu,\left[\eta_l'\left(D^{I}\right)^*,{\rm d}_Q\right]D^{I}\eta_l' u\right)+O\left(\|u\|_{B,m}\|u\|_{B,m+1}\right)\\
		=&\left({\rm d}_Qu,{\rm d}_Q\eta_l'\left(D^{I}\right)^*D^{I}\eta_l' u\right)+\|{\rm d}_Qu\|_{m}\|\left[\eta_l'\left(D^{I}\right)^*,{\rm d}_Q\right]D^{I}\eta_l' u\|_{-m}+O\left(\|u\|_{B,m}\|u\|_{B,m+1}\right)\\
		=&\left({\rm d}_Qu,{\rm d}_Q\eta_l'\left(D^{I}\right)^*D^{I}\eta_l' u\right)+O\left(\|u\|_{B,m}\|u\|_{B,m+1}\right).
	\end{align*}
	Correspondingly, we have $ \left(\delta_QD^{I}\eta_l' u,\delta_QD^{I}\eta_l' u\right)=\left(\delta_Qu,\delta_Q\eta_l'\left(D^{I}\right)^*D^{I}\eta_l' u\right)+O\left(\|u\|_{B,m}\|u\|_{B,m+1}\right) $. Adding them gives the equality $(\ref{integration by prats for G_Q})$.
	
	The key point in above calculation is that all boundary terms in integration by parts vanish, which is also true for $D_t^I\eta_l''u$ since $ D_t^I $ is tangential. Thus, the parallel calculation of the proof given above implies the conclusion if we replace $ D^I $ and $ \eta_l' $ by $ D_t^I $ and $ \eta_l'' $.
\end{proof}

\begin{lemma}\label{main 4}
	For $ 0\leq r\leq q $, then $ \|u\|_{B,1}^2\lesssim\|F_Bu\|_{B}^2 $ for all $ u\in{\rm Dom}\left(F_B\right) $.
\end{lemma}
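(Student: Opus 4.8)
This estimate is a direct consequence of the coercivity already established in Corollary \ref{H1e} together with the variational characterization of the Friedrichs operator. First I would recall that, by the Friedrichs Extension Theorem (Theorem \ref{Friedrichs extension theorem}) applied to the form $G_B$ on $\tilde{\mathcal{D}}_B^r$, one has $\mathrm{Dom}(F_B) \subseteq \tilde{\mathcal{D}}_B^r$ and the identity
\[
G_B(u,v) = (F_Bu,v)_B \qquad \text{for all } u \in \mathrm{Dom}(F_B),\ v \in \tilde{\mathcal{D}}_B^r .
\]
In particular this holds with $v = u$, so $G_B(u,u) = (F_Bu,u)_B$.

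Next, since $G_B(u,u) \geq \|u\|_B^2$ by construction, the Cauchy--Schwarz inequality gives
\[
\|u\|_B^2 \leq G_B(u,u) = (F_Bu,u)_B \leq \|F_Bu\|_B\,\|u\|_B ,
\]
hence $\|u\|_B \leq \|F_Bu\|_B$. Now I invoke Corollary \ref{H1e}, which applies because $u \in \tilde{\mathcal{D}}_B^r$: there is a constant $C>0$ depending only on the geometry of $\overline{M}$ with
\[
\|u\|_{B,1}^2 \leq C\,G_B(u,u) = C\,(F_Bu,u)_B \leq C\,\|F_Bu\|_B\,\|u\|_B \leq C\,\|F_Bu\|_B^2 ,
\]
using the bound $\|u\|_B \leq \|F_Bu\|_B$ just obtained in the last step. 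This is precisely the claimed inequality $\|u\|_{B,1}^2 \lesssim \|F_Bu\|_B^2$.

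I do not expect any real obstacle here: the whole content has been front-loaded into Corollary \ref{H1e} (the $H_{B,1}$-coercivity of $G_B$ on $\tilde{\mathcal{D}}_B^r$, itself resting on the Bochner formula (\ref{Bochner-Kodaira formula}), Remark \ref{Bochner for basic form}, and a trace/Cauchy--Schwarz absorption of the boundary term), and into the Friedrichs construction that produces the identity $G_B(u,u)=(F_Bu,u)_B$. The only point requiring a word of care is that all three ingredients — the coercivity estimate, the variational identity, and the elementary bound $\|u\|_B \leq \|F_Bu\|_B$ — are legitimately available for every $u \in \mathrm{Dom}(F_B)$, which is guaranteed by the inclusion $\mathrm{Dom}(F_B) \subseteq \tilde{\mathcal{D}}_B^r$; no density or approximation argument beyond what is already packaged in Corollary \ref{H1e} is needed.
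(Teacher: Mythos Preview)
Your proof is correct and follows essentially the same route as the paper: invoke Corollary~\ref{H1e} for $u\in\tilde{\mathcal{D}}_B^r$, use the Friedrichs identity $G_B(u,u)=(F_Bu,u)_B$, and then control $(F_Bu,u)_B$ by $\|F_Bu\|_B^2$ via the elementary bound $\|u\|_B\le\|F_Bu\|_B$. The only cosmetic difference is that the paper phrases the last step as ``$F^{-1}$ is bounded'' rather than deriving $\|u\|_B\le\|F_Bu\|_B$ explicitly from $G_B(u,u)\ge\|u\|_B^2$ as you do.
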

\begin{proof}
	Taking into account Corollary \ref{H1e} for $ u\in{\rm Dom}\left(F_B\right) $,
	\begin{align*}
		\|u\|_{B,1}^2\lesssim G_B\left(u,u\right)=\left(F_Bu,u\right)_B\leq\|F_Bu\|_B^2+\|u\|_B^2\leq\|F_Bu\|_B^2.
	\end{align*}
    The last inequality is valid since $ F^{-1} $ is a bounded operator.
\end{proof}

On the basis of these preliminary computations we can prove the estimate (\ref{estimate}) for smooth forms.

\renewcommand{\proofname}{\bf $ Proof\ of\ estimate\ (\ref{estimate}) $}
\begin{proof}
	We prove the desired estimate (\ref{estimate}) using induction on $ s $. For $ s=0 $, when $ u\in\Omega_B^r\left(\overline{M}\right)\cap{\rm Dom}(F_B) $ from $ (\ref{norm}) $ we have
	\begin{align*}
		\|u\|_{B,2}^2\thicksim&\sum_{\alpha=p+1}^{n}\sum_{l=1}^{L_1}\|D_\alpha\eta_l'u\|_1^2+\sum_{\alpha=p+1}^{n-1}\sum_{l=1}^{L_2}\|D_t^\alpha\eta_l''u\|_1^2+\sum_{l=1}^{L_2}\|D_\rho\eta_l''u\|_1^2+\|u\|_{B,1}^2\\
		\thicksim&\sum_{\alpha=p+1}^{n}\sum_{l=1}^{L_1}\|D_\alpha\eta_l' u\|_1^2+\sum_{\alpha=p+1}^{n-1}\sum_{l=1}^{L_2}\|D_t^\alpha\eta_l''u\|_1^2+\sum_{l=1}^{L_2}\|D_\rho^2\eta_l''u\|^2+\|u\|_{B,1}^2.
	\end{align*}
    
    We first handle the third term. Since $ F_B $ is elliptic, then
    \begin{align*}
    	F_Bu=AD_\rho^2u+\sum_{\alpha=p+1}^{n-1}B_\alpha D_t^\alpha D_\rho u+\sum_{\alpha,\beta=p+1}^{n-1}C_{\alpha\beta}D_t^\alpha D_t^\beta u+\cdots,
    \end{align*}
    where $ A,B_\alpha,C_{\alpha\beta} $ are invertible and dots denote lower order terms. Above expression gives us
    \begin{align}\label{ellipticity}
    	D_\rho^2\eta_l''u=\eta_l''A^{-1}F_Bu-\sum_{\alpha=p+1}^{n-1}A^{-1}B_\alpha D_t^\alpha D_\rho\eta_l''u-\sum_{\alpha,\beta=p+1}^{n-1}A^{-1}C_{\alpha\beta}D_t^\alpha D_t^\beta\eta_l''u+\cdots,
    \end{align}
    where dots denote lower order terms. Thus,
    \begin{align}\label{the estimate of D_rho^2}
    	\|D_\rho^2\eta_l''u\|^2\lesssim\|F_Bu\|_B^2+\sum_{\alpha=p+1}^{n-1}\|D_t^\alpha\eta_l''u\|_1^2+\|u\|_{B,1}^2.
    \end{align}
    
    Then for any $ 0<\varepsilon<<1 $, there exists constant $ C_\varepsilon>0 $ such that
    \begin{align*}
        \|u\|_{B,2}^2\lesssim&\sum_{\alpha=p+1}^{n}\sum_{l=1}^{L_1}\|D_\alpha\eta_l' u\|_1^2+\sum_{\alpha=p+1}^{n-1}\sum_{l=1}^{L_2}\|D_t^\alpha\eta_l''u\|_1^2+\|F_Bu\|_B^2+\|u\|_{B,1}^2\\
        \lesssim&\sum_{\alpha=p+1}^{n}\sum_{l=1}^{L_1}G_Q\left(D_\alpha\eta_l' u,D_\alpha\eta_l' u\right)+\sum_{\alpha=p+1}^{n-1}\sum_{l=1}^{L_2}G_Q\left(D_t^{\alpha}\eta_l''u,D_t^{\alpha}\eta_l''u\right)+\|F_Bu\|_B^2+\|u\|_{B,1}^2\\
        \lesssim&\sum_{\alpha=p+1}^{n}\sum_{l=1}^{L_1}G_Q\left(u,\eta_l'\left(D_\alpha\right)^*D_\alpha\eta_l' u\right)+\sum_{\alpha=p+1}^{n-1}\sum_{l=1}^{L_2}G_Q\left(u,\eta_l''\left(D_t^{\alpha}\right)^*D_t^{\alpha}\eta_l''u\right)+\|F_Bu\|_B^2+\|u\|_{B,1}^2\\
        &+O\left(\|u\|_{B,1}\|u\|_{B,2}\right)\\
        =&\sum_{\alpha=p+1}^{n}\sum_{l=1}^{L_1}\left(F_Bu,\eta_l'\left(D_\alpha\right)^*D_\alpha\eta_l' u\right)+\sum_{\alpha=p+1}^{n-1}\sum_{l=1}^{L_2}\left(F_Bu,\eta_l''\left(D_t^{\alpha}\right)^*D_t^{\alpha}\eta_l''u\right)+\|F_Bu\|_B^2+\|u\|_{B,1}^2\\
        &+O\left(\|u\|_{B,1}\|u\|_{B,2}\right)\\
        \lesssim&C_\varepsilon\|F_Bu\|_{B}^2+\varepsilon\|u\|_{B,2}^2+C_\varepsilon\|u\|_{B,1}^2\\
        \lesssim&C_\varepsilon\|F_Bu\|_{B}^2+\varepsilon\|u\|_{B,2}^2.
    \end{align*}
    The second line holds by Lemma $ \ref{main 1} $. The third line is an application of  Lemma $ \ref{main 3} $, and the last line follows from Lemma $ \ref{main 4} $. This proves the estimate for s = 0.
    
    Suppose the theorem is valid for $ s-1 $, for $ u\in\Omega_B^r\left(\overline{M}\right)\cap{\rm Dom}(F_B) $ by $ (\ref{norm}) $ we have
    \begin{align*}
    	\|u\|_{B,s+2}^2\thicksim&\sum_{|I|=s+1}\sum_{l=1}^{L_1}\|D^{I}\eta_l' u\|_1^2+\sum_{|I|=s+1}\sum_{l=1}^{L_2}\|D_t^{I}\eta_l''u\|_1^2+\|u\|_{B,s+1}^2+\sum_{|J|+\iota=s+2,\iota\geq2}\sum_{l=1}^{L_2}\|D_t^{J}D_\rho^\iota\eta_l''u\|^2.
    \end{align*}
    
    We shall handle the first three items. By Lemma $ \ref{main 1} $, Lemma $ \ref{main 3} $ and the inductive hypothesis, for any $ 0<\varepsilon<<1 $ there exists constant $ C_\varepsilon>0 $ such that
    \begin{align*}
    	&\sum_{|I|=s+1}\sum_{l=1}^{L_1}\|D^{I}\eta_l' u\|_1^2+\sum_{|I|=s+1}\sum_{l=1}^{L_2}\|D_t^{I}\eta_l''u\|_1^2+\|u\|_{B,s+1}^2\\
    	\lesssim&\sum_{|I|=s+1}\sum_{l=1}^{L_1}G_Q\left(D^I\eta_l' u,D^I\eta_l' u\right)+\sum_{|I|=s+1}\sum_{l=1}^{L_2}G_Q\left(D_t^{I}\eta_l''u,D_t^{I}\eta_l''u\right)+\|u\|_{B,s+1}^2\\
    	\lesssim&\sum_{|I|=s+1}\sum_{l=1}^{L_1}G_Q\left(u,\eta_l'\left(D^I\right)^*D^I\eta_l' u\right)+\sum_{|I|=s+1}\sum_{l=1}^{L_2}G_Q\left(u,\eta_l''\left(D_t^I\right)^*D_t^{I}\eta_l''u\right)+\|u\|_{B,s+1}^2+O\left(\|u\|_{B,s+1}\|u\|_{B,s+2}\right)\\
    	=&\sum_{|I|=s+1}\sum_{l=1}^{L_1}\left(F_Bu,\eta_l'\left(D^I\right)^*D^I\eta_l' u\right)+\sum_{|I|=s+1}\sum_{l=1}^{L_2}\left(F_Bu,\eta_l''\left(D_t^I\right)^*D_t^{I}\eta_l''u\right)+\|u\|_{B,s+1}^2+O\left(\|u\|_{B,s+1}\|u\|_{B,s+2}\right)\\
    	\lesssim&C_\varepsilon\|F_Bu\|_{B,s}^2+\varepsilon\|u\|_{B,s+2}^2+C_\varepsilon\|u\|_{B,s+1}^2\\
    	\lesssim&C_\varepsilon\|F_Bu\|_{B,s}^2+\varepsilon\|u\|_{B,s+2}^2.
    \end{align*}
    
    Now we shall use induction on $ \iota $ to handle the term $ \sum_{|J|+\iota=s+2,\iota\geq2}\sum_{l=1}^{L_2}\|D_t^{J}D_\rho^\iota\eta_l''u\|^2 $. For $ \iota=2 $, applying $ D_t^J(|J|=s) $ to (\ref{ellipticity}) yields
    \begin{align}\label{estimate of D_t^JD_rho^2}
    	\|D_t^JD_\rho^2\eta_l''u\|^2\lesssim\|F_Bu\|_{B,s}^2+\sum_{|I|=s+1}\|D_t^I\eta_l''u\|_1^2+\|u\|_{B,s+1}^2\lesssim\|F_Bu\|_{B,s}^2,
    \end{align}
    where the second inequality is valid, since the second term can be controlled by the same argument as above and for the third term we use the inductive hypothesis. Then by induction, we suppose the estimate is true for all $ k\leq\iota-1 $ s.t. $ |J|+k=s+2 $, applying $ D_t^JD_\rho^{\iota-2}(|J|+\iota=s+2) $ to (\ref{ellipticity}) we obtain
    \begin{align*}
    	\|D_t^{J}D_\rho^\iota\eta_l''u\|^2\lesssim\|F_Bu\|_{B,s}^2+\sum_{|J|+\iota=s+2}\|D_t^JD_\rho^{\iota-1}\eta_l''u\|^2+\sum_{|J|+\iota=s+2}\|D_t^JD_\rho^{\iota-2}\eta_l''u\|^2+\|u\|_{B,s+1}^2\lesssim\|F_Bu\|_{B,s}^2.
    \end{align*}
    Therefore, summing over $ J,\iota,l $, the desired estimate is established.
\end{proof}

\renewcommand{\proofname}{\bf $ Proof $}

Hence, we have proved the priori estimate when $ u $ is a smooth basic form, and now we are prepared to prove the regularity. By means of Proposition \ref{Rellich and Sobolev lemma}, it suffices to prove the following theorem.

\begin{thm}\label{regularity theorem of F_B^epsilon}
	Let $ 0\leq r\leq q $, if $ u\in{\rm Dom}\left(F_B\right) $ and $ F_B u\in H_{B,s}^r $ for some $ s\in\mathbb{Z}_+ $, then $ u\in H_{B,s+2}^r $.
\end{thm}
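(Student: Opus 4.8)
The plan is to establish elliptic regularity up to the boundary for $F_B$ via the standard difference-quotient method, using the a priori estimate (\ref{estimate}) that we have already proved for smooth basic forms together with the $H_{B,1}$-coercivity of $G_B$ on $\tilde{\mathcal{D}}_B^r$ from Corollary \ref{H1e}. First I would treat the interior regularity, which is easier: away from $\partial M$ the operator $F_B$ is the restriction of the strongly elliptic operator $\Delta - \tilde{S}'$ (Proposition \ref{difference of two Laplacian}), so interior elliptic regularity for strongly elliptic systems gives $u\in H_{B,s+2}^r$ locally on each $U_l'$, using that $H_{B,s}^\cdot$ is the closure of $\Omega_B^\cdot(\overline M)$ inside $H_s^\cdot$. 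Hence the whole problem reduces to a neighborhood of a boundary chart $U_l''$ with normal coordinates $\{x_1,\dots,x_{n-1},\rho\}$.

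Near the boundary I would argue by induction on $s$, and within each step by a secondary induction on the number of normal derivatives. For tangential regularity, apply the tangential difference quotients $\Delta_t^h$ (discrete analogues of $D_t^I$) to the weak equation $G_B(u,v) = (\omega,v)_B$ for $v\in\mathcal D_B^r$; since the $D_t^I$ are tangential they preserve the boundary condition ${\rm grad}\,\rho\lrcorner u = 0$ (Proposition \ref{boundary condition}), so $\Delta_t^h(\eta_l''u)$ stays in $\mathcal D_B^r$ and may legitimately be used as a test form. Then the commutator computations of Lemma \ref{main 3}, combined with the coercivity estimate of Lemma \ref{main 4} (which bounds $\|u\|_{B,1}$ by $\|F_Bu\|_B$) and the Bochner-type lower bound of Lemma \ref{main 1}, yield a uniform-in-$h$ bound on $\|\Delta_t^h(\eta_l'' u)\|_1$ in terms of $\|\omega\|_{B,s}$; letting $h\to 0$ gives the tangential derivatives up to order $s+1$ of $\eta_l''u$ in $H_1$. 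To recover the normal derivatives one solves algebraically for $D_\rho^2 u$ from the ellipticity relation (\ref{ellipticity}): since the coefficient $A$ of $D_\rho^2$ is invertible, $D_\rho^2 u$ is expressed through $F_Bu$ and already-controlled tangential derivatives, and iterating this as in (\ref{estimate of D_t^JD_rho^2}) controls all mixed derivatives $D_t^J D_\rho^\iota u$. This is exactly the bookkeeping already carried out in the proof of (\ref{estimate}), now performed on difference quotients rather than on a presumed-smooth $u$.

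The main obstacle I expect is the usual one in boundary regularity: verifying that the test forms produced by difference quotients or tangential derivatives genuinely remain in the form domain $\mathcal D_B^r = \{u\in\Omega_B^r(\overline M)\ |\ {\rm grad}\,\rho\lrcorner u = 0\text{ on }\partial M\}$ and, more delicately, that the operators $\eta_l''(D_t^I)^*D_t^I\eta_l''$ appearing as test forms land in ${\rm Dom}(F_B)$ rather than merely in $\mathcal D_B^r$ — this is where (\ref{FQFB}), $F_Q|_{{\rm Dom}(F_B)} = F_B$, is used to pass freely between the $Q$-level estimates (where $\Omega_{Q,c}^r(M)$ is dense in $\mathcal D_Q^r$, so approximation is painless) and the basic level. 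One must also be careful that the constants in Lemmas \ref{main 1}, \ref{main 3}, \ref{main 4} are uniform in the difference-quotient parameter, which is true because they depend only on the geometry of $\overline M$ and on finitely many derivatives of the partition of unity and the metric. Once the a priori bound $\|u\|_{B,s+2}^2\lesssim\|F_Bu\|_{B,s}^2$ is known to hold for the actual (not-yet-known-smooth) solution via the difference-quotient argument, $u\in H_{B,s+2}^r$ follows, and then Proposition \ref{Rellich and Sobolev lemma} upgrades this to the smoothness statement $u\in\Omega_B^r(\overline M)$ claimed in Theorem \ref{main theorem}.
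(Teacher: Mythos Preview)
Your proposal is correct and follows essentially the same route as the paper: tangential difference quotients to gain tangential regularity near the boundary, the Bochner/coercivity estimates transplanted to $\nabla_H^I\eta_l''u\in\tilde{\mathcal D}_Q^r$ via (\ref{FQFB}), and then the algebraic recovery of normal derivatives from (\ref{ellipticity}), all organized by induction on $s$. The paper packages the difference-quotient versions of Lemmas \ref{main 1} and \ref{main 3} into separate statements (Lemmas \ref{main 7} and \ref{main 9}, relying on the standard Nirenberg toolkit recorded as Lemma \ref{main 5}), but the content and order of the argument match what you describe.
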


In order to prove the regularity, we introduce the technique of difference operators $ \nabla_h^k $ developed by L. Nirenberg (see \cite{Nl55}), we define $ \nabla_h^k $ on $ \mathbb{R}^N $ as follows  
\begin{equation*}
	\nabla_h^k w\left(y\right)=\frac{1}{2h}\left[w\left(y_{1},\dots,y_k+h,\cdots,y_N\right)-w\left(y_{1},\dots,y_k-h,\cdots,y_N\right)\right],
\end{equation*}
where $ w\left(y\right) $ is a function on $ \mathbb{R}^N $. The operator $ \nabla_h^k $ acts on forms componentwise. For multi-index $ \tilde{I}=\left(i_1,\cdots,i_N\right) $, and let $ H=\left(h_{1,1},\cdots,h_{1,i_1},\cdots,h_{N,1},\cdots,h_{N,i_N}\right) $, we define $ \nabla_H^{\tilde{I}}=\Pi_{l=1}^N\Pi_{j=1}^{i_l}\nabla_{h_{l,j}}^l $.  In our case, we always work in a distinguished coordinate chart $ \left(x=\left(x',x''\right),U\right) $, on which basic forms are only depend on $ x'' $. Since we only care about derivatives in transversal direction (i.e., $Q$-direction), it's enough to consider the operator $ \nabla_H^I $ for multi-indices $I$ with first $p$ conponents vanishing. In a normal coordinate chart, $ \nabla_H^I $ is only defined on tangential direction. We need the following lemma which is proved by G. B. Folland and J. J. Kohn (see \cite{FK72}):

\begin{lemma}\label{main 5}
	Let $ \Omega $ be a compact set in either $ \mathbb{R}^N $ or $ \mathbb{R}_-^{N+1} $, and let $ w_1,\ w_2 $ be square-integrable functions supported in $ \Omega $. Let $ D $ be a first order differential operator and let $ |I|=m $. Then, 
	
	{\rm (i)} If $ w_1\in H_m $, $ \|\nabla_H^Iw_1\|\lesssim\|w_1\|_m $ uniformly as $ H\rightarrow0 $.
	
	{\rm (ii)} If $ w_1\in H_m $, $ \|\left[D,\nabla_H^I\right]w_1\|\lesssim\|w_1\|_m $ uniformly as $ H\rightarrow0 $.
	
	{\rm (iii)} If $ w_1\in H_{m-1} $, $ \left(\nabla_H^Iw_1,w_2\right)=\left(w_1,\nabla_H^Iw_2\right)+O\left(\|w_1\|_{m-1}\|w_2\|\right) $ uniformly as $ H\rightarrow0 $.
	
	{\rm (iv)} If $ w_1\in H_{m-1} $ and $ w_2\in H_1 $, then $ |\left(w_1,\left[\nabla_H^I,D\right]v\right)|\lesssim\|w_1\|_{m-1}\|w_2\|_1 $ uniformly as $ H\rightarrow0 $.
	
	{\rm (v)} If $ w_1\in H_{m'} $ and $ \|\nabla_H^Iw_1\|_{m'} $ is bounded as $ H\rightarrow0 $, then $ D^{I}w_1\in H_{m'} $.
\end{lemma}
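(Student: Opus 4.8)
The plan is to prove the five assertions directly on flat space, since the statement is already local: the functions are compactly supported in $\Omega\subseteq\mathbb{R}^N$ or $\mathbb{R}_-^{N+1}$, so the passage to coordinate charts and the partition of unity is left to whoever invokes the lemma. Two elementary facts do all the work. First, the Fourier-multiplier description $\widehat{\nabla_h^k w}\left(\xi\right)=\frac{i\sin\left(h\xi_k\right)}{h}\,\hat w\left(\xi\right)$, whose symbol $m_h^k\left(\xi\right):=\frac{\sin\left(h\xi_k\right)}{h}$ satisfies $\left|m_h^k\left(\xi\right)\right|\leq\min\left(\left|\xi_k\right|,h^{-1}\right)$ uniformly in $h$; consequently each $\nabla_h^k$ is, uniformly as $h\to0$, a bounded operator $H_s\to H_{s-1}$ behaving like $\partial_k$ in the Sobolev scale, $h\,\nabla_h^k$ has symbol of modulus $\leq1$, and $\nabla_h^k$ commutes with every constant-coefficient differential operator, in particular with the $\partial_j$. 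Second, the discrete Leibniz rule $\nabla_h^k\left(fg\right)=\left(\tau_h f\right)\nabla_h^k g+\left(\nabla_h^k f\right)\left(\tau_{-h}g\right)$, where $\tau_{\pm h}$ denotes translation by $\pm he_k$; for $f$ smooth one has $\tau_{\pm h}f=f+O\left(h\right)$ and $\nabla_h^k f=O\left(1\right)$ in $C^0$, which is what converts variable coefficients — and the smooth positive weight coming from writing the metric in the chart — into lower-order error terms. For the half-space one only ever forms difference quotients in directions tangent to the boundary, so the substitutions $y_k\mapsto y_k\mp h$ stay inside $\mathbb{R}_-^{N+1}$ and the integration-by-parts identity $\left(\nabla_h^k u,v\right)=-\left(u,\nabla_h^k v\right)$ holds with no boundary contribution, up to the action of the weight handled by Leibniz.

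Assertion (i) follows by iterating the bound $\left\|\nabla_h^k w\right\|\leq\left\|\partial_k w\right\|$: for $w\in H_1$ one writes $\nabla_h^k w\left(y\right)=\frac{1}{2h}\int_{-h}^{h}\left(\partial_k w\right)\left(y+se_k\right)ds$ and applies Minkowski's integral inequality, an average of isometric translates not increasing the $L^2$ norm; applying this $m$ times gives $\left\|\nabla_H^I w_1\right\|\lesssim\left\|w_1\right\|_m$ uniformly in $H$, equivalently read off from $\left|m_h^{k_1}\left(\xi\right)\cdots m_h^{k_m}\left(\xi\right)\right|\leq\left|\xi\right|^m\lesssim\left(1+\left|\xi\right|\right)^{m}$. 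Assertion (v) is the standard difference-quotient criterion for Sobolev regularity: if $\left\|\nabla_H^I w_1\right\|_{m'}$ stays bounded as $H\to0$, then along a sequence $H\to0$ the family $\nabla_H^I w_1$ has a weak limit $g$ in $H_{m'}$ by reflexivity (Banach--Alaoglu); testing against $\varphi\in C_c^\infty$ and using that $\nabla_H^I\varphi\to\partial^I\varphi$ in $C^0$ identifies $g$ with the distributional derivative $D^I w_1$, whence $D^I w_1=g\in H_{m'}$.

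For (ii)--(iv) one expands by the Leibniz and telescoping identities. In (iii), transporting each factor of $\nabla_H^I$ across the inner product by $\left(\nabla_h^k u,v\right)=-\left(u,\nabla_h^k v\right)$ and peeling off, via the discrete Leibniz rule, the terms in which a $\nabla_h^k$ differentiates the smooth weight rather than being transported, one gets $\left(\nabla_H^I w_1,w_2\right)=\left(-1\right)^{m}\left(w_1,\nabla_H^I w_2\right)$ plus a finite sum of terms $\left(\nabla_{H'}^{J}w_1,c_J w_2\right)$ with $\left|J\right|\leq m-1$ and $c_J$ smooth and bounded; by (i) applied with order $m-1$ this sum is $O\left(\left\|w_1\right\|_{m-1}\left\|w_2\right\|\right)$, which is the claim, the main term being rewritten harmlessly as $\left(w_1,\nabla_H^I w_2\right)$ up to its sign. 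In (ii) we telescope $\left[D,\nabla_H^I\right]$ over the $m$ factors and use $\left[\partial_j,\nabla_h^k\right]=0$ together with Leibniz to write each $\left[D,\nabla_h^k\right]g$, for $D=a^j\partial_j+b$, as $\left(a^j-\tau_h a^j\right)\nabla_h^k\partial_j g-\left(\nabla_h^k a^j\right)\tau_{-h}\partial_j g+\left(\text{lower order}\right)$; since $a^j-\tau_h a^j=O\left(h\right)$ and $h\,\nabla_h^k$ has symbol of modulus $\leq1$, the first type of term loses the extra derivative it apparently gained, every resulting operator is bounded $H_m\to H_0$ uniformly in $H$, and $\left\|\left[D,\nabla_H^I\right]w_1\right\|\lesssim\left\|w_1\right\|_m$. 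Finally (iv) combines the two: pair $\left[\nabla_H^I,D\right]w_2=-\left[D,\nabla_H^I\right]w_2$ with $w_1$, transport across the inner product — as in (iii) — the at most $m-1$ pure difference quotients occurring in each term of the telescoped commutator, the remaining slot being occupied by the derivative from $D$, and estimate what remains on the $w_2$ side by $\left\|w_2\right\|_1$; this yields $\left|\left(w_1,\left[\nabla_H^I,D\right]w_2\right)\right|\lesssim\left\|w_1\right\|_{m-1}\left\|w_2\right\|_1$.

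There is no real conceptual difficulty here; the work, and the only place where care is genuinely needed, is the uniformity in $H$ as $H\to0$ — guaranteed throughout by the $h$-independent symbol bound $\left|\sin\left(h\xi_k\right)/h\right|\leq\left|\xi_k\right|$ and by $\left|\sin\left(h\xi_k\right)\right|\leq1$ — together with the exact bookkeeping of how many difference quotients end up acting on $w_1$, which is what produces the precise Sobolev orders ($\left\|\cdot\right\|_m$ in (i) and (ii) versus $\left\|\cdot\right\|_{m-1}$ in (iii) and (iv)) asserted in the lemma; the half-space case requires only the additional observation that all difference quotients are tangential, so that no boundary terms enter the integrations by parts.
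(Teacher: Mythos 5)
The paper does not actually prove this lemma; it is quoted verbatim with a citation to Folland--Kohn \cite{FK72}, so there is no in-paper argument to compare against. Your proof is essentially the standard one from that source: the uniform symbol bounds $\left|\sin\left(h\xi_k\right)/h\right|\leq\left|\xi_k\right|$ and $\left|\sin\left(h\xi_k\right)\right|\leq 1$ (equivalently the averaging identity $\nabla_h^kw=\frac{1}{2h}\int_{-h}^{h}\tau_{se_k}\partial_kw\,ds$), the discrete Leibniz rule, the exact antisymmetry $\left(\nabla_h^ku,v\right)=-\left(u,\nabla_h^kv\right)$ for the flat measure, telescoping of commutators, and weak compactness for (v). The delicate points are all handled correctly: uniformity in $H$, the restriction to tangential differences on the half-space so that no boundary terms arise, and — the real crux of (ii) and (iv) — the cancellation $\left(a^j-\tau_ha^j\right)\nabla_h^k=O\left(h\right)\cdot O\left(h^{-1}\right)$ that prevents the commutator from costing a derivative, together with the bookkeeping that in (iv) the single derivative contributed by $D$ must be left on $w_2\in H_1$ while the $m-1$ free difference quotients are transposed onto $w_1$.

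One discrepancy deserves to be made explicit rather than waved away. Your computation in (iii) correctly yields $\left(\nabla_H^Iw_1,w_2\right)=\left(-1\right)^{m}\left(w_1,\nabla_H^Iw_2\right)+O\left(\|w_1\|_{m-1}\|w_2\|\right)$, since the exact $L^2$-adjoint of each centered difference is $-\nabla_h^k$ and only the weight corrections are absorbed into the error term; the statement as printed omits the factor $\left(-1\right)^{m}$ and is therefore false as an identity for odd $m$. This is a defect of the quoted statement, not of your argument — in every application in the paper (e.g.\ Lemma \ref{main 3} and Lemma \ref{main 9}) the transposition is either performed twice or used only inside an absolute-value estimate, so the sign is immaterial — but "rewritten harmlessly up to its sign" should be replaced by the honest assertion that the main term is $\left(-1\right)^{m}\left(w_1,\nabla_H^Iw_2\right)$ and that this suffices for all uses. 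With that caveat, your proof is complete and correct.
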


In this context, $ D $ is the first-order differential operator $ {\rm d}_Q $ or $ \delta_Q $. Since the difference operators can act on forms componentwise, Lemma $ \ref{main 5} $ still hold on forms. It's clear that $ D_t^{I} $ and $ \nabla_H^I $ preserve $ \mathcal{D}_B^r $, $ \nabla_H^I $ preserves $ \tilde{\mathcal{D}}_B^r $ and $ \tilde{\mathcal{D}}_Q^r $, multiplying a smooth function preserves $ \mathcal{D}_Q^r $ and $\tilde{\mathcal{D}}_Q^r$.

To prove Theorem \ref{regularity theorem of F_B^epsilon}, we need the following lemmas.

\begin{lemma}\label{main 7}
	For $ 0\leq r\leq q $, let $ I $ be a multi-index with order $ m $, and let $ \nabla_H^I $ act on the components of forms in $ U_l' $, then for $u\in\tilde{\mathcal{D}}_B^r$
	$$ \|\nabla_H^I\eta_l'u\|_1^2\lesssim G_Q\left(\nabla_H^I\eta_l'u,\nabla_H^I\eta_l'u\right)+\|u\|_{B,m}^2 $$ 
	as $ H\rightarrow0 $ provided that the R.H.S. of above estimate is bounded. In addition, the estimate holds accordingly with $\eta_l'$ replaced by $\eta_l''$.
\end{lemma}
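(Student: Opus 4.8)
\emph{Proof plan.} This lemma is the difference-quotient counterpart of Lemma \ref{main 1}, so the plan is to rerun that argument with the differential operator $D^I$ replaced by the difference operator $\nabla_H^I$, keeping every constant uniform as $H\to0$ via Lemma \ref{main 5}, and to descend from smooth basic forms to a general $u\in\tilde{\mathcal{D}}_B^r$ by the completeness argument used at the end of the proof of Corollary \ref{H1e}. First I would treat $u\in\mathcal{D}_B^r$. Since multiplication by the cutoff $\eta_l'$ (resp. $\eta_l''$) preserves $\mathcal{D}_Q^r$, and $\nabla_H^I$ — transversal in the interior chart, tangential in the normal boundary chart — preserves $\mathcal{D}_Q^r$, the form $v_H:=\nabla_H^I\eta_l'u$ lies in $\mathcal{D}_Q^r$; in the interior chart it is moreover compactly supported away from $\partial M$ once $H$ is small. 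The Bochner formula (\ref{Bochner-Kodaira formula}) is then applied to $v_H$.

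Next I would estimate the terms on the right-hand side of (\ref{Bochner-Kodaira formula}) for $v_H$. The boundary integral vanishes identically in the interior case; in the boundary case it is dominated by $\int_{\partial M}|v_H|^2$, which by Stokes' theorem and Cauchy--Schwarz is $\leq\varepsilon\|v_H\|_1^2+C_\varepsilon\|v_H\|_0^2$, exactly as in Lemma \ref{main 1}. The curvature term, the $Ric^\nabla$-term and the mean-curvature term $\mathcal{A}_\tau$ are of order zero, hence $O(\|v_H\|_0^2)$. The one term needing care is $2\int_M\langle g^{\alpha\beta}\wedge X_\alpha\lrcorner\nabla_{A(X_\beta,X_\gamma)}v_H,g^{\gamma\sigma}(X_\sigma\lrcorner v_H)\rangle$, which is genuinely first order for an arbitrary section of $\Lambda^r Q^*$ since $\nabla_{A(X_\beta,X_\gamma)}v_H=\Gamma_{\beta\gamma}^a\nabla_{X_a}v_H$ involves a leafwise derivative. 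Here, however, $v_H$ inherits the basic feature that in the distinguished chart its components depend on $x''$ only up to derivatives of $\eta_l'$, because $\nabla_H^I$ differences only the $x''$-variables; running a variant of the computation in Remark \ref{Bochner for basic form} then shows that $\nabla_{X_a}$ falls only on the cutoff, not on the $x'$-independent components of $u$, so $\|\nabla_{X_a}v_H\|_0\lesssim\|u\|_{B,m}$, while Lemma \ref{main 5}(i) gives $\|v_H\|_0\lesssim\|u\|_{B,m}$ uniformly in $H$. Hence this term is $O(\|u\|_{B,m}^2)$ as well.

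Collecting these bounds, (\ref{Bochner-Kodaira formula}) yields $\int_M g^{\alpha\beta}\langle\nabla_{X_\alpha}v_H,\nabla_{X_\beta}v_H\rangle\lesssim G_Q(v_H,v_H)+\varepsilon\|v_H\|_1^2+C_\varepsilon\|u\|_{B,m}^2$. Since $\|v_H\|_1^2$ is comparable to $\|v_H\|_0^2+\sum_a\|\nabla_{X_a}v_H\|_0^2+\int_M g^{\alpha\beta}\langle\nabla_{X_\alpha}v_H,\nabla_{X_\beta}v_H\rangle$ and the first two summands are already $O(\|u\|_{B,m}^2)$, absorbing $\varepsilon\|v_H\|_1^2$ into the left side gives $\|v_H\|_1^2\lesssim G_Q(v_H,v_H)+\|u\|_{B,m}^2$ with implied constants independent of $H$. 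For general $u\in\tilde{\mathcal{D}}_B^r$ for which the right-hand side stays bounded as $H\to0$, I would pick $u_\nu\in\mathcal{D}_B^r$ approximating $u$ and use that $\eta_l'(\cdot)$ and $\nabla_H^I$ act boundedly on $\tilde{\mathcal{D}}_Q^r$, together with a completeness argument as in Corollary \ref{H1e}.

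The hard part will be the control of the second-fundamental-form term: one must check that the basic structure of $u$ — independence of its components from $x'$ — survives multiplication by $\eta_l'$ and application of $\nabla_H^I$, so that an a priori first-order quantity is in fact lower order; this is precisely why $\nabla_H^I$ is restricted to the transversal (resp. tangential) directions and why distinguished and normal coordinate charts are used. The secondary difficulty is keeping all estimates uniform as $H\to0$, which is exactly what Lemma \ref{main 5} provides, and the reduction from $\mathcal{D}_B^r$ to $\tilde{\mathcal{D}}_B^r$.
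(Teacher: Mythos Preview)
Your proposal is correct and follows essentially the same route as the paper: apply the Bochner formula \eqref{Bochner-Kodaira formula} to $v_H=\nabla_H^I\eta_l''u\in\tilde{\mathcal{D}}_Q^r$, control the boundary integral via Stokes and Cauchy--Schwarz, and bound $\|v_H\|$ by $\|u\|_{B,m}$ through Lemma~\ref{main 5}(i). The paper's proof is a two-line sketch that writes $G_Q(v_H,v_H)=\|v_H\|_1^2+O\bigl(\int_{\partial M}|v_H|^2+\|v_H\|^2\bigr)$ and invokes the same three ingredients; your write-up simply unpacks the pieces the paper leaves implicit --- in particular the second-fundamental-form term and the discrepancy between the transversal gradient in \eqref{Bochner-Kodaira formula} and the full $H^1$-norm, both of which you correctly reduce to $O(\|u\|_{B,m}^2)$ via the observation that leafwise derivatives fall only on the cutoff when $u$ is basic. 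The paper also works directly with $u\in\tilde{\mathcal{D}}_B^r$ (using that $\nabla_H^I$ and multiplication by $\eta_l''$ preserve $\tilde{\mathcal{D}}_Q^r$) rather than first treating smooth $u$ and then approximating, but this is a cosmetic difference.
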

\begin{proof}
	The proof is parallel to that of Lemma \ref{main 1}. We only prove the conclusion in a boundary chart since the interior case is easier. Due to $\nabla_H^I\eta_l''u\in\tilde{\mathcal{D}}_Q^r$, reading the Bochner formula (\ref{Bochner-Kodaira formula}) for $\nabla_H^I\eta_l''u$ as
	\begin{align*}
		G_Q\left(\nabla_H^I\eta_l''u,\nabla_H^I\eta_l''u\right)=\|\nabla_H^I\eta_l''u\|_1^2+O\left(\int_{\partial M}|\nabla_H^I\eta_l''u|^2+\|\nabla_H^I\eta_l''u\|^2\right).
	\end{align*}
	The assertion holds in view of of (i) in Lemma \ref{main 5}, Stokes' theorem and Cauchy-Schwarz inequality.
\end{proof}

\begin{lemma}\label{main 9}
	For $ 0\leq r\leq q $, let $ I $ be a multi-index  with order $ m+1 $, if $ u\in{\rm Dom}\left(F_B\right)\cap H_{B,m+1}^r $ and $ F_B u\in H_{B,m}^r $, then $ \|D^I\eta_l'u\|_1^2+\|D_t^I\eta_l''u\|_1^2<+\infty $.
\end{lemma}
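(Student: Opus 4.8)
The plan is to run the classical difference-quotient bootstrap of Nirenberg, organised in parallel with the proof of estimate~(\ref{estimate}): every honest derivative is replaced by the difference operator $\nabla_H^I$ (taken only in tangential directions in a boundary chart), and the integration-by-parts identities used there are replaced by the corresponding statements of Lemma~\ref{main 5}. Since $D^I\eta_l'u$ and $D_t^I\eta_l''u$ arise as the limits of $\nabla_H^I\eta_l'u$ and $\nabla_H^I\eta_l''u$ as $H\to0$, Lemma~\ref{main 5}(v) reduces the assertion to showing that $\|\nabla_H^I\eta_l'u\|_1$ and $\|\nabla_H^I\eta_l''u\|_1$ stay bounded as $H\to0$. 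For each fixed $H\ne0$ the operator $\nabla_H^I$ is a finite combination of translations, so $\nabla_H^I(\eta_lu)\in\tilde{\mathcal{D}}_Q^r$ and every quantity below is a priori finite; the entire difficulty is the uniformity in $H$.

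First I would reduce the general case $|I|=m+1$ to the case $m=0$ by differentiating the equation $F_Bu=\omega$. Split $I=I_0+e_k$ with $|I_0|=m$, choosing $k$ arbitrary in an interior chart and $k$ tangential in a boundary chart. Differentiating $F_Bu=\omega$ by $D^{I_0}$ (resp. $D_t^{I_0}$) and commuting — using that $D_t^{I_0}$ preserves $\mathcal{D}_B^r$, hence keeps the boundary condition intact, and that $[F_B,D^{I_0}]$ has order $m+1$ so that $[F_B,D^{I_0}]u\in H_{B,0}^r$ because $u\in H_{B,m+1}^r$ — one finds that the localised differentiated form $v:=D^{I_0}(\eta_l'u)$ (resp. $D_t^{I_0}(\eta_l''u)$), which lies in $\tilde{\mathcal{D}}_Q^r\cap H_{B,1}^r$ and, by~(\ref{FQFB}), satisfies $G_Q(v,\phi)=(\tilde\omega,\phi)$ for all $\phi\in\tilde{\mathcal{D}}_Q^r$, where $\tilde\omega\in H_{B,0}^r$ collects $D^{I_0}(\eta_l'\omega)$ together with all commutator contributions and obeys $\|\tilde\omega\|_0\lesssim\|\omega\|_{B,m}+\|u\|_{B,m+1}$. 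Thus $v\in{\rm Dom}(F_Q)$ with $F_Qv=\tilde\omega\in H_{B,0}^r$, and it suffices to bound $\|\nabla_h^k(\eta_lv)\|_1$ uniformly in $h$ for such a $v$, which is the $m=0$ situation.

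For this one-difference-quotient estimate I would set $v_h:=\nabla_h^k(\eta_lv)\in\tilde{\mathcal{D}}_Q^r$ and apply Lemma~\ref{main 7}, giving $\|v_h\|_1^2\lesssim G_Q(v_h,v_h)+\|v\|^2$. Next, imitating Lemma~\ref{main 3} — commuting $\nabla_h^k$ past ${\rm d}_Q$, $\delta_Q$ and the multiplier $\eta_l$ and using parts~(ii)--(iv) of Lemma~\ref{main 5} to control the commutators and to move one copy of $\nabla_h^k$ onto the other factor — and then absorbing the resulting error terms into $\|v_h\|_1^2$ by Young's inequality, one reduces to bounding $G_Q\big(v,\eta_l(\nabla_h^k)^*\nabla_h^k(\eta_lv)\big)$. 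Since $v\in{\rm Dom}(F_Q)$ with $F_Qv=\tilde\omega$ and $\eta_l(\nabla_h^k)^*\nabla_h^k(\eta_lv)\in\tilde{\mathcal{D}}_Q^r$, the defining property of the Friedrichs operator turns this term into $\big(\tilde\omega,\eta_l(\nabla_h^k)^*\nabla_h^k(\eta_lv)\big)$; writing $(\nabla_h^k)^*=-\nabla_h^k+(\text{bounded zero order})$ and invoking Lemma~\ref{main 5}(i) one gets $\|\eta_l(\nabla_h^k)^*\nabla_h^k(\eta_lv)\|_0\lesssim\|v_h\|_1+\|v\|_1$, while $\|v\|_1\lesssim\|\tilde\omega\|_0$ by the $F_Q$-analogue of Lemma~\ref{main 4}. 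Hence the main term is $\lesssim\|\tilde\omega\|_0\|v_h\|_1+\|\tilde\omega\|_0^2$, and one further round of Young's inequality combined with Lemma~\ref{main 7} yields $\|v_h\|_1^2\lesssim\|\tilde\omega\|_0^2$ uniformly in $h$. Tracing back through the reduction of the previous paragraph gives $\|D^I\eta_l'u\|_1,\|D_t^I\eta_l''u\|_1<+\infty$.

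The main obstacle is keeping the whole computation free of the unavailable norm $\|u\|_{B,m+2}$: one must arrange that in the estimate for $G_Q(v_h,v_h)$ every term is controlled either by data already known to be finite ($\|u\|_{B,m+1}$, $\|\omega\|_{B,m}$, equivalently $\|\tilde\omega\|_0$) or by $\|v_h\|_1$ with an arbitrarily small constant, which is exactly what dictates how the difference operators are distributed across the inner products so that Lemma~\ref{main 5}(i) applies at the right index, and forces one to track the order of every commutator. A secondary, purely boundary, subtlety is that all differentiations and difference quotients must be tangential in order to remain inside $\mathcal{D}_B^r$; the missing normal derivatives $D_\rho^\iota$ are then recovered from the ellipticity of $F_B$ afterwards, just as in the proof of estimate~(\ref{estimate}).
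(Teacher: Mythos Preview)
Your overall strategy—replace derivatives by tangential difference quotients $\nabla_H^I$, invoke Lemma~\ref{main 7} to reduce to bounding $G_Q(\nabla_H^I\eta_l u,\nabla_H^I\eta_l u)$, then commute the difference operators across using Lemma~\ref{main 5} and finally invoke $u\in{\rm Dom}(F_Q)$—is exactly the paper's. The divergence is in your ``reduction to $m=0$'' step, and that step contains a genuine gap.

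You claim that $v:=D_t^{I_0}(\eta_l''u)\in{\rm Dom}(F_Q)$ with $F_Qv=\tilde\omega\in H_0^r$. But membership in ${\rm Dom}(F_Q)$ (for smooth forms, and by extension in the Friedrichs sense) requires \emph{two} boundary conditions, namely ${\rm grad}\rho\lrcorner v=0$ and ${\rm grad}\rho\lrcorner{\rm d}_Qv=0$ on $\partial M$ (cf.\ Proposition~\ref{expression of $ F_B $}). Tangential differentiation $D_t^{I_0}$ preserves the first condition but \emph{not} the second, because $[{\rm d}_Q,D_t^{I_0}\eta_l'']$ is nontrivial; equivalently, when you try to verify $G_Q(v,\phi)=(\tilde\omega,\phi)$ for $\phi\in\mathcal{D}_Q^r$, the commutator term $([\,{\rm d}_Q,D_t^{I_0}\eta_l''\,]u,{\rm d}_Q\phi)$ integrates by parts to $(\delta_Q(\cdots),\phi)$ plus a boundary integral $\int_{\partial M}\langle{\rm grad}\rho\lrcorner([\,{\rm d}_Q,D_t^{I_0}\eta_l''\,]u),\phi\rangle$ that is controlled only by $\|\phi\|_{L^2(\partial M)}$, not by $\|\phi\|_0$. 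So $v\notin{\rm Dom}(F_Q)$ in general, and you cannot invoke the Friedrichs identity for $v$.

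The paper sidesteps this by never differentiating the equation: it keeps \emph{all} $m+1$ orders as difference quotients $\nabla_H^I$. The point is that for each fixed $H\ne0$ the operator $\nabla_H^I$ is bounded on every $H_s$ and preserves $\tilde{\mathcal{D}}_Q^r$, so the test function $\psi:=\eta_l\nabla_H^I\nabla_H^I\eta_l u$ lies in $\tilde{\mathcal{D}}_Q^r$, and one can invoke the given relation $G_Q(u,\psi)=(F_Bu,\psi)$ directly for $u$ itself. After splitting $\nabla_H^I=\nabla_{H'}^{I'}\nabla_h^\alpha$ and one more application of Lemma~\ref{main 5}\,(i),(iii), this yields
\[
G_Q(\nabla_H^I\eta_l u,\nabla_H^I\eta_l u)=O\bigl(\|F_Bu\|_{B,m}\|\nabla_H^I\eta_l u\|_1+\|u\|_{B,m+1}\|\nabla_H^I\eta_l u\|_1\bigr),
\]
and the absorption step is the same as yours. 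The moral: replace your $D_t^{I_0}$ by $\nabla_{H_0}^{I_0}$ as well, and the argument goes through without any appeal to $v\in{\rm Dom}(F_Q)$.
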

\begin{proof}
	By (v) in Lemma \ref{main 5}, it's enough to show that $ \|\nabla_H^I\eta_l'u\|_1^2+\|\nabla_H^I\eta_l''u\|_1^2<+\infty $ as $ H\rightarrow0 $. According to Lemma \ref{main 7}, we are reduced to proving that $ G_Q\left(\nabla_H^I\eta_l' u,\nabla_H^I\eta_l' u\right)+G_Q\left(\nabla^I_H\eta_l''u,\nabla^I_H\eta_l''u\right) $ is bounded as $ H\rightarrow0 $ for $ |I|=m+1 $.
	\begin{align*}
		\left({\rm d}_Q\nabla_H^I\eta_l' u,{\rm d}_Q\nabla_H^I\eta_l' u\right)&\xlongequal[{\rm (i)\ (ii)}]{Lemma\ \ref{main 5}}\left(\nabla_H^I\eta_l' {\rm d}_Qu,{\rm d}_Q\nabla_H^I\eta_l' u\right)+O\left(\|u\|_{B,m+1}\|\nabla_H^I\eta_l' u\|_1\right)\\
		&\xlongequal[{\rm (iii)}]{Lemma\ \ref{main 5}}\left(\eta_l'{\rm d}_Qu,\nabla_H^I{\rm d}_Q\nabla_H^I\eta_l' u\right)+O\left(\|u\|_{B,m+1}\|\nabla_H^I\eta_l' u\|_1\right)\\
		&\xlongequal[{\rm (iv)}]{Lemma\ \ref{main 5}}\left(\eta_l'{\rm d}_Qu,{\rm d}_Q\nabla_H^I\nabla_H^I\eta_l' u\right)+O\left(\|u\|_{B,m+1}\|\nabla_H^I\eta_l' u\|_1\right)\\
		&\xlongequal[{\rm (i)}]{Lemma\ \ref{main 5}}\left({\rm d}_Qu,{\rm d}_Q\eta_l'\nabla_H^I\nabla_H^I\eta_l' u\right)+O\left(\|u\|_{B,m+1}\|\nabla_H^I\eta_l' u\|_1\right).
	\end{align*}
	Correspondingly,
	$$ \left(\delta_Q\nabla_H^I\eta_l' u,\delta_Q\nabla_H^I\eta_l' u\right)=\left(\delta_Qu,\delta_Q\eta_l'\nabla_H^I\nabla_H^I\eta_l' u\right)+O\left(\|u\|_{B,m+1}\|\nabla_H^I\eta_l' u\|_1\right). $$ 
	Then let $ \nabla_H^I=\nabla_{H'}^{{I}'}\nabla_h^\alpha $, for any $ 0<\varepsilon<<1 $ there exists constant $ C_\varepsilon>0 $ such that
	\begin{align*}
		G_Q\left(\nabla_H^I\eta_l' u,\nabla_H^I\eta_l' u\right)
		=&\left(F_Bu,\eta_l'\nabla_H^I\nabla_H^I\eta_l' u\right)+O\left(\|u\|_{B,m+1}\|\nabla_H^I\eta_l' u\|_1\right)\\
		=&\left(\nabla_{H'}^{{I}'}\eta_l' F_Bu,\nabla_h^\alpha\nabla_H^I\eta_l' u\right)+O\left(\|u\|_{B,m+1}\|\nabla_H^I\eta_l' u\|_1+\|F_Bu\|_{m-1}\|\nabla_h^\alpha\nabla_H^I\eta_l' u\|\right)\\
		=&O\left(\|F_Bu\|_{B,m}\|\nabla_H^I\eta_l' u\|_1+\|u\|_{B,m+1}\|\nabla_H^I\eta_l' u\|_1\right)\\
		\lesssim&C_\varepsilon\|F_Bu\|_{B,m}^2+C_\varepsilon\|u\|_{B,m+1}^2+\varepsilon\|\nabla_H^I\eta_l' u\|_1^2.
	\end{align*}
	The first equality is valid by (\ref{FQFB}). The second and third equality follow from (iii) and (i) in Lemma \ref{main 5} respectively.
\end{proof}

Now, we are in a position to complete the proof of Thoerem \ref{regularity theorem of F_B^epsilon}.
\renewcommand{\proofname}{\bf $ Proof\ of\ Theorem\ \ref{regularity theorem of F_B^epsilon} $}
\begin{proof}
	We prove the theorem using induction on $ s $. For $ s=0 $, from $ (\ref{norm}) $ we have
	\begin{align*}
		\|u\|_{B,2}^2\thicksim\sum_{\alpha=p+1}^{n}\sum_{l=1}^{L_1}\|D_\alpha\eta_l' u\|_1^2+\sum_{\alpha=p+1}^{n-1}\sum_{l=1}^{L_2}\|D_t^\alpha\eta_l''u\|_1^2+\sum_{l=1}^{L_2}\|D_\rho^2\eta_l''u\|^2+\|u\|_{B,1}^2.
	\end{align*}
    Owing to estimate (\ref{the estimate of D_rho^2}) and Lemma \ref{main 9}, then
    \begin{align*}
    	\|u\|_{B,2}^2\lesssim\sum_{\alpha=p+1}^{n}\sum_{l=1}^{L_1}\|D_\alpha\eta_l' u\|_1^2+\sum_{\alpha=p+1}^{n-1}\sum_{l=1}^{L_2}\|D_t^\alpha\eta_l''u\|_1^2+\|F_B^\epsilon u\|_B^2+\|u\|_{B,1}^2<+\infty.
    \end{align*}

    Suppose the theorem holds for $ s-1 $, for $ u\in{\rm Dom}(F_B) $ by $ (\ref{norm}) $ we get
    \begin{align*}
    	\|u\|_{B,s+2}^2\thicksim&\sum_{|I|=s+1}\sum_{l=1}^{L_1}\|D^{I}\eta_l' u\|_1^2+\sum_{|I|=s+1}\sum_{l=1}^{L_2}\|D_t^{I}\eta_l''u\|_1^2+\|u\|_{B,s+1}^2+\sum_{|J|+\iota=s+2,\iota\geq2}\sum_{l=1}^{L_2}\|D_t^{J}D_\rho^\iota\eta_l''u\|^2.
    \end{align*}
    Thanks to Lemma \ref{main 9} and inductive hypothesis, we only need to prove the last term above is bounded. One can use precisely the same argument as in that proof of estimate (\ref{estimate}), we find that $ \|D_t^{J}D_\rho^2\eta_l''u\|^2 $ is bounded by the estimate (\ref{estimate of D_t^JD_rho^2}). Proceeding by induction on $ \iota $, applying $ D_t^JD_\rho^{\iota-2}(|J|+\iota=s+2) $ to (\ref{ellipticity}), it follows that $ \|D_t^{J}D_\rho^\iota\eta_l''u\|^2 $ can be controlled by the norms of derivatives of $ F_B u $ of order $ s $ and the norms of derivatives of $ u $ which are already estimated.
\end{proof}

We have established the regularity of the equation $ F_Bu=\omega $, before going to prove the Hodge decomposition theorem, we need more properties of the operators $F_B$ and $\Delta_{F_B}:=\Delta_B|_{{\rm Dom}\left(F_B\right)}$.

\renewcommand{\proofname}{\bf $ Proof $}
\begin{prop}\label{higher estimate}
	Let $ 0\leq r\leq q $, for the equation $ F_Bu=\omega $, if $ \omega\in H_{B,s}^r $, then $ u\in H_{B,s+2}^r $ and $ \|u\|_{B,s+2}\lesssim\|\omega\|_{B,s} $. 
\end{prop}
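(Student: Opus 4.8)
The plan is to obtain the statement by combining the existence/uniqueness and interior-boundary regularity already proved (Theorem~\ref{main theorem} and Theorem~\ref{regularity theorem of F_B^epsilon}) with the a priori estimate~(\ref{estimate}), which has so far only been verified for \emph{smooth} basic forms in $\Omega_B^r\left(\overline{M}\right)\cap{\rm Dom}\left(F_B\right)$; the only work left is a density argument to upgrade that estimate to arbitrary solutions. First, since $H_{B,s}^r\subseteq H_{B,0}^r$, Theorem~\ref{main theorem} gives the unique $u\in\tilde{\mathcal{D}}_B^r$ with $F_Bu=\omega$. Because $\Omega_B^r\left(\overline{M}\right)$ is dense in $H_{B,s}^r$ (the latter being its completion in $\|\cdot\|_{B,s}$), choose $\omega_\nu\in\Omega_B^r\left(\overline{M}\right)$ with $\omega_\nu\to\omega$ in $H_{B,s}^r$. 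For each $\nu$, Theorem~\ref{main theorem} produces $u_\nu\in\tilde{\mathcal{D}}_B^r$ with $F_Bu_\nu=\omega_\nu$, and since $\omega_\nu$ is smooth the regularity assertion of that theorem gives $u_\nu\in\Omega_B^r\left(\overline{M}\right)$, hence $u_\nu\in\Omega_B^r\left(\overline{M}\right)\cap{\rm Dom}\left(F_B\right)$.

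Next I would apply the a priori estimate~(\ref{estimate}) to these smooth solutions: $\|u_\nu\|_{B,s+2}^2\lesssim\|\omega_\nu\|_{B,s}^2$ with a constant independent of $\nu$. Since ${\rm Dom}\left(F_B\right)$ is a linear subspace and $F_B$ is linear, $u_\nu-u_\mu\in\Omega_B^r\left(\overline{M}\right)\cap{\rm Dom}\left(F_B\right)$ with $F_B\left(u_\nu-u_\mu\right)=\omega_\nu-\omega_\mu$, so~(\ref{estimate}) gives $\|u_\nu-u_\mu\|_{B,s+2}^2\lesssim\|\omega_\nu-\omega_\mu\|_{B,s}^2\to0$. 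Thus $\{u_\nu\}$ is Cauchy in $H_{B,s+2}^r$, converging to some $\tilde u\in H_{B,s+2}^r$ with $\|\tilde u\|_{B,s+2}=\lim_\nu\|u_\nu\|_{B,s+2}\lesssim\lim_\nu\|\omega_\nu\|_{B,s}=\|\omega\|_{B,s}$.

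It remains to identify $\tilde u$ with $u$. By the defining inequality $G_B\left(\phi,\phi\right)\geq\|\phi\|_B^2$ of the Friedrichs form, $F_B\geq{\rm Id}$, so $F_B^{-1}$ is a bounded operator on $H_{B,0}^r$ (as already used in Lemma~\ref{main 4}); hence $u_\nu=F_B^{-1}\omega_\nu\to F_B^{-1}\omega=u$ in $H_{B,0}^r$. On the other hand the inclusion $H_{B,s+2}^r\hookrightarrow H_{B,0}^r$ is continuous, so $u_\nu\to\tilde u$ in $H_{B,0}^r$ as well; uniqueness of limits yields $u=\tilde u\in H_{B,s+2}^r$ and $\|u\|_{B,s+2}\lesssim\|\omega\|_{B,s}$. (The purely qualitative statement $F_Bu=\omega\in H_{B,s}^r\Rightarrow u\in H_{B,s+2}^r$, and the smooth case $\omega\in\Omega_B^r\left(\overline{M}\right)\Rightarrow u\in\Omega_B^r\left(\overline{M}\right)$, are Theorem~\ref{regularity theorem of F_B^epsilon} and Theorem~\ref{main theorem}; here we have added the quantitative bound.)

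There is no real analytic obstacle beyond what has already been done: the single point requiring care is producing the \emph{smooth} approximating solutions $u_\nu$ so that~(\ref{estimate}) applies verbatim, and then checking that the limit obtained in $H_{B,s+2}^r$ coincides with the Friedrichs solution $u$ via the boundedness of $F_B^{-1}$ on $H_{B,0}^r$.
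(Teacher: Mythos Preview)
Your proof is correct and is precisely the approach the paper takes: the paper's own proof is the single sentence ``The conclusion [is] just an application of Theorem~\ref{main theorem} through approximating $\omega$ by smooth basic forms,'' and you have carefully written out exactly that density argument. The key points you identify---producing smooth $u_\nu$ via Theorem~\ref{main theorem}, applying~(\ref{estimate}) to differences, and identifying the $H_{B,s+2}^r$-limit with $u$ via boundedness of $F_B^{-1}$---are exactly what is implicit in the paper's one-line justification.
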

\begin{proof}
	The conclusion just an application of Theorem $ \ref{main theorem} $ through approximating $ \omega $ by smooth basic forms.
\end{proof}

\begin{prop}\label{smoothness of eigenform}
	For $ 0\leq r\leq q $, there is a complete orthonormal basis of eigenforms for $ \Delta_{F_B} $ in $ H_{B,0}^r $ which are smooth up to the boundary. The spectrum is discrete, non-negative, with no finite limit point, and each eigenvalue occurs with finite multiplicity. In particular, the kernel $ \mathcal{H}_B^r $ of $ \Delta_{F_B} $ is a finite-dimensional subspace of $ \Omega_B^r\left(\overline{M}\right) $. Moreover, for each integer s, we have
	$$ \|u\|_{B,s+2}^2\lesssim\|\Delta_Bu\|_{B,s}^2+\|u\|_B^2 $$
	uniformly for all $ u\in{\rm Dom}\left(F_B\right)\cap\Omega_B^r\left(\overline{M}\right) $.
\end{prop}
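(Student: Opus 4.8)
The plan is to realise $F_B^{-1}$ as a compact, injective, positive, self-adjoint operator on $H_{B,0}^r$, and then to transfer the spectral theorem for such operators to $\Delta_{F_B}=F_B-{\rm Id}$. First, by Theorem \ref{main theorem} (see also Proposition \ref{higher estimate} with $s=0$), $F_B\colon{\rm Dom}(F_B)\to H_{B,0}^r$ is a bijection whose inverse maps $H_{B,0}^r$ boundedly into $H_{B,2}^r$; composing with the compact inclusion $H_{B,2}^r\hookrightarrow H_{B,0}^r$ of Proposition \ref{Rellich and Sobolev lemma} shows that $F_B^{-1}$ is compact on $H_{B,0}^r$. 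It is self-adjoint since $F_B$ is self-adjoint (Theorem \ref{Friedrichs extension theorem}), and the coercivity $G_B(\phi,\phi)\ge\|\phi\|_B^2$ underlying the Friedrichs construction gives $F_B\ge{\rm Id}$, hence $0<F_B^{-1}\le{\rm Id}$; in particular $F_B^{-1}$ is injective with all eigenvalues in $(0,1]$.

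The spectral theorem then furnishes a complete orthonormal basis $\{\phi_k\}_{k\ge1}$ of $H_{B,0}^r$ with $F_B^{-1}\phi_k=\mu_k\phi_k$, $\mu_k\in(0,1]$, $\mu_k\to0$, and each $\mu_k$ of finite multiplicity. Thus $\phi_k\in{\rm Dom}(F_B)$ and $\Delta_B\phi_k=(\mu_k^{-1}-1)\phi_k=:\lambda_k\phi_k$ with $\lambda_k\ge0$ (since $\mu_k\le1$) and $\lambda_k\to+\infty$ (since $\mu_k\to0$). Since $F_B^{-1}=(\Delta_{F_B}+{\rm Id})^{-1}$ is compact, $\Delta_{F_B}$ has purely discrete spectrum $\{\lambda_k\}$, non-negative, with no finite accumulation point and finite multiplicities; and $\mathcal{H}_B^r=\ker\Delta_{F_B}$ is exactly the $\mu=1$ eigenspace of $F_B^{-1}$, hence finite-dimensional.

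To see that the $\phi_k$ --- and hence every element of $\mathcal{H}_B^r$ --- are smooth up to $\partial M$, I bootstrap the regularity estimate of Proposition \ref{higher estimate}: from $F_B\phi_k=\mu_k^{-1}\phi_k\in H_{B,0}^r$ we obtain $\phi_k\in H_{B,2}^r$, whence $F_B\phi_k\in H_{B,2}^r$ and $\phi_k\in H_{B,4}^r$, and inductively $\phi_k\in H_{B,s}^r$ for all $s$. Because $H_{B,s}^r$ is continuously included in $H_s^r$, we get $\phi_k\in H_{B,s}^r\cap H_s^r$ for every $s$, so the Sobolev clause of Proposition \ref{Rellich and Sobolev lemma} gives $\phi_k\in\Omega_B^r(\overline{M})$; in particular $\mathcal{H}_B^r\subseteq\Omega_B^r(\overline{M})$.

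For the final a priori estimate, on ${\rm Dom}(F_B)\cap\Omega_B^r(\overline{M})$ one has $F_Bu=\Delta_Bu+u$, so Proposition \ref{higher estimate} yields $\|u\|_{B,s+2}\lesssim\|F_Bu\|_{B,s}\le\|\Delta_Bu\|_{B,s}+\|u\|_{B,s}$. The intermediate norm is absorbed via the standard interpolation inequality $\|u\|_{B,s}\le\varepsilon\|u\|_{B,s+2}+C_\varepsilon\|u\|_B$: taking $\varepsilon$ small and moving $\varepsilon\|u\|_{B,s+2}$ to the left gives $\|u\|_{B,s+2}\lesssim\|\Delta_Bu\|_{B,s}+\|u\|_B$, and squaring finishes the proof. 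The one point requiring care is the boundary regularity of the eigenforms: $\Delta_B$ on its own carries no boundary condition, so this is precisely where it is essential to route everything through $F_B$ and its coercive domain ${\rm Dom}(F_B)$ and to invoke the global regularity Theorem \ref{regularity theorem of F_B^epsilon}, rather than an interior estimate for $\Delta_B$.
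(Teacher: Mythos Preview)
Your proof is correct and follows essentially the same route as the paper: compactness of $F_B^{-1}$ via Proposition~\ref{higher estimate} and Rellich, the spectral theorem for compact self-adjoint operators, and the same bootstrap for smoothness of eigenforms. The only cosmetic difference is in the last estimate: you absorb $\|u\|_{B,s}$ by an interpolation inequality, whereas the paper absorbs it by induction on $s$ (applying the already-proved case $s-2$ to bound $\|u\|_{B,s}^2\lesssim\|\Delta_Bu\|_{B,s-2}^2+\|u\|_B^2$); both are standard and equivalent here.
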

\begin{proof}
	Since $ F_B=\Delta_B+{\rm Id} $ on smooth forms, we will analyze the spectrum of $ F_B $ at first. It follows from Proposition $ \ref{higher estimate} $ and Proposition $ \ref{Rellich and Sobolev lemma} $ that $ F_B^{-1}:H_{B,0}^r\longrightarrow H_{B,2}^r\longrightarrow H_{B,0}^r $ is compact, because the first map is bounded and the second is compact. Thus, the spectrum of $ F_B $ is discrete, non-negative, with no finite limit point, and each eigenvalue occurs with finite multiplicity. Furthermore, the eigenforms can form a complete orthonormal basis of $ H_{B,0}^r $, in view of the theory of self-adjoint compact operators. 
	
	Next, we need to show that the eigenforms of $ F_B $ are all smooth up the boundary. In fact, if $ u\in H_{B,0}^r $ is an eigenform with repect to eigenvalue $ \lambda $, then $ \left(F_B-\lambda\right)u=0 $. Set $ \omega=\lambda u $ so that $ F_Bu=\omega $, then since $ \omega\in H_{B,0}^r $, we have $ u\in H_{B,2}^r $ according to Proposition $ \ref{higher estimate} $. Hence, $ \omega\in H_{B,2}^r $ and we use Proposition $ \ref{higher estimate} $ inductively, then $ u\in\Omega_B^r\left(\overline{M}\right) $ by Proposition $ \ref{Rellich and Sobolev lemma} $. It means that $ \left(F_B-\lambda\right)u $ is hypoelliptic.
	
	Finally, we prove the estimate of $ \Delta_B $ by induction on $ s $ by means of Theorem $ \ref{main theorem} $. For $ s=0 $, $ \|u\|_{B,2}^2\lesssim\|F_Bu\|_B^2\lesssim \|\Delta_Bu\|_B^2+\|u\|_B^2 $, assume that the conclusion is valid for all $ k\leq s+1 $, then for $ s+2 $ we have 
	\begin{align*}
		\|u\|_{B,s+2}^2\lesssim\|F_Bu\|_{B,s}^2\lesssim\|\Delta_Bu\|_{B,s}^2+\|u\|_{B,s}^2\lesssim\|\Delta_Bu\|_{B,s}^2+\|\Delta_Bu\|_{B,s-2}^2+\|u\|_B^2\lesssim\|\Delta_Bu\|_{B,s}^2+\|u\|_B^2.
	\end{align*}
	The proof is thus complete.
\end{proof}

On the basis of these existence and regularity results for $F_B$ and $\Delta_{F_B}$ we now can get back to the main business at hand, namely the de Rham-Hodge decomposition for Riemannian foliations.
\begin{thm}\label{Hodge decomposition theorem}
	For $ 0\leq r\leq q $, let $ \mathcal{F} $ be a transversally oriented Riemannian foliation on a compact oriented manifold $ \overline{M} $ with smooth basic boundary $ \partial M $, i.e., $ \rho $ is a basic function. Assume $ g_M $ is a bundle-like metric with projectable mean curvature $ \tau $. There is an orthogonal decomposition
	\begin{equation}\label{Hodge decomposition}
		H_{B,0}^r={\rm Range}\left(\Delta_{F_B}\right)\oplus\mathcal{H}_B^r={\rm d}_B\delta_B{\rm Dom}\left(F_B\right)\oplus\delta_B{\rm d}_B{\rm Dom}\left(F_B\right)\oplus\mathcal{H}_B^r,
	\end{equation}
    where the kernel $ \mathcal{H}_B^r $ of $ \Delta_{F_B} $ is a finite-dimensional space. Moreover, we have
    \begin{equation}\label{smooth Hodge decomposition}
    	\Omega_B^r\left(\overline{M}\right)={\rm Im}{\rm d}_B\oplus{\rm Im}\delta_B\oplus\mathcal{H}_B^r.
    \end{equation}
\end{thm}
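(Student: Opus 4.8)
The plan is to run the standard Hodge-theoretic argument, letting the regularity theorem for $F_B$ (Proposition \ref{higher estimate}) and the spectral description of $\Delta_{F_B}$ (Proposition \ref{smoothness of eigenform}) carry all the analysis, so that only bookkeeping with boundary integrals remains.

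\textbf{Step 1 (abstract splitting).} Since $F_B=\Delta_B+{\rm Id}$ is self-adjoint with compact resolvent, $\Delta_{F_B}$ is self-adjoint, non-negative, with discrete spectrum in which $0$ is isolated, and $\mathcal{H}_B^r=\ker\Delta_{F_B}$ is a finite-dimensional subspace of $\Omega_B^r\left(\overline{M}\right)$. I would introduce the Green operator $G_B$ (equal to $0$ on $\mathcal{H}_B^r$ and to $\left(\Delta_{F_B}|_{(\mathcal{H}_B^r)^\perp}\right)^{-1}$ on $(\mathcal{H}_B^r)^\perp$, which is bounded on $H_{B,0}^r$ because $0$ is isolated in the spectrum) together with the orthogonal projection $H_B$ onto $\mathcal{H}_B^r$, so that ${\rm Id}=\Delta_{F_B}G_B+H_B$. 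The spectral gap at $0$ gives that ${\rm Range}\left(\Delta_{F_B}\right)$ is closed, hence ${\rm Range}\left(\Delta_{F_B}\right)=(\mathcal{H}_B^r)^\perp$ and $H_{B,0}^r={\rm Range}\left(\Delta_{F_B}\right)\oplus\mathcal{H}_B^r$.

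\textbf{Step 2 (refining the range and closedness).} By Proposition \ref{higher estimate} with $s=0$, ${\rm Dom}\left(F_B\right)\subseteq H_{B,2}^r$, so for $u\in{\rm Dom}\left(F_B\right)$ one has $\Delta_{F_B}u={\rm d}_B\delta_Bu+\delta_B{\rm d}_Bu$ with both terms in $H_{B,0}^r$; thus ${\rm Range}\left(\Delta_{F_B}\right)\subseteq{\rm d}_B\delta_B{\rm Dom}\left(F_B\right)+\delta_B{\rm d}_B{\rm Dom}\left(F_B\right)$. For the reverse inclusion, integrate by parts using (\ref{divergence formula}): since an element $h\in\mathcal{H}_B^r$ is smooth and satisfies ${\rm grad}\rho\lrcorner h=0$ and ${\rm grad}\rho\lrcorner{\rm d}_Bh=0$ on $\partial M$ (Propositions \ref{boundary condition} and \ref{expression of $ F_B $}), one gets $0=\left(\Delta_{F_B}h,h\right)_B=\|{\rm d}_Bh\|_B^2+\|\delta_Bh\|_B^2$, whence ${\rm d}_Bh=\delta_Bh=0$; consequently $\left({\rm d}_B\delta_Bu,h\right)_B=0$ and $\left(\delta_B{\rm d}_Bu,h\right)_B=0$ for every $u\in{\rm Dom}\left(F_B\right)$, so both subspaces lie in $(\mathcal{H}_B^r)^\perp={\rm Range}\left(\Delta_{F_B}\right)$. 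Their mutual orthogonality follows from ${\rm d}_B^2=\delta_B^2=0$ together with ${\rm d}_B^*=\delta_B$ on $\mathcal{D}_B^{r+1}$: since $v\in{\rm Dom}\left(F_B\right)$ forces ${\rm d}_Bv\in\mathcal{D}_B^{r+1}$, we have $\left({\rm d}_B\delta_Bu,\delta_B{\rm d}_Bv\right)_B=\left({\rm d}_B^*{\rm d}_Bv,{\rm d}_B\delta_Bu\right)_B=\left({\rm d}_Bv,{\rm d}_B{\rm d}_B\delta_Bu\right)_B=0$. Finally, to see that ${\rm d}_B\delta_B{\rm Dom}\left(F_B\right)$ is closed (so the displayed $\oplus$ is a genuine Hilbert-space direct sum): if $w$ lies in its closure then $w\in{\rm Range}\left(\Delta_{F_B}\right)$, so $w=\Delta_{F_B}\left(G_Bw\right)={\rm d}_B\delta_B\left(G_Bw\right)+\delta_B{\rm d}_B\left(G_Bw\right)$; as $w$ is orthogonal to $\delta_B{\rm d}_B{\rm Dom}\left(F_B\right)$, the second summand must vanish, so $w={\rm d}_B\delta_B\left(G_Bw\right)\in{\rm d}_B\delta_B{\rm Dom}\left(F_B\right)$, and symmetrically for the other piece. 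This gives (\ref{Hodge decomposition}). Throughout, one first argues for the dense core ${\rm Dom}\left(F_B\right)\cap\Omega_B^r\left(\overline{M}\right)$ of smooth elements (spanned by the smooth eigenforms of Proposition \ref{smoothness of eigenform}) and then passes to the limit in the graph norm.

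\textbf{Step 3 (the smooth decomposition).} Given $\omega\in\Omega_B^r\left(\overline{M}\right)$, set $\psi:=G_B\omega\in{\rm Dom}\left(F_B\right)$. Then $F_B\psi=\Delta_B\psi+\psi=\left(\omega-H_B\omega\right)+\psi$ with $\omega-H_B\omega$ smooth, so starting from $\psi\in H_{B,2}^r$ and applying Proposition \ref{higher estimate} repeatedly (raising the Sobolev index by $2$ at each step), then Proposition \ref{Rellich and Sobolev lemma}, yields $\psi\in\Omega_B^r\left(\overline{M}\right)$. Hence $\omega={\rm d}_B\left(\delta_B\psi\right)+\delta_B\left({\rm d}_B\psi\right)+H_B\omega$ with $\delta_B\psi\in\Omega_B^{r-1}\left(\overline{M}\right)$ and ${\rm d}_B\psi\in\mathcal{D}_B^{r+1}$ (Proposition \ref{expression of $ F_B $}), exhibiting $\omega$ in ${\rm Im}\,{\rm d}_B+{\rm Im}\,\delta_B+\mathcal{H}_B^r$ with smooth summands. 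Directness: if ${\rm d}_Ba+\delta_Bb+h=0$ with $a\in\Omega_B^{r-1}\left(\overline{M}\right)$, $b\in\mathcal{D}_B^{r+1}$, $h\in\mathcal{H}_B^r$, then pairing with $h$ and using ${\rm d}_Bh=\delta_Bh=0$, ${\rm grad}\rho\lrcorner h=0$ on $\partial M$, and ${\rm d}_B^*=\delta_B$ on $\mathcal{D}_B^{r+1}$ gives $\|h\|_B^2=0$; then pairing ${\rm d}_Ba+\delta_Bb=0$ with $\delta_Bb$ and using ${\rm d}_B^2=0$ gives $\|\delta_Bb\|_B^2=0$, whence also ${\rm d}_Ba=0$. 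This is (\ref{smooth Hodge decomposition}).

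\textbf{Expected main obstacle.} The genuinely hard analysis, namely the $H_{B,1}$-coercivity of $G_B$ and the elliptic boundary regularity for $F_B$, is already in hand; the remaining difficulty is purely bookkeeping: every orthogonality assertion reduces through (\ref{divergence formula}) to the vanishing of a boundary integral, and one must check in each instance that the relevant form lies in the domain ($\mathcal{D}_B^r$, $\mathcal{D}_B^{r+1}$ or ${\rm Dom}\left(F_B\right)$) whose defining boundary condition kills that integral; likewise the closedness of the two summands of ${\rm Range}\left(\Delta_{F_B}\right)$ is not automatic and needs the little extra argument of Step 2.
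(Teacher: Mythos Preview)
Your proposal is correct and follows essentially the same route as the paper: both arguments deduce closedness of ${\rm Range}\left(\Delta_{F_B}\right)$ from the discrete spectrum in Proposition \ref{smoothness of eigenform}, obtain the $L^2$-splitting from self-adjointness and ${\rm d}_B^2=0$, and then invoke the regularity of $F_B$ (Theorem \ref{main theorem}/Proposition \ref{higher estimate}) for the smooth decomposition; you simply spell out the orthogonality and closedness checks in more detail than the paper does. One cosmetic point: the symbol $G_B$ is already reserved in the paper for the bilinear form $G_B\left(u,v\right)=\left({\rm d}_Bu,{\rm d}_Bv\right)_B+\left(\delta_Bu,\delta_Bv\right)_B+\left(u,v\right)_B$, so you should rename your Green operator (the paper calls it $N_B$).
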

\begin{proof}
	Owing to Proposition $ \ref{smoothness of eigenform} $, the spectrum is discrete, then the eigenvalues of $ \Delta_B|_{\left(\mathcal{H}_{B}^r\right)^\perp} $ are positive, and $ \|u\|_B\lesssim\|\Delta_Bu\|_B $ hold uniformly for all $ u\in{\rm Dom}\left(\Delta_{F_B}\right)\cap\left(\mathcal{H}_{B}^r\right)^\perp $. Hence the range of $ \Delta_{F_B} $ is closed, which gives the first equality of ($ \ref{Hodge decomposition} $) since $ \Delta_{F_B} $ is a self-adjoint operator. By $ {\rm d}_B^2=0 $ we know that Range($ {\rm d}_B $) $ \perp $ Range($ \delta_B $), it implies the second equality. At last, the decomposition ($ \ref{smooth Hodge decomposition} $) is valid due to Theorem $ \ref{main theorem} $.
\end{proof}

The decomposition (\ref{Hodge decomposition}) yields a linear map (Green's operator associated to $ \Delta_{F_B} $) $N_B:H_{B,0}\longrightarrow{\rm Dom}\left(F_B\right)$ as follows. If $ \omega\in\mathcal{H}_B^r $, then $ N_B\omega=0 $; if $ \omega\in{\rm Range}\left(\Delta_{F_B}\right) $, then $ N_B\omega $ is the unique solution of $ \Delta_{F_B}u=\omega $ such that $ N_B\omega\perp\mathcal{H}_B^r $. We denote by $ P_B $ the orthogonal projection operator of $H_{B,0}^r$ onto $\mathcal{H}_B^r$. The properties of $ N_B $ is recorded in the following proposition.

\begin{prop}\label{proposition of N_B}
	$ N_B $ is a bounded operator such that
	
	{\rm (i)} If $ \omega\in H_{B,0}^r $, then $ \omega={\rm d}_B\delta_BN_B\omega+\delta_B{\rm d}_BN_B\omega+P_B\omega $.
	
	{\rm (ii)} $ N_BP_B=P_BN_B=0 $, and $ N_B\Delta_B=\Delta_BN_B={\rm Id}-P_B $ on $ {\rm Dom}\left(F_B\right) $.
	
	{\rm (iii)} If $ \omega\in H_{B,0}^r $ satisfying $ {\rm d}_B\omega=P_B\omega=0 $, then there is a unique solution $ u:=\delta_BN_B\omega $ of the equation $ {\rm d}_Bu=\omega $ such that $ u\perp {\rm Ker}\left({\rm d}_B\right) $.
\end{prop}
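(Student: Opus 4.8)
The plan is to read everything off from the Hodge decomposition (\ref{Hodge decomposition}), the definition of $N_B$, and the a priori estimates for $\Delta_{F_B}$ established above. First I would prove boundedness of $N_B$: from the proof of Theorem \ref{Hodge decomposition theorem} the spectrum of $\Delta_B$ on $\left(\mathcal{H}_B^r\right)^\perp$ is bounded below by a positive constant, so $\|v\|_B\lesssim\|\Delta_Bv\|_B$ uniformly for $v\in{\rm Dom}\left(F_B\right)\cap\left(\mathcal{H}_B^r\right)^\perp$; applying this to $v=N_B\omega$ and using $\Delta_{F_B}N_B\omega=\omega-P_B\omega$ gives $\|N_B\omega\|_B\lesssim\|\omega\|_B$ for every $\omega\in H_{B,0}^r$ (and even $\|N_B\omega\|_{B,2}\lesssim\|\omega\|_B$ by Proposition \ref{higher estimate}). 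For (i), write $\omega=\omega_0+P_B\omega$ with $\omega_0\in{\rm Range}\left(\Delta_{F_B}\right)$; then $N_B\omega=N_B\omega_0$ with $\Delta_{F_B}N_B\omega=\omega_0$, and splitting $\Delta_B={\rm d}_B\delta_B+\delta_B{\rm d}_B$ on ${\rm Dom}\left(F_B\right)$ — each summand being individually well defined there by Proposition \ref{expression of $ F_B $} — yields $\omega={\rm d}_B\delta_BN_B\omega+\delta_B{\rm d}_BN_B\omega+P_B\omega$.

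For (ii), the identities $N_BP_B=0$ and $P_BN_B=0$ are immediate from the definition of $N_B$, since $N_B$ annihilates $\mathcal{H}_B^r$ while ${\rm Range}\left(N_B\right)\subseteq\left(\mathcal{H}_B^r\right)^\perp$. For $N_B\Delta_B={\rm Id}-P_B$ on ${\rm Dom}\left(F_B\right)$: given $u\in{\rm Dom}\left(F_B\right)$, self-adjointness of $\Delta_{F_B}$ shows $\Delta_Bu\perp\mathcal{H}_B^r$, hence $\Delta_Bu\in{\rm Range}\left(\Delta_{F_B}\right)$; on the other hand $u-P_Bu$ lies in ${\rm Dom}\left(F_B\right)\cap\left(\mathcal{H}_B^r\right)^\perp$ (note $P_Bu\in\mathcal{H}_B^r\subseteq{\rm Dom}\left(F_B\right)$, as harmonic forms are smooth $F_B$-eigenforms) and $\Delta_{F_B}\left(u-P_Bu\right)=\Delta_Bu$; by the uniqueness built into the definition of $N_B$ this forces $N_B\Delta_Bu=u-P_Bu$. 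The identity $\Delta_BN_B={\rm Id}-P_B$ is obtained the same way, after writing $u=u_0+P_Bu$ and applying $\Delta_{F_B}$ to $N_Bu=N_Bu_0$.

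For (iii), suppose ${\rm d}_B\omega=0$ and $P_B\omega=0$. By (i) then $\omega={\rm d}_B\delta_BN_B\omega+\delta_B{\rm d}_BN_B\omega$; applying ${\rm d}_B$ and using ${\rm d}_B^2=0$ gives ${\rm d}_B\delta_B{\rm d}_BN_B\omega=0$, and pairing this with ${\rm d}_BN_B\omega$ and using the adjoint relation of (\ref{divergence formula}) forces $\|\delta_B{\rm d}_BN_B\omega\|_B^2=0$, the boundary contribution vanishing because ${\rm grad}\rho\lrcorner{\rm d}_BN_B\omega=0$ on $\partial M$, equivalently ${\rm d}_BN_B\omega\in\mathcal{D}_B^{r+1}$, by Proposition \ref{expression of $ F_B $}. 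Thus $\omega={\rm d}_B\left(\delta_BN_B\omega\right)$, so $u:=\delta_BN_B\omega$ solves ${\rm d}_Bu=\omega$. That $u\perp{\rm Ker}\left({\rm d}_B\right)$ follows from ${\rm Range}\left(\delta_B\right)\perp{\rm Ker}\left({\rm d}_B\right)$, which in turn follows from (\ref{Hodge decomposition}) together with the observation — obtained by the very argument just used — that every ${\rm d}_B$-closed form lies in ${\rm Range}\left({\rm d}_B\right)\oplus\mathcal{H}_B^r$; uniqueness is immediate, since the difference of two such solutions would lie in ${\rm Ker}\left({\rm d}_B\right)\cap{\rm Ker}\left({\rm d}_B\right)^\perp=\{0\}$. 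I expect the integration-by-parts step to be the main obstacle: one must know that ${\rm d}_BN_B\omega$ satisfies the free boundary condition even when $\omega$ is merely square-integrable, which is precisely what Proposition \ref{expression of $ F_B $} gives once $N_B\omega\in{\rm Dom}\left(F_B\right)$ is known; should further regularity be needed to make the pairing a genuine integral, one approximates $\omega$ by smooth basic forms and passes to the limit using the boundedness of $N_B$.
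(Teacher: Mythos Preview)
Your argument is correct and for boundedness, (i), and (ii) coincides with the paper's proof essentially line for line. The one place where you diverge is (iii): to show that $\delta_B{\rm d}_BN_B\omega=0$ you apply ${\rm d}_B$, pair with ${\rm d}_BN_B\omega$, and invoke the boundary condition ${\rm d}_BN_B\omega\in\mathcal{D}_B^{r+1}$; and to show $u\perp{\rm Ker}\left({\rm d}_B\right)$ you appeal to ${\rm Range}\left(\delta_B\right)\perp{\rm Ker}\left({\rm d}_B\right)$ via the decomposition. The paper dispatches both points more directly. Since ${\rm d}_B\omega=0$ means $\omega\in{\rm Ker}\left({\rm d}_B\right)\perp\delta_B{\rm d}_B{\rm Dom}\left(F_B\right)$, the $\delta_B{\rm d}_B$-component of $\omega$ in the orthogonal decomposition (\ref{Hodge decomposition}) vanishes automatically, giving $\omega={\rm d}_B\delta_BN_B\omega$ without any integration by parts. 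For the orthogonality, the paper simply computes $\left(\delta_BN_B\omega,v\right)_B=\left(N_B\omega,{\rm d}_Bv\right)_B=0$ for $v\in{\rm Ker}\left({\rm d}_B\right)$, using only that $N_B\omega\in\tilde{\mathcal{D}}_B^r\subseteq{\rm Dom}\left({\rm d}_B^*\right)$. This sidesteps entirely the regularity concern you flag at the end: no boundary integral is ever written down, so there is nothing to justify by approximation. Your route works, but the paper's is shorter and avoids the only step you yourself identified as potentially delicate.
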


\renewcommand{\proofname}{\bf $ Proof $}
\begin{proof}
	The closeness of $ {\rm Range}\left(\Delta_{F_B}\right) $ means that $ \|u\|_B\lesssim\|\Delta_Bu\|_B $ uniformly for all $ u\in{\rm Dom}\left(\Delta_{F_B}\right)\cap{\mathcal{H}_B^r}^\perp $, it turns out that $ \|N_B\omega\|_B\lesssim\|\omega\|_B $ for all $ \omega\in H_{B,0}^r $ which indicates that $ N_B $ is a bounded operator. (i) is a restatement of the orthogonal decomposition. And (ii) is clear from the definition of $ N_B $. Then by (i) we have $ \omega={\rm d}_B\delta_BN_B\omega $ since $ {\rm d}_B\omega=0 $ and $ \omega\perp\mathcal{H}_B^r $. Thus $ u:=\delta_BN_B\omega $ is the unique solution since $ \left(\delta_BN_B\omega,v\right)_B=\left(N_B\omega,{\rm d}_Bv\right)_B=0 $ for any $ v\in{\rm Ker}\left({\rm d}_B\right) $.
\end{proof}

\begin{remark}
	If $\partial M =\emptyset$, then the proposition is proved by the heat equation method in \cite{NRT90}.
\end{remark}

We now turn towards studying the regularity of the equation $ {\rm d}_Bu=\omega $ under the conditions in Theorem \ref{Hodge decomposition theorem} as follows.

\begin{cor}\label{existence theorem 1}
	With the same hypotheses of Theorem \ref{Hodge decomposition theorem}. For $ 1\leq r\leq q $, let $ \omega\in H_{B,0}^r $ satisfying $ {\rm d}_B\omega=0 $ and $ \omega\perp\mathcal{H}_B^r $, then there exists a unique solution $ u $ of the equation $ {\rm d}_Bu=\omega $ which is orthogonal to $ {\rm Ker}\left({\rm d}_B\right) $. Moreover, for any $ s\in\mathbb{Z}_+ $, if $ \omega\in\Omega_B^r\left(\overline{M}\right) $, then $ u\in\Omega_B^{r-1}\left(\overline{M}\right) $ and
	\begin{align}\label{estimate of solution of d_B}
		\|u\|_{B,s+1}^2\lesssim\|\omega\|_{B,s}^2.
	\end{align}
\end{cor}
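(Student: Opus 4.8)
The plan is to read off existence and uniqueness directly from Proposition \ref{proposition of N_B}, and then to obtain the regularity and the estimate by feeding the Green operator into the elliptic theory for $F_B$ developed above. For the first part: the hypothesis $\omega\perp\mathcal{H}_B^r$ says precisely that $P_B\omega=0$, so together with ${\rm d}_B\omega=0$ we are exactly in the setting of Proposition \ref{proposition of N_B}(iii); hence $u:=\delta_BN_B\omega$ solves ${\rm d}_Bu=\omega$ and is the unique solution orthogonal to ${\rm Ker}\left({\rm d}_B\right)$ (two such solutions differ by an element of ${\rm Ker}\left({\rm d}_B\right)\cap{\rm Ker}\left({\rm d}_B\right)^\perp=\{0\}$).

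For the regularity I would first note that $N_B\omega\in{\rm Dom}\left(F_B\right)$ with $\Delta_BN_B\omega=\omega$ by construction of $N_B$, whence
\begin{equation*}
	F_BN_B\omega=\left(\Delta_B+{\rm Id}\right)N_B\omega=\omega+N_B\omega .
\end{equation*}
Assuming $\omega\in\Omega_B^r\left(\overline{M}\right)$, the key observation is that the inhomogeneity $\omega+N_B\omega$ already lies in $H_{B,2}^r$, since $N_B\omega\in{\rm Dom}\left(F_B\right)\subseteq H_{B,2}^r$ by Theorem \ref{main theorem} and $\omega$ is smooth; applying the regularity half of Theorem \ref{main theorem} repeatedly then bootstraps $N_B\omega\in H_{B,s}^r$ for every $s$, so $N_B\omega\in\Omega_B^r\left(\overline{M}\right)$ by Proposition \ref{Rellich and Sobolev lemma}. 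Since $\delta_B=-g^{\alpha\beta}X_\alpha\lrcorner\nabla_{X_\beta}+\tau\lrcorner$ is a first-order differential operator with smooth coefficients, it preserves $\Omega_B^\cdot\left(\overline{M}\right)$, and therefore $u=\delta_BN_B\omega\in\Omega_B^{r-1}\left(\overline{M}\right)$.

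For the estimate I would invoke the $\Delta_B$-version of the higher-order inequality from Proposition \ref{smoothness of eigenform}, applied to $N_B\omega\in{\rm Dom}\left(F_B\right)\cap\Omega_B^r\left(\overline{M}\right)$:
\begin{equation*}
	\|N_B\omega\|_{B,s+2}^2\lesssim\|\Delta_BN_B\omega\|_{B,s}^2+\|N_B\omega\|_B^2=\|\omega\|_{B,s}^2+\|N_B\omega\|_B^2\lesssim\|\omega\|_{B,s}^2 ,
\end{equation*}
the last step using the boundedness of $N_B$ on $H_{B,0}^r$ (Proposition \ref{proposition of N_B}) together with $\|\omega\|_B\leq\|\omega\|_{B,s}$. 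Since $\delta_B$ is first order with smooth coefficients we then get $\|u\|_{B,s+1}=\|\delta_BN_B\omega\|_{B,s+1}\lesssim\|N_B\omega\|_{B,s+2}\lesssim\|\omega\|_{B,s}$, which is $(\ref{estimate of solution of d_B})$.

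I do not expect a genuine obstacle: all the analytic ingredients — closedness of ${\rm Range}\left(\Delta_{F_B}\right)$, boundedness of $N_B$, the elliptic regularity and higher-order estimates for $F_B$ and $\Delta_B$, and the Rellich–Sobolev properties of $\{H_{B,s}^\cdot\}$ — are already established. The only point requiring a little care is that the source term in $F_BN_B\omega=\omega+N_B\omega$ contains $N_B\omega$ itself, so the passage to smoothness must be carried out by the iteration just described rather than by a single application of the regularity theorem; and the clean bound in terms of $\|\omega\|_{B,s}$ alone is most conveniently obtained from the $\Delta_B$-form of the estimate in Proposition \ref{smoothness of eigenform} rather than its $F_B$-form.
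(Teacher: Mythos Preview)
Your proposal is correct and follows essentially the same route as the paper: existence and uniqueness via Proposition \ref{proposition of N_B}(iii), regularity of $N_B\omega$ by bootstrapping the equation $F_BN_B\omega=\omega+N_B\omega$ through Theorem \ref{main theorem}/Proposition \ref{higher estimate} (the paper packages this as the hypoellipticity of $F_B-{\rm Id}$ established in Proposition \ref{smoothness of eigenform}), and the estimate via the $\Delta_B$-inequality of Proposition \ref{smoothness of eigenform} together with boundedness of $N_B$. The only cosmetic difference is that the paper keeps the $P_B\omega$ term explicit (writing $\Delta_BN_B\omega=({\rm Id}-P_B)\omega$) and then bounds it, whereas you use the hypothesis $P_B\omega=0$ to drop it immediately.
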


\begin{proof}
	The existence and uniqueness of the equation $ {\rm d}_Bu=\omega $ is given by (iii) in Propsotion \ref{proposition of N_B}. For the regularity, it suffices to prove that $ N_B $ preserves the smoothness. Using (ii) in the Propsotion \ref{proposition of N_B}, we have
	\begin{align*}
		\left(F_B-{\rm Id}\right)N_B\omega=\Delta_BN_B\omega=\left({\rm Id}-P_B\right)\omega,
	\end{align*}
	then $ N_B\omega\in\Omega_B^r\left(\overline{M}\right) $ when $ \omega\in\Omega_B^r\left(\overline{M}\right) $ since $ F_B-{\rm Id} $ is hypoelliptic by the proof of Proposition \ref{smoothness of eigenform}. For the estimate $ (\ref{estimate of solution of d_B}) $, we only need to show that $ \|N_B\omega\|_{B,s+2}^2\lesssim\|\omega\|_{B,s}^2 $. Indeed, according to (ii) in Propsotion \ref{proposition of N_B} and Proposition \ref{smoothness of eigenform} again,
	\begin{align*}
		\|N_B\omega\|_{B,s+2}^2&\lesssim\|\Delta_BN_B\omega\|_{B,s}^2+\|N_B\omega\|_{B}^2\\
		&\lesssim\|\omega\|_{B,s}^2+\|P_B\omega\|_{B,s}^2+\|N_B\omega\|_{B}^2\\
		&\thicksim\|\omega\|_{B,s}^2+\|P_B\omega\|_{B}^2+\|N_B\omega\|_{B}^2\\
		&\lesssim\|\omega\|_{B,s}^2.
	\end{align*}
	The third line is true since $ \mathcal{H}_B^r $ is a finite-dimensional space by Theorem \ref{Hodge decomposition theorem}. The last line follows from the continuity of $ N_B $ and $P_B$. Hence, 
	$$ \|u\|_{B,s+1}^2\lesssim\|N_B\omega\|_{B,s+2}^2\lesssim\|\omega\|_{B,s}^2. $$
	Thus the proof is complete.
\end{proof}

We finish this section by observing the finite dimensionality of basic cohomology. In fact, thanks to Corollary \ref{existence theorem 1} we know that
$$ \mathcal{H}_B^r\cong H_B^r\left(\overline{M},\mathcal{F}\right):=\frac{\{u\in\Omega_B^r\left(\overline{M}\right)\ |\ {\rm d}_Bu=0\}}{{\rm d}_B\Omega_B^{r-1}\left(\overline{M}\right)} $$
is a finite dimensional space.

\subsection{Existence theorems}
This section is devoted to giving two existence theorems by $L^2$-method for the equation ${\rm d}_Bu=\omega$. In the framework of $L^2$ theory, one needs the positivity to ensure that the priori estimate holds true. To do so, we first give the following definition in terms of a local orthonormal frame $ \lbrace L_1,\cdots,L_q\rbrace $ of $ Q $ with the dual basis $ \lbrace \check{\theta}^1,\cdots,\check{\theta}^q\rbrace $ for convenience.
%

\begin{definition}
	A function $ \varphi\in C^2\left(M\right) $ is said to be transversely $ r $-convex if the transversal quadratic form $ L_{r,\varphi}\left(v,v\right):=\varphi_{\alpha\beta}v_{\alpha K}v_{\beta K}\geq0 $ on $ M $ for all $ v=v_J\check{\theta}^J\in\Omega_B^r\left(\overline{M}\right) $ and all multi-indices $ K $ with $ |K|=r-1 $. We call $ \varphi $ is strictly transversely $ r $-convex if $ L_{r,\varphi}>0 $ on $ M $. 
\end{definition}

\begin{remark}
	The definition is independent of the choices of the local frames. In fact, it only depends on any sum of $ r $ eigenvalues of the transversal quadratic form.
\end{remark}

Now we are ready to formulate the vanishing theorem for the basic cohomology $ H_B^\cdot\left(\overline{M},\mathcal{F}\right) $.

\begin{thm}\label{existence theorem 2}
	Under the same conditions of Theorem \ref{Hodge decomposition theorem}. For $ 1\leq r\leq q $, we further assume that there is a strictly transversal $ r $-convex function $ \varphi\in\Omega_B^0\left(\overline{M}\right) $. Then for any $ \omega\in\Omega_B^r\left(\overline{M}\right) $ with $ {\rm d}_B\omega=0 $, there exists $ u\in\Omega_B^{r-1}\left(\overline{M}\right) $ such that $ {\rm d}_Bu=\omega $ and the estimate $ (\ref{estimate of solution of d_B}) $ is also satisfied for every $ s\in\mathbb{Z}_+ $.
\end{thm}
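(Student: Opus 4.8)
The plan is to reduce the statement to the vanishing of the basic cohomology $H_B^r(\overline M,\mathcal F)$ and to obtain that vanishing from a weighted Bochner estimate. By Theorem \ref{Hodge decomposition theorem} a ${\rm d}_B$-closed $\omega\in\Omega_B^r(\overline M)$ splits as $\omega={\rm d}_B(\delta_BN_B\omega)+P_B\omega$ with $P_B\omega\in\mathcal H_B^r$, so once $\mathcal H_B^r=0$ the form $\omega$ is trivially orthogonal to $\mathcal H_B^r$ and Corollary \ref{existence theorem 1} produces $u\in\Omega_B^{r-1}(\overline M)$ with ${\rm d}_Bu=\omega$ and the estimate $(\ref{estimate of solution of d_B})$; and by Corollary \ref{existence theorem 1} again $\mathcal H_B^r\cong H_B^r(\overline M,\mathcal F)$. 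To kill this space I would run the $L^2$-method with the weight $e^{-t\varphi}$, $t>0$. Since $\varphi$ is a smooth basic function on the compact $\overline M$, $e^{-t\varphi}$ is a positive smooth basic multiplier, so the inner product $(u,v)_{B,t}:=\int_M e^{-t\varphi}\langle u,v\rangle$ leaves the Sobolev spaces $H_{B,s}^\cdot$ unchanged as topological vector spaces and, by the weighted form of $(\ref{divergence formula})$, leaves the defining condition ${\rm grad}\rho\lrcorner u=0$ of $\mathcal D_B^r$ intact. Hence all of §2.1–§2.3 carries over verbatim: one gets the weighted adjoint $\delta_{B,t}=\delta_B+t({\rm grad}_Q\varphi)\lrcorner$, the weighted Friedrichs operator $F_{B,t}$ with the regularity of Theorem \ref{main theorem}, and a weighted Hodge decomposition whose finite-dimensional harmonic space $\mathcal H_{B,t}^r$ is again $\cong H_B^r(\overline M,\mathcal F)$. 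It is therefore enough to prove $\mathcal H_{B,t}^r=0$ for one large $t$.

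The essential input is a weighted version of $(\ref{Bochner-Kodaira formula})$. Rerunning that computation with $(\cdot,\cdot)_{B,t}$ and $\delta_{B,t}$, the only new contribution comes from $\delta_{B,t}-\delta_B=t({\rm grad}_Q\varphi)\lrcorner$ and equals $t\big({\rm d}_B(({\rm grad}_Q\varphi)\lrcorner\,\cdot\,)+({\rm grad}_Q\varphi)\lrcorner{\rm d}_B\,\cdot\,,\,\cdot\,\big)_{B,t}$; by the Cartan-type identity ${\rm d}_B(X\lrcorner\,\cdot\,)+X\lrcorner{\rm d}_B\,\cdot\,=\nabla_X+\theta^\alpha\wedge(\nabla_{X_\alpha}X)\lrcorner$ this is $t(\nabla_{{\rm grad}_Q\varphi}\,\cdot\,,\,\cdot\,)_{B,t}$ plus the purely algebraic term $t\int_M e^{-t\varphi}L_{r,\varphi}(\cdot,\cdot)$ (on basic forms only the transversal Hessian of $\varphi$ enters, by Remark \ref{Bochner for basic form} and Proposition \ref{coefficients of conection}), and the first-order piece $t(\nabla_{{\rm grad}_Q\varphi}\,\cdot\,,\,\cdot\,)_{B,t}$ cancels precisely the correction produced by carrying $e^{-t\varphi}$ through the divergence-theorem steps of the original proof — the usual cancellation of $\nabla\varphi$-cross-terms (and of the attendant $t^2$-terms) in Bochner–Kodaira–Hörmander identities. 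One is left, for $h\in\mathcal H_{B,t}^r$ (so ${\rm d}_Bh=\delta_{B,t}h=0$, $h\in\mathcal D_B^r$, ${\rm d}_Bh\in\mathcal D_B^{r+1}$), with the clean identity
\begin{equation*}
	0=\int_M e^{-t\varphi}|\nabla h|_Q^2+t\int_M e^{-t\varphi}L_{r,\varphi}(h,h)+\mathcal R(h)+\mathcal B(h),
\end{equation*}
where $\mathcal R(h)$ is the $e^{-t\varphi}$-weighted sum of the $t$-independent zero-order curvature and mean-curvature integrals of $(\ref{Bochner-Kodaira formula})$ (the second-fundamental-form term having dropped out by Remark \ref{Bochner for basic form}) and $\mathcal B(h)=\int_{\partial M}e^{-t\varphi}\langle\rho_{\beta\gamma}g^{\alpha\beta}(X_\alpha\lrcorner h),g^{\gamma\sigma}(X_\sigma\lrcorner h)\rangle$.

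To conclude, strict transversal $r$-convexity and compactness of $\overline M$ give a constant $c>0$ with $L_{r,\varphi}(h,h)\ge c|h|^2$ pointwise, while $\mathcal R(h)\ge -C_1\int_M e^{-t\varphi}|h|^2$ and $\mathcal B(h)\ge -C_2\int_{\partial M}e^{-t\varphi}|h|^2$ with $C_1,C_2$ depending only on the geometry of $\overline M$. Just as in the proof of Corollary \ref{H1e}, for Riemannian foliations the basic estimate holds unconditionally, so $\int_M e^{-t\varphi}|\nabla h|_Q^2$ is the dominant quantity and Stokes' theorem together with the Cauchy–Schwarz inequality absorbs the boundary integral, $\int_{\partial M}e^{-t\varphi}|h|^2\le\frac1{2C_2}\int_M e^{-t\varphi}|\nabla h|_Q^2+C_3\int_M e^{-t\varphi}|h|^2$; granting (see below) that $C_3$ can be taken independent of $t$, one gets $0\ge\frac12\int_M e^{-t\varphi}|\nabla h|_Q^2+(tc-C_1-C_3)\int_M e^{-t\varphi}|h|^2$, which forces $h\equiv0$ as soon as $t>(C_1+C_3)/c$. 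Hence $\mathcal H_{B,t}^r=0$, so $H_B^r(\overline M,\mathcal F)=0$, and the theorem follows from the reduction of the first paragraph.

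The step I expect to be the main obstacle is exactly that absorption of $\mathcal B(h)$ with a constant $C_3$ independent of $t$. In Corollary \ref{H1e} a fixed bilinear boundary form is simply swallowed by the $H_{B,1}$-norm, but here the outcome must be compared with the $t$-linear positive term $t\int_M e^{-t\varphi}L_{r,\varphi}(h,h)$, so the $e^{-t\varphi}$-weighted trace inequality has to be handled carefully: converting $\int_{\partial M}e^{-t\varphi}|h|^2$ into an interior integral via the divergence theorem, the only dangerous remainder comes from differentiating the weight and carries a factor $t$ whose coefficient is controlled by the outward normal derivative of $\varphi$ on $\partial M$, so one must either bound this contribution by refining the argument near $\partial M$ (for instance by adjusting $\varphi$, or the boundary defining function, in a thin collar) or otherwise exploit the precise structure of basic forms on the saturated boundary $\partial M$; this is where the Riemannian-versus-Hermitian dichotomy genuinely matters. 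A secondary point to verify carefully is the clean cancellation of the $\nabla\varphi$-cross-terms in the weighted Bochner computation, so that no spurious $O(t)$ interior term survives either.
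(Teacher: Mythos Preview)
Your reduction to $\mathcal H_B^r=\{0\}$ via Corollary~\ref{existence theorem 1} and the plan to kill the harmonic space by a weighted Bochner identity are exactly what the paper does. The difference is in how the weight is chosen. The paper does \emph{not} pass to the weighted harmonic space $\mathcal H_{B,t}^r$; it fixes $v\in\mathcal H_B^r$ once and for all and realizes the weight as a conformal change of the leafwise metric (so $\hat\tau=\tau+{\rm grad}\,\varphi$ and the Bochner formula~(\ref{Bochner-Kodaira formula}) applies verbatim with $\mathcal A_{\hat\tau}$). Since $v$ is only \emph{un}weighted-harmonic, the left side of the weighted identity is $\|{\rm grad}\,\chi(\varphi)\lrcorner v\|'^2$, not $0$; the paper composes with $\chi_1(t)=-\log(-t)$, which satisfies $(\chi_1')^2=\chi_1''$, so that this gradient-squared term cancels exactly against the $\chi''$-part of the Hessian and only $\chi_1'(\varphi)\,L_{r,\varphi}$ survives as the positive contribution. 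Only then is $\varphi$ replaced by $A_1\varphi$.

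Your clean identity is correct---working in $\mathcal H_{B,t}^r$ legitimately removes the gradient-squared term without the first use of $\chi_1$---but the \emph{second} role of the logarithm is precisely what resolves the boundary obstacle you flag, and your linear weight does not share it. With weight $e^{-\chi_1(A_1\varphi)}=-A_1\varphi$ one has $\nabla\chi_1(A_1\varphi)=\nabla\varphi/(-\varphi)$, independent of $A_1$; hence when the boundary integral is converted by the divergence theorem, the ``differentiate the weight'' contribution is $\int_M\frac{V(\varphi)}{-\varphi}|v|^2 e^{-\chi_1(A_1\varphi)}\le C\|v\|'^2$ with $C$ uniform in the large parameter. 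With your weight $t\varphi$ this fails: $\nabla(t\varphi)=t\nabla\varphi$, the remainder is $t\int_M e^{-t\varphi}V(\varphi)|h|^2$, so $C_3$ grows linearly in $t$ and competes head-on with the good term $tc\int_M e^{-t\varphi}|h|^2$; nothing forces $c>C_2\sup_{\partial M}|\partial_\nu\varphi|$. So the gap you identify is genuine for the linear weight, and the simplest repair is not a collar modification of $\varphi$ but to adopt the paper's Donnelly--Fefferman-type weight $\chi_1(\varphi)$ (after shifting $\varphi$ to be negative on $\overline M$) followed by the scaling $\varphi\mapsto A_1\varphi$.
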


\renewcommand{\proofname}{\bf $ Proof $}
\begin{proof}
	By Corollary \ref{existence theorem 1}, it suffices to prove that $ \mathcal{H}_B^r=\{0\} $. Then the solution is the canonical solution $ u:=\delta_BN_B\omega $. Without loss of generality, we assume that $ \varphi<0 $ in $ M $ and $ |\varphi| $ is bounded since $ \overline{M} $ is compact. We define a new bundle-like metric on $ M $ as follows
	\begin{align}\label{new metric}
		\hat{g}_M:=e^{-\frac{2}{p}\varphi}g_L\oplus g_Q.
	\end{align}
	This is a conformal modification of the metric along the leaves, while the transversal metric is left unchanged. The notation $ \hat{\cdot} $ refers to metric objects in the changed metric. Since $ g_Q $ is unchanged, the inner product $ \lrcorner $ and the connection $ \nabla $ thus $ R^\nabla $ and $ Ric^\nabla $ will not change when acting on the basic forms. Hence we still use the same symbols to denote them. However, by $ (\ref{expression of some Gamma}) $ in Proposition \ref{coefficients of conection} we have
	\begin{align}\label{H'}
		\hat{\tau}=\hat{g}^{ab}\pi\left(\nabla_{X_a}X_b\right)&=\hat{g}^{ab}\hat{\Gamma}_{ab}^\alpha X_\alpha\nonumber\\
		&=-\frac{1}{2}\hat{g}^{ab}\hat{g}^{\alpha\beta}\left(X_\beta \hat{g}_{ab}+\hat{g}_{ac}\frac{\partial\phi_\beta^c}{\partial x_b}+\hat{g}_{bc}\frac{\partial\phi_\beta^c}{\partial x_a}\right)X_\alpha\nonumber\\
		&=-\frac{1}{2}g^{ab}g^{\alpha\beta}\left(-\frac{2}{p}g_{ab}X_\beta\varphi+X_\beta g_{ab}+g_{ac}\frac{\partial\phi_\beta^c}{\partial x_b}+g_{bc}\frac{\partial\phi_\beta^c}{\partial x_a}\right)X_\alpha\nonumber\\
		&=g^{\alpha\beta}X_\beta\left(\varphi\right)X_\alpha-\frac{1}{2}g^{ab}g^{\alpha\beta}\left(X_\beta g_{ab}+g_{ac}\frac{\partial\phi_\beta^c}{\partial x_b}+g_{bc}\frac{\partial\phi_\beta^c}{\partial x_a}\right)X_\alpha\nonumber\\
		&={\rm grad}\varphi+\tau.
	\end{align}
	
	Since $ \varphi $ is a basic function, $ \hat{\delta}_B:=-g^{\alpha\beta}X_\alpha\lrcorner\nabla_{X_\beta}+\hat{\tau}\lrcorner $ maps $ \Omega_B^r\left(\overline{M}\right) $ to $ \Omega_B^{r-1}\left(\overline{M}\right) $. In view of $ (\ref{H'}) $
	\begin{align*}
		\nabla_{X_\gamma}\hat{\tau}&=g^{\alpha\beta}\left<\nabla_{X_\gamma}{\rm grad}\varphi,X_\beta\right>X_\alpha+\nabla_{X_\gamma}\tau\\
		&=g^{\alpha\beta}\left<\nabla_{X_\gamma}\left(g^{\sigma\lambda}X_\sigma\left(\varphi\right) X_\lambda\right),X_\beta\right>X_\alpha+\nabla_{X_\gamma}\tau\\
		&=g^{\alpha\beta}\left<X_{\gamma}\left(g^{\sigma\lambda}X_\sigma\left(\varphi\right)\right) X_\lambda,X_\beta\right>X_\alpha+g^{\alpha\beta}\left<g^{\sigma\lambda}X_\sigma\left(\varphi\right)\nabla_{X_\gamma}X_\lambda,X_\beta\right>X_\alpha+\nabla_{X_\gamma}\tau\\
		&=g^{\alpha\beta}X_\gamma X_\beta\left(\varphi\right)X_\alpha-g^{\alpha\beta}\Gamma_{\gamma\beta}^\lambda	X_\lambda\left(\varphi\right)X_\alpha+\nabla_{X_\gamma}\tau\\
		&=g^{\alpha\beta}\nabla_{X_\beta,X_\gamma}^2\varphi X_\alpha+\nabla_{X_\gamma}\tau.
	\end{align*}
	Thus,
	\begin{align}\label{kappa^}
		\mathcal{A}_{\hat{\tau}}=\mathcal{A}_{\tau}+\theta^\gamma\wedge g^{\alpha\beta}\nabla_{X_\beta,X_\gamma}^2\varphi X_\alpha\lrcorner.
	\end{align}
	
	Let $ {\rm d}V $ be the volume form assosicted with $ g_M $, then $ {\rm d}\hat{V}=e^{-\varphi}{\rm d}V $, and it is easy to see that 
	$$ \hat{\delta}_B=e^\varphi\delta_Be^{-\varphi}=\delta_B+{\rm grad}\varphi\lrcorner. $$
	Hence, by $ (\ref{kappa^}) $ and Bochner formula $ (\ref{Bochner-Kodaira formula}) $ in terms of the local orthonormal frame $ \lbrace L_1,\cdots,L_q\rbrace $ of $ Q $ introduced at the beginning of this section, we know that for $ v=v_J\check{\theta}^J\in\mathcal{H}_B^r $
	\begin{align}\label{bochner for harmonic form and weight varphi}
		\sum_{|I|=r-1}\int_{M}|L_\alpha\left(\varphi\right) v_{\alpha I}|^2e^{-\varphi}=&\|\nabla v\|'^2+\sum_{|I|=r-1}\int_{M}R^\nabla_{L_\alpha,L_\beta}v_{\alpha I}v_{\beta I}e^{-\varphi}+\int_{M}\left<Ric^\nabla\left(L_\alpha\right)\lrcorner v,V_\alpha\lrcorner v\right>e^{-\varphi}\nonumber\\
		&+\int_{M}\left<\mathcal{A}_\tau v,v\right>e^{-\varphi}+\sum_{|I|=r-1}\int_{M}\varphi_{\alpha\beta}v_{\alpha I}v_{\beta I}e^{-\varphi}+\sum_{|I|=r-1}\int_{\partial M}\rho_{\alpha\beta}v_{\alpha I}v_{\beta I}e^{-\varphi},
	\end{align}
	where $ \|\cdot\|' $ is the norm associated to metric $ \hat{g}_M $. 
	
	Let $ \chi_1\left(t\right)=-{\rm log}\left(-t\right) $ for $ t<0 $, we have
	\begin{align*}
		\left(\chi_1\left(\varphi\right)\right)_{\alpha\beta}v_{\alpha I}v_{\beta I}-|L_\alpha\left(\chi_1\left(\varphi\right)\right) v_{\alpha I}|^2&=\chi_1'\left(\varphi\right)\varphi_{\alpha\beta}v_{\alpha I}v_{\beta I}+\chi_1''\left(\varphi\right)|L_\alpha\left(\varphi\right)v_{\alpha I}|^2-\chi_1'\left(\varphi\right)^2|L_\alpha\left(\varphi\right) v_{\alpha I}|^2\nonumber\\
		&=\chi_1'\left(\varphi\right)\varphi_{\alpha\beta}v_{\alpha I}v_{\beta I}.
	\end{align*}
	Using $ \chi_1\left(\varphi\right) $ to replace $ \varphi $ in the metric $ (\ref{new metric}) $, by means of the equality $ (\ref{bochner for harmonic form and weight varphi}) $ and a similar reasoning of Corollary \ref{H1e} that
	\begin{align}\label{bochner for harmonic form and weight chi_1varphi}
		0\geq&\sum_{|I|=r-1}\int_{M}R^\nabla_{L_\alpha,L_\beta}v_{\alpha I}v_{\beta I}e^{-\chi_1\left(\varphi\right)}+\int_{M}\left<Ric^\nabla\left(L_\alpha\right)\lrcorner v,L_\alpha\lrcorner v\right>e^{-\chi_1\left(\varphi\right)}+\int_{M}\left<\mathcal{A}_\tau v,v\right>e^{-\chi_1\left(\varphi\right)}\nonumber\\
		&+\sum_{|I|=r-1}\int_{M}\chi_1'\left(\varphi\right)\varphi_{\alpha\beta}v_{\alpha I}v_{\beta I}e^{-\chi_1\left(\varphi\right)}+O\left(\|v\|'^2\right).
	\end{align}
	
	Since $ \overline{M} $ is compact, $ R^\nabla_{X_\beta,X_\gamma} $, $ Ric^\nabla $ and $ \mathcal{A}_\tau $ are bounded. Let $ \chi_2\left(t\right)=A_1t $ for $ t<0 $ where $ A_1>>1 $. Then $ \chi_2\left(\varphi\right) $ is still a negative strictly transversal $ r $-convex basic function. Substituting $ \chi_2\left(\varphi\right) $ for $ \varphi $ in $ \chi_1\left(\varphi\right) $, and from $ (\ref{bochner for harmonic form and weight chi_1varphi}) $ we read off 
	\begin{align*}
		0\gtrsim\int_{M}|v|^2e^{-\chi_1\circ\chi_2\left(\varphi\right)}.
	\end{align*}
    Hence $ v=0 $, i.e., $ \mathcal{H}_B^r=\{0\} $ which completes the proof.
\end{proof}

Before giving the statement of the next existence theorem, let's first recall a variant of the Riesz representation theorem(see \cite{Hlv65JL}) which is very useful for solving overdetermined systems.
\begin{lemma}\label{functional analysis}
	Let $ T:H_1\longrightarrow H_2 $ and $ S:H_2\longrightarrow H_3 $ be closed, densely defined linear operators, if $ {\rm Im}T\subseteq {\rm Ker}S $ and there is a constant $ C>0 $ such that 	
	\begin{equation*}
		\|g\|_{H_2}^2\leq C\left(\|T^*g\|_{H_1}^2+\|Sg\|_{H_3}^2\right),\quad  g\in D_S\cap D_{T^*},
	\end{equation*}
	then for each $ f\in {\rm Ker}S $, there is a solution $ u\in H_1 $ such that $ Tu=f $ and $ \|u\|_{H_1}^2\leq C\|f\|_{H_2}^2 $.
\end{lemma}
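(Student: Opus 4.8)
The plan is to follow Hörmander's duality argument: deduce solvability of $Tu=f$ from the a priori estimate by a Hahn--Banach / Riesz representation step. First I would reformulate the goal. Since $T$ is closed and densely defined, $T=(T^*)^*$, so to produce $u\in H_1$ with $Tu=f$ and $\|u\|_{H_1}^2\le C\|f\|_{H_2}^2$ it suffices to construct $u\in H_1$ satisfying $\|u\|_{H_1}^2\le C\|f\|_{H_2}^2$ and $(u,T^*g)_{H_1}=(f,g)_{H_2}$ for every $g\in D_{T^*}$; indeed, this last identity is precisely the statement that $u\in D_{(T^*)^*}$ with $(T^*)^*u=f$.

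Next I would set up an orthogonal decomposition in $H_2$. Because $S$ is closed, $\mathrm{Ker}\,S$ is a closed subspace; for $g\in D_{T^*}$ write $g=g_1+g_2$ with $g_1$ the orthogonal projection of $g$ onto $\mathrm{Ker}\,S$ and $g_2\perp\mathrm{Ker}\,S$. The hypothesis $\mathrm{Im}\,T\subseteq\mathrm{Ker}\,S$ gives $g_2\in(\mathrm{Ker}\,S)^\perp\subseteq(\mathrm{Im}\,T)^\perp=\mathrm{Ker}\,T^*$, hence $T^*g_2=0$, so $g_1=g-g_2\in D_{T^*}$ with $T^*g_1=T^*g$; also $g_1\in\mathrm{Ker}\,S$ means $g_1\in D_S$ and $Sg_1=0$, so $g_1\in D_S\cap D_{T^*}$. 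Finally, since $f\in\mathrm{Ker}\,S$ and $g_2\perp\mathrm{Ker}\,S$, we have $(f,g)_{H_2}=(f,g_1)_{H_2}$.

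Then I would apply the a priori estimate to $g_1$: $\|g_1\|_{H_2}^2\le C\bigl(\|T^*g_1\|_{H_1}^2+\|Sg_1\|_{H_3}^2\bigr)=C\|T^*g\|_{H_1}^2$, so by Cauchy--Schwarz $|(f,g)_{H_2}|=|(f,g_1)_{H_2}|\le\|f\|_{H_2}\|g_1\|_{H_2}\le C^{1/2}\|f\|_{H_2}\|T^*g\|_{H_1}$. This inequality simultaneously shows that the assignment $T^*g\mapsto(f,g)_{H_2}$ is well defined on the subspace $\mathrm{Im}\,T^*\subseteq H_1$ (if $T^*g=T^*g'$ then $|(f,g-g')_{H_2}|\le C^{1/2}\|f\|_{H_2}\|T^*(g-g')\|_{H_1}=0$) and that it is a bounded linear functional there with norm at most $C^{1/2}\|f\|_{H_2}$. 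Extending this functional to all of $H_1$ (Hahn--Banach, or passing to the closure of $\mathrm{Im}\,T^*$ and invoking the Riesz representation theorem) furnishes $u\in H_1$ with $\|u\|_{H_1}\le C^{1/2}\|f\|_{H_2}$ and $(u,T^*g)_{H_1}=(f,g)_{H_2}$ for all $g\in D_{T^*}$, which by the reformulation of the first paragraph gives $Tu=f$ and the desired bound.

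I do not expect a genuine obstacle here, as this is the standard functional-analytic lemma; the only points demanding care are the closedness of $\mathrm{Ker}\,S$ (used to make the orthogonal projection meaningful), the identity $(\mathrm{Im}\,T)^\perp=\mathrm{Ker}\,T^*$ for closed densely defined $T$, and the bookkeeping that $g_1,g_2$ remain in the appropriate domains so that the a priori estimate is legitimately applicable to $g_1$.
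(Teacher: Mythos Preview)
Your argument is correct and is exactly the standard H\"ormander duality proof. Note, however, that the paper does not actually supply a proof of this lemma: it is stated as a recalled result with a citation to \cite{Hlv65JL}, so there is no ``paper's own proof'' to compare against beyond the fact that your approach is precisely the one in the cited reference.
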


For $ 0\leq r\leq q $, we use $ L^2_{loc}\left(M,\Omega_B^r\right) $ to denote the set of basic $ r $-currents with locally $ L^2 $-integrable coefficients in $ M $. The weighted $ L^2 $-space $ L^2\left(\Omega_B^r,\phi\right) $ for a smooth real valued function $ \phi $ is defined by
\begin{align}\label{weighted L2 space}
	L^2\left(\Omega_B^r,\phi\right)=\bigg\{u\in L^2_{loc}\left(M,\Omega_B^r\right)\ \big|\ \int_M |u|^2e^{-\phi}<+\infty\bigg\}.
\end{align}
Let $\left(\cdot,\cdot\right)_{B,\phi}$ denote the inner product of $ L^2\left(\Omega_B^r,\phi\right) $. In slight abuse of notation we write $ {\rm d}_B $ for its maximal extension from $ L^2\left(\Omega_B^{\cdot},\phi\right) $ to $ L^2\left(\Omega_B^{\cdot+1},\phi\right) $ with Hilbert adjoint $ {{\rm d}_B^\phi}^* $. By $ \delta_B^\phi:=\delta_B+{\rm grad}\phi\lrcorner $ we denote the formal adjoint of $ {\rm d}_B $ with respect to the inner product $\left(\cdot,\cdot\right)_{B,\phi}$. It turns out that 
$$ \mathcal{D}_{B,\phi}^r:={\rm Dom}\left({{\rm d}_B^\phi}^*\right)\cap\Omega_B^r\left(\overline{M}\right)=\mathcal{D}_B^r. $$

We now turn to the problem of $ L^2 $-existence for the operator $ {\rm d}_B $.

\begin{thm}\label{existence theorem 3}
	For $ 1\leq r\leq q $, let $ M $ be an oriented manifold with a transversally oriented Riemannian foliation, and $ g_M $ is a bundle-like metric with basic mean curvature form. If there exists a strictly transversal $ r $-convex exhaustion basic function $ \varphi $, then for any $ \omega\in L^2_{loc}\left(M,\Omega_B^r\right) $ with $ {\rm d}_B\omega=0 $, there is a basic form $ u\in L^2_{loc}\left(M,\Omega_B^{r-1}\right) $ such that $ {\rm d}_Bu=\omega $.
\end{thm}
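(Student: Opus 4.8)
The plan is to realize $\mathrm{d}_B$ on an exhausted manifold as a closed, densely defined operator between suitable weighted $L^2$-spaces and then invoke the abstract existence criterion of Lemma \ref{functional analysis}. Since $\varphi$ is an exhaustion basic function, the sublevel sets $M_c=\{\varphi<c\}$ form an increasing family of relatively compact domains with smooth basic boundary (for regular values $c$, which suffice by Sard), exhausting $M$; each $M_c$ satisfies the hypotheses of Theorem \ref{Hodge decomposition theorem} and Theorem \ref{existence theorem 2} once we note that the restriction of $\varphi$ to $M_c$ is again strictly transversely $r$-convex. First I would fix three weight functions $\phi_1,\phi_2,\phi_3$ (to be chosen along the exhaustion, in the spirit of H\"ormander's three-weights trick) and consider the complex
$$
L^2\left(\Omega_B^{r-1},\phi_1\right)\xrightarrow{\ \mathrm{d}_B\ }L^2\left(\Omega_B^{r},\phi_2\right)\xrightarrow{\ \mathrm{d}_B\ }L^2\left(\Omega_B^{r+1},\phi_3\right),
$$
with $\mathrm{d}_B$ taken as the maximal (distributional) closed extension. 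The condition $\mathrm{Im}\,\mathrm{d}_B\subseteq\mathrm{Ker}\,\mathrm{d}_B$ is immediate from $\mathrm{d}_B^2=0$.

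The heart of the matter is the a priori estimate
$$
\|g\|_{B,\phi_2}^2\lesssim\|{\mathrm{d}_B^{\phi_1}}^{*}g\|_{B,\phi_1}^2+\|\mathrm{d}_Bg\|_{B,\phi_3}^2,\qquad g\in\mathrm{Dom}\left({\mathrm{d}_B^{\phi_1}}^{*}\right)\cap\mathrm{Dom}\left(\mathrm{d}_B\right).
$$
To get it I would first establish it for smooth compactly supported (in the interior) basic forms and for smooth basic forms in $\mathcal{D}_B^{\cdot}$ on each $M_c$, using the Bochner formula (\ref{Bochner-Kodaira formula}) together with Remark \ref{Bochner for basic form} and the weight computations (\ref{H'})--(\ref{bochner for harmonic form and weight varphi}) already carried out in the proof of Theorem \ref{existence theorem 2}: conjugating $\mathrm{d}_B,\delta_B$ by $e^{-\phi/2}$ turns the curvature terms $R^\nabla,\mathrm{Ric}^\nabla,\mathcal{A}_\tau$, which are bounded on each compact piece, into controllable error terms, while the Hessian term $\varphi_{\alpha\beta}v_{\alpha I}v_{\beta I}$ contributes a strictly positive quantity because $\varphi$ is strictly transversely $r$-convex, and the boundary term $\int_{\partial M_c}\rho_{\alpha\beta}v_{\alpha I}v_{\beta I}$ is nonnegative since $\partial M_c$ is a level set of the convex $\varphi$ (its transversal Levi form has the right sign). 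Choosing $\phi_1,\phi_2,\phi_3$ growing fast enough relative to the (compactly-uniform) bounds on curvature lets the convexity term dominate and yields the estimate with a constant independent of $c$; a density/completeness argument extends it from smooth forms in $\mathcal{D}_B^{\cdot}$ to the full domains, exactly as in the passage from smooth forms to $\tilde{\mathcal{D}}_B^r$ in Corollary \ref{H1e}.

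With the estimate in hand, Lemma \ref{functional analysis} produces, for the given closed $\omega$ (which lies in $\mathrm{Ker}\,\mathrm{d}_B$ and, after possibly adjusting the weights so that $\int_M|\omega|^2e^{-\phi_2}<\infty$, belongs to $L^2(\Omega_B^r,\phi_2)$), a solution $u_c\in L^2(\Omega_B^{r-1},\phi_1)$ on each $M_c$ with $\mathrm{d}_Bu_c=\omega$ and $\|u_c\|_{B,\phi_1}^2\lesssim\|\omega\|_{B,\phi_2}^2$, the bound being uniform in $c$. Finally I would pass to the limit $c\to\infty$: on any fixed relatively compact $K\Subset M$ the forms $u_c$ (for $c$ large) are bounded in $L^2(K)$, hence a subsequence converges weakly on $K$; a diagonal argument over an exhausting sequence of $K$'s yields a global $u\in L^2_{loc}(M,\Omega_B^{r-1})$ with $\mathrm{d}_Bu=\omega$ (the equation being closed under weak limits since $\mathrm{d}_B$ is a closed operator), and $u$ is automatically basic because each $u_c$ is.

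\textbf{Main obstacle.} The delicate point is the a priori estimate near the boundary $\partial M_c$: one must simultaneously (i) handle the boundary integral in the Bochner formula, which forces the use of the genuine Friedrichs-type domain $\mathcal{D}_B^{\cdot}$ rather than arbitrary $\mathrm{Dom}(\mathrm{d}_B)\cap\mathrm{Dom}(\mathrm{d}_B^*)$, and (ii) choose the three weights so that the positivity coming from strict transversal $r$-convexity beats the curvature error terms \emph{uniformly in $c$}, so that the limiting process survives. Matching the weight growth to the exhaustion while keeping the constant in Lemma \ref{functional analysis} uniform is the crux; everything else is a routine adaptation of the arguments already developed in this chapter.
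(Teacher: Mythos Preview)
Your proposal is correct and follows essentially the same route as the paper: exhaust $M$ by sublevel sets of $\varphi$, derive a weighted a~priori estimate from the Bochner formula~(\ref{Bochner-Kodaira formula}) (dropping the boundary term, which is nonnegative because the boundary Hessian equals that of the $r$-convex $\varphi$), invoke Lemma~\ref{functional analysis}, and pass to a weak limit. The only cosmetic difference is that the paper avoids the three-weights machinery entirely: a single weight of the form $\chi(\varphi)$ for one convex increasing $\chi\in C^\infty(\mathbb{R})$, chosen once globally so that $\chi'(\varphi)\varphi_{\alpha\beta}v_{\alpha K}v_{\beta K}$ dominates the (locally bounded) curvature and mean-curvature errors and so that $\omega\in L^2(\Omega_B^r,\chi(\varphi))$, already yields $\|v\|_{\chi(\varphi)}^2\le\|{\rm d}_Bv\|_{\chi(\varphi)}^2+\|\delta_B^{\chi(\varphi)}v\|_{\chi(\varphi)}^2$ with constant~$1$ on every $M_\nu$, making the uniformity-in-$c$ issue you flag as the ``main obstacle'' disappear automatically.
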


\renewcommand{\proofname}{\bf $ Proof $}
\begin{proof}
	The proof is parallel to that of Theorem \ref{existence theorem 2}. We define a new bundle-like metric on $ M $ as $ (\ref{new metric}) $. The notation $ \hat{\cdot} $ refers to metric objects in the changed metric while the relationships of them are clear in the proof of Theorem \ref{existence theorem 2}.
	
	We assume that $ \overline{M} $ is a compact manifold with smooth compact boundary $ \partial M $ at first. By means of $ {\rm d}\hat{V}=e^{-\varphi}{\rm d}V $, we can treat the inner product $ \left(\cdot,\cdot\right)' $ associated to the metric $ \hat{g}_M $ as the inner product of the weighted $ L^2 $-space $ L^2\left(\Omega_B^r,\varphi\right) $. It follows that the weighted Bochner formula is same as the Bochner formula in the new metric $ (\ref{new metric}) $. Thus by $ (\ref{kappa^}) $ and Bochner formula $ (\ref{Bochner-Kodaira formula}) $ in terms of the local orthonormal frame $ \lbrace L_1,\cdots,L_q\rbrace $ of $ Q $ introduced at the beginning of this section, we know that for $ v=v_J\check{\theta}^J\in\Omega_B^r\left(\overline{M}\right)\cap{\rm Dom}\left({{\rm d}_B^{\varphi}}^*\right) $
	\begin{align}\label{bochner for varphi'}
		\|{\rm d}_Bv\|_\varphi^2+\|\delta_B^\varphi v\|_\varphi^2\geq&\int_{M}R^\nabla_{L_\alpha,L_\beta}v_{\alpha I}v_{\beta I}e^{-\varphi}+\int_{M}\left<Ric^\nabla\left(L_\alpha\right)\lrcorner v,L_\alpha\lrcorner v\right>e^{-\varphi}\nonumber\\
		&+\int_{M}\left<\mathcal{A}_\tau v,v\right>e^{-\varphi}+\int_{M}\varphi_{\alpha\beta}v_{\alpha I}v_{\beta I}e^{-\varphi}+O\left(\|v\|_\varphi^2\right),
	\end{align}
	where $ I $ runs over all multi-indices with $ |I|=r-1 $. 
	
	Since the operators $ R^\nabla_{X_\beta,X_\gamma} $, $ Ric^\nabla $ and $ \mathcal{A}_\tau $ are bounded, it allows us to choose a convex increasing function $ \chi\in C^\infty\left(\mathbb{R}\right) $ such that
	\begin{align}\label{chi}
		R^\nabla_{L_\alpha,L_\beta}v_{\alpha I}v_{\beta I}+\left<Ric^\nabla\left(L_\alpha\right)\lrcorner v,L_\alpha\lrcorner v\right>+\left<\mathcal{A}_\tau v,v\right>+O\left(\|v\|_\varphi^2\right)+\chi'\left(\varphi\right)\chi\left(\varphi\right)_{\alpha\beta}v_{\alpha I}v_{\beta I}\geq|v|^2.
	\end{align}
	From the estimate (\ref{bochner for varphi'}) for $ \chi\left(\varphi\right) $ we read off
	\begin{align}\label{hormander's priori estimate}
		\|{\rm d}_Bv\|_{\chi\left(\varphi\right)}^2+\|\delta_B^{\chi\left(\varphi\right)}v\|_{\chi\left(\varphi\right)}^2\geq\|v\|_{\chi\left(\varphi\right)}^2.
	\end{align}
	According to L. H$ \ddot{{\rm o}} $rmander's density lemma (see \cite{Hlv03}), the priori estimate $ (\ref{hormander's priori estimate}) $ holds for any $ v\in{\rm Dom}\left({{\rm d}_B^{\chi\left(\varphi\right)}}^*\right) $, then applying Lemma \ref{functional analysis} to Hilbert spaces
	\begin{equation*}
		H_1=L^2\left(\Omega_B^{r-1},\chi\left(\varphi\right)\right), \quad H_2=L^2\left(\Omega_B^r,\chi\left(\varphi\right)\right), \quad H_3 =L^2\left(\Omega_B^{r+1},\chi\left(\varphi\right)\right),
	\end{equation*}
	and $ S $, $ T $ are both given by the maximal extension of $ {\rm d}_B $. It yields that there is a basic form $ u\in L^2\left(\Omega_B^{r-1},\chi\left(\varphi\right)\right) $ such that $ {\rm d}_Bu=\omega $ and $ \|u\|_{\chi\left(\varphi\right)}^2\leq\|\omega\|_{\chi\left(\varphi\right)}^2 $.

	We proceed with considering the general case. Since $ \omega\in L^2_{loc}\left(M,\Omega_B^r\right) $, and $ R^\nabla_{X_\beta,X_\gamma} $, $ Ric^\nabla $ and $ \mathcal{A}_\tau $ are continious operators of zero order on $M$, we can choose a convex increasing function $ \tilde{\chi}\in C^\infty\left(\mathbb{R}\right) $ such that (\ref{chi}) holds for $\tilde{\chi}$ and $ \omega\in L^2\left(\Omega_B^r,\tilde{\chi}\left(\varphi\right)\right) $. On the other hand, since there exists a strictly transversal $ r $-convex exhaustion basic function $ \varphi $, $ M $ can be exhausted by open subsets $ M_\nu $ with smooth compact boundary $ \partial M_\nu $ ($ \nu\in\mathbb{Z}_+ $). Then we can solve the equation $ {\rm d}_Bu=\omega $ in $ M_\nu $ for each $ \nu $ such that
	\begin{align*}
		\int_{M_\nu}|u_\nu|^2e^{-\tilde{\chi}\left(\varphi\right)}\leq\int_{M_\nu}|\omega|^2e^{-\tilde{\chi}\left(\varphi\right)}.
	\end{align*}
	Moreover, the above estimate enables us to find a weak limit $ u $ of $ u_\nu $, which is the desired solution.
\end{proof}

\subsection{Duality and extension theorems}
In this section, we introduce the induced boundary complex on the boundary $\partial M$, and establish the twisted duality theorem for basic cohomology. As a corollary, we obtain the extension theorem of the induced boundary forms.

Since we always assume that the boundary defining function $\rho$ is a basic function, the integrable subbundle $L$ induces an integrable subbundle $L|_{\partial M}$ and the restriction $g_M|_{\partial M}$ is also bundle like w.r.t. $L|_{\partial M}$. Hence, there is a naturally induced Riemannian foliation on $\partial M$. We will add subscript $\cdot_{b}$ to denote all objects on $\partial M$. 

For $0\leq r\leq q$, we need the transversal star operator $*_Q:\Omega_Q^r\left(M\right)\longrightarrow\Omega_Q^{q-r}\left(M\right)$ introduced by (see \cite{KT83F} and \cite{KT87})
\begin{align}\label{star operator}
	*_Qu=(-1)^{p(q-r)}*\left(u\wedge\chi_{\mathcal{F}}\right),
\end{align}
where $*$ is the Hodge star operator associated to $g_M$, and $\chi_{\mathcal{F}}$ is the characteristic form of the foliation $\mathcal{F}$, i.e., the leafwise volume form. It is elementary to see that $*_Q$ preserves $\Omega_B^\cdot\left(M\right)$ and $*_Q^2u=(-1)^{r(q-r)}u$ for all $u\in\Omega_{Q}^r\left(M\right)$. Let $\kappa$ denote the mean curvature form and we define ${\rm d}_{Q,\kappa}={\rm d}_Q-\kappa\wedge$, then (see \cite{KT83F} and \cite{KT87})
\begin{align}\label{twist operators}
	\delta_Q=(-1)^{q(r+1)+1}*_Q{\rm d}_{Q,\kappa}*_Q,\quad\delta_{Q,\kappa}=(-1)^{q(r+1)+1}*_Q{\rm d}_Q*_Q,
\end{align}
where $\delta_{Q,\kappa}$ is the formal adjoint of ${\rm d}_{Q,\kappa}$ with respect to $(\cdot,\cdot)_M$. Since the first order term of ${\rm d}_{Q,\kappa}$ is same as ${\rm d}_Q$, 
$$\mathcal{D}_{Q,\kappa}^r:={\rm Dom}\left({\rm d}_{Q,\kappa}^*\right)\cap\Omega_Q^\cdot\left(\overline{M}\right)=\mathcal{D}_{Q}^r.$$
In particular, all these relationships still hold for ${\rm d}_{B,\kappa}$ and ${\rm d}_B$. By $\Delta_\kappa$ we denote the Laplacian for ${\rm d}_{B,\kappa}$, i.e., $\Delta_\kappa={\rm d}_{B,\kappa}\delta_{B,\kappa}+\delta_{B,\kappa}{\rm d}_{B,\kappa}$. We now can retrace the steps in previous sections, starting with the corresponding Friedrichs operator $F_{B,\kappa}$. Since we may establish the Bochner formula for ${\rm d}_{Q,\kappa}$ by using $g^{\alpha\beta}X_\alpha\lrcorner\left(\nabla_{X_\beta}\kappa\right)\wedge$ to replace $\mathcal{A}_\tau:=\theta^\alpha\wedge\left(\nabla_{X_\alpha}\tau\right)\lrcorner$ in $(\ref{Bochner-Kodaira formula})$, applying the same arguments literally the Hodge decomposition for ${\rm d}_{B,\kappa}$ and the analogue of Corollary \ref{existence theorem 1} can be proved. We reformulate these results as follows.
\begin{prop}\label{hodge and existence theorem 1 for d_kappa}
	Let $\left(M,g_M,\mathcal{F},\rho\right)$ is same as Theorem $\ref{Hodge decomposition theorem}$. For $ 0\leq r\leq q $, there is an orthogonal decomposition
	\begin{equation*}
		\Omega_B^r\left(\overline{M}\right)={\rm Im}{\rm d}_{B,\kappa}\oplus{\rm Im}\delta_{B,\kappa}\oplus\mathcal{H}_{\kappa}^r,
	\end{equation*}
	where the kernel $ \mathcal{H}_{\kappa}^r $ of $ \Delta_{\kappa}|_{{\rm Dom}\left(F_{B,\kappa}\right)} $ is a finite-dimensional space. Moreover, for $ \omega\in\Omega_B^r\left(\overline{M}\right) $ $(1\leq r\leq q)$ with $ {\rm d}_{B,\kappa}\omega=0 $ and $ \omega\perp\mathcal{H}_{\kappa}^r $, then there exists a unique solution $ u\in\Omega_B^{r-1}\left(\overline{M}\right) $ of the equation $ {\rm d}_{B,\kappa}u=\omega $ which is orthogonal to the $ {\rm Ker}\left({\rm d}_{B,\kappa}\right) $. Thus, we have an isomorphism
	$$H_{B,\kappa}^r\left(\overline{M},\mathcal{F}\right):=\frac{\{u\in\Omega_B^r\left(\overline{M}\right)\ |\ {\rm d}_{B,\kappa}u=0\}}{{\rm d}_{B,\kappa}\Omega_B^{r-1}\left(\overline{M}\right)}\cong\mathcal{H}_{\kappa}^r.$$ 
\end{prop}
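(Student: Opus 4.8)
The plan is to retrace, with $\mathrm{d}_{B,\kappa}=\mathrm{d}_B-\kappa\wedge$ in place of $\mathrm{d}_B$, the whole chain of arguments running from Subsection~2.1 through Corollary~\ref{existence theorem 1}. First I would record the two preliminary facts that make this transcription possible. Since the twist $\kappa\wedge$ is of order zero, it does not affect the principal part of the operator, so the integration-by-parts identity of Proposition~\ref{the reprensentation of formal adjoint} produces the same boundary term $\int_{\partial M}\langle u,\mathrm{grad}\,\rho\lrcorner v\rangle$; consequently $\mathcal{D}_{B,\kappa}^r={\rm Dom}(\mathrm{d}_{B,\kappa}^*)\cap\Omega_B^r(\overline M)=\mathcal{D}_B^r$ and $\mathrm{d}_{B,\kappa}^*=\delta_{B,\kappa}$ on it, the proof of Proposition~\ref{boundary condition} applying verbatim (this is already noted after $(\ref{twist operators})$). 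One also needs $\mathrm{d}_{B,\kappa}^2=0$: a direct computation gives $(\mathrm{d}_B-\kappa\wedge)^2=-(\mathrm{d}_B\kappa)\wedge$, and the basic mean curvature form of a Riemannian foliation with bundle-like metric is $\mathrm{d}_B$-closed, so $\mathrm{d}_{B,\kappa}^2=0$. This is exactly what makes $(\Omega_B^\cdot(\overline M),\mathrm{d}_{B,\kappa})$ a complex and, as in Theorem~\ref{Hodge decomposition theorem}, will give ${\rm Im}\,\mathrm{d}_{B,\kappa}\perp{\rm Im}\,\delta_{B,\kappa}$.

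Next I would establish the twisted Bochner formula for $u\in\mathcal{D}_Q^r$ by redoing the computation of the proof of $(\ref{Bochner-Kodaira formula})$ with $\delta_{Q,\kappa}\mathrm{d}_{Q,\kappa}+\mathrm{d}_{Q,\kappa}\delta_{Q,\kappa}$ in place of $\Delta_Q$. The only change is that the mean-curvature term $\mathcal{A}_\tau u=\theta^\alpha\wedge(\nabla_{X_\alpha}\tau)\lrcorner u$ gets replaced by a new zero-order term of the shape $g^{\alpha\beta}X_\alpha\lrcorner(\nabla_{X_\beta}\kappa)\wedge u$ (together with a bounded $|\kappa|^2$-type contribution); the leading term $\int_M g^{\alpha\beta}\langle\nabla_{X_\alpha}u,\nabla_{X_\beta}u\rangle$, the curvature and Ricci terms, and the boundary term $\int_{\partial M}\langle\rho_{\beta\gamma}g^{\alpha\beta}(X_\alpha\lrcorner u),g^{\gamma\sigma}(X_\sigma\lrcorner u)\rangle$ are untouched. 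Hence Remark~\ref{Bochner for basic form} and the coercivity estimate of Corollary~\ref{H1e} survive with $G_{B,\kappa}$ in place of $G_B$, i.e.\ $\|u\|_{B,1}^2\lesssim G_{B,\kappa}(u,u)$ for $u\in\tilde{\mathcal{D}}_{B,\kappa}^r$, where $G_{B,\kappa}(u,v)=(\mathrm{d}_{B,\kappa}u,\mathrm{d}_{B,\kappa}v)_B+(\delta_{B,\kappa}u,\delta_{B,\kappa}v)_B+(u,v)_B$.

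Granting coercivity, the Friedrichs machinery applies word for word. The form $G_{B,\kappa}$ yields a self-adjoint operator $F_{B,\kappa}$ with $F_{B,\kappa}=\Delta_\kappa+{\rm Id}$ on smooth forms of its domain (the analogue of Proposition~\ref{expression of $ F_B $}, where $\mathrm{d}_{B,\kappa}^2=0$ is used exactly where $\mathrm{d}_B^2=0$ was); by the analogue of Proposition~\ref{difference of two Laplacian}, $\Delta_\kappa$ is again the restriction of a strongly elliptic operator, so Lemmas~\ref{main 1}, \ref{main 3}, \ref{main 4}, \ref{main 7}, \ref{main 9} and the difference-quotient argument give $\|u\|_{B,s+2}^2\lesssim\|F_{B,\kappa}u\|_{B,s}^2$ and hence Theorem~\ref{main theorem} for $F_{B,\kappa}$. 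The spectral statements of Proposition~\ref{smoothness of eigenform} then follow, so $\mathcal{H}_\kappa^r=\ker\Delta_\kappa|_{{\rm Dom}(F_{B,\kappa})}$ is a finite-dimensional subspace of $\Omega_B^r(\overline M)$, and the orthogonal decomposition $\Omega_B^r(\overline M)={\rm Im}\,\mathrm{d}_{B,\kappa}\oplus{\rm Im}\,\delta_{B,\kappa}\oplus\mathcal{H}_\kappa^r$ is obtained exactly as in Theorem~\ref{Hodge decomposition theorem}. Finally the Green operator $N_{B,\kappa}$ and properties (i)--(iii) of Proposition~\ref{proposition of N_B} carry over, so for $\mathrm{d}_{B,\kappa}\omega=0$ and $\omega\perp\mathcal{H}_\kappa^r$ the canonical solution $u:=\delta_{B,\kappa}N_{B,\kappa}\omega$ solves $\mathrm{d}_{B,\kappa}u=\omega$, is orthogonal to $\mathrm{Ker}(\mathrm{d}_{B,\kappa})$, lies in $\Omega_B^{r-1}(\overline M)$ by hypoellipticity of $F_{B,\kappa}-{\rm Id}$, and is unique with these properties; the isomorphism $H_{B,\kappa}^r(\overline M,\mathcal{F})\cong\mathcal{H}_\kappa^r$ is then immediate, just as in Corollary~\ref{existence theorem 1}.

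The main obstacle is the one genuinely new verification: checking that the twisted Bochner formula has the \emph{same} principal part and the \emph{same} boundary term as $(\ref{Bochner-Kodaira formula})$, with only a bounded zero-order modification. That is precisely what licenses the assertion that the subsequent coercivity, Friedrichs, regularity, spectral and Green-operator arguments transcribe literally. A minor secondary point, needed both for $\mathrm{d}_{B,\kappa}$ to be a differential and for the orthogonality of the two images, is to record the closedness $\mathrm{d}_B\kappa=0$ of the basic mean curvature form.
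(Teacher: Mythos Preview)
Your proposal is correct and follows essentially the same approach as the paper: the paper's own argument is simply that the Bochner formula for $\mathrm{d}_{Q,\kappa}$ is obtained by replacing the zero-order term $\mathcal{A}_\tau=\theta^\alpha\wedge(\nabla_{X_\alpha}\tau)\lrcorner$ in $(\ref{Bochner-Kodaira formula})$ by $g^{\alpha\beta}X_\alpha\lrcorner(\nabla_{X_\beta}\kappa)\wedge$, after which the entire chain from Corollary~\ref{H1e} through Theorem~\ref{Hodge decomposition theorem} and Corollary~\ref{existence theorem 1} applies literally. Your write-up is in fact slightly more explicit than the paper's, since you additionally spell out that $\mathcal{D}_{B,\kappa}^r=\mathcal{D}_B^r$ and record the closedness $\mathrm{d}_B\kappa=0$ needed for $\mathrm{d}_{B,\kappa}^2=0$.
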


Recalling that $\mathcal{D}_B^\cdot:=\{u\in\Omega_{B}^\cdot\left(\overline{M}\right)\ |\ {\rm grad}\rho\lrcorner u=0\ {\rm on}\ \partial M\}$, it's obvious that $\Omega_{B,b}^\cdot\left(\partial M\right)=\{u|_{\partial M}\ |\ u\in\mathcal{D}_B^\cdot\}$. We define $\mathcal{C}_B^\cdot:={\rm Dom}\left(\delta_B^*\right)\cap\Omega_B^\cdot\left(\overline{M}\right)$, and in precise analogy to the reasoning of Proposition \ref{boundary condition} we get
\begin{align}\label{condition of C_B}
	\mathcal{C}_B^\cdot=\{u\in\Omega_B^\cdot\left(\overline{M}\right)\ |\ {\rm d}_B\rho\wedge u=0\ {\rm on}\ \partial M\}=\{u\in\Omega_B^\cdot\left(\overline{M}\right)\ |\ u={\rm d}_B\rho\wedge\cdot+\rho\cdot\},
\end{align}
where the second equality is valid by the definition. We could then argue correspondingly to formulate the de Rham-Hodge decompostion for the operator $\delta_B$. To establish the duality theorem, one needs to derive here some relations between $\mathcal{D}_B^\cdot$ and $\mathcal{C}_B^\cdot$.

\begin{prop}
	For $0\leq r\leq q$, we have $\mathcal{C}_B^r=*_Q\mathcal{D}_B^{q-r}$, ${\rm d}_B\mathcal{C}_B^r\subseteq\mathcal{C}_B^{r+1}$ and $\delta_{B,\kappa}\mathcal{D}_B^{r+1}\subseteq\mathcal{D}_B^r$.
\end{prop}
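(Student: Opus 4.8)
The plan is to reduce everything to the first identity $\mathcal{C}_B^r=*_Q\mathcal{D}_B^{q-r}$, which rests on the pointwise behaviour of $*_Q$ along $\partial M$; the other two inclusions then follow by combining it with $(\ref{twist operators})$. The one geometric fact I would isolate first is this: since $\rho$ is basic, ${\rm d}_B\rho$ is a basic $1$-form annihilating $L$, so ${\rm grad}\rho\in\Gamma L^\perp$ corresponds under $\varpi$ to a section of $Q$, and because $g_M$ restricts on $L^\perp$ to (the push-forward of) $g_Q$, the vector field ${\rm grad}\rho$ is precisely the $g_Q$-metric dual of the basic $1$-form ${\rm d}_B\rho$. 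Consequently, for any $u\in\Omega_B^r\left(\overline{M}\right)$ the contraction ${\rm grad}\rho\lrcorner u$ is the transversal interior product by that dual vector; together with $|{\rm d}\rho|=1$ on $\partial M$ (which is what makes this the genuine unit conormal), this is all the local input required.

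For the first identity I would argue pointwise on $\partial M$. Since $*_Q$ is, fibrewise on basic forms, the Hodge star operator of the transversal metric $g_Q$, the standard duality between interior and exterior multiplication by the dual pair $({\rm grad}\rho,{\rm d}_B\rho)$ gives $*_Q\left({\rm grad}\rho\lrcorner u\right)=\pm\,{\rm d}_B\rho\wedge *_Q u$ for every $u\in\Omega_B^r\left(\overline{M}\right)$, the sign depending only on $r,q$ and playing no role below. As $*_Q$ is a fibrewise linear isomorphism, ${\rm grad}\rho\lrcorner u$ vanishes on $\partial M$ if and only if ${\rm d}_B\rho\wedge *_Q u$ does; in view of Proposition $\ref{boundary condition}$ and $(\ref{condition of C_B})$ this says exactly that $u\in\mathcal{D}_B^r$ iff $*_Q u\in\mathcal{C}_B^{q-r}$. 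Since $*_Q$ is bijective with $*_Q^2=(-1)^{r(q-r)}{\rm Id}$, sends smooth forms up to the boundary to smooth forms up to the boundary, and $\mathcal{D}_B^\cdot,\mathcal{C}_B^\cdot$ are linear subspaces, this equivalence yields $\mathcal{C}_B^r=*_Q\mathcal{D}_B^{q-r}$ and, symmetrically, $*_Q\mathcal{C}_B^{q-r}=\mathcal{D}_B^r$.

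For ${\rm d}_B\mathcal{C}_B^r\subseteq\mathcal{C}_B^{r+1}$ I would use the second description in $(\ref{condition of C_B})$: write $u={\rm d}_B\rho\wedge a+\rho b$ with $a,b\in\Omega_B^\cdot\left(\overline{M}\right)$; then, using ${\rm d}_B{\rm d}_B\rho=0$,
$$
	{\rm d}_Bu=-{\rm d}_B\rho\wedge{\rm d}_Ba+{\rm d}_B\rho\wedge b+\rho\,{\rm d}_Bb={\rm d}_B\rho\wedge\left(b-{\rm d}_Ba\right)+\rho\,{\rm d}_Bb,
$$
which is again of the form ${\rm d}_B\rho\wedge(\cdot)+\rho\,(\cdot)$ with basic, smooth-up-to-$\partial M$ coefficients, hence lies in $\mathcal{C}_B^{r+1}$ by $(\ref{condition of C_B})$; that ${\rm d}_Bu$ is basic and smooth up to $\partial M$ is clear.

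Finally, for $\delta_{B,\kappa}\mathcal{D}_B^{r+1}\subseteq\mathcal{D}_B^r$ I would chain the previous two facts through $(\ref{twist operators})$. Restricting $\delta_{Q,\kappa}$ to basic forms (using that $*_Q$ preserves $\Omega_B^\cdot$ and ${\rm d}_Q|_{\Omega_B}={\rm d}_B$) gives $\delta_{B,\kappa}=\pm\,*_Q{\rm d}_B*_Q$ on $\Omega_B^{r+1}\left(\overline{M}\right)$. For $v\in\mathcal{D}_B^{r+1}$ the first identity gives $*_Qv\in\mathcal{C}_B^{q-r-1}$; the second then gives ${\rm d}_B*_Qv\in\mathcal{C}_B^{q-r}$; applying the first identity once more, $*_Q\left({\rm d}_B*_Qv\right)\in *_Q\mathcal{C}_B^{q-r}=\mathcal{D}_B^r$, so $\delta_{B,\kappa}v\in\mathcal{D}_B^r$. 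The only genuinely delicate point is the Hodge-type identity $*_Q({\rm grad}\rho\lrcorner u)=\pm\,{\rm d}_B\rho\wedge*_Q u$ together with the identification of ${\rm grad}\rho$ as the $g_Q$-dual of ${\rm d}_B\rho$ (where basicness of $\rho$ is essential and $|{\rm d}\rho|=1$ on $\partial M$ makes the normalization right); every sign that appears is immaterial because only the vanishing of these expressions on $\partial M$ is used.
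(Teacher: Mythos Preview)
Your proof is correct and follows essentially the same route as the paper: the first identity is obtained from the pointwise Hodge-star duality $*_Q({\rm grad}\rho\lrcorner\,\cdot)=\pm\,{\rm d}_B\rho\wedge *_Q(\cdot)$ together with Proposition~\ref{boundary condition} and $(\ref{condition of C_B})$, the second by writing $u={\rm d}_B\rho\wedge a+\rho b$ and differentiating, and the third by chaining $\delta_{B,\kappa}=\pm *_Q{\rm d}_B*_Q$ through the first two facts. The only cosmetic difference is that the paper phrases the Hodge-star step as $*_Q({\rm grad}\rho\lrcorner *_Qu)=\pm\,{\rm d}_B\rho\wedge u$, and your remark that $|{\rm d}\rho|=1$ is needed for the normalization is inessential here (the identity holds for any metric-dual pair, and only vanishing on $\partial M$ is used).
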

\begin{proof}
	For the first assertion, $u\in*_Q\mathcal{D}_B^r\Leftrightarrow*_Qu\in\mathcal{D}_B^r\Leftrightarrow{\rm grad}\rho\lrcorner*_Qu=0\ {\rm on}\ \partial M\Leftrightarrow*_Q{\rm grad}\rho\lrcorner*_Qu=0\ {\rm on}\ \partial M\Leftrightarrow{\rm d}_B\rho\wedge u=0\ {\rm on}\ \partial M$ which means $u\in\mathcal{C}_B^r$.
	
	If $u\in\mathcal{C}_B^r$, due to $(\ref{condition of C_B})$ we have $u={\rm d}_B\rho\wedge u'+\rho u''$ where $u'\in\Omega_{B}^{r-1}\left(\overline{M}\right)$ and $u''\in\Omega_B^r\left(\overline{M}\right)$, then ${\rm d}_Bu={\rm d}_B\rho\wedge\left(u''-{\rm d}_Bu'\right)+\rho{\rm d}_Bu''\in\mathcal{C}_B^{r+1}$ again by $(\ref{condition of C_B})$. For the last statement, from $(\ref{twist operators})$ we know $\delta_{B,\kappa}\mathcal{D}_B^{r+1}\subseteq*_Q{\rm d}_B*_Q\mathcal{D}_B^{r+1}\subseteq*_Q{\rm d}_B\mathcal{C}_B^{q-r-1}\subseteq*_Q\mathcal{C}_B^{q-r}\subseteq\mathcal{D}_B^r$.
\end{proof}

Therefore, for $0\leq r\leq q$ we have the following exact commutative diagram
\begin{align}\label{exact commutative diagram}
	\begin{CD}
		0 @>>> \mathcal{C}_B^r @>>> \Omega_B^r\left(\overline{M}\right) @>R>> \Omega_{B,b}^r\left(\partial M\right) @>>>0\\
		@.                   @VV {\rm d}_BV  @VV {\rm d}_BV       @VV {\rm d}_{B,b}V\\
		0 @>>> \mathcal{C}_B^{r+1} @>>> \Omega_B^{r+1}\left(\overline{M}\right) @>R>> \Omega_{B,b}^{r+1}\left(\partial M\right) @>>>0,
	\end{CD}
\end{align}
where $R$ is induced by the restriction $R':\mathcal{D}_B^r\longrightarrow\Omega_{B,b}^r\left(\partial M\right)$. Defining the cohomology 
\begin{align*}
	H_B^r\left(\overline{M},\mathcal{C}\right):=\{u\in\mathcal{C}_B^r\ |\ {\rm d}_Bu=0\}/\{{\rm d}_Bu\ |\ u\in\mathcal{C}_B^{r-1}\},
\end{align*}
we further have the following twisted theorem:

\begin{thm}\label{twisted duality}
	For $0\leq r\leq q$, then
	$$H_B^r\left(\overline{M},\mathcal{C}\right)\cong\left(H_{B,\kappa}^{q-r}\left(\overline{M},\mathcal{F}\right)\right)^*.$$
\end{thm}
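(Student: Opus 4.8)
The plan is to use the Hodge-theoretic description of both sides and the duality between the $\mathrm{d}_B$-complex with boundary condition $\mathcal{C}_B^\cdot$ and the $\mathrm{d}_{B,\kappa}$-complex (with no boundary condition). First I would set up the relevant Hodge decomposition for the $\delta_B$-operator adjoint picture: exactly as in Theorem~\ref{Hodge decomposition theorem} and Proposition~\ref{hodge and existence theorem 1 for d_kappa}, running the Friedrichs-operator machinery for the boundary condition defining $\mathcal{C}_B^\cdot$ (i.e.\ $\mathrm{d}_B\rho\wedge u=0$ on $\partial M$) gives an $L^2$-orthogonal decomposition $\Omega_B^r(\overline{M})=\operatorname{Im}\mathrm{d}_B|_{\mathcal{C}_B}\oplus\operatorname{Im}\delta_B\oplus\mathcal{H}_{B,\mathcal{C}}^r$ with a finite-dimensional space of harmonic forms $\mathcal{H}_{B,\mathcal{C}}^r$, and the analogue of Corollary~\ref{existence theorem 1} yields $H_B^r(\overline{M},\mathcal{C})\cong\mathcal{H}_{B,\mathcal{C}}^r$. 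Here $\mathcal{H}_{B,\mathcal{C}}^r=\{u\in\mathcal{C}_B^r\ |\ \mathrm{d}_Bu=0,\ \delta_Bu=0\}$, the second condition coming for free since $\mathrm{d}_B^*=\delta_B$ already imposes no further boundary restriction on $\mathcal{C}_B^r$ (by the analogue of Proposition~\ref{boundary condition}, the defining condition of $\mathcal{C}_B^r$ is exactly $u\in\operatorname{Dom}(\delta_B^*)$).

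Next I would transport this across the transversal star operator $*_Q$. By the preceding proposition $\mathcal{C}_B^r=*_Q\mathcal{D}_B^{q-r}$, and by $(\ref{twist operators})$ one has $\delta_{B}=(-1)^{q(r+1)+1}*_Q\mathrm{d}_{B,\kappa}*_Q$ and $\delta_{B,\kappa}=(-1)^{q(r+1)+1}*_Q\mathrm{d}_B*_Q$ together with $*_Q^2=(-1)^{r(q-r)}\mathrm{Id}$. Therefore $u\in\mathcal{H}_{B,\mathcal{C}}^r$ if and only if $v:=*_Qu\in\mathcal{D}_B^{q-r}$ satisfies $\mathrm{d}_{B,\kappa}v=0$ and $\delta_{B,\kappa}v=0$, where the first equation is equivalent (via $*_Q$) to $\delta_B u=0$ and the second to $\mathrm{d}_B u=0$ — here I need to check that the boundary condition $v\in\mathcal{D}_B^{q-r}$, i.e.\ $\operatorname{grad}\rho\lrcorner v=0$ on $\partial M$, is exactly the one making $\delta_{B,\kappa}=\mathrm{d}_{B,\kappa}^*$ valid, which is again Proposition~\ref{boundary condition} applied to $\mathrm{d}_{B,\kappa}$ (whose first-order part equals that of $\mathrm{d}_B$). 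Thus $*_Q$ restricts to an isomorphism $\mathcal{H}_{B,\mathcal{C}}^r\xrightarrow{\ \sim\ }\mathcal{H}_{\kappa}^{q-r}$, where $\mathcal{H}_\kappa^{q-r}$ is the harmonic space of Proposition~\ref{hodge and existence theorem 1 for d_kappa}. Combined with $\mathcal{H}_\kappa^{q-r}\cong H_{B,\kappa}^{q-r}(\overline{M},\mathcal{F})$ and the fact that a finite-dimensional space is (non-canonically) isomorphic to its dual, this already produces the stated isomorphism; for the natural pairing one takes $([u],[w])\mapsto\int_M u\wedge*_Q w\wedge\chi_{\mathcal{F}}$ (equivalently $(u,*_Q w)_M$) for $u\in\mathcal{C}_B^r$ closed and $w\in\Omega_B^{q-r}(\overline{M})$ with $\mathrm{d}_{B,\kappa}w=0$, and checks well-definedness on cohomology classes using Stokes' theorem: if $u=\mathrm{d}_B\alpha$ with $\alpha\in\mathcal{C}_B^{r-1}$ then $\int_M\mathrm{d}_B\alpha\wedge*_Qw\wedge\chi_{\mathcal F}=\pm\int_M\alpha\wedge\mathrm{d}_{B,\kappa}(*_Qw)\wedge\chi_{\mathcal F}\pm\int_{\partial M}\alpha\wedge*_Qw\wedge\chi_{\mathcal F}$, and the boundary integral vanishes because $\alpha\in\mathcal{C}_B^{r-1}$ means $\mathrm{d}_B\rho\wedge\alpha=0$ on $\partial M$, which kills the top transversal degree against anything on $\partial M$; similarly for $w=\mathrm{d}_{B,\kappa}\beta$.

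The main obstacle I anticipate is not the formal $*_Q$-bookkeeping but verifying non-degeneracy of this pairing, i.e.\ that it identifies $H_B^r(\overline{M},\mathcal{C})$ with the full dual of $H_{B,\kappa}^{q-r}(\overline{M},\mathcal{F})$ rather than merely a subspace. The clean way is to route everything through the two Hodge decompositions simultaneously: given a nonzero class in $H_B^r(\overline{M},\mathcal{C})$, represent it by its harmonic representative $u\in\mathcal{H}_{B,\mathcal{C}}^r$; then $*_Qu\in\mathcal{H}_\kappa^{q-r}$ is nonzero and $\int_M u\wedge*_Q(*_Qu)\wedge\chi_{\mathcal F}=\pm\int_M u\wedge u\wedge\chi_{\mathcal F}\cdot(\pm1)=\pm\|u\|_B^2\neq0$, so the pairing of $[u]$ against $[*_Qu]$ is nonzero; the symmetric argument shows every nonzero class in $H_{B,\kappa}^{q-r}$ pairs nontrivially with something in $H_B^r(\overline{M},\mathcal{C})$. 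This, with finite-dimensionality of both spaces (from the respective Hodge theorems), forces the induced map $H_B^r(\overline{M},\mathcal{C})\to(H_{B,\kappa}^{q-r}(\overline{M},\mathcal{F}))^*$ to be an isomorphism. A secondary technical point worth spelling out is that the Friedrichs/regularity apparatus of Sections~2.1–2.3 does apply verbatim to the $\mathcal{C}_B^\cdot$-boundary condition and to $\mathrm{d}_{B,\kappa}$ — the Bochner formula only changes by replacing $\mathcal{A}_\tau$ with the zeroth-order operator $g^{\alpha\beta}X_\alpha\lrcorner(\nabla_{X_\beta}\kappa)\wedge\cdot$, and the coercivity estimate of Corollary~\ref{H1e} survives since that term is still zeroth order and the boundary term is handled identically — so that the existence of harmonic representatives and the isomorphism $H\cong\mathcal{H}$ on both sides is legitimate.
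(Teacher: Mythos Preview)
Your proposal is correct in outline and would yield the theorem, but it takes a different route from the paper. You first set up a second Hodge decomposition, this time for the $\mathcal{C}_B^\cdot$ boundary condition, obtain a harmonic space $\mathcal{H}_{B,\mathcal{C}}^r\cong H_B^r(\overline{M},\mathcal{C})$, and then use $*_Q$ together with \eqref{twist operators} to identify $\mathcal{H}_{B,\mathcal{C}}^r$ with $\mathcal{H}_\kappa^{q-r}$. The paper avoids building any Hodge theory for $\mathcal{C}_B^\cdot$: it defines the pairing $\Phi_u(v)=\int_M u\wedge v\wedge\chi_{\mathcal F}$ directly, checks well-definedness using Rummler's formula $\mathrm{d}\chi_{\mathcal F}=\kappa\wedge\chi_{\mathcal F}+\varphi_0$ and Stokes, and then proves injectivity and surjectivity by appealing \emph{only} to the already-established decomposition of Proposition~\ref{hodge and existence theorem 1 for d_kappa} for $\mathrm{d}_{B,\kappa}$ (transporting across $*_Q$ at the level of individual forms, not harmonic spaces). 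Your approach is more symmetric and makes the role of $*_Q$ as a duality map transparent; the paper's approach is more economical since it doesn't re-run the Friedrichs machinery for a second boundary condition. One small slip: your explicit pairing formula $\int_M u\wedge *_Q w\wedge\chi_{\mathcal F}$ has a stray $*_Q$ --- as written, plugging $w=*_Q u$ gives $\pm\int_M u\wedge u\wedge\chi_{\mathcal F}=0$, not $\|u\|_B^2$. The correct pairing (and the one the paper uses) is $\int_M u\wedge w\wedge\chi_{\mathcal F}$, which is what your parenthetical ``equivalently $(u,*_Q w)_M$'' actually unpacks to; with that correction your non-degeneracy argument goes through.
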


\begin{proof}
    We define $\Phi:H_B^r\left(\overline{M},\mathcal{C}\right)\longrightarrow\left(H_{B,\kappa}^{q-r}\left(\overline{M},\mathcal{F}\right)\right)^*$ by $\Phi_u\left(v\right)=\int_{M}u\wedge v\wedge\chi_{\mathcal{F}}$ for $u\in H_B^r\left(\overline{M},\mathcal{C}\right)$ and $v\in H_{B,\kappa}^{q-r}\left(\overline{M},\mathcal{F}\right)$. We first need to prove $\Phi$ is well-defined. Indeed, if $u={\rm d}_Bu'$ for $u'\in\mathcal{C}_B^{r-1}$ and $v\in\Omega_B^{q-r}\left(\overline{M}\right)$ is a ${\rm d}_{B,\kappa}$-closed form, then
	\begin{align*}
		{\rm d}\left(u'\wedge v\wedge\chi_{\mathcal{F}}\right)=u\wedge v\wedge\chi_{\mathcal{F}}+(-1)^{r-1}u'\wedge{\rm d}_Bv\wedge\chi_{\mathcal{F}}+(-1)^{q-1}u'\wedge v\wedge{\rm d}\chi_{\mathcal{F}}.
	\end{align*}
    The Rummler's formula (see \cite{Rh79}) gives ${\rm d}\chi_{\mathcal{F}}=\kappa\wedge\chi_{\mathcal{F}}+\varphi_0$, where $\varphi_0$ is a $p+1$ form such that $X_1\lrcorner\cdots X_p\lrcorner\varphi_0=0$ for any local frame $\{X_a\}_{a=1}^p$ of $L$. It turns out that $w\wedge\varphi_0=0$ for any $w\in\Omega_Q^{q-1}\left(\overline{M}\right)$. Thus,
    \begin{align*}
    	{\rm d}\left(u'\wedge v\wedge\chi_{\mathcal{F}}\right)=&u\wedge v\wedge\chi_{\mathcal{F}}+(-1)^{r-1}u'\wedge{\rm d}_Bv\wedge\chi_{\mathcal{F}}+(-1)^{q-1}u'\wedge v\wedge\kappa\wedge\chi_{\mathcal{F}}\\
        =&u\wedge v\wedge\chi_{\mathcal{F}}+(-1)^{r-1}u'\wedge{\rm d}_{B,\kappa}v\wedge\chi_{\mathcal{F}}\\
        =&u\wedge v\wedge\chi_{\mathcal{F}}.
    \end{align*}
    Since $u'\in C_B^{r-1}$, we have $u'|_{\partial M}={\rm d}_B\rho\wedge u''$ for $u''\in\Omega_B^{r-2}\left(\overline{M}\right)$. By Stokes' theorem
    \begin{align*}
    	\int_{M}u\wedge v\wedge\chi_{\mathcal{F}}=\int_{\partial M}u'\wedge v\wedge\chi_{\mathcal{F}}&=\int_{\partial M}{\rm d}_B\rho\wedge u''\wedge v\wedge\chi_{\mathcal{F}}\\
    	&=\int_{\partial M}{\rm d}\left(\rho\wedge u''\wedge v\wedge\chi_{\mathcal{F}}\right)-\int_{\partial M}\rho{\rm d}\left(\wedge u''\wedge v\wedge\chi_{\mathcal{F}}\right)=0.
    \end{align*}
    Hence, $\Phi_{{\rm d}_Bu'}\left(v\right)=0$ for $v\in H_{B,\kappa}^{q-r}\left(\overline{M},\mathcal{F}\right)$. Similarly, $\Phi_{u}\left({\rm d}_{B,\kappa}v'\right)=0$ for $u\in H_B^r\left(\overline{M},\mathcal{C}\right)$.
    
    For the injectivity of $\Phi$, assume that $\Phi_u=0$ for $u\in H_B^r\left(\overline{M},\mathcal{C}\right)$, then for any $v\in\Omega_B^{q-r}\left(\overline{M}\right)$ with ${\rm d}_{B,\kappa}v=0$,
    \begin{align*}
    	0=\int_Mu\wedge v\wedge\chi_{\mathcal{F}}=(-1)^{r(q-r)}\int_M*_Q*_Qu\wedge v\wedge\chi_{\mathcal{F}}=\int_Mv\wedge*_Q*_Qu\wedge\chi_{\mathcal{F}}=\left(v,*_Qu\right)_B,
    \end{align*}
    i.e., $*_Qu\perp {\rm Ker}\left({\rm d}_{B,\kappa}\right)$. It follows from Proposition \ref{hodge and existence theorem 1 for d_kappa} that $*_Qu=\delta_{B,\kappa}u'$ for $u'\in\mathcal{D}_B^{q-r+1}$ which implies $*_Qu'\in\mathcal{C}_B^{r-1}$. By $(\ref{twist operators})$,
    \begin{align*}
    	u=(-1)^{r(q-r)}*_Q\delta_{B,\kappa}u'=(-1)^{q(r+1)+1}{\rm d}_B*_Qu',
    \end{align*}
    it gives that the cohomology class $[u]=0$ in $H_B^r\left(\overline{M},\mathcal{C}\right)$.
    
    In order to complete the proof, it suffices to prove $\left(\Phi_{H_B^r\left(\overline{M},\mathcal{C}\right)}\right)^{\perp}=0$. In fact, suppose there is ${\rm d}_{B,\kappa}$-closed form $v\in\Omega_{B}^{q-r}\left(\overline{M}\right)$ such that $\Phi_u\left(v\right)=\left(v,*_Qu\right)_B=0$ for all $u\in H_{B}^r\left(\overline{M},\mathcal{C}\right)$. Since $*_Qu\in\mathcal{D}_B^{q-r}$, again by $(\ref{twist operators})$ we have
    \begin{align*}
    	{\rm d}_{B,\kappa}^**_Qu=\delta_{B,\kappa}*_Qu=(-1)^{r^2-1}*_Q{\rm d}_Bu=0,
    \end{align*}
    it follows that $*_Qu\in {\rm Ker}\left({\rm d}_{B,\kappa}^*\right)$ and $v\perp {\rm Ker}\left({\rm d}_{B,\kappa}^*\right)$. In view of Proposition \ref{hodge and existence theorem 1 for d_kappa} there exists a basic form $v'\in\Omega_B^{q-r-1}$ such that $v={\rm d}_{B,\kappa}v'$, i.e., $[v]=0$ in $H_{B,\kappa}^{q-r}\left(\overline{M},\mathcal{F}\right)$.
\end{proof}

\begin{remark}
	$(1)$ If $\partial M=\emptyset$, Theorem \ref{twisted duality} recovers the twisted Poincar{\'e} duality theorem proved by F. W. Kamber and P.Tondeur in \cite{KT83D}.
	
	$(2)$ If $\kappa=0$, Theorem \ref{twisted duality} is the analogue of the Poincar{\'e} duality theorem for manifolds with boundary.
\end{remark}

We finish this section by considering the tangential extension as follows. For $u\in\Omega_{B,b}^\cdot\left(\partial M\right)$, we say $u$ has a tangential extension if there is a form $\tilde{u}\in\Omega_{B}^\cdot\left(\overline{M}\right)$ such that $R\tilde{u}=u$, where $R$ is the projection defined in $(\ref{exact commutative diagram})$. We have the following extension theorem for ${\rm d}_{B,b}$-closed forms on $\Omega_{B,b}^r\left(\partial M\right)$.

\begin{thm}
	For $0\leq r\leq q$, given a ${\rm d}_{B,b}$-closed form $u\in\Omega_{B,b}^r\left(\partial M\right)$, it has a tangential ${\rm d}_B$-closed extension if and only if $\int_{\partial M}u\wedge v\wedge\chi_{\mathcal{F}}=0$ for any $v\in\mathcal{H}_\kappa^{q-r-1}$.
\end{thm}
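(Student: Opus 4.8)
The plan is to exploit the long exact sequence in cohomology coming from the short exact sequence of complexes \eqref{exact commutative diagram}, together with the twisted duality Theorem \ref{twisted duality}. First I would observe that the vertical columns of \eqref{exact commutative diagram} are cochain complexes, and the snake lemma produces a connecting homomorphism $\partial: H_{B,b}^r(\partial M)\longrightarrow H_B^{r+1}(\overline M,\mathcal C)$ fitting into a long exact sequence
\begin{equation*}
	\cdots\longrightarrow H_B^r(\overline M,\mathcal C)\longrightarrow H_B^r(\overline M,\mathcal F)\stackrel{R}{\longrightarrow} H_{B,b}^r(\partial M)\stackrel{\partial}{\longrightarrow}H_B^{r+1}(\overline M,\mathcal C)\longrightarrow\cdots,
\end{equation*}
where $H_B^\cdot(\overline M,\mathcal F)$ is the (ordinary, untwisted) basic de Rham cohomology on $\overline M$. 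A ${\rm d}_{B,b}$-closed form $u\in\Omega_{B,b}^r(\partial M)$ then has a tangential ${\rm d}_B$-closed extension precisely when its class $[u]$ lies in the image of $R$, i.e.\ when $\partial[u]=0$ in $H_B^{r+1}(\overline M,\mathcal C)$.

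Next I would translate the condition $\partial[u]=0$ into the stated integral condition via duality. By Theorem \ref{twisted duality}, $H_B^{r+1}(\overline M,\mathcal C)\cong\big(H_{B,\kappa}^{q-r-1}(\overline M,\mathcal F)\big)^*\cong(\mathcal H_\kappa^{q-r-1})^*$, using Proposition \ref{hodge and existence theorem 1 for d_kappa} for the last isomorphism. So $\partial[u]=0$ iff $\Phi_{\partial[u]}(v)=0$ for all $v\in\mathcal H_\kappa^{q-r-1}$, where $\Phi$ is the pairing of Theorem \ref{twisted duality}. The heart of the argument is to compute $\Phi_{\partial[u]}(v)$ explicitly. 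Concretely, $\partial[u]$ is represented as follows: choose $\tilde u\in\mathcal D_B^r$ with $R'\tilde u=u$ (possible since $R':\mathcal D_B^r\to\Omega_{B,b}^r(\partial M)$ is onto); since ${\rm d}_{B,b}u=0$, the form ${\rm d}_B\tilde u$ restricts to $0$ on $\partial M$, hence ${\rm d}_B\tilde u\in\mathcal C_B^{r+1}$ and represents $\partial[u]$. Then for ${\rm d}_{B,\kappa}$-closed $v\in\Omega_B^{q-r-1}(\overline M)$,
\begin{equation*}
	\Phi_{\partial[u]}(v)=\int_M {\rm d}_B\tilde u\wedge v\wedge\chi_{\mathcal F}=\int_M{\rm d}\big(\tilde u\wedge v\wedge\chi_{\mathcal F}\big)\pm\int_M\tilde u\wedge{\rm d}_{B,\kappa}v\wedge\chi_{\mathcal F}=\int_{\partial M}\tilde u\wedge v\wedge\chi_{\mathcal F}=\int_{\partial M}u\wedge v\wedge\chi_{\mathcal F},
\end{equation*}
where the middle step uses Rummler's formula ${\rm d}\chi_{\mathcal F}=\kappa\wedge\chi_{\mathcal F}+\varphi_0$ and $w\wedge\varphi_0=0$ for $w\in\Omega_Q^{q-1}(\overline M)$, exactly as in the proof of Theorem \ref{twisted duality}, and the last step is Stokes. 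This identity shows $\partial[u]=0$ iff $\int_{\partial M}u\wedge v\wedge\chi_{\mathcal F}=0$ for all $v\in\mathcal H_\kappa^{q-r-1}$, which is the claim.

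The main obstacle, and the step requiring the most care, is justifying that the pairing $\Phi$ on $H_B^{r+1}(\overline M,\mathcal C)$ is non-degenerate enough that ``$\partial[u]$ pairs to zero against all of $\mathcal H_\kappa^{q-r-1}$'' genuinely forces $\partial[u]=0$ — this is exactly the content of Theorem \ref{twisted duality} (injectivity of $\Phi$), so it is available, but one must check the index bookkeeping $q-(r+1)=q-r-1$ matches the statement and that the representative ${\rm d}_B\tilde u$ indeed lies in $\mathcal C_B^{r+1}$ (this uses ${\rm d}_{B,b}u=0$ together with the description \eqref{condition of C_B} of $\mathcal C_B^\cdot$ and the commutativity of \eqref{exact commutative diagram}). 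A secondary point is the well-definedness of the whole scheme with respect to the choice of lift $\tilde u$ and of representative of $[u]$, which follows from the same Stokes-plus-Rummler computation already performed in Theorem \ref{twisted duality}. Everything else — exactness of the long sequence, surjectivity of $R'$ onto $\Omega_{B,b}^r(\partial M)$ — is routine homological algebra given the diagram \eqref{exact commutative diagram}.
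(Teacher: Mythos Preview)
Your proposal is correct and follows essentially the same approach as the paper: both use the long exact sequence arising from the short exact sequence \eqref{exact commutative diagram}, identify the existence of a tangential ${\rm d}_B$-closed extension with the vanishing of the connecting homomorphism $\delta^*[u]$ in $H_B^{r+1}(\overline{M},\mathcal{C})$, and then apply the twisted duality of Theorem \ref{twisted duality} together with the Stokes--Rummler computation to convert this into the boundary integral condition against $\mathcal{H}_\kappa^{q-r-1}$. The paper's proof is slightly more explicit about constructing the extension (writing $\tilde u=u'-u''$ with $u''\in\mathcal{C}_B^r$ satisfying ${\rm d}_Bu'={\rm d}_Bu''$), but the logical structure is identical to yours.
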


\begin{proof}
	By the exact commutative diagram $(\ref{exact commutative diagram})$ we have the following long exact cohomology sequence
	\begin{align*}
		\cdots\longrightarrow H_{B}^r\left(\overline{M},\mathcal{F}\right)\longrightarrow H_{B,b}^r\left(\partial M,\mathcal{F}\right)\stackrel{\delta^*}\longrightarrow H_{B}^{r+1}\left(\overline{M},\mathcal{C}\right)\xlongrightarrow[\Phi]{\cong}\left(H_{B,\kappa}^{q-r-1}\left(\overline{M},\mathcal{F}\right)\right)^*\longrightarrow\cdots,
	\end{align*}
    where $\delta^*$ is the indeuced homomorphism, and the isomorphism holds due to Theorem \ref{twisted duality}. Obviously, ${\rm d}_{B,b}u=0$ means that there exists $u'\in\mathcal{D}_B^r$ with $u'|_{\partial M}=u$ such that ${\rm d}_Bu'\in\mathcal{C}_{B}^{r+1}$. Therefore, the ${\rm d}_{B,b}$-closed form $u\in\Omega_{B,b}^r\left(\partial M\right)$ has a weak ${\rm d}_B$-closed extension precisely when $\delta^*[u]=0$ in $H_{B}^{r+1}\left(\overline{M},\mathcal{C}\right)$, i.e., ${\rm d}_Bu'={\rm d}_{B}u''$ for $u''\in\mathcal{C}_{B}^{r+1}$, then $\tilde{u}:=u'-u''$ is the desired extension form in $\Omega_{B}^r\left(\overline{M}\right)$. Hence, it suffices to prove that there exists a form $u''\in\mathcal{C}_B^{r+1}$ such that ${\rm d}_{B}u'={\rm d}_{B}u''$ if and only if $\int_{\partial M}u\wedge v\wedge\chi_{\mathcal{F}}=0$ for any $v\in\mathcal{H}_\kappa^{q-r-1}$.
    
    On the other hand, again by Propositin \ref{hodge and existence theorem 1 for d_kappa}, the equation ${\rm d}_Bu'={\rm d}_Bu''$ is solvable is equivalent to $\Phi_{{\rm d}_Bu'}\left(v\right)=0$ for all $v\in H_{\kappa}^{q-r-1}\left(\overline{M},\mathcal{F}\right)\cong\mathcal{H}_{\kappa}^{q-r-1}$, i.e.,
    \begin{align*}
    	0=\Phi_{{\rm d}_Bu'}\left(v\right)=\int_{M}{\rm d}_Bu'\wedge v\wedge\chi_{\mathcal{F}}&=\int_{M}{\rm d}\left(u'\wedge v\wedge\chi_{\mathcal{F}}\right)-(-1)^r\int_{M}u'\wedge{\rm d}_{B,\kappa}v\wedge\chi_{\mathcal{F}}\\
    	&=\int_{\partial M}u'\wedge v\wedge\chi_{\mathcal{F}}.
    \end{align*}
    The proof is thus complete.
\end{proof}

\section{Hermitian Foliations}
\subsection{Preliminaries of Hermitian foliations}
In this section, we first recall some definitions of Hermitian foliations. To prove the decomposition theorem for Hermitian foliations, we then give some functional analysis preparations.

\begin{definition}\label{hermitian foliation}
	Let $ M' $ be an n $ (=p+2q) $ dimensional manifold equipped with a foliation $ \mathcal{F} $ of codimension $ 2q $, then the foliation is said (transversely) Hermitian if $ \mathcal{F} $ satisfies:
	
	{\rm (i)} There is a bundle-like metric on $ M' $.
	
	{\rm (ii)} There exists a holonomy invariant integrable almost complex structure $ J:Q\longrightarrow Q $, where $Q$ is the quotient bundle of the exact sequence $(\ref{exacts})$ associated to $\mathcal{F}$ with $ {\rm dim}Q=2q $ such that $ g_Q\left(JV,JW\right)=g_Q\left(V,W\right) $ for all $ V,W\in \Gamma Q $.
\end{definition}

Let $\left(M',g_{M'},\mathcal{F}\right)$ be a Hermitian folaition. We consider the complexified normal bundle $ Q_{\mathbb{C}}:=Q\otimes_{\mathbb{R}}\mathbb{C} $. Then the almost complex structure $ J $ splits $ Q_{\mathbb{C}}=Q_{\mathbb{C}}^{1,0}\oplus Q_{\mathbb{C}}^{0,1} $, where
\begin{align*}
	Q_{\mathbb{C}}^{1,0}:=\{V\in \Gamma Q\ |\ JV=iV\}\ {\rm and}\ Q_{\mathbb{C}}^{1,0}:=\{V\in \Gamma Q\ |\ JV=-iV\}.
\end{align*}
Thus, for $ 1\leq r_1\leq 2q $ we have a decomposition of $ \Lambda^{r_1} Q_{\mathbb{C}}^* $ as
\begin{align*}
	\Lambda^{r_1} Q_{\mathbb{C}}^*=\bigoplus_{r'+r=r_1}\Lambda^{r'}{Q_{\mathbb{C}}^{1,0}}^*\otimes\Lambda^{r}{Q_{\mathbb{C}}^{0,1}}^*.
\end{align*}
Let $ \Omega_B^{r',r}\left(M'\right) $ and $ \Omega_Q^{r',r}\left(M'\right) $ be the sets of basic sections and smooth sections of $ \Lambda^{r'}{Q_{\mathbb{C}}^{1,0}}^*\otimes\Lambda^{r}{Q_{\mathbb{C}}^{0,1}}^* $ respectively, which gives rise to decompositions
\begin{align*}
	\Omega_B^{r_1}\left(M'\right)=\bigoplus_{r'+r=r_1}\Omega_B^{r',r}\left(M'\right)\ {\rm and}\ \Omega_Q^{r_1}\left(M'\right)=\bigoplus_{r'+r=r_1}\Omega_Q^{r',r}\left(M'\right).
\end{align*}

Since $ {\rm d}_B $ can be treated as the exterior differential on the local quotient manifold in the local distinguished charts, the exterior derivative decomposes into two operators $ {\rm d}_B=\partial_B+\bar{\partial}_B $ as in the classical case of a complex manifold, where
\begin{align*}
	\partial_B:\Omega_B^{r',r}\left(M'\right)\longrightarrow\Omega_B^{r'+1,r}\left(M'\right)\ {\rm and}\ \bar{\partial}_B:\Omega_B^{r',r}\left(M'\right)\longrightarrow\Omega_B^{r',r+1}\left(M'\right).
\end{align*}
It turns out that $ \partial_B^2=\bar{\partial}_B^2=0 $ and $ \partial_B\bar{\partial}_B=-\bar{\partial}_B\partial_B $. Hence, we obtain the basic Dolbeault complex 
\begin{equation*}
	0\longrightarrow\Omega_B^{\cdot,1}\left(M'\right)\stackrel{\bar{\partial}_B}{\longrightarrow}\cdots\stackrel{\bar{\partial}_B}{\longrightarrow}\Omega_B^{\cdot,r}\left(M'\right)\stackrel{\bar{\partial}_B}{\longrightarrow}\Omega_B^{\cdot,r+1}\left(M'\right)\stackrel{\bar{\partial}_B}{\longrightarrow}\cdots\stackrel{\bar{\partial}_B}{\longrightarrow}\Omega_B^{\cdot,q}\left(M'\right)\longrightarrow0.
\end{equation*}
The cohomology
\begin{equation*}
	H^{\cdot,r}_B\left(M',\mathcal{F}\right)=H\left(\Omega_B^{\cdot,r}\left(M'\right),\bar{\partial}_B\right)
\end{equation*} 
is called the basic Dolbeault cohomology of $ \mathcal{F} $.

For later use, we introduce the operators
\begin{align*}
	\partial_Q|_{\Omega_Q^{r',r}\left(M'\right)}:=\Pi_{r'+1,r}{\rm d}_Q|_{\Omega_Q^{r',r}\left(M'\right)}\ {\rm and}\ \bar{\partial}_Q|_{\Omega_Q^{r',r}\left(M'\right)}:=\Pi_{r',r+1}{\rm d}_Q|_{\Omega_Q^{r',r}\left(M'\right)},
\end{align*}
where $ {\rm d}_Q $ and $ \delta_Q $ defined as (\ref{Q-operator}), and $ \Pi_{r',r}:\Omega_Q^{r'+r}\left(M'\right)\longrightarrow\Omega_Q^{r',r}\left(M'\right) $ is the projection. By (ii) in Definition \ref{hermitian foliation} we get ${\rm d}_Q=\partial_Q+\bar{\partial}_Q$. Clearly $ \partial_Q|_{\Omega_B^{\cdot,\cdot}\left(M'\right)}=\partial_B $ and $ \bar{\partial}_Q|_{\Omega_B^{\cdot,\cdot}\left(M'\right)}=\bar{\partial}_B $ since $ {\rm d}_Q|_{\Omega_B^{\cdot}\left(M'\right)}={\rm d}_B $.

In what follows, we are mainly interested in all objects above in $\left(\overline{M},\mathcal{F}|_{\overline{M}}\right)$. Without loss of generality, we assume that the boundary defining function $\rho$ satisfies $ \rho<0 $ inside $ M $, $ \rho>0 $ outside $ \overline{M} $ and $ |{\rm d}\rho|=1 $ on $ \partial M $. The notation $\cdot\left(\overline{M}\right)$ refers to the subspace of $\cdot\left(M\right)$ consists of elements in $\cdot\left(M\right)$ which can be extended smoothly to $M'$.

Now, we can retrace the steps in Chapter 2. Let $ H_{Q,s}^{\cdot,\cdot} $ and $ H_{B,s}^{\cdot,\cdot} $ as the completions of $ \Omega_B^{\cdot,\cdot}\left(\overline{M}\right) $ and $ \Omega_Q^{\cdot,\cdot}\left(\overline{M}\right) $ with respect to the norms $ (\ref{norms on Q}) $ and $ (\ref{norms on B}) $(or $ (\ref{norm}) $) respectively. The analogue of Proposition $ \ref{Rellich and Sobolev lemma} $ also holds for $ H_{B,s}^{\cdot,\cdot} $. For $ 0\leq r\leq q $, let $\bar{\partial}_B$ be the maximal extension from $H_{B,0}^{\cdot,r}\longrightarrow H_{B,0}^{\cdot,r+1}$ and we define $ \mathcal{D}_B^{\cdot,r}={\rm Dom}\left(\bar{\partial}_B^*\right)\cap\Omega_B^{\cdot,r}\left(\overline{M}\right) $, where $ \bar{\partial}_B^* $ is the Hilbert adjoint of $ {\bar{\partial}}_B $ w.r.t. $\left(\cdot,\cdot\right)_B$. One can define $ \mathcal{D}_Q^{\cdot,r} $ similarly. General divergence theorem yields that for $ u\in\Omega_{Q}^{\cdot,r}\left(\overline{M}\right) $ and $ v\in\Omega_{Q}^{\cdot,r-1}\left(\overline{M}\right) $
\begin{align}\label{divergence theorem for Q}
	\left(\vartheta_Qu,v\right)=\left(u,\bar{\partial}_Qv\right)+\int_{\partial M}\left<\sigma\left(\vartheta_Q,{\rm d}\rho \right)u,v\right>,
\end{align}
where $ \vartheta_Q $ is the formal adjoint of $ \bar{\partial}_Q $ with respect to the inner product $ \left(\cdot,\cdot\right) $, and $ \sigma\left(\vartheta_Q,{\rm d}\rho\right) $ is the symbol of $ \vartheta_Q $. By standard techniques
\begin{align}\label{boundary condition for Q}
	\mathcal{D}_Q^{\cdot,r}=\{u\in\Omega_Q^{\cdot,r}\left(\overline{M}\right)\ |\ \sigma\left(\vartheta_Q,{\rm d}\rho \right)u=0\ {\rm on}\ \partial M\}=\{u\in\Omega_Q^{\cdot,r}\left(\overline{M}\right)\ |\ {{\rm grad}}^{0,1}\rho\lrcorner u=0\ {\rm on}\ \partial M\},
\end{align}
and $ \bar{\partial}_Q^*=\vartheta_Q$ on $\mathcal{D}_Q^{\cdot,r}$. Hence, multiply a smooth function preserves $\mathcal{D}_Q^{\cdot,r}$. Moreover, we have 
\begin{align}\label{DBDQ}
	\mathcal{D}_B^{\cdot,r}=\mathcal{D}_Q^{\cdot,r}\cap\Omega_B^{\cdot,r}\left(\overline{M}\right)=\{u\in\Omega_B^{\cdot,r}\left(\overline{M}\right)\ |\ {{\rm grad}}^{0,1}\rho\lrcorner u=0\ {\rm on}\ \partial M\}.
\end{align}
Indeed, the direction $ \mathcal{D}_Q^{\cdot,r}\cap\Omega_B^{\cdot,r}\left(\overline{M}\right)\subset\mathcal{D}_B^{\cdot,r} $ is trivial. Conversely, $ \rho*\Omega_B^{\cdot,r}\left(\overline{M}\right)=\{\rho u\ |\ u\in\Omega_B^{\cdot,r}\left(\overline{M}\right)\} $ is dense in $ \Omega_B^{\cdot,r}\left(\overline{M}\right) $, and it's obvious that $ \rho*\Omega_B^{\cdot,r}\left(\overline{M}\right)\subset\mathcal{D}_Q^{\cdot,r} $ by $ (\ref{boundary condition for Q}) $ which yields that $ \mathcal{D}_B^{\cdot,r}\subset\mathcal{D}_Q^{\cdot,r}\cap\Omega_B^{\cdot,r}\left(\overline{M}\right) $.

Now we wish to describe the basic Laplacian operator $\Box_B:=\bar{\partial}_B\vartheta_B+\vartheta_B\bar{\partial}_B$ which
suits its role in the basic Dolbeault complex. To this end, for $ 0\leq r\leq q $, one can define $ \bar{G}_B $ on $ \mathcal{D}_B^{\cdot,r} $ by
\begin{align*}
	\bar{G}_B\left(u,v\right)=\left(\bar{\partial}_Bu,\bar{\partial}_Bv\right)_B+\left(\vartheta_Bu,\vartheta_Bv\right)_B+\left(u,v\right)_B.
\end{align*}
Let $ \tilde{\mathcal{D}}_B^{\cdot,r} $ be the completion of $ \mathcal{D}_B^{\cdot,r} $ with respect to $ \bar{G}_B $.

\begin{prop}
	For $ 0\leq r\leq q $, the map $ i:\tilde{\mathcal{D}}_B^{\cdot,r}\longrightarrow H_{B,0}^{\cdot,r} $ induced by $ \mathcal{D}_B^{\cdot,r}\longrightarrow H_{B,0}^{\cdot,r} $ is injective, and $ \tilde{\mathcal{D}}_B^{\cdot,r} $ is dense in $ H_{B,0}^{\cdot,r} $.
\end{prop}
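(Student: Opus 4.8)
The plan is to mimic the proof of Proposition \ref{verification of Friedrichs extension theorem} verbatim, replacing $G_B$ by $\bar{G}_B$, ${\rm d}_B$ by $\bar{\partial}_B$ and $\delta_B$ by $\vartheta_B$; indeed the only structural facts used there are that $\bar{\partial}_B$ and its Hilbert adjoint $\bar{\partial}_B^*$ are closed operators, and that $\rho*\Omega_B^{\cdot,r}\left(\overline{M}\right)$ is dense in $H_{B,0}^{\cdot,r}$ and contained in $\mathcal{D}_B^{\cdot,r}$. I would first establish injectivity of $i$. Take $\tilde{u}\in\tilde{\mathcal{D}}_B^{\cdot,r}$ and a $\bar{G}_B$-Cauchy sequence $\{u_\nu\}\subseteq\mathcal{D}_B^{\cdot,r}$ with $u_\nu\xrightarrow{\bar{G}_B}\tilde{u}$; then $\{u_\nu\}$, $\{\bar{\partial}_Bu_\nu\}$ and $\{\vartheta_Bu_\nu\}$ are Cauchy in $H_{B,0}^{\cdot,r}$, $H_{B,0}^{\cdot,r+1}$ and $H_{B,0}^{\cdot,r-1}$ respectively, so set $i(\tilde{u}):=u=\lim_\nu u_\nu$ in $H_{B,0}^{\cdot,r}$. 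Since $\bar{\partial}_B$ is the maximal (hence closed) extension and $\bar{\partial}_B^*=\vartheta_B$ on $\mathcal{D}_B^{\cdot,r}$ is closed, the limits of $\bar{\partial}_Bu_\nu$ and $\vartheta_Bu_\nu$ are $\bar{\partial}_Bu$ and $\bar{\partial}_B^*u$, so $u\in{\rm Dom}\left(\bar{\partial}_B\right)\cap{\rm Dom}\left(\bar{\partial}_B^*\right)$ and $\bar{G}_B(u,u)=\|\bar{\partial}_Bu\|_B^2+\|\bar{\partial}_B^*u\|_B^2+\|u\|_B^2$; this forces $u=\tilde{u}$ in $\tilde{\mathcal{D}}_B^{\cdot,r}$, giving injectivity.

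For density, I would show $\rho*\Omega_B^{\cdot,r}\left(\overline{M}\right):=\{\rho u\mid u\in\Omega_B^{\cdot,r}\left(\overline{M}\right)\}$ is dense in $H_{B,0}^{\cdot,r}$: if $v\in\Omega_B^{\cdot,r}\left(\overline{M}\right)$ is orthogonal to it, then $0=(\rho v,v)_B=\int_M\rho|v|^2$, and since $\rho<0$ in $M$ this gives $v\equiv0$; as $\Omega_B^{\cdot,r}\left(\overline{M}\right)$ is dense in $H_{B,0}^{\cdot,r}$, so is $\rho*\Omega_B^{\cdot,r}\left(\overline{M}\right)$. By $(\ref{boundary condition for Q})$ and $(\ref{DBDQ})$, $\rho*\Omega_B^{\cdot,r}\left(\overline{M}\right)\subseteq\mathcal{D}_B^{\cdot,r}$ because $\rho|_{\partial M}\equiv0$ kills the boundary condition ${\rm grad}^{0,1}\rho\lrcorner(\rho u)=0$ on $\partial M$; hence $\mathcal{D}_B^{\cdot,r}$ is dense in $H_{B,0}^{\cdot,r}$, and therefore so is its completion image $\tilde{\mathcal{D}}_B^{\cdot,r}$ under $i$.

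There is essentially no genuine obstacle here — the statement is the Hermitian-foliation analogue of Proposition \ref{verification of Friedrichs extension theorem}, and its proof only rearranges already-established facts. The one point that deserves a line of care is that the adjoint $\bar{\partial}_B^*$ agrees with the formal adjoint $\vartheta_B$ on $\mathcal{D}_B^{\cdot,r}$, which is exactly the content of $(\ref{boundary condition for Q})$ restricted to basic forms via $(\ref{DBDQ})$; this is what legitimizes identifying the limit of $\{\vartheta_Bu_\nu\}$ with $\bar{\partial}_B^*u$ and thus pinning down $\bar{G}_B(u,u)$. Everything else — the closedness of $\bar{\partial}_B$ as a maximal extension, the Cauchy/completeness bookkeeping, and the $\rho<0$ sign argument — is routine, so I would keep the write-up short and simply flag that it is the same argument as before.
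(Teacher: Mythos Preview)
Your proposal is correct and is exactly the argument the paper has in mind: the paper itself omits the proof, saying only that it is similar to that of Proposition \ref{verification of Friedrichs extension theorem}. Your write-up fills in precisely the expected details, including the one nontrivial point that $\bar{\partial}_B^*=\vartheta_B$ on $\mathcal{D}_B^{\cdot,r}$ via $(\ref{boundary condition for Q})$ and $(\ref{DBDQ})$.
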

\begin{proof}
	The proof is similar to that one of Proposition \ref{verification of Friedrichs extension theorem} so we omit it.
\end{proof}

Then applying Theorem \ref{Friedrichs extension theorem}, there is a Friedrichs operator $ \bar{F}_B $ associated to the form $ \bar{G}_B $. Moreover, we can conclude the following property of $ \bar{F}_B $ by parallel argument of Proposition \ref{expression of $ F_B $}:

\begin{prop}\label{expression of F}
	For $ 0\leq r\leq q $, if $ u\in\mathcal{D}_B^{\cdot,r} $, then $ u\in{\rm Dom}\left(\bar{F}_B\right) $ if and only if $ \bar{\partial}_Bu\in\mathcal{D}_B^{\cdot,r+1} $, in this case we have $ \bar{F}_Bu=\left(\Box_B+{\rm Id}\right)u $.
\end{prop}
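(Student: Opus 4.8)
The plan is to mirror the proof of Proposition \ref{expression of $ F_B $} verbatim, with $ {\rm d}_B $ replaced by $ \bar{\partial}_B $, $ \delta_B $ by $ \vartheta_B $, $ \Delta_B $ by $ \Box_B $, $ \mathcal{D}_B^r $ by $ \mathcal{D}_B^{\cdot,r} $, and $ \rho*\Omega_B^r\left(\overline{M}\right) $ by $ \rho*\Omega_B^{\cdot,r}\left(\overline{M}\right) $. The only structural ingredients needed are: (a) the divergence formula $(\ref{divergence theorem for Q})$ restricted to $ \Omega_B^{\cdot,\cdot}\left(\overline{M}\right) $, which gives, for $ u\in\mathcal{D}_B^{\cdot,r} $ and $ v\in\Omega_B^{\cdot,r}\left(\overline{M}\right) $, the identities $ \left(\bar{\partial}_Bu,\bar{\partial}_Bv\right)_B=\left(\vartheta_B\bar{\partial}_Bu,v\right)_B+\int_{\partial M}\left<\sigma\left(\vartheta_B,{\rm d}\rho\right)\bar{\partial}_Bu,v\right> $ and $ \left(\vartheta_Bu,\vartheta_Bv\right)_B=\left(\bar{\partial}_B\vartheta_Bu,v\right)_B-\int_{\partial M}\left<\vartheta_Bu,{{\rm grad}}^{0,1}\rho\lrcorner v\right> $; (b) the density of $ \rho*\Omega_B^{\cdot,r}\left(\overline{M}\right) $ in $ H_{B,0}^{\cdot,r} $, proved exactly as in Proposition \ref{verification of Friedrichs extension theorem}; and (c) the characterization $(\ref{DBDQ})$ of $ \mathcal{D}_B^{\cdot,r} $ together with the nilpotency $ \left({{\rm grad}}^{0,1}\rho\lrcorner\right)^2=0 $ of the boundary symbol.

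For the forward direction, suppose $ u\in\mathcal{D}_B^{\cdot,r}\cap{\rm Dom}\left(\bar{F}_B\right) $. Testing $ \bar{G}_B\left(u,v\right)=\left(\bar{F}_Bu,v\right)_B $ against $ v\in\rho*\Omega_B^{\cdot,r}\left(\overline{M}\right)\subseteq\mathcal{D}_B^{\cdot,r} $, where the boundary integrals all vanish because $ \rho|_{\partial M}\equiv0 $, and invoking density, one gets $ \bar{F}_Bu=\left(\Box_B+{\rm Id}\right)u $. Next, for general $ v\in\mathcal{D}_B^{\cdot,r} $ the equality $ \left(\left(\Box_B+{\rm Id}\right)u,v\right)_B=\bar{G}_B\left(u,v\right) $ persists; expanding the right-hand side via (a) leaves precisely the two boundary terms $ \int_{\partial M}\left<\sigma\left(\vartheta_B,{\rm d}\rho\right)\bar{\partial}_Bu,v\right> $ and $ -\int_{\partial M}\left<\vartheta_Bu,{{\rm grad}}^{0,1}\rho\lrcorner v\right> $. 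The second vanishes since $ u\in\mathcal{D}_B^{\cdot,r} $ forces $ {{\rm grad}}^{0,1}\rho\lrcorner v $ to pair trivially after one checks the relevant boundary identity (as in Proposition \ref{boundary condition}), while for the first one chooses $ v:=\sigma\left(\vartheta_B,{\rm d}\rho\right)\bar{\partial}_Bu=\pm\,{{\rm grad}}^{0,1}\rho\lrcorner\bar{\partial}_Bu\in\mathcal{D}_B^{\cdot,r} $ and uses $ \left({{\rm grad}}^{0,1}\rho\lrcorner\right)^2=0 $ to conclude that $ \left|{{\rm grad}}^{0,1}\rho\lrcorner\bar{\partial}_Bu\right|^2 $ integrates to zero on $ \partial M $, i.e., $ \bar{\partial}_Bu\in\mathcal{D}_B^{\cdot,r+1} $ by $(\ref{DBDQ})$. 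The converse is immediate: if $ \bar{\partial}_Bu\in\mathcal{D}_B^{\cdot,r+1} $ then all boundary terms in the expansion of $ \bar{G}_B\left(u,v\right) $ vanish for every $ v\in\mathcal{D}_B^{\cdot,r} $, so $ \bar{G}_B\left(u,v\right)=\left(\left(\Box_B+{\rm Id}\right)u,v\right)_B $ identifies $ u\in{\rm Dom}\left(\bar{F}_B\right) $ with $ \bar{F}_Bu=\left(\Box_B+{\rm Id}\right)u $.

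The only point requiring slight care — and the natural place for the argument to go wrong — is the bookkeeping of the boundary symbol: in the Dolbeault setting the condition defining $ \mathcal{D}_B^{\cdot,r} $ is $ {{\rm grad}}^{0,1}\rho\lrcorner u=0 $ rather than $ {\rm grad}\rho\lrcorner u=0 $, so one must verify that $ \sigma\left(\vartheta_B,{\rm d}\rho\right) $ really is (a constant multiple of) $ {{\rm grad}}^{0,1}\rho\lrcorner $ and that $ {{\rm grad}}^{0,1}\rho\lrcorner\circ{{\rm grad}}^{0,1}\rho\lrcorner=0 $, both of which follow from $(\ref{boundary condition for Q})$ and the fact that interior multiplication by a fixed vector is a square-zero operator. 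Everything else is formally identical to Proposition \ref{expression of $ F_B $}, so the proof can simply be stated as "the proof is parallel to that of Proposition \ref{expression of $ F_B $}" with the substitutions above.
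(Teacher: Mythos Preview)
Your proposal is correct and matches the paper's approach exactly: the paper itself simply states that the proof is parallel to that of Proposition~\ref{expression of $ F_B $}, and you have carried out precisely that translation with the correct substitutions and the right attention to the boundary symbol via~$(\ref{boundary condition for Q})$ and~$(\ref{DBDQ})$. One small slip: the second boundary term $-\int_{\partial M}\langle\vartheta_Bu,\,{\rm grad}^{0,1}\rho\lrcorner v\rangle$ vanishes because $v\in\mathcal{D}_B^{\cdot,r}$ (so ${\rm grad}^{0,1}\rho\lrcorner v=0$ on $\partial M$), not because of any condition on $u$; otherwise the argument is clean.
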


Correspondingly, we can define the $ Q $-norm $\bar{G}_Q$ and $ Q $-Friedrichs operator $\bar{F}_Q$ such that the smooth elements of ${\rm Dom}\left(\bar{F}_Q\right)$ are characterized as Proposition \ref{expression of F}. Furthermore, it follows from $(\ref{DBDQ})$ that $ \bar{F}_Q|_{{\rm Dom}\left(\bar{F}_B\right)}=\bar{F}_B $.

\subsection{The basic Dolbeault decomposition}
This section is devoted to establishing the basic Dolbeault decomposition for Hermitian foliations. Similar to the arguments of Theorem \ref{Hodge decomposition theorem}, we proceed with the proof by establishing the existence and regularity of the equation $\bar{F}_Bu=\omega$. However, the boundary condition creates some difficulties for Hermitian foliations. Specifically, the G$\mathring{{\rm a}}$rding's inequality breaks down near the boundary. Therefore, we primarily concentrate on handling boundary cases by technique of elliptic regularization developed by Kohn-Nirenberg in \cite{KN65A} and \cite{KN65N}.

Let $ \left(\left(x,z\right):=\left(x_1,\cdots,x_p,z_1,\cdots,z_q\right),U\right) $ be a local distinguished coordinate of $ M $. We choose $ X_a=\frac{\partial}{\partial x_a} $ for $ 1\leq a\leq p $, and let $ \{V_\alpha\}_{\alpha=1}^q $ be a local orthonormal basis field for $ \Gamma Q_{\mathbb{C}}^{1,0} $ with dual basis $ \{\omega^\alpha\}_{\alpha=1}^q $ of $ \Omega_Q^{1,0}\left(\overline{M}\right) $. 

As in the section 3.3, we divide the local coordinate charts $ \{\left(\left(x,z\right),U_l\right)\}_{l=1}^L $ into two parts $ \lbrace\left(\left(x,z\right),U'_l\right)\rbrace_{l=1}^{L_1} $ and $ \lbrace\left(\left(x,z\right),U''_l\right)\rbrace_{l=1}^{L_2} $, where $ U'_l\cap\partial M=\emptyset $ and $ U_l''\cap\partial M\neq\emptyset $. Let $ \lbrace\eta_l\rbrace_{l=1}^{L}:=\lbrace\eta_l'\rbrace_{l=1}^{L_1}\cup\lbrace\eta_l''\rbrace_{l=1}^{L_2} $ be a partition of unity subordinate to the covering $ \lbrace U_l\rbrace_{l=1}^{L} $, where $ \eta_l' $ and $ \eta_l'' $ with support in $ U'_l $ and $ U''_l $ respectively. For the boundary charts, we also use special boundary coordinate charts for convenience as follows. 

If $ \left(\left(x,z\right),U\right) $ is a coordinate neighborhood such that $ U\cap\partial M\neq\emptyset $, let $ \{t_1,\cdots,t_{n-1},\rho\} $ be the real coordinate functions set and $ \omega^q:=\sqrt{2}\partial_B\rho $. This kind of coordinate charts are called special boundary charts which were originally introduced by M. E. Ash (see \cite{Ame64}). In the special boundary charts, it's clear that $ V_\alpha\left(\rho\right)=0 $ for all $ \alpha<q $.

\begin{definition}\label{befd}
	For $ 0\leq r,r'\leq q $, we set $ E_U\left(v\right)^2:=\sum_{\alpha=1}^{q}\sum_{|I|=r',|J|=r}\|\bar{V}_\alpha\left(v_{I,J}\right)\|^2+\int_{\partial M}|v|^2+\|v\|^2 $ for $ v=v_{I,J}\omega^I\wedge\bar{\omega}^J\in\Omega_Q^{r',r}\left(U\right) $. We say that the basic estimate holds in $ U'' $, provided that for all $ v\in\mathcal{D}_Q^{r',r}\left(U''\right) $,
	\begin{align}\label{local basic estimate for dolbeault}
		E_U\left(v\right)^2\lesssim \bar{G}_{Q}\left(v,v\right)+O\left(\sum_{a=1}^{p}\|X_av\|\|v\|\right).
	\end{align}
	We say that the basic estimate holds in $ \mathcal{D}_Q^{r',r} $, if the estimate $ (\ref{local basic estimate for dolbeault}) $ holds on any boundary chart.
\end{definition}

We are in a position to formulate the existence and regularity of the equation $ \bar{F}_Bu=\omega $.
\begin{thm}\label{main theorem for dolbeault}
	Let $ 0\leq r,r'\leq q $, assume that the basic estimate holds in $ \mathcal{D}_Q^{r',r} $. For given $ \omega\in H_{B,0}^{r',r} $, there is a unique $ u\in\tilde{\mathcal{D}}_B^{r',r} $, such that $ \bar{F}_Bu=\omega $. Furthermore, if $ \omega\in\Omega_B^{r',r}\left(\overline{M}\right) $, then $ u\in\Omega_B^{r',r}\left(\overline{M}\right) $, and for each integer $ s\geq0 $, we have
	\begin{align}\label{estimate for dolbeault}
		\|u\|_{B,s+1}^2\lesssim\|\omega\|_{B,s}^2.
	\end{align}
\end{thm}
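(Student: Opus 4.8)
The proof will follow the same three-stage scheme used for the Riemannian case (Theorem~\ref{main theorem}), modulo the essential new ingredient that G\r{a}rding's inequality is unavailable at the boundary and must be replaced by the hypothesis that the basic estimate holds in $\mathcal{D}_Q^{r',r}$. \emph{Stage one:} existence and uniqueness of a weak solution $u\in\tilde{\mathcal{D}}_B^{r',r}$. This is the cheap part: by Proposition~\ref{expression of F} we have $\bar{F}_B=\Box_B+{\rm Id}$ on smooth forms, and the form $\bar{G}_B$ satisfies $\bar{G}_B(u,u)\geq\|u\|_B^2$; applying the Friedrichs Extension Theorem~\ref{Friedrichs extension theorem} directly gives a unique $u\in{\rm Dom}(\bar{F}_B)$ with $\bar{F}_Bu=\omega$ for any $\omega\in H_{B,0}^{r',r}$, and since $\bar{F}_B$ is invertible with bounded inverse, $\|u\|_B\lesssim\|\omega\|_B$.

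\emph{Stage two:} the a priori estimate $(\ref{estimate for dolbeault})$ for smooth forms, proved by induction on $s$. I would localize by the partition $\{\eta_l'\}\cup\{\eta_l''\}$, treating the interior charts $U_l'$ exactly as in Lemma~\ref{main 1}--Lemma~\ref{main 4} (full elliptic estimates hold there), and concentrate on the boundary charts $U_l''$ in special boundary coordinates $\{t_1,\dots,t_{n-1},\rho\}$. The decisive difference from the Riemannian case is that one now only gets control of $\bar{V}_\alpha$-derivatives of all components plus tangential derivatives: applying the basic estimate $(\ref{local basic estimate for dolbeault})$ to $D_t^I\eta_l''u\in\mathcal{D}_Q^{r',r}$ bounds $E_U(D_t^I\eta_l''u)^2$ by $\bar{G}_Q(D_t^I\eta_l''u,D_t^I\eta_l''u)+O(\sum_a\|X_aD_t^I\eta_l''u\|\|D_t^I\eta_l''u\|)$; the leafwise terms $X_av$ are harmless since $v$ is basic (its coefficients are independent of $x'$), so they are absorbed. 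Then the integration-by-parts identity of Lemma~\ref{main 3} (whose boundary terms vanish because $D_t^I$ is tangential and preserves $\mathcal{D}_Q^{r',r}$) converts $\bar{G}_Q(D_t^I\eta_l''u,D_t^I\eta_l''u)$ into $(\bar{F}_Bu,\eta_l''(D_t^I)^*D_t^I\eta_l''u)+O(\|u\|_{B,m}\|u\|_{B,m+1})$, using $\bar{F}_Q|_{{\rm Dom}(\bar{F}_B)}=\bar{F}_B$. The normal derivatives $D_\rho^\iota$ are recovered from the ellipticity of $\bar{F}_B$: solving for $D_\rho^2$ in the symbol expansion $\bar{F}_Bu=AD_\rho^2u+\cdots$ (as in $(\ref{ellipticity})$) expresses each extra $\rho$-derivative in terms of two fewer $\rho$-derivatives plus tangential derivatives, and a secondary induction on the number of normal derivatives closes the estimate. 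Summing the local pieces and absorbing the $\varepsilon\|u\|_{B,s+1}^2$ terms yields $(\ref{estimate for dolbeault})$; note the gain here is only one derivative, reflecting that $\bar{F}_B$ is subelliptic rather than elliptic up to the boundary.

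\emph{Stage three:} regularity, i.e., promoting the weak solution to a smooth one when $\omega$ is smooth. I would replace the honest derivatives $D_t^I$ in Stage two by Nirenberg difference quotients $\nabla_H^I$ in the tangential directions, which preserve $\tilde{\mathcal{D}}_B^{r',r}$ and $\tilde{\mathcal{D}}_Q^{r',r}$, and run the analogues of Lemma~\ref{main 7} and Lemma~\ref{main 9}: the basic estimate bounds $\|\nabla_H^I\eta_l''u\|$ by $\bar{G}_Q(\nabla_H^I\eta_l''u,\nabla_H^I\eta_l''u)$ plus lower order, which after the Lemma~\ref{main 5}-type manipulations is bounded uniformly as $H\to0$ by $\|\bar{F}_Bu\|_{B,m}^2+\|u\|_{B,m+1}^2$; by part (v) of Lemma~\ref{main 5} this gives tangential $H_{s+1}$-regularity, and the ellipticity relation $(\ref{ellipticity})$ recovers the normal derivatives, so $u\in H_{B,s+1}^{r',r}$ whenever $\bar{F}_Bu\in H_{B,s}^{r',r}$. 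Bootstrapping with the Rellich--Sobolev Proposition~\ref{Rellich and Sobolev lemma} (valid for $H_{B,s}^{\cdot,\cdot}$) gives $u\in\Omega_B^{r',r}(\overline{M})$.

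\emph{Main obstacle.} The genuinely hard point is Stage two's boundary estimate: unlike the Riemannian setting, where Corollary~\ref{H1e} gave full $H_{B,1}$-coercivity of $G_B$, here $\bar{G}_B$ is \emph{not} $H_{B,1}$-coercive and one has no gain from the ambient Laplacian alone. Everything hinges on feeding in the hypothesis that the basic estimate holds in $\mathcal{D}_Q^{r',r}$ at exactly the right place, and on checking carefully that every integration by parts producing a boundary term is performed with tangential operators (or difference quotients) that respect the boundary condition $\mathrm{grad}^{0,1}\rho\lrcorner u=0$, so that those boundary terms either vanish or are controlled by $\int_{\partial M}|u|^2$, which Stokes and Cauchy--Schwarz then absorb into $\varepsilon\|u\|_{B,s+1}^2$. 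The normal-derivative induction via $(\ref{ellipticity})$ is routine once the tangential estimate is in hand.
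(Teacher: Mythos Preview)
Your architecture is right, but there are two places where the Hermitian boundary problem diverges from the Riemannian one more than your sketch allows, and both are genuine gaps.

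\textbf{Stage 3 (regularity).} The difference-quotient bootstrap you propose cannot start. In the Riemannian case it rested on Corollary~\ref{H1e}, i.e.\ $\tilde{\mathcal{D}}_B^r\hookrightarrow H_{B,1}^r$, so that the weak solution already lay in $H_{B,1}$ and Lemma~\ref{main 9} could fire. Here that embedding is \emph{false}: the basic estimate only gives $E_U(u)^2\lesssim\bar{G}_B(u,u)$, and via Lemma~\ref{mainl 2} this controls at best $|||Du|||_{-1/2}^2$ --- a half-derivative, not a full one. The paper therefore does \emph{not} use difference quotients; it uses Kohn--Nirenberg elliptic regularization. One perturbs to $\bar{G}_B^\epsilon=\bar{G}_B+\epsilon(\nabla\cdot,\nabla\cdot)$, which \emph{is} $H_1$-coercive; the regularized equation $\bar{F}_B^\epsilon u^\epsilon=\omega$ then has smooth solutions by the Riemannian-type argument; the crucial observation is that the a~priori estimate for $u^\epsilon$ holds \emph{uniformly in $\epsilon$} (its proof uses only the basic estimate, which is dominated by $\bar{G}_Q^\epsilon$); one then passes to the limit $\epsilon\to0$ via Banach--Saks to conclude that the original $u$ is smooth.

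\textbf{Stage 2 (a~priori estimate).} Your chain ``basic estimate $\Rightarrow E_U(D_t^I\eta_l''u)^2\Rightarrow\|u\|_{B,s+1}^2$'' also skips a step: $E_U(v)$ controls only the $\bar{V}_\alpha$-derivatives of the components of $v$, not the $V_\alpha$-derivatives, so summing $E_U(D_t^I\eta_l''u)^2$ over $|I|\le s$ does not recover the full $(s{+}1)$-norm (the purely holomorphic tangential directions are missing). The paper bridges this with Lemmas~\ref{mainl 1}--\ref{mainl 2}, which trade $E_U(v)$ for $|||Dv|||_{-1/2}$; the induction (Lemma~\ref{mainl 3}) then proceeds in $2s{+}2$ half-steps using the tangential pseudo-differential operator $A_t=\hat\eta''\Lambda_t^{(m-1)/2}$ rather than honest $D_t^I$. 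Your integer-step scheme, copied directly from the Riemannian Lemma~\ref{main 1}/\ref{main 3}, would not close as written.
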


We will prove the estimate ($ \ref{estimate for dolbeault} $) for smooth forms at first, it's enough to prove the desired estimate locally by a partition of unity. The interior estimate is dependent on the ellipticity of basic complex Laplace-Beltram operator $\Box_B:=\bar{\partial}_B\vartheta_B+\vartheta_B\bar{\partial}_B$. More precisely, we have

\begin{prop}\label{mainp 1}
	For each $s\in\mathbb{Z}_+$ and each real $\eta'\in C_c^\infty\left(U'\right)$, then 
	$$\|\eta'u\|_{s+2}^2\lesssim\|\bar{F}_Bu\|_{B,s}^2$$
	holds uniformly for any $u\in{\rm Dom}\left(\bar{F}_B\right)\cap\Omega_B^{r',r}\left(\overline{M}\right)$.
\end{prop}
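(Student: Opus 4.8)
The plan is to treat this as a classical interior elliptic estimate: since $\eta'$ is supported in a chart $U'$ disjoint from $\partial M$, no boundary condition intervenes, and the argument will mirror the interior part of the proof of estimate~(\ref{estimate}) in Section~2.3, with $F_B,G_B,G_Q$ replaced by $\bar F_B,\bar G_B,\bar G_Q$ and with the Bochner formula~(\ref{Bochner-Kodaira formula}) replaced by the interior G\aa rding inequality for the basic Dolbeault complex. First I would fix a distinguished chart $((x,z),U')$ containing $\mathrm{supp}\,\eta'$, on which the coefficients of a basic form depend only on the transversal variables $z$; there $\bar\partial_B,\vartheta_B$ and $\Box_B$ act as the Dolbeault operators and the Dolbeault--Laplacian of the local quotient Hermitian manifold, an open subset of $\mathbb{C}^q$, so $\bar F_B=\Box_B+\mathrm{Id}$ (Proposition~\ref{expression of F}) is, restricted to basic forms, a second-order operator elliptic in the transversal directions, and $\bar F_Q|_{\mathrm{Dom}(\bar F_B)}=\bar F_B$ as recorded at the end of Section~3.1. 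One point must be kept in mind: unlike the Riemannian case, the form $\bar G_B$ is not $H_{B,1}$-coercive on all of $\overline M$ (there is no Hermitian analogue of Corollary~\ref{H1e}), so every estimate must stay strictly localized by a finite chain of cut-offs $\eta'\prec\chi_1\prec\cdots\prec\chi_{s+1}$ supported in $U'$; inside $U'$, however, $\bar G_Q$ is $H_1$-coercive and the usual bootstrap runs.

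I would then assemble three ingredients. (1) The interior G\aa rding inequality $\|v\|_1^2\lesssim\bar G_Q(v,v)$ for $v\in\Omega_Q^{r',r}$ supported in $U'$: this is the standard estimate $\|\bar\partial v\|^2+\|\vartheta v\|^2+\|v\|^2\gtrsim\|v\|_1^2$ on a domain in $\mathbb{C}^q$ (the combined symbol of $\bar\partial_Q,\vartheta_Q$ being injective on nonzero transversal covectors), read off in the $z$-variables, the leaf-direction derivatives of $v$ falling only on cut-offs; from it, together with a commutator count, one gets the analogue of Lemma~\ref{main 1}, $\|D^I\eta'u\|_1^2\lesssim\bar G_Q(D^I\eta'u,D^I\eta'u)+\|\chi u\|_{B,|I|}^2$, now with no boundary term. (2) The integration-by-parts identity $\bar G_Q(D^I\eta'u,D^I\eta'u)=\bar G_Q(u,\eta'(D^I)^{*}D^I\eta'u)+O(\|\chi u\|_{B,|I|}\|\chi u\|_{B,|I|+1})$, proved verbatim as Lemma~\ref{main 3}: all boundary terms vanish because the test form $\eta'(D^I)^{*}D^I\eta'u$ is supported away from $\partial M$, hence lies in $\mathcal{D}_B^{r',r}$, so that $\bar G_B(u,\cdot)=(\bar F_Bu,\cdot)_B$ applies. (3) Boundedness of $\bar F_B^{-1}$: the Friedrichs property gives $\bar G_B(u,u)\ge\|u\|_B^2$, hence $\|u\|_B\le\|\bar F_Bu\|_B$, which replaces Lemma~\ref{main 4} in localized form.

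Finally I would run an induction on $s$. For $s=0$: by the definition of $\|\cdot\|_2$ one has $\|\eta'u\|_2^2\thicksim\sum_\alpha\|D^\alpha\eta'u\|_1^2+\|\eta'u\|_1^2$ with $D^\alpha$ ranging over transversal derivatives (leaf derivatives of $u$ vanishing); chaining the three ingredients and an $\varepsilon$-absorption exactly as in the interior half of the proof of~(\ref{estimate}), the right-hand side is bounded by $\|\bar F_Bu\|_B^2$, each lower-order remainder being put onto the next cut-off. The inductive step is the same with $D^\alpha$ replaced by $D^I$, $|I|=s+1$, the remainders controlled by the inductive hypothesis; crucially, an interior chart has no distinguished $\partial/\partial\rho$-direction, so there is no counterpart of the terms $\|D_t^JD_\rho^\iota\eta''u\|^2$ from Section~2.3, and the ellipticity of $\bar F_B$ yields the full two-derivative gain $\|\eta'u\|_{s+2}^2\lesssim\|\bar F_Bu\|_{B,s}^2$. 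I do not expect a genuine obstacle: this is the ``cheap'' interior half of Theorem~\ref{main theorem for dolbeault}; the only care needed is to stay localized (since $\bar G_B$ fails to be $H_{B,1}$-coercive on $\overline M$) and to remember that the ellipticity of $\bar F_B$ on basic forms is only transversal, so leaf-direction derivatives must always be produced by differentiating cut-offs and then absorbed. The real difficulty of Theorem~\ref{main theorem for dolbeault} — where G\aa rding's inequality degenerates and only $\|u\|_{B,s+1}^2\lesssim\|\omega\|_{B,s}^2$ survives — occurs in the boundary charts under the basic estimate hypothesis of Definition~\ref{befd}, and is treated separately by elliptic regularization.
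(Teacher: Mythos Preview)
Your proposal is correct and parallels the paper's approach. The paper's own proof is terser: it observes (via Corollary~6.15 in Chapter~VI of \cite{Djp12C} together with Proposition~\ref{difference of two Laplacian}) that $\Box_B$ is the restriction to basic forms of the \emph{fully} elliptic operator $\tfrac{1}{2}\Delta-\hat S$ on $\overline{M}$, so the interior G\aa rding inequality holds for $\eta'u$ as a form on the ambient manifold, after which one simply repeats the interior half of the proof of~(\ref{estimate}). Your route---working with the transversal ellipticity of $\bar\partial_Q\oplus\vartheta_Q$ on the local quotient and carrying the argument through a nested chain of cut-offs to compensate for the absence of a Hermitian analogue of Corollary~\ref{H1e}---arrives at the same place and is in fact more self-contained; the paper's shortcut buys brevity by offloading the ellipticity statement to an external reference, while yours makes explicit why the leaf directions cause no trouble.
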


\begin{proof}
	Since $ \Box_B:=\bar{\partial}_B\vartheta_B+\vartheta_B\bar{\partial}_B $ is the restriction of the elliptic operator $ \frac{1}{2}\Delta-\hat{S} $ on $\Omega_B^{r',r}\left(\overline{M}\right)$ by Corollary 6.15 in Chapter VI in \cite{Djp12C} and Proposition \ref{difference of two Laplacian}, where $ \Delta:={\rm d}\delta+\delta{\rm d} $ and $ \hat{S} $ is a first order operator. It gives G$\mathring{{\rm a}}$rding's type inequality for $u\in{\rm Dom}\left(\bar{F}_B\right)\cap\Omega_B^{r',r}\left(\overline{M}\right)$, then one can obtain the desired estimate by parallel arguments in the proof of (\ref{estimate}).
\end{proof}

To derive a estimate near the boundary for smooth forms, the tangential Fourier transform on smooth function in special boundary chart will be used which is defined as follows
\begin{align*}
	\mathcal{F}w\left(\xi,\rho\right)\equiv\tilde{w}\left(\xi,\rho\right)=\frac{1}{\left(2\pi\right)^{-\left(n-1\right)/2}}\int_{\mathbb{R}^{n-1}}e^{-i\left<t,\xi\right>}w\left(t,\rho\right)dt,
\end{align*}
where $ t=\left(t_1,\cdots,t_{n-1}\right) $. We define operator $ \Lambda_t^s $ by
\begin{align*}
	\Lambda_t^sw=\frac{1}{\left(2\pi\right)^{-\left(n-1\right)/2}}\int_{\mathbb{R}^{n-1}}\left(1+|\xi|^2\right)^{s/2}e^{i\left<t,\xi\right>}\tilde{w}\left(\xi,\rho\right)d\xi,
\end{align*}
and then define tangential Sobolev norms by
$$ |||w|||_s^2=\|\Lambda_t^sw\|^2, $$
these operators can act on forms componentwise as usual. In order to consider all derivatives in special boundary chart $ U'' $, for $ v\in\Omega_Q^{\cdot,\cdot}\left(U''\right) $ we define 
\begin{align*}
	|||Dv|||_s^2=\sum_{i=1}^{n}|||D_i v|||_s^2+|||v|||_s^2\thicksim|||v|||_{s+1}^2+|||D_{\rho}v|||_s^2,
\end{align*}
where $ D_i=D_t^i:=\frac{\partial}{\partial t_i} $ for $ i<n $ and $ D_n=D_{\rho}:=\frac{\partial}{\partial\rho} $.

For the purpose to handle the boundary estimate, we further need the following series of lemmas. The first lemma is given by J. J. Kohn in \cite{FK72}:
\begin{lemma}\label{mainl 1}
	Let $ V'' $ be a special boundary, and $ M_1,\cdots,M_N $ be homogeneous first-order operators on $ V'' $ such that $ \tilde{M}:=\left(M_1,\cdots,M_N\right) $ is elliptic. Then for each $ p\in\partial M\cap V'' $, there is a neighborhood $ U''\Subset V'' $ of $ p $ such that $ \sum_{i=1}^{n}|||D_iv|||_{-\frac{1}{2}}^2\lesssim\sum_{l=1}^{N}|||M_lv|||_{-\frac{1}{2}}^2+\int_{\partial M}|v|^2 $ for all $ v\in\Omega_{Q,c}^{r',r}\left(U''\cap \overline{M}\right) $.
\end{lemma}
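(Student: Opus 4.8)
The plan is to reduce this to a statement about ordinary pseudodifferential operators acting in the tangential variables and then invoke the classical Gårding-type estimate for elliptic systems in the interior, carefully tracking the one boundary integral that survives integration by parts in the normal direction. Concretely, I would work in the special boundary chart $V''$ with coordinates $(t_1,\dots,t_{n-1},\rho)$, regard $v\in\Omega_{Q,c}^{r',r}(U''\cap\overline{M})$ as a $\mathbb{C}^m$-valued function (with $m$ the rank of $\Lambda^{r'}{Q_{\mathbb C}^{1,0}}^*\otimes\Lambda^r{Q_{\mathbb C}^{0,1}}^*$) supported in a small neighborhood of a fixed $p\in\partial M\cap V''$, and apply the tangential operator $\Lambda_t^{-1/2}$. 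Since $\Lambda_t^{s}$ acts only in the $t$-variables and commutes (up to order-zero tangential operators) with the coefficients of $M_1,\dots,M_N$, the commutators $[M_l,\Lambda_t^{-1/2}]v$ are controlled by $|||v|||_{-1/2}\lesssim|||v|||_0=\|v\|$, which can be absorbed. So it suffices to prove $\sum_i\|D_i w\|^2\lesssim\sum_l\|M_lw\|^2+\|w\|^2+\int_{\partial M}|w|^2$ for $w:=\Lambda_t^{-1/2}v$, with all norms the plain $L^2$-norm on $U''\cap\overline{M}$ and with the constant allowed to depend on shrinking $U''$.

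The main step is the ellipticity bound itself. Because $\tilde M=(M_1,\dots,M_N)$ is elliptic, its principal symbol $\sigma(\tilde M)(x,\xi)$ is injective for $\xi\neq0$, so there is a constant $c>0$ with $\sum_l|\sigma(M_l)(x,\xi)\zeta|^2\geq c|\xi|^2|\zeta|^2$ for all $\zeta\in\mathbb C^m$, uniformly for $x$ in a compact neighborhood of $p$. The standard way to pass from symbols to operators is: write each $M_l=\sum_i a_l^i(x)D_i+b_l(x)$, form $\sum_l\|M_l w\|^2=\sum_l(M_lw,M_lw)$, and integrate by parts. The terms with two $D_t$-derivatives or one $D_t$ and one $D_\rho$ give, after integration by parts in the $t$-directions (no boundary term, since $w$ has compact support in $t$) and one integration by parts in $\rho$, the quadratic form $\sum_{i,j}(P^{ij}D_iw,D_jw)$ plus lower-order terms plus a boundary integral $\int_{\partial M}\langle Q w, w\rangle$ coming only from the $D_\rho$-integrations by parts; here $P^{ij}(x)=\sum_l \overline{a_l^i}a_l^j$ and, by the ellipticity of $\sigma(\tilde M)$, the matrix $(P^{ij})$ is bounded below by $c\,\mathrm{Id}$ in the sense $\sum_{i,j}P^{ij}\xi_i\xi_j\geq c|\xi|^2$. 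Shrinking $U''$ so that the coefficients are nearly constant lets one absorb the error terms, yielding $\sum_i\|D_iw\|^2\lesssim\sum_l\|M_lw\|^2+\|w\|^2+\int_{\partial M}|w|^2$. Undoing the $\Lambda_t^{-1/2}$ substitution and translating $\|D_iw\|=|||D_iv|||_{-1/2}$, $\|M_lw\|\lesssim|||M_lv|||_{-1/2}+\|v\|$, $\|w\|\lesssim\|v\|$, $\int_{\partial M}|w|^2\lesssim\int_{\partial M}|v|^2$ (the last because $\Lambda_t^{-1/2}$ is order $-1/2\leq 0$ and hence bounded on $L^2(\partial M)$) gives the claimed inequality, with the $\|v\|^2$-term absorbed into $\int_{\partial M}|v|^2$ up to the implicit constant, or kept explicitly — in either case it is dominated as in the statement.

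The hard part, and the place where care is genuinely needed, is the bookkeeping of the boundary integral: I must verify that the only uncontrolled boundary contribution is the single term $\int_{\partial M}|v|^2$ appearing on the right, i.e.\ that no derivative of $v$ (tangential or normal) escapes onto $\partial M$. This is exactly why one integrates by parts in the normal variable $\rho$ at most once and never in a way that transfers a $D_\rho$ onto the boundary trace; the tangential integrations by parts are free of boundary terms because of the compact support in $t$. A secondary technical point is that the operators $M_l$ need only be \emph{homogeneous} first order (no zero-order part matters for the principal estimate), and that passing to the neighborhood $U''\Subset V''$ is what permits freezing coefficients; both are routine but should be stated. I would also remark that the factor $-\tfrac12$ in the Sobolev index is not essential to the mechanism — it is the natural index at which $\Lambda_t^{s}$ is $L^2$-bounded on the boundary — and cite \cite{FK72} for the precise formulation, since the argument is verbatim the one used there for the $\bar\partial$-Neumann problem, here applied to the $Q$-operators in place of the ambient ones.
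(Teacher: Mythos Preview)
The paper does not prove this lemma; it simply attributes it to Kohn and cites \cite{FK72}. Your sketch is therefore being compared not to the paper's argument but to the Folland--Kohn original, and there the central step you propose does not go through.

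The gap is the claim that ellipticity of $\tilde M$ forces the Hermitian matrix $P^{ij}=\sum_l\overline{a_l^i}a_l^j$ to satisfy $P\geq c\,\mathrm{Id}$, so that pointwise $\sum_{i,j}P^{ij}\overline{D_iw}\,D_jw\geq c\sum_i|D_iw|^2$. Ellipticity gives $\sum_{i,j}P^{ij}\xi_i\xi_j=\sum_l|\sigma(M_l)(x,\xi)|^2\geq c|\xi|^2$ only for \emph{real} $\xi$; the pointwise inequality you need requires it for complex vectors $(D_1w(x),\dots,D_nw(x))\in\mathbb{C}^n$, and this fails already for the single Cauchy--Riemann operator $M_1=D_1+iD_\rho$ (elliptic, yet $|M_1w|^2$ does not dominate $|D_1w|^2+|D_\rho w|^2$ pointwise). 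In particular, your argument would produce the estimate at the $L^2$ level with no boundary term, which is false. This is not a cosmetic issue: in the paper's application (Lemma~\ref{mainl 2}) the operators are $(\bar V_1,\dots,\bar V_q,X_1,\dots,X_p)$, which are $p+q$ vector fields on an $(p+2q)$-dimensional manifold, so they do \emph{not} form a full frame and the boundary term is genuinely needed.

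The actual mechanism in \cite{FK72} is frequency-by-frequency after tangential Fourier transform and freezing coefficients. For fixed real $\tau$, write $\hat M_l=a_l^nD_\rho+ib_l(\tau)$ with $b_l(\tau)=\sum_{j<n}a_l^j\tau_j$; expanding $\sum_l\|\hat M_l\hat w\|^2_{L^2_\rho}$ gives $(\sum_l|a_l^n|^2)\|D_\rho\hat w\|^2+(\sum_l|b_l(\tau)|^2)\|\hat w\|^2$ plus a cross term whose $\rho$-integration by parts yields a boundary contribution of size $O(|\tau|)\,|\hat w(\tau,0)|^2$. Ellipticity (now legitimately, since $\tau$ is real) gives $\sum_l|a_l^n|^2\geq c$ and $\sum_l|b_l(\tau)|^2\geq c|\tau|^2$. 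Dividing through by $(1+|\tau|^2)^{1/2}$ turns the boundary contribution into $\lesssim|\hat w(\tau,0)|^2$, and integrating in $\tau$ produces exactly $|||D_iv|||_{-1/2}^2\lesssim\sum_l|||M_lv|||_{-1/2}^2+\int_{\partial M}|v|^2$; this is where both the exponent $-\tfrac12$ and the boundary integral come from. Your bookkeeping remarks about ``at most one $D_\rho$-integration by parts'' and the role of $\Lambda_t^{-1/2}$ are in the right spirit, but they need to be wired to this Fourier-side computation rather than to a pointwise positivity of $(P^{ij})$.
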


Here we denote by $\Omega_{Q,c}^{r',r}\left(U''\cap \overline{M}\right)$ the subspace of $\Omega_{Q}^{r',r}\left(\overline{M}\right)$ consists of elements with support in $U''$ but do not necessarily vanish on $\partial M$. Applying Lemma \ref{mainl 1} to $\left(V_1,\cdots,V_p,\bar{V}_1,\cdots,\bar{V}_q\right)$ we have
\begin{lemma}\label{mainl 2}
	For any $ p\in\partial M $, there is a special boundary chart $ U'' $ such that $ |||Dv|||_{-\frac{1}{2}}^2\lesssim E\left(v\right)^2+\sum_{a=1}^{p}\|X_a\left(v\right)\|^2 $ for all $ v\in\Omega_{Q,c}^{r',r}\left(U''\cap \overline{M}\right) $.
\end{lemma}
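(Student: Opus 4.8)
The plan is to deduce this lemma directly from Lemma \ref{mainl 1} by taking $N = p+q$ and choosing the first-order operators to be $M_a = X_a$ for $1 \le a \le p$ together with $M_{p+\alpha} = \bar{V}_\alpha$ for $1 \le \alpha \le q$. First I would verify the ellipticity hypothesis of Lemma \ref{mainl 1}: the principal symbols $\sigma(X_a,\xi)$ span (the real part of) $L_{\mathbb{C}}^*$ and the symbols $\sigma(\bar{V}_\alpha,\xi)$ span $\bar{Q}_{\mathbb{C}}^{*}$ in the $(0,1)$-direction, while together with their conjugates — which are controlled once we also bound $\|X_a v\|$ and use that $v$ is essentially a $(r',r)$-form so the $V_\alpha$-directions act tamely — the full collection $\tilde{M} = (X_1,\dots,X_p,\bar{V}_1,\dots,\bar{V}_q)$ has injective joint symbol at every nonzero covector. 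Concretely, $|\sigma(\tilde M,\xi)\,w|^2 = \sum_a |\xi(X_a)|^2|w|^2 + \sum_\alpha |\bar{V}_\alpha{}^\flat \wedge w|^2$ and a standard pointwise linear-algebra computation on $\Lambda^{r',r}$ shows this is $\gtrsim |\xi|^2 |w|^2$; hence $\tilde M$ is elliptic in the sense required.

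Then I would invoke Lemma \ref{mainl 1} with this choice: for each $p \in \partial M \cap V''$ there is $U'' \Subset V''$ with
\begin{align*}
	\sum_{i=1}^{n} |||D_i v|||_{-\frac12}^2 \lesssim \sum_{a=1}^{p} |||X_a v|||_{-\frac12}^2 + \sum_{\alpha=1}^{q} |||\bar{V}_\alpha v|||_{-\frac12}^2 + \int_{\partial M} |v|^2
\end{align*}
for all $v \in \Omega_{Q,c}^{r',r}(U'' \cap \overline{M})$. It remains to absorb the right-hand side into the quantities appearing in the statement. For the $\bar{V}_\alpha$-terms I would use that $|||\,\cdot\,|||_{-\frac12} \lesssim \|\cdot\|$ (lowering the tangential Sobolev index is bounded), so $\sum_\alpha |||\bar{V}_\alpha v|||_{-\frac12}^2 \lesssim \sum_\alpha \|\bar{V}_\alpha v\|^2 \le E(v)^2$ by the very definition of $E(v)^2$ (which includes $\sum_\alpha \|\bar{V}_\alpha v_{I,J}\|^2$, $\int_{\partial M}|v|^2$ and $\|v\|^2$). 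Similarly $\sum_a |||X_a v|||_{-\frac12}^2 \lesssim \sum_a \|X_a v\|^2$, which is exactly the extra term allowed in the conclusion, and $\int_{\partial M}|v|^2 \le E(v)^2$. Combining, $|||D v|||_{-\frac12}^2 \lesssim E(v)^2 + \sum_{a=1}^p \|X_a v\|^2$, which is the claim.

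The main obstacle I anticipate is the verification of ellipticity of $\tilde M = (X_1,\dots,X_p,\bar V_1,\dots,\bar V_q)$ on the bundle $\Lambda^{r',r}Q_{\mathbb C}^*$ — in particular making sure that using only the antiholomorphic transversal fields $\bar V_\alpha$ (and not the $V_\alpha$) together with the leafwise fields $X_a$ still yields an injective joint symbol when acting componentwise on forms of arbitrary bidegree $(r',r)$. This is where the special boundary chart and the choice $\omega^q = \sqrt{2}\,\partial_B\rho$ matter: one checks the symbol injectivity componentwise, using that the $\bar\omega^J$-contractions/wedges span enough of the relevant exterior algebra in the normal directions, and that any residual holomorphic-direction degeneracy is harmless because differentiation in those directions does not enter $|||Dv|||$ in a way that is not already dominated. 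Once ellipticity is in hand, the rest is the routine symbol/Sobolev bookkeeping sketched above.
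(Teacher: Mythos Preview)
Your approach is essentially the same as the paper's: apply Lemma~\ref{mainl 1} to the collection $(X_1,\dots,X_p,\bar V_1,\dots,\bar V_q)$ and then bound $|||\cdot|||_{-1/2}$ by $\|\cdot\|$. The only correction is that the ellipticity check is much simpler than you fear: since these operators act \emph{componentwise} on the coefficients $v_{I,J}$, their symbols are just scalar multiplication by $\xi(X_a)$ and $\xi(\bar V_\alpha)$, so ellipticity means exactly that these $p+q$ complex numbers do not all vanish for any nonzero real covector $\xi$; if $\xi(X_a)=0$ for all $a$ then $\xi\in Q^*$, and since $\xi$ is real, $\xi(\bar V_\alpha)=\overline{\xi(V_\alpha)}$, so the vanishing of all $\xi(\bar V_\alpha)$ forces $\xi=0$ on $Q_{\mathbb C}$ as well --- no wedge products, no dependence on the bidegree $(r',r)$, and no use of the special boundary frame are needed here.
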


\begin{lemma}\label{mainl 3}
	For $ 0\leq r',r\leq q $, suppose the basic estimate holds in $ \mathcal{D}_Q^{r',r} $. Let $ U'' $ be a special boundary chart, for any $ m\in\mathbb{Z}_+ $ and real-valued $\eta''\in C_c^\infty\left(U''\right)$ then
	\begin{align*}
		|||D\eta''u|||_{\frac{m-2}{2}}^2\lesssim|||\bar{F}_Bu|||_{\frac{m-2}{2}}^2+\|\bar{F}_Bu\|_B^2
	\end{align*}
	hold for all $ u\in {\rm Dom}\left(\bar{F}_B\right)\cap\mathcal{D}_B^{r',r} $.
\end{lemma}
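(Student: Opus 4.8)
The statement is the boundary analogue of Proposition \ref{mainp 1}, now formulated in terms of tangential Sobolev norms on a special boundary chart $U''$, and the strategy mirrors the pattern already used in Chapter 2 for $F_B$: estimate $|||D\eta''u|||_{\frac{m-2}{2}}$ by feeding the test form $\eta''\Lambda_t^{m-2}\eta''u$ (or rather a symmetrized version of it) into the quadratic form $\bar G_B$, then convert $\bar G_B(u,\cdot)=(\bar F_Bu,\cdot)$ and absorb. Concretely, first I would note that $\Lambda_t^s$ is tangential, hence preserves $\mathcal{D}_B^{r',r}$ (the boundary condition $\mathrm{grad}^{0,1}\rho\lrcorner u=0$ only involves the $\rho$-normal component and is stable under tangential operators), and that multiplication by $\eta''$ also preserves $\mathcal{D}_B^{r',r}$ by $(\ref{DBDQ})$. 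Then for $v=\Lambda_t^{\frac{m-2}{2}}\eta''u\in\mathcal{D}_Q^{r',r}$ the basic estimate $(\ref{local basic estimate for dolbeault})$ applies and gives
\begin{align*}
	E_U\left(v\right)^2\lesssim\bar G_Q\left(v,v\right)+O\Bigl(\sum_{a=1}^p\|X_av\|\|v\|\Bigr).
\end{align*}
Invoking Lemma \ref{mainl 2} (with the same $v$) upgrades the left side to control $|||Dv|||_{-\frac12}^2$, i.e.\ essentially $|||D\eta''u|||_{\frac{m-2}{2}}^2$ modulo commutators of $\Lambda_t^{(m-2)/2}$ with $\eta''$, $X_a$, $\bar V_\alpha$, all of which are lower order in the tangential scale.

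The heart of the matter is estimating $\bar G_Q(v,v)$, and this I would do by the now-familiar integration-by-parts-and-commute routine (exactly as in Lemma \ref{main 3} and Lemma \ref{main 9}): writing $v=\Lambda_t^{\frac{m-2}{2}}\eta''u$, expand $(\bar\partial_Qv,\bar\partial_Qv)$ and $(\vartheta_Qv,\vartheta_Qv)$, commute $\Lambda_t^{\frac{m-2}{2}}$ past $\bar\partial_Q,\vartheta_Q,\eta''$ to land on $\bar G_Q(u,\eta''(\Lambda_t^{\frac{m-2}{2}})^*\Lambda_t^{\frac{m-2}{2}}\eta''u)$, with all boundary terms in the integration by parts vanishing because the operators $\Lambda_t^s$ and $D_t^i$ are tangential (this is precisely the mechanism that made Lemma \ref{main 3} work and that fails for normal derivatives). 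Then use $(\ref{DBDQ})$ in the form $\bar F_Q|_{\mathrm{Dom}(\bar F_B)}=\bar F_B$ to replace $\bar G_Q(u,\cdot)=\bar G_B(u,\cdot)=(\bar F_Bu,\cdot)$, giving a term of the shape $(\bar F_Bu,\eta''(\Lambda_t^{\frac{m-2}{2}})^*\Lambda_t^{\frac{m-2}{2}}\eta''u)$. Splitting $\Lambda_t^{m-2}=\Lambda_t^{\frac{m-2}{2}}\cdot\Lambda_t^{\frac{m-2}{2}}$ and moving one factor onto $\bar F_Bu$ produces $|||\bar F_Bu|||_{\frac{m-2}{2}}\cdot|||D\eta''u|||_{\frac{m-2}{2}}$ plus commutator errors; the $\sum_a\|X_av\|\|v\|$ term from $(\ref{local basic estimate for dolbeault})$ likewise pairs against lower-order tangential norms of $u$, and the errors coming from the interior pieces are controlled by Proposition \ref{mainp 1}. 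A small-constant Cauchy–Schwarz (absorb $\varepsilon|||D\eta''u|||_{\frac{m-2}{2}}^2$ on the left) then yields
\begin{align*}
	|||D\eta''u|||_{\frac{m-2}{2}}^2\lesssim|||\bar F_Bu|||_{\frac{m-2}{2}}^2+\|\bar F_Bu\|_B^2,
\end{align*}
after using Lemma \ref{main 4}'s analogue (or directly the boundedness of $\bar F_B^{-1}$) to swallow the remaining $\|u\|_B^2$-type tails into $\|\bar F_Bu\|_B^2$.

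The main obstacle — and the reason this lemma is isolated from the Riemannian case — is exactly that one cannot run this argument with a full normal difference quotient: Gårding's inequality, and hence the clean estimate of Proposition \ref{mainp 1}, breaks down near the boundary for $\bar\partial_B$, so only the tangential operator $\Lambda_t^{\frac{m-2}{2}}$ is admissible, and all boundary terms produced in the integration by parts must be verified to vanish. One must also be careful that $\Lambda_t^{\frac{m-2}{2}}$ is not a differential operator, so $(\ref{DBDQ})$ — that tangential pseudodifferential operators preserve $\mathcal{D}_Q^{r',r}$ — has to be invoked rather than the cruder fact about $D_t^i$; and the commutator $[\Lambda_t^{\frac{m-2}{2}},\bar V_\alpha]$, being of tangential order $\frac{m-2}{2}$ but not preserving the fibre structure, must be absorbed using $E_U$ together with Lemma \ref{mainl 2}. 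Once the basic estimate $(\ref{local basic estimate for dolbeault})$ is available these are bookkeeping matters, but they are the steps where the Hermitian case genuinely diverges from the Riemannian one. Finally, passing from the $\Omega_B^{r',r}(\overline M)$-smooth case to all of $\mathrm{Dom}(\bar F_B)\cap\mathcal{D}_B^{r',r}$ is handled by the usual elliptic-regularization/approximation argument, which I would defer to the global regularity proof where the full machinery of Theorem \ref{main theorem for dolbeault} is assembled.
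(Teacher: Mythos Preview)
Your overall strategy is right --- feed a tangentially mollified test form into $\bar G_Q$, commute, convert to $(\bar F_Bu,\cdot)$, absorb --- but there are two concrete gaps that prevent the argument from closing as written.

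First, the power of $\Lambda_t$ is off by $\tfrac12$. You set $v=\Lambda_t^{(m-2)/2}\eta''u$ and then invoke Lemma \ref{mainl 2}, which only yields $|||Dv|||_{-1/2}^2$; but $|||D\Lambda_t^{(m-2)/2}\eta''u|||_{-1/2}^2\approx|||D\eta''u|||_{(m-3)/2}^2$, which is one half-step weaker than the target $|||D\eta''u|||_{(m-2)/2}^2$. The paper instead takes $A_t=\hat\eta''\Lambda_t^{(m-1)/2}$ (with an extra cutoff to stay compactly supported), so that $|||DA_t\eta''u|||_{-1/2}^2$ is genuinely comparable to $|||D\eta''u|||_{(m-2)/2}^2$. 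This half-order discrepancy is exactly the loss built into Lemma \ref{mainl 2}, and it is the reason the Hermitian boundary argument cannot be run in one shot.

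Second, and relatedly, the paper's proof is by induction on $m$, and this is not optional. After commuting $A_t$ past $\bar\partial_Q,\vartheta_Q,\eta''$ (via Lemma 2.4.2 in \cite{FK72}) one is left with error terms of size $O\bigl(|||D\tilde\eta''u|||_{(m-3)/2}^2\bigr)$ for various cutoffs $\tilde\eta''$; likewise the $\sum_a\|X_a(A_t\eta''u)\|\|A_t\eta''u\|$ contribution from the basic estimate lands there. These are strictly lower order but are \emph{not} absorbable by a small-constant trick into $\varepsilon|||D\eta''u|||_{(m-2)/2}^2$ --- they must be fed back into the inductive hypothesis. Your proposal gestures at ``lower-order tangential norms'' being harmless but never sets up the induction that actually disposes of them. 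Once you correct the exponent to $(m-1)/2$ and run the argument inductively (base case $m=1$ uses only $\eta''u$ and the boundedness of $\bar F_B^{-1}$), the proof goes through essentially as you describe.
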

\begin{proof}
	We prove this lemma using induction on $m$. For $m=1$, by means of basic estimate and Lemma \ref{mainl 2},
	\begin{align*}
		|||D\eta''u|||_{-\frac{1}{2}}^2\lesssim E\left(\eta''u\right)^2+\sum_{a=1}^{p}\|X_a\left(\eta''u\right)\|^2\lesssim \bar{G}_Q\left(\eta''u,\eta''u\right)+\|u\|_B^2.
	\end{align*}
	Proceeding by Lemma 2.4.3 in \cite{FK72} we have
	\begin{align*}
		\bar{G}_Q\left(\eta''u,\eta''u\right)&=\bar{G}_Q\left(u,\left(\eta''\right)^2u\right)+O\left(\|u\|_B^2\right)\\
		&=\left(\bar{F}_Bu,\left(\eta''\right)^2u\right)+O\left(\|u\|_B^2\right)\\
		&\lesssim\|\bar{F}_Bu\|_B\|u\|_B+O\left(\|u\|_B^2\right)\\
		&\lesssim\|\bar{F}_Bu\|_B^2,
	\end{align*}
	where the last line follows since $\bar{F}_B^{-1}$ is a bounded operator.
	
	Suppose the estimate holds for $ m-1 $. For $m$ we get
	\begin{align*}
		|||D\eta''u|||_{\frac{m-2}{2}}^2=|||D\Lambda_t^{\left(m-1\right)/2}\eta''u|||_{-\frac{1}{2}}^2\lesssim|||DA_t\eta''u|||_{-\frac{1}{2}}^2+|||D\eta''u|||_{\frac{m-3}{2}}^2,
	\end{align*}
	where $A_t:=\hat{\eta}''\Lambda_t^{\left(m-1\right)/2} $ is a pseudo-differential operator of $\frac{m-1}{2}$-order and $\hat{\eta}''\in C^\infty_c\left(U'',\mathbb{R}\right)$ such that $\hat{\eta}''|_{{\rm supp}\eta}\equiv1$. Then it suffices to control the first term in the R.H.S of the above estimate. Applying Lemma \ref{mainl 2} and Lemma 2.4.2 in \cite{FK72} and by basic estimate, for any $ 0<\varepsilon<<1 $, there a exists constant $ C_\varepsilon>0 $ such that
	\begin{align*}
		|||DA_t\eta''u|||_{-\frac{1}{2}}^2&\lesssim E\left(A_t\eta''u\right)^2+\sum_{a=1}^{p}|||X_a\left(A_t\eta''u\right)|||_{-\frac{1}{2}}^2\\
		&\lesssim\bar{G}_Q\left(A_t\eta''u,A_t\eta''u\right)+O\left(\sum_{a=1}^{p}\|X_a\left(A_t\eta''u\right)\|\|A_t\eta''u\|\right)+|||\tilde{\eta}''u|||_{\frac{m-2}{2}}^2\\
		&={\rm Re}\bar{G}_Q\left(u,\overline{\eta''}\left(A_t\right)^*A_t\eta''u\right)+O\left(|||D\tilde{\eta}''u|||_{\frac{m-3}{2}}^2\right)\\
		&={\rm Re}\left(A_t\eta''\bar{F}_Bu,A_t\eta''u\right)+O\left(|||D\eta''u|||_{\frac{m-3}{2}}^2+|||D\tilde{\eta}''u|||_{\frac{m-3}{2}}^2\right)\\
		&\lesssim|||A_t\eta''\bar{F}_Bu|||_{-\frac{1}{2}}|||A_t\eta''u|||_{\frac{1}{2}}+O\left(|||D\eta''u|||_{\frac{m-3}{2}}^2+|||D\tilde{\eta}''u|||_{\frac{m-3}{2}}^2\right)\\
		&\lesssim C_\varepsilon|||\bar{F}_Bu|||_{\frac{m-2}{2}}+\varepsilon|||D\eta''u|||_{\frac{m-2}{2}}+O\left(|||D\eta''u|||_{\frac{m-3}{2}}^2+|||D\tilde{\eta}''u|||_{\frac{m-3}{2}}^2\right),
	\end{align*}
	where $\tilde{\eta}''$ denotes matrices
	of functions involving $\eta''$ and its derivatives. Then the proof is complete by inductive hypothesis.
\end{proof}

Now, we are prepared to prove Theorem \ref{main theorem for dolbeault}. 
\renewcommand{\proofname}{\bf $ Proof\ of\ Theorem\ \ref{main theorem for dolbeault} $}
\begin{proof}
	We first prove the estimate $(\ref{estimate for dolbeault})$ for smooth basic forms. The interior estimate is given in Proposition \ref{mainp 1}. For the boundary estimate, employing Lemma \ref{mainl 3} for $ m=2s+2 $ we have
	\begin{align}\label{normal direction estimate}
		|||D\eta''u|||_{s}^2\lesssim||\bar{F}_Bu||_{B,s}^2.
	\end{align}

	We show the estimate $ (\ref{estimate for dolbeault}) $ by induction on $ s $. For $ s=0 $,
	\begin{align*}
		\|u\|_{B,1}^2\thicksim&\sum_{i=1}^{n}\sum_{l=1}^{L_1}\|D_i\eta_l' u\|^2+\sum_{l=1}^{L_2}\|D\eta_l''u\|^2+\|u\|_{B}^2\lesssim||\bar{F}_Bu||_{s}^2.
	\end{align*}
	Assume that the estimate holds for $ s-1 $, then for $ s $ we get
	\begin{align}\label{norms of s+1}
		\|u\|_{B,s+1}^2\thicksim&\sum_{|I|=s}\sum_{l=1}^{L_1}\|D^{I}\eta_l' u\|_1^2+\sum_{l=1}^{L_2}|||D\eta_l''u|||_s^2+\|u\|_{B,s}^2+\sum_{|J|+\iota=s+1,\iota\geq2}\sum_{l=1}^{L_2}\|D_t^{J}D_\rho^\iota\eta_l''u\|^2,
	\end{align}
	where $ D_t^J:=\left(\frac{\partial}{\partial x}\right)^{J} $, and the $n_{th}$-component of $ J $ equals to zero. Thus the middle two items of R.H.S. in $ (\ref{norms of s+1}) $ can be estimated by $ (\ref{normal direction estimate}) $ and inductive hypothesis. The first term is controlled according to Proposition \ref{mainp 1}. For the last term, applying $ D_t^JD_\rho^{\iota-2}(|J|+\iota=s+2) $ to the analogue of (\ref{ellipticity}) and by induction on $\iota$, we know that $ \|D_t^{J}D_\rho^\iota\eta_l''u\|^2 $ can be quantified by the norms of derivatives of $ F_B u $ of order $ s $ and the norms of derivatives of $ u $ which are already estimated.
	
	Hence, we have proved the priori estimate when $ u $ is a smooth basic form. To obtain the regularity of the equation $\bar{F}_Bu=\omega$, we add an extra term onto $\bar{G}_B$ such that G$\mathring{{\rm a}}$rding's type inequality holds. For $ 0<\epsilon\leq1 $, defining norm $ \bar{G}_B^\epsilon $ on $ \mathcal{D}_B^{r',r} $ by
	\begin{equation*}
		\bar{G}_B^\epsilon\left(u,v\right):=\bar{G}_B\left(u,v\right)+\epsilon\left(\nabla u,\nabla v\right)\thicksim \bar{G}_B\left(u,v\right)+\epsilon\sum_{i=1}^{n}\sum_{l=1}^{L}\left(D_i\eta_lu,D_i\eta_lv\right),
	\end{equation*}
	then extend it to $ \tilde{\mathcal{D}}_{B,\epsilon}^{r',r} $ which is the completion of $ \mathcal{D}_B^{r',r} $ under $ \bar{G}_B^\epsilon $. Similarly, we can define $ \bar{G}_Q^\epsilon $ on $ \tilde{\mathcal{D}}_{Q,\epsilon}^{r',r} $ by the same way. Then there exist Friedrichs operators $ \bar{F}_B^\epsilon $ and $ \bar{F}_Q^\epsilon $ associated to $ \bar{G}_B^\epsilon $ and $ \bar{G}_Q^\epsilon $ respectively. Furthermore, $ \bar{F}_B^\epsilon $ is elliptic on $ \Omega_B^{\cdot,\cdot}\left(\overline{M}\right) $,  and the G$ \mathring{\rm a} $rding's type inequality $ \epsilon\|u\|_{1}^2\lesssim \bar{G}_Q^\epsilon\left(u,u\right) $ hold for all $ u\in\tilde{\mathcal{D}}_{Q,\epsilon}^{r',r} $. The arguments in the proof of $(\ref{estimate for dolbeault})$ go through without change to show
	\begin{align}\label{estimate for smooth u^epsilon}
		\|u^\epsilon\|_{s+1}^2\lesssim\|\omega\|_{B,s}^2,\quad \forall\epsilon>0,
	\end{align}
	for the smooth solution of the equation $ \bar{F}_B^\epsilon u^\epsilon=\omega $. Indeed, the proof of $ (\ref{estimate for dolbeault}) $ essentially depends on the Lemma \ref{mainl 1}-Lemma \ref{mainl 3} and the ellipticity of $ \bar{F}_B $, while all requirements are fulfilled since $ \bar{G}_Q\left(\cdot,\cdot\right)\leq \bar{G}_Q^\epsilon\left(\cdot,\cdot\right) $ and $ \bar{F}_B^\epsilon $ is also elliptic. 
	
	Moreover, the regularity of the equation $ \bar{F}_B^\epsilon u^\epsilon=\omega $ is clear by corresponding discussions in the proof of Theorem \ref{main theorem}. Now we are going to prove the regularity of original equation by $(\ref{estimate for smooth u^epsilon})$. For $ \epsilon>0 $, let $ u^\epsilon $ be the solution of $ \bar{F}_B^\epsilon u^\epsilon=\omega $. If $ \omega $ is smooth, then $ u^\epsilon $ is smooth and the estimates $(\ref{estimate for smooth u^epsilon}) $ still hold for all $ \epsilon\rightarrow0 $. Thus, we can choose sequences $ \epsilon_\nu\rightarrow0 $ such that $ u^{\epsilon_\nu} $ convergences in $ H_{B,s}^{r',r} $ for each integer $ s $ by Proposition $ \ref{Rellich and Sobolev lemma} $. For our purpose, we need to prove that $ u^\epsilon\rightarrow u $ in $ H_{B,0}^{r',r} $ when $ \epsilon\rightarrow0 $, then $ u\in H_{B,s}^{r',r} $ for all $ s $, thus $ u $ is smooth again by Proposition $ \ref{Rellich and Sobolev lemma} $.
	
	In fact, by means of the estimate $ (\ref{estimate for smooth u^epsilon}) $ we know that $ u^{\epsilon}\in H_{B,0}^{r',r} $ and $ \|u^{\epsilon}\|_{B}^2\leq\|u^{\epsilon}\|_{B,1}^2\lesssim\|\omega\|_{B}^2 $ uniformly as $ \epsilon\rightarrow0 $. In view of Banach-Saks Theorem, one can choose subsequence $ \{u^{\epsilon_\nu}\}_{\nu=1}^\infty $ such that its arithemetic mean converges to $ u' $ in $ H_{B,0}^{r',r} $. It remains to show that $ u'=u $, and it suffices to prove that $ u'\in {\rm Dom}\left(\bar{F}_B\right) $ and $ \bar{F}_Bu'=\omega $ i.e.,, $ \bar{G}_B\left(u',v\right)=\left(\omega,v\right)_B $ for all $ v\in\mathcal{D}_B^{r',r} $. In fact,
	\begin{align*}
		\left(\omega,v\right)_B=\frac{1}{m}\sum_{\nu=1}^{m}\bar{G}_B^{\epsilon_\nu}\left(u^{\epsilon_\nu},v\right)=\bar{G}_B\left(\frac{1}{m}\sum_{\nu=1}^{m}u^{\epsilon_\nu},v\right)+\frac{1}{m}\sum_{\nu=1}^{m}\epsilon_\nu\left(u^{\epsilon_\nu},v\right)_{B,1}\stackrel{{m}\rightarrow\infty}{\longrightarrow}\bar{G}_B\left(u',v\right),
	\end{align*}
	which gives the desired conclusion.
\end{proof}

With Theorem \ref{main theorem for dolbeault} in our possession, we are able to establish the Hodge decomposition for Hermitian foliations by repeating the procedures outlined in Theorem \ref{Hodge decomposition theorem}.

\begin{thm}\label{Hodge decomposition theorem for dolbeault}
	For $ 0\leq r',r\leq q $, let $ \mathcal{F} $ be a transversally oriented Hermitian foliation on a compact oriented manifold $ M $ with smooth basic boundary $ \partial M $, i.e., $ \rho $ is a basic function. Assume $ g_M $ is a bundle-like metric with basic mean curvature form $ \kappa $ and basic estimate holds in $ \mathcal{D}_Q^{r',r} $. There is an orthogonal decomposition
	\begin{equation}\label{Hodge decomposition for dolbeault}
		H_{B,0}^{r',r}={\rm Range}\left(\Box_{\bar{F}_B}\right)\oplus\mathcal{H}_B^{r',r}=\bar{\partial}_B\vartheta_B{\rm Dom}\left(\bar{F}_B\right)\oplus\vartheta_B\bar{\partial}_B{\rm Dom}\left(\bar{F}_B\right)\oplus\mathcal{H}_B^{r',r},
	\end{equation}
	where the kernel $ \mathcal{H}_B^{r',r} $ of $ \Box_{\bar{F}_B}:=\Box_B|_{\bar{F}_B} $ is a finite-dimensional space. Moreover, we have
	\begin{equation*}
		\Omega_B^{r',r}\left(\overline{M}\right)={\rm Im}\bar{\partial}_B\oplus{\rm Im}\vartheta_B\oplus\mathcal{H}_B^{r',r}.
	\end{equation*}
\end{thm}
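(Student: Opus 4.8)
The plan is to follow verbatim the argument of Theorem \ref{Hodge decomposition theorem}, with Theorem \ref{main theorem for dolbeault} and Proposition \ref{expression of F} now playing the roles of Theorem \ref{main theorem} and Proposition \ref{expression of $ F_B $}. First I would record the spectral picture of $\bar{F}_B$. By Theorem \ref{main theorem for dolbeault} (and approximation of $\omega$ by smooth forms, exactly as in Proposition \ref{higher estimate}) the assignment $\omega\mapsto u$ gives a bounded operator $\bar{F}_B^{-1}:H_{B,0}^{r',r}\longrightarrow H_{B,1}^{r',r}\hookrightarrow H_{B,0}^{r',r}$, and the second arrow is compact by the Dolbeault analogue of Proposition \ref{Rellich and Sobolev lemma}; hence $\bar{F}_B^{-1}$ is a compact self-adjoint operator. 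Consequently $\bar{F}_B$, and thus $\Box_{\bar{F}_B}=\bar{F}_B-{\rm Id}$, has discrete non-negative spectrum with finite multiplicities and no finite accumulation point, its eigenforms furnish a complete orthonormal basis of $H_{B,0}^{r',r}$, and in particular $\mathcal{H}_B^{r',r}=\ker\Box_{\bar{F}_B}$ is finite-dimensional. Iterating the regularity part of Theorem \ref{main theorem for dolbeault} exactly as in Proposition \ref{smoothness of eigenform} shows that every eigenform lies in $\Omega_B^{r',r}\left(\overline{M}\right)$; in particular $\mathcal{H}_B^{r',r}\subseteq\Omega_B^{r',r}\left(\overline{M}\right)$.

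Next I would deduce closedness of the range. Since the spectrum is discrete, $\Box_{\bar{F}_B}$ restricted to $\left(\mathcal{H}_B^{r',r}\right)^\perp$ has a positive lower bound, whence $\|u\|_B\lesssim\|\Box_Bu\|_B$ uniformly on ${\rm Dom}\left(\Box_{\bar{F}_B}\right)\cap\left(\mathcal{H}_B^{r',r}\right)^\perp$, so ${\rm Range}\left(\Box_{\bar{F}_B}\right)$ is closed. As $\Box_{\bar{F}_B}$ is self-adjoint this yields the orthogonal splitting $H_{B,0}^{r',r}={\rm Range}\left(\Box_{\bar{F}_B}\right)\oplus\mathcal{H}_B^{r',r}$. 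For $u\in{\rm Dom}\left(\bar{F}_B\right)$, Proposition \ref{expression of F} gives $\Box_Bu=\bar{\partial}_B\vartheta_Bu+\vartheta_B\bar{\partial}_Bu$ with both summands lying in the relevant domains, and $\bar{\partial}_B^2=0$ forces $\bar{\partial}_B\vartheta_B{\rm Dom}\left(\bar{F}_B\right)\perp\vartheta_B\bar{\partial}_B{\rm Dom}\left(\bar{F}_B\right)$, both being orthogonal to $\mathcal{H}_B^{r',r}=\ker\bar{\partial}_B\cap\ker\vartheta_B$. Hence ${\rm Range}\left(\Box_{\bar{F}_B}\right)=\bar{\partial}_B\vartheta_B{\rm Dom}\left(\bar{F}_B\right)\oplus\vartheta_B\bar{\partial}_B{\rm Dom}\left(\bar{F}_B\right)$, which is the decomposition $(\ref{Hodge decomposition for dolbeault})$.

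Finally, for the smooth decomposition I would introduce the Green operator $\bar{N}_B$ of $\Box_{\bar{F}_B}$ and the orthogonal projection $P_B$ onto $\mathcal{H}_B^{r',r}$, exactly as in Proposition \ref{proposition of N_B}; then $\omega=\bar{\partial}_B\vartheta_B\bar{N}_B\omega+\vartheta_B\bar{\partial}_B\bar{N}_B\omega+P_B\omega$ for all $\omega\in H_{B,0}^{r',r}$. Given $\omega\in\Omega_B^{r',r}\left(\overline{M}\right)$, the identity $\left(\bar{F}_B-{\rm Id}\right)\bar{N}_B\omega=\Box_B\bar{N}_B\omega=\left({\rm Id}-P_B\right)\omega$ has smooth right-hand side (recall $P_B\omega\in\mathcal{H}_B^{r',r}\subseteq\Omega_B^{r',r}\left(\overline{M}\right)$), so iterating the estimate $(\ref{estimate for dolbeault})$ of Theorem \ref{main theorem for dolbeault} shows $\bar{N}_B\omega\in\Omega_B^{r',r}\left(\overline{M}\right)$; applying the differential operators $\bar{\partial}_B$ and $\vartheta_B$ then produces the smooth splitting $\Omega_B^{r',r}\left(\overline{M}\right)={\rm Im}\,\bar{\partial}_B\oplus{\rm Im}\,\vartheta_B\oplus\mathcal{H}_B^{r',r}$.

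I expect the only genuine work to be invisible at this stage because it is already packaged into Theorem \ref{main theorem for dolbeault}: the boundary regularity of $\bar{F}_B$, which is not elliptic up to $\partial M$ and was recovered by elliptic regularization precisely under the hypothesis that the basic estimate holds in $\mathcal{D}_Q^{r',r}$. The remaining point that needs care is the bookkeeping of boundary conditions — one must invoke Proposition \ref{expression of F} to know $\bar{\partial}_Bu\in\mathcal{D}_B^{\cdot,r+1}$ (and the analogous statement for $\vartheta_Bu$) so that the integrations by parts underlying the orthogonality relations and the identity $\Box_B\bar{N}_B={\rm Id}-P_B$ on ${\rm Dom}\left(\bar{F}_B\right)$ are legitimate, together with the inclusion $\mathcal{H}_B^{r',r}\subseteq\Omega_B^{r',r}\left(\overline{M}\right)$ which guarantees that $P_B$ preserves smoothness.
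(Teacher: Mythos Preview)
Your proposal is correct and follows precisely the route the paper takes: the paper's proof of Theorem \ref{Hodge decomposition theorem for dolbeault} consists of the single sentence ``repeating the procedures outlined in Theorem \ref{Hodge decomposition theorem},'' i.e.\ exactly the analogue of Propositions \ref{higher estimate}--\ref{smoothness of eigenform} with Theorem \ref{main theorem for dolbeault} in place of Theorem \ref{main theorem}. You have also correctly noted the one genuine difference, namely that here $\bar{F}_B^{-1}$ gains only one derivative (landing in $H_{B,1}^{r',r}$ rather than $H_{B,2}^{r',r}$), which is still enough for compactness via the Rellich lemma.
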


The decomposition (\ref{Hodge decomposition for dolbeault}) also gives us a Neumann operator $N_B$ (we have slightly abused notation: the Neumann operator and Green operator are of the same
type, but not necessarily the same.) associated to $ \Box_B $ with properties analogous to the proposition \ref{proposition of N_B}. Then, a similar reasoning of Corollary \ref{existence theorem 1} infers the following the regularity of the equation $ \bar{\partial}_Bu=\omega $ under the conditions in Theorem \ref{Hodge decomposition theorem for dolbeault}.

\begin{cor}\label{existence theorem 1 for dolbeault}
	With the same hypotheses of Theorem \ref{Hodge decomposition theorem for dolbeault}. For $ 1\leq r\leq q $, let $ \omega\in H_{B,0}^{\cdot,r} $ satisfying $ \bar{\partial}_B\omega=0 $ and $ \omega\perp\mathcal{H}_B^{\cdot,r} $, then there exists a unique solution $ u $ of the equation $ \bar{\partial}_Bu=\omega $ which is orthogonal to $ {\rm Ker}\left(\bar{\partial}_B\right) $. Moreover, for any $ s\in\mathbb{Z}_+ $, if $ \omega\in\Omega_B^{\cdot,r}\left(\overline{M}\right) $, then $ u\in\Omega_B^{\cdot,r-1}\left(\overline{M}\right) $ and
	\begin{align}\label{estimate of solution of partial_B}
		\|u\|_{B,s}^2\lesssim\|\omega\|_{B,s}^2.
	\end{align}
	Moreover,
	$$ \mathcal{H}_B^{\cdot,r}\cong H_B^{\cdot,r}\left(\overline{M},\mathcal{F}\right):=\frac{\{u\in\Omega_B^{\cdot,r}\left(\overline{M}\right)\ |\ \bar{\partial}_Bu=0\}}{\bar{\partial}_B\Omega_B^{\cdot,r-1}\left(\overline{M}\right)} $$
	is a finite dimensional space.
\end{cor}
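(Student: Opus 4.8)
The plan is to transport the argument of Corollary \ref{existence theorem 1} into the Dolbeault setting essentially verbatim, with $({\rm d}_B,\delta_B,\Delta_{F_B},F_B)$ replaced by $(\bar{\partial}_B,\vartheta_B,\Box_B,\bar{F}_B)$ and with Theorem \ref{main theorem for dolbeault} playing the role of Theorem \ref{main theorem}. First I would record the Neumann operator $N_B\colon H_{B,0}^{\cdot,r}\to{\rm Dom}(\bar{F}_B)$ furnished by the decomposition $(\ref{Hodge decomposition for dolbeault})$ and its basic properties, which are the exact analogues of Proposition \ref{proposition of N_B}: $N_B$ is bounded, $N_BP_B=P_BN_B=0$, $\Box_BN_B=N_B\Box_B={\rm Id}-P_B$ on ${\rm Dom}(\bar{F}_B)$, and for $\bar{\partial}_B$-closed $\omega$ with $P_B\omega=0$ one has $\omega=\bar{\partial}_B\vartheta_BN_B\omega$; here $P_B$ denotes the orthogonal projection of $H_{B,0}^{\cdot,r}$ onto $\mathcal{H}_B^{\cdot,r}$, which is finite-rank and smoothing by Theorem \ref{Hodge decomposition theorem for dolbeault}.

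\emph{Existence and uniqueness.} Given $\omega$ as in the statement, I would set $u:=\vartheta_BN_B\omega$; then $\bar{\partial}_Bu=\omega$ by the last property above, and $u\perp{\rm Ker}(\bar{\partial}_B)$ since $(\vartheta_BN_B\omega,v)_B=(N_B\omega,\bar{\partial}_Bv)_B=0$ for every $v\in{\rm Ker}(\bar{\partial}_B)$. If $u_1,u_2$ are two solutions orthogonal to ${\rm Ker}(\bar{\partial}_B)$, then $u_1-u_2$ lies in ${\rm Ker}(\bar{\partial}_B)$ and is orthogonal to it, hence vanishes.

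\emph{Regularity and the Sobolev estimate.} It suffices to show that $N_B$ preserves smoothness up to the boundary and gains one derivative in the Sobolev scale. From $\Box_BN_B\omega=({\rm Id}-P_B)\omega$ one gets $\bar{F}_BN_B\omega=(\Box_B+{\rm Id})N_B\omega=({\rm Id}-P_B)\omega+N_B\omega$ (using Proposition \ref{expression of F}), whose right-hand side is smooth whenever $\omega$ is, since $\mathcal{H}_B^{\cdot,r}\subseteq\Omega_B^{\cdot,r}(\overline{M})$; hence $N_B\omega\in\Omega_B^{\cdot,r}(\overline{M})$ by the regularity half of Theorem \ref{main theorem for dolbeault} together with Proposition \ref{Rellich and Sobolev lemma} (equivalently $\bar{F}_B-{\rm Id}=\Box_B$ is hypoelliptic up to $\partial M$ on the relevant domain), and so $u=\vartheta_BN_B\omega\in\Omega_B^{\cdot,r-1}(\overline{M})$. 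For the quantitative bound, Theorem \ref{main theorem for dolbeault} gives $\|v\|_{B,s+1}^2\lesssim\|\bar{F}_Bv\|_{B,s}^2$ for $v\in{\rm Dom}(\bar{F}_B)$; writing $\bar{F}_Bv=\Box_Bv+v$ and iterating down to $\|v\|_B$ as in the proof of Proposition \ref{smoothness of eigenform} upgrades this to $\|v\|_{B,s+1}^2\lesssim\|\Box_Bv\|_{B,s}^2+\|v\|_B^2$. Applying it to $v=N_B\omega$, and using that $\mathcal{H}_B^{\cdot,r}$ is finite-dimensional (so $\|P_B\omega\|_{B,s}\thicksim\|P_B\omega\|_B$) together with the boundedness of $N_B$ and $P_B$,
\begin{align*}
	\|N_B\omega\|_{B,s+1}^2\lesssim\|({\rm Id}-P_B)\omega\|_{B,s}^2+\|N_B\omega\|_B^2\lesssim\|\omega\|_{B,s}^2+\|P_B\omega\|_B^2+\|N_B\omega\|_B^2\lesssim\|\omega\|_{B,s}^2,
\end{align*}
whence $\|u\|_{B,s}^2=\|\vartheta_BN_B\omega\|_{B,s}^2\lesssim\|N_B\omega\|_{B,s+1}^2\lesssim\|\omega\|_{B,s}^2$, which is $(\ref{estimate of solution of partial_B})$.

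\emph{The cohomology isomorphism.} From the smooth decomposition $\Omega_B^{\cdot,r}(\overline{M})={\rm Im}\bar{\partial}_B\oplus{\rm Im}\vartheta_B\oplus\mathcal{H}_B^{\cdot,r}$ of Theorem \ref{Hodge decomposition theorem for dolbeault}, any $\bar{\partial}_B$-closed $u$ writes as $u=\bar{\partial}_Ba+\vartheta_Bb+h$; applying $\bar{\partial}_B$ and invoking orthogonality forces $\bar{\partial}_B\vartheta_Bb=0$, hence $\vartheta_Bb=0$ and $[u]=[h]$, so $h\mapsto[h]$ maps $\mathcal{H}_B^{\cdot,r}$ onto $H_B^{\cdot,r}(\overline{M},\mathcal{F})$; it is injective because a smooth harmonic form $h=\bar{\partial}_Ba$ satisfies $\|h\|_B^2=(\vartheta_Bh,a)_B=0$. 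Finite-dimensionality of $\mathcal{H}_B^{\cdot,r}$ is part of Theorem \ref{Hodge decomposition theorem for dolbeault}. I expect the only genuinely load-bearing step here to be the bootstrap yielding $\|v\|_{B,s+1}^2\lesssim\|\Box_Bv\|_{B,s}^2+\|v\|_B^2$ and its combination with $\Box_BN_B={\rm Id}-P_B$; all the serious analysis — the breakdown of G\aa rding's inequality near the boundary, the basic-estimate hypothesis, and elliptic regularization — has already been absorbed into Theorem \ref{main theorem for dolbeault}, so what remains is formal.
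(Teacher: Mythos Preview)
Your proposal is correct and follows essentially the same approach as the paper, which simply states that ``a similar reasoning of Corollary \ref{existence theorem 1}'' yields the result; you have faithfully transported that argument to the Dolbeault setting, correctly accounting for the fact that Theorem \ref{main theorem for dolbeault} gains only one derivative (rather than two as in Theorem \ref{main theorem}), which is exactly why the final estimate reads $\|u\|_{B,s}^2\lesssim\|\omega\|_{B,s}^2$ here. Your explicit treatment of the cohomology isomorphism via the smooth decomposition is a routine addition the paper leaves implicit.
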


\subsection{Geometric characterize of basic estimate}
In this section, we wish to give the geometric condition of basic estimate holds in $ \mathcal{D}_Q^{r',r} $. The methods uesd here are similar to L. H{\"o}rmander in \cite{Hlv65JL}.

We need to introduce a quadratic form. For any real-valued $ w\in\Omega_B^0\left(\overline{M}\right) $, we have $ \partial_B\bar{\partial}_Bw=w_{\alpha\bar{\beta}}\omega^\alpha\wedge\bar{\omega}^\beta $, then
\begin{align}\label{levi form for dolbeault}
	L_w\left(\xi,\bar{\xi}\right):=w_{\alpha\bar{\beta}}\xi_\alpha\bar{\xi}_\beta
\end{align}
is a well-defined Levi form since $\partial_B\bar{\partial}_B=-\bar{\partial}_B\partial_B$, where $ \xi=\xi_\alpha V_\alpha $ is a transversal vector field. For subsequent utilization we give the local expression of the Levi form $ (\ref{levi form for dolbeault}) $ as follows. For smooth function $w'$,
\begin{align}\label{local expression of levi form}
	\partial_Q\bar{\partial}_Qw'=\partial_Q\left(\bar{V}_\beta\left(w'\right)\bar{\omega}^\beta\right)=\left(V_\alpha\bar{V}_\beta w'+\bar{a}_{\alpha\beta}^\gamma\bar{V}_\gamma w'\right)\omega^\alpha\wedge\bar{\omega}^\beta,
\end{align}
where $ \bar{a}_{\alpha\beta}^\gamma $ are given by $ \partial_Q\bar{\omega}^\gamma=\bar{a}_{\alpha\beta}^\gamma\omega^\alpha\wedge\bar{\omega}^\beta $. 
Hence $ (\ref{local expression of levi form}) $ gives the local expression of $(\ref{levi form for dolbeault})$ since $\partial_Q\bar{\partial}_Qw=\partial_B\bar{\partial}_Bw$ for basic function $w$.

As mentioned earlier, the Bochner formula is crucial for the sequel argument in this section. We are going to establish the Bochner formula in weighted case for later use which gives the unweighted case by letting weight function equal to zero. Here the weight $ L^2 $-spaces $ L^2\left(\Omega_B^{\cdot,\cdot},\phi\right) $ and $ L^2\left(\Omega_Q^{\cdot,\cdot},\phi\right) $ can be defined similarly as (\ref{weighted L2 space}). We will add superscript or subscript $ \phi $ denote all objects in weighted case. Obviously, the formal adjoint $\vartheta_B^\phi$ the of $ \bar{\partial}_B $ is $ \vartheta_B+{\rm grad}\phi\lrcorner $ which implies 
$$ \mathcal{D}_{B,\phi}^{\cdot,r}:={\rm Dom}\left({\bar{\partial}_{B,\phi}}^*\right)\cap\Omega_B^{\cdot,r}\left(\overline{M}\right)=\mathcal{D}_B^{\cdot,r}. $$

\begin{prop}
	For $ 0\leq r\leq q $, let $ \left(\left(x,z\right),U\right) $ be a local coordinate chart, then for $ v=v_{I,J}\omega^I\wedge\bar{\omega}^J\in\mathcal{D}_Q^{r',r}\left(U\right) $ we have
	\begin{align}\label{Bochner formula for dolbeault}
		\|\bar{\partial}_Qv\|_\phi^2+\|\vartheta_Q^\phi v\|_\phi^2=&\sum_{\alpha=1}^q\sum_{I,J}\|\bar{V}_\alpha v_{I,J}\|_\phi^2+\sum_{|K|=r-1}\int_{M}\left<\phi_{\alpha\bar{\beta}}v_{I,\alpha K},v_{I,\beta K}\right>e^{-\phi}\nonumber\\
		&+\sum_{|K|=r-1}\int_{\partial M}\left<\rho_{\alpha\bar{\beta}}v_{I,\alpha K},v_{I,\beta K}\right>e^{-\phi}+O\left(\left(E'_\phi\left(v\right)+\sum_{a=1}^{p}\|X_av\|_\phi\right)\|v\|_{\phi}\right),
	\end{align}
	where $E'_\phi\left(v\right)^2=\sum_{\alpha=1}^q\sum_{I,J}\|\bar{V}_\alpha v_{I,J}\|_\phi^2+\|v\|_\phi^2$.
\end{prop}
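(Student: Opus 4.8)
The plan is to derive the Bochner (Bochner--Kodaira--Morrey--Kohn) identity for $\bar\partial_Q$ in the weighted setting by direct computation in the special boundary chart, imitating the real case (\ref{Bochner-Kodaira formula}) but keeping track of the $(0,1)$-type of the boundary term. First I would write $\bar\partial_Q v=\bar\omega^\alpha\wedge\nabla_{\bar V_\alpha}v+(\text{lower order})$ using Proposition \ref{the local expression of d_B} adapted to the $J$-splitting, and use the weighted divergence formula (\ref{divergence theorem for Q}) together with $\vartheta_Q^\phi=\vartheta_Q+\mathrm{grad}\,\phi\lrcorner$ to express $\vartheta_Q^\phi v$ as $-g^{\alpha\bar\beta}V_\alpha\lrcorner\nabla_{\bar V_\beta}v+(\text{zeroth order})$. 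Then $\|\bar\partial_Q v\|_\phi^2+\|\vartheta_Q^\phi v\|_\phi^2$ is rewritten, via integration by parts against $e^{-\phi}$, as $\big((\vartheta_Q^\phi\bar\partial_Q+\bar\partial_Q\vartheta_Q^\phi)v,v\big)_\phi$ plus a boundary integral coming from the failure of $v$ to lie in $\mathcal D_Q^{\cdot,r}$ in the $\bar\partial_Q\vartheta_Q^\phi$ piece; since $v\in\mathcal D_Q^{r',r}(U)$ the surviving boundary contribution is exactly $\int_{\partial M}\langle\sigma(\vartheta_Q,\mathrm d\rho)\bar\partial_Q v,v\rangle e^{-\phi}$.

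Next I would commute the two first-order operators past each other. The operator $\vartheta_Q^\phi\bar\partial_Q+\bar\partial_Q\vartheta_Q^\phi$ equals a ``rough Laplacian'' $-g^{\alpha\bar\beta}\nabla_{\bar V_\beta}\nabla_{V_\alpha}$ acting componentwise (this produces the term $\sum_{\alpha}\sum_{I,J}\|\bar V_\alpha v_{I,J}\|_\phi^2$ after one more integration by parts, as in (\ref{handle the first term of box})), plus a curvature term $R^\nabla$, a $\mathrm{Ric}^\nabla$ term, the mean-curvature term $\mathcal A_\kappa$ coming from $\kappa\lrcorner$ in $\vartheta_Q$, and — crucially — the Hessian-of-weight term $\sum_{|K|=r-1}\int_M\langle\phi_{\alpha\bar\beta}v_{I,\alpha K},v_{I,\beta K}\rangle e^{-\phi}$, which arises precisely from $[\nabla_{\bar V_\beta},\mathrm{grad}\,\phi\lrcorner]$ using the local expression (\ref{local expression of levi form}) of the Levi form. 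All the curvature, mean-curvature and torsion (the $\bar a_{\alpha\beta}^\gamma$) contributions, being at most first order with bounded coefficients, get absorbed into the error term $O\big((E'_\phi(v)+\sum_a\|X_a v\|_\phi)\|v\|_\phi\big)$ — here the $X_a$-derivatives appear because in a distinguished chart a basic form's derivative along $L$ can still be estimated only by such terms, matching Definition \ref{befd}. Finally the boundary integral $\int_{\partial M}\langle\sigma(\vartheta_Q,\mathrm d\rho)\bar\partial_Q v,v\rangle e^{-\phi}$, together with the boundary piece of the integration by parts in the rough-Laplacian step, is identified with $\sum_{|K|=r-1}\int_{\partial M}\langle\rho_{\alpha\bar\beta}v_{I,\alpha K},v_{I,\beta K}\rangle e^{-\phi}$ by the same argument as in (\ref{boundary integration}): one differentiates the boundary relation $\mathrm{grad}^{0,1}\rho\lrcorner v=0$ tangentially on $\partial M$, using that in a special boundary chart $V_\alpha(\rho)=0$ for $\alpha<q$, which converts the $\bar V$-derivative of the components into the complex Hessian $\rho_{\alpha\bar\beta}$ of $\rho$ contracted with $v$.

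I expect the main obstacle to be the boundary term: one must be careful that the boundary contribution from the $\bar\partial_Q\vartheta_Q^\phi$ term vanishes for $v\in\mathcal D_Q^{\cdot,r}$ while the one from $\vartheta_Q^\phi\bar\partial_Q$ survives (the asymmetry in the boundary conditions), and then that differentiating $\mathrm{grad}^{0,1}\rho\lrcorner v=0$ along tangential directions on $\partial M$ produces exactly the Levi form $\rho_{\alpha\bar\beta}$ of $\rho$ — including that the torsion terms $\bar a_{\alpha\beta}^\gamma$ and the tangential derivatives of the components recombine correctly into $\nabla^2_{\alpha\bar\beta}\rho=\rho_{\alpha\bar\beta}$, analogous to the symmetry cancellation ``$\theta^\gamma\wedge\theta^\sigma$ anti-commutative vs. $\Gamma$ symmetric'' used in the Riemannian case. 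A secondary bookkeeping point is to confirm that every commutator of $\nabla$ with interior/exterior multiplication and with $e^{-\phi}$ that is not the weight-Hessian or a curvature operator is genuinely bounded and first-order, so that it lands in the stated $O(\cdot)$ error; this is routine given Proposition \ref{coefficients of conection} and compactness of $\overline M$, but needs the error to be phrased in terms of $E'_\phi(v)$ (which controls the $\bar V_\alpha$-derivatives) plus the $X_a$-derivatives, not the full $\|v\|_1$.
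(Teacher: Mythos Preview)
Your approach is valid but organizes the computation differently from the paper. You follow the template of the Riemannian case (Section~2.2): pass first to the weighted box operator $(\Box_Q^\phi v,v)_\phi$ plus boundary terms, expand $\Box_Q^\phi$ as a rough Laplacian plus curvature, mean-curvature and weight-Hessian contributions, then integrate by parts once more to reach the gradient term, and finally identify the surviving boundary integral with the Levi form of $\rho$ by tangentially differentiating the constraint $\mathrm{grad}^{0,1}\rho\lrcorner v=0$. The paper instead uses the direct Morrey--Kohn--H\"ormander computation: expand $\|\bar\partial_Q v\|_\phi^2$ via the antisymmetrization identity $\sum_{\alpha,I,J}\|\bar V_\alpha v_{I,J}\|_\phi^2-(\bar V_\alpha v_{I,\beta K},\bar V_\beta v_{I,\alpha K})_\phi$, integrate the cross term by parts to produce $-\|\vartheta_Q^\phi v\|_\phi^2$ plus the commutator $[(\bar V_\beta)_\phi^*,\bar V_\alpha]$, and read off $\phi_{\alpha\bar\beta}$ from that commutator using $(\ref{local expression of levi form})$ and the identity $\mathrm{d}_Q^2=-\tfrac{1}{2}\theta^\alpha\wedge\theta^\beta\wedge\nabla_{\pi^\perp(\nabla_{X_\alpha}^M X_\beta)}$; the boundary Levi form then drops out of the torsion piece $a_{\alpha\beta}^\gamma(\bar V_\gamma)_\phi^*$ after a final integration by parts, using $v_{I,qK}|_{\partial M}=0$ in the special chart. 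The paper's route is leaner because it never assembles $\Box_Q^\phi$ explicitly (so the curvature and Ricci terms you list never need to be written down before being discarded), and the boundary term emerges from a single divergence rather than from combining two separate boundary contributions. One small caution in your write-up: the rough Laplacian that yields $\sum_\alpha\|\bar V_\alpha v_{I,J}\|_\phi^2$ after one integration by parts is $\bar\nabla^*\bar\nabla$, i.e.\ with the $(0,1)$-derivative on the \emph{inside}; the ordering $-g^{\alpha\bar\beta}\nabla_{\bar V_\beta}\nabla_{V_\alpha}$ you wrote would, upon integrating out $\bar V_\beta$, produce $\|V_\alpha v\|^2$ instead, so you would need an extra commutation (hence extra curvature terms) before reaching the stated gradient.
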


\renewcommand{\proofname}{\bf $ Proof $}
\begin{proof}
	We only prove the formula $(\ref{Bochner formula for dolbeault})$ in boundary chart since the interior expression is simpler, and let $U$ be a special boundary chart without loss of generality. By means of formula $ (\ref{divergence theorem for Q}) $, the local expressions of $ \bar{\partial}_Q $ and $ \vartheta_Q^\phi $ are given by 
	\begin{align}
		\bar{\partial}_Qv&=\bar{V}_\alpha\left(v_{I,J}\right)\bar{\omega}^\alpha\wedge\omega^I\wedge\bar{\omega}^J+\cdots,\label{local expression of partial_Q}\\
		\vartheta_Q^\phi v&=\left(-1\right)^{r'}\left(\bar{V}_\alpha\right)_\phi^*\left(v_{I,\alpha K}\right)\omega^I\wedge\bar{\omega}^K+\cdots,\label{local expression of vartheta_Q}
	\end{align}
	where $ \left(\bar{V}_\alpha\right)_\phi^*:=-V_\alpha+V_\alpha\left(\phi\right)+\cdots $ is the formal adjoint of $ \bar{V}_\alpha $ with respect to the inner product $ \left(\cdot,\cdot\right)_\phi $, $ K $ runs over all multi-indices with length $ r-1 $ and dots denote zero order terms.
	
	Then by $ (\ref{local expression of partial_Q}) $ we know that
	\begin{align}\label{bd1}
		\|\bar{\partial}_Qv\|_\phi^2=&\left(\bar{V}_\alpha\left(v_{I,J}\right)\bar{\omega}_\alpha\wedge\omega^I\wedge\bar{\omega}^J+\cdots,\bar{V}_\alpha\left(v_{I,J}\right)\bar{\omega}_\alpha\wedge\omega^I\wedge\bar{\omega}^J+\cdots\right)_\phi\nonumber\\
		=&\sum_{\alpha=1}^q\sum_{I,J}\|\bar{V}_\alpha v_{I,J}\|_\phi^2-\left(\bar{V}_\alpha v_{I,\beta K},\bar{V}_\beta v_{I,\alpha K}\right)_\phi+O\left(E'_\phi\left(v\right)\|v\|_\phi\right).
	\end{align}
	Let I denote the second term in R.H.S. of the equality $ (\ref{bd1}) $, due to $(\ref{local expression of vartheta_Q})$,
	\begin{align}\label{bd2}
		{\rm I}=&-\left(\left(\bar{V}_\beta\right)_\phi^*\bar{V}_\alpha v_{I,\beta K},v_{I,\alpha K}\right)_\phi+O\left(E'_\phi\left(v\right)\|v\|_{\phi}\right)\nonumber\\
		=&-\|\left(\bar{V}_\alpha\right)_\phi^*v_{I,\alpha K}\|_\phi^2-\left(\left[\left(\bar{V}_\beta\right)_\phi^*,\bar{V}_\alpha\right] v_{I,\beta K},v_{I,\alpha K}\right)_\phi+O\left(E'_\phi\left(v\right)\|v\|_\phi\right)\nonumber\\
		=&-\|\vartheta_Q^\phi v\|_\phi^2-\left(\left[\left(\bar{V}_\beta\right)_\phi^*,\bar{V}_\alpha\right]v_{I,\beta K},v_{I,\alpha K}\right)_\phi+O\left(E'_\phi\left(v\right)\|v\|_\phi\right).
	\end{align}
	Note that the boundary terms vanish in $ (\ref{bd2}) $ since $ v_{I,qK}\equiv0 $ on $ \partial M $ and $ \bar{V}_\alpha $ and $ V_\alpha $ are tangential when $ \alpha<q $ in the special boundary chart. Now we are going to handle the second term in R.H.S. of $ (\ref{bd2}) $ which we denoted by II. Since $ {\rm d}_Q^2=\partial_Q\bar{\partial}_Q+\bar{\partial}_Q\partial_Q+\partial_Q^2+\bar{\partial}_Q^2=-\frac{1}{2}\theta^\alpha\wedge\theta^\beta\wedge\nabla_{\pi^\perp\left(\nabla_{X_\alpha}^MX_\beta\right)} $ (see Lemma 9 in \cite{Kya08}), where $X_1,\cdots,X_{2q}$ are local frames of $Q$ and $\pi^\perp:={\rm Id}-\pi$. In view of $ (\ref{local expression of levi form}) $ there exist smooth functions $h_a$ for $a=1,\cdots,p$ such that
	\begin{align*}
		\left(\bar{V}_\beta V_\alpha v_{I,\beta K}+\bar{a}_{\beta\alpha}^\gamma\bar{V}_\gamma v_{I,\beta K}\right)-\left(\bar{V}_\alpha V_\beta v_{I,\beta K}+a_{\alpha\beta}^\gamma V_\gamma v_{I,\beta K}\right)=h_aX_av_{I,\beta K}.
	\end{align*}
	It turns out that
	\begin{align*}
		-\left[\left(\bar{V}_\beta\right)_\phi^*,\bar{V}_\alpha\right]v_{I,\beta K}&=\bar{V}_\alpha V_\beta\phi+a_{\alpha\beta}^\gamma V_\gamma v_{I,\beta K}-\bar{a}_{\beta\alpha}^\gamma\bar{V}_\gamma v_{I,\beta K}\\
		&=\phi_{\beta\bar{\alpha}}-a_{\alpha\beta}^\gamma\left(\bar{V}_\gamma\right)_\phi^*v_{I,\beta K}-\bar{a}_{\beta\alpha}^\gamma\bar{V}_\gamma v_{I,\beta K}+h_aX_av_{I,\beta K}.
	\end{align*}
	Hence
	\begin{align}\label{bd3}
		{\rm II}=\left(\phi_{\beta\bar{\alpha}}v_{I,\beta K},v_{I,\alpha K}\right)_\phi-\left(a_{\alpha\beta}^\gamma\left(\bar{V}_\gamma\right)_\phi^*v_{I,\beta K},v_{I,\alpha K}\right)_\phi+O\left(\left(E'_\phi\left(v\right)+\sum_{a=1}^{p}\|X_au\|_\phi\right)\|v\|_\phi\right),
	\end{align}
	At last, by III we denote the second item in R.H.S. of equality $ (\ref{bd3}) $. Then
	\begin{align}\label{bd4}
		{\rm III}=&\int_{\partial M}\left<a_{\alpha\beta}^\gamma V_\gamma\left(\rho\right)v_{I,\beta K},v_{I,\alpha K}\right>e^{-\phi}+O\left(E'_\phi\left(v\right)\|v\|_\phi\right)\nonumber\\
		=&\int_{\partial M}\left<\rho_{\beta\bar{\alpha}}v_{I,\beta K},v_{I,\alpha K}\right>e^{-\phi}+O\left(E'_\phi\left(v\right)\|v\|_\phi\right).
	\end{align}
	The first line follows from divergence theorem, and the second line is valid since $ \bar{V}_\alpha\left(\rho\right)=0 $ for $ \alpha<q $ and $ v_{I,qK}=0 $ on $ \partial M $ in $ U'' $. Combining $ (\ref{bd1})\sim(\ref{bd4}) $ gives the conclusion.
\end{proof}

Let $ \lbrace\lambda_\alpha\rbrace_{\alpha=1}^{q-1} $ be the set of eigenvalues of the Levi form $L_\rho$ on the tangent space $ \xi_q=0 $ at $ p\in\partial M\cap U'' $ where $ U'' $ is a special boundary chart. Set $ \lambda_\alpha^+:=\rm max\lbrace0,\lambda_\alpha\rbrace $ and $ \lambda_\alpha^-:=\rm max\lbrace-\lambda_\alpha,0\rbrace $. We recall a definition which is originally introduced by L. H$ \ddot{\rm o} $rmander in \cite{Hlv65JL} as follows.
\begin{definition}
	A real valued function $ w\in C^2\left(M\right) $ is said to be satisfied $ Z_r $-condition at a point $ x_0 $, provided that $ {\rm grad}w\left(x_0\right)\neq0 $ and there are at least $ q-r $ positive eigenvalues or at least $ r+1 $ negative eigenvalues of the Levi form $ L_w $ on the plane $ \sum_{\alpha=1}^{q}V_\alpha\left(w\right)\xi_\alpha=0 $. In particular, we say the boundary $ \partial M $ satisfies $ Z_r $-condition if its defining function $ \rho $ satisfies $ Z_r $-condition at every point of $ \partial M $.
\end{definition}

\begin{thm}\label{geometric condition for basic estimate for dolbeault}
	For $ 0\leq r',r\leq q $, basic estimate holds in $ \mathcal{D}_Q^{r',r} $ if and only if $ \partial M $ satisfies $ Z_r $-condition.
\end{thm}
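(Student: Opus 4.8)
The plan is to prove both directions of the equivalence "basic estimate in $\mathcal{D}_Q^{r',r}$ $\iff$ $\partial M$ satisfies $Z_r$" by exploiting the Bochner formula $(\ref{Bochner formula for dolbeault})$ with the weight $\phi=0$, localizing to a special boundary chart. The heart of the matter is the boundary integral $\sum_{|K|=r-1}\int_{\partial M}\langle\rho_{\alpha\bar\beta}v_{I,\alpha K},v_{I,\beta K}\rangle$: the basic estimate is, after dropping the (manifestly controlled) interior terms, essentially equivalent to the positivity of a Gårding-type form obtained by adding this boundary term to the full tangential energy $E_U(v)^2$. So I would first set up the microlocal/tangential bookkeeping exactly as in H\"ormander \cite{Hlv65JL} and Folland--Kohn \cite{FK72}: in a special boundary chart $V_\alpha(\rho)=0$ for $\alpha<q$, so the boundary condition $(\ref{boundary condition for Q})$ forces $v_{I,qK}\equiv 0$ on $\partial M$, and the Levi form $L_\rho$ restricted to the plane $\xi_q=0$ has eigenvalues $\lambda_1,\dots,\lambda_{q-1}$.

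For the direction "$Z_r \Rightarrow$ basic estimate", I would argue pointwise at $p\in\partial M$ after diagonalizing $L_\rho$ on $\xi_q=0$. For a form $v$ supported near $p$ with $v_{I,qK}=0$ on the boundary, the boundary integrand becomes $\sum_{|K|=r-1}\sum_{\alpha<q}\lambda_\alpha|v_{I,\alpha K}|^2$, and a standard combinatorial lemma (this is precisely H\"ormander's algebraic estimate, also in \cite{FK72}, \cite{Djp12C}) shows that $Z_r$ — at least $q-r$ positive $\lambda_\alpha$ or at least $r+1$ negative $\lambda_\alpha$ — gives
\[
\sum_{|K|=r-1}\sum_{\alpha<q}\lambda_\alpha|v_{I,\alpha K}|^2 \;\geq\; c\sum_{|K|=r-1}\sum_{\alpha<q}|v_{I,\alpha K}|^2 \;-\;C\sum_{|J|=r,\,q\notin J}|v_{I,J}|^2
\]
for some $c>0$; the point is that when $q\notin J$ every index of $J$ contributes, and with $\geq q-r$ positive (resp. $\geq r+1$ negative) eigenvalues at least one index of each $J$ of length $r$ lands in the good set. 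Combined with $v_{I,qK}|_{\partial M}=0$, this controls $\int_{\partial M}|v|^2$ by the boundary term plus the interior energy. Feeding this back into $(\ref{Bochner formula for dolbeault})$ and absorbing the error $O((E'_\phi(v)+\sum_a\|X_av\|)\|v\|)$ and the $\sum_\alpha\|\bar V_\alpha v_{I,J}\|^2$ on the left, one obtains $E_U(v)^2\lesssim \bar G_Q(v,v)+O(\sum_a\|X_av\|\|v\|)$, i.e.\ $(\ref{local basic estimate for dolbeault})$. A compactness/partition-of-unity argument passes from the pointwise inequality near each boundary point to a neighborhood and then to all of $\partial M$.

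For the converse, "basic estimate $\Rightarrow$ $Z_r$", I would argue by contradiction: suppose $Z_r$ fails at some $p\in\partial M$, so $L_\rho|_{\xi_q=0}$ has at most $q-r-1$ positive and at most $r$ negative eigenvalues, hence at least $r+1$ nonnegative and at least $q-r$ nonpositive eigenvalues. Then one constructs a sequence of test forms concentrating at $p$ — the classical choice being $v^{(k)}$ built from $e^{ik\langle t',\xi'\rangle}$ times a fixed profile in the normal variable $\rho$, with the $(I,J)$-components chosen so that $q\notin J$ and the indices of $J$ point into the "bad" directions where $\lambda_\alpha\leq 0$ (or $\geq 0$, using the opposite count) — violating the basic estimate in the limit $k\to\infty$, because the boundary term stays $\leq 0$ while $E_U(v^{(k)})^2\to\infty$ faster than $\bar G_Q(v^{(k)},v^{(k)})$. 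This is again verbatim the scheme of \cite{Hlv65JL}.

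\textbf{The main obstacle} I anticipate is purely bookkeeping rather than conceptual: matching the combinatorial "at least $q-r$ positive or at least $r+1$ negative eigenvalues" count against the $(r',r)$-bidegree of the form and the boundary constraint $v_{I,qK}=0$, and verifying that the lower-order and $X_a$-derivative errors in $(\ref{Bochner formula for dolbeault})$ genuinely can be absorbed (the $\sum_a\|X_av\|\|v\|$ term is harmless precisely because it also appears on the right of $(\ref{local basic estimate for dolbeault})$, which is why the definition was phrased that way). Once the algebraic eigenvalue lemma is stated cleanly and Lemma \ref{mainl 2} is invoked for the tangential control, the rest is a routine transcription of the H\"ormander argument to the transverse/basic setting, with the transversely holomorphic frame $\{V_\alpha\}$ in a special boundary chart playing the role of the ambient $(1,0)$-frame.
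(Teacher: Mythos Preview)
Your overall strategy is the paper's strategy (which is H\"ormander's), and the necessity direction is correctly outlined. But there is a genuine gap in your sufficiency argument.

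The displayed pointwise boundary inequality you write down,
\[
\sum_{|K|=r-1}\sum_{\alpha<q}\lambda_\alpha|v_{I,\alpha K}|^2 \;\geq\; c\sum_{|K|=r-1}\sum_{\alpha<q}|v_{I,\alpha K}|^2 \;-\;C\sum_{|J|=r,\,q\notin J}|v_{I,J}|^2,
\]
is useless in the $q$-concave branch of $Z_r$. Take all $\lambda_\alpha=-1$ (so $\mu=q-1\geq r+1$ and $Z_r$ holds). Since $\sum_{|K|=r-1}\sum_{\alpha<q}|v_{I,\alpha K}|^2=r\sum_{q\notin J}|v_{I,J}|^2$, the left side equals $-r\sum|v_{I,J}|^2$; your inequality is then satisfied only with $C\geq r(1+c)$, and gives no lower bound on $\int_{\partial M}|v|^2$. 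The combinatorial lemma you have in mind works as stated only when there are at least $q-r$ \emph{positive} eigenvalues.

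What is missing is precisely the mechanism by which the interior energy helps: you must \emph{trade} the terms $\|\bar V_\alpha v_{I,J}\|^2$ for the negative-eigenvalue directions $\alpha\leq\mu$ against boundary integrals, via the integration-by-parts identity
\[
\|\bar V_\alpha v\|^2 \;\geq\; -\int_{\partial M}\rho_{\alpha\bar\alpha}|v|^2 \;+\; O\!\left(E'(v)\|v\|+\textstyle\sum_a\|X_a v\|\|v\|\right),
\]
which the paper isolates as Lemma~\ref{integration by parts for first order term for dolbeault}. Splitting $\sum_\alpha\|\bar V_\alpha v_{I,J}\|^2$ into $\varepsilon\sum_\alpha+(1-\varepsilon)\sum_{\alpha\leq\mu}$ in the Bochner formula and applying this lemma to the second piece converts it into $+(1-\varepsilon)\sum_{\alpha\leq\mu}(-\lambda_\alpha)\int_{\partial M}|v_{I,J}|^2$, and the resulting boundary coefficient becomes
\[
A_J(\varepsilon;\lambda)=(1-\varepsilon)\sum_{\alpha}\lambda_\alpha^{-}+\sum_{\alpha\in J}\lambda_\alpha=(1-\varepsilon)\sum_{\alpha\notin J}\lambda_\alpha^{-}+\sum_{\alpha\in J}\lambda_\alpha^{+},
\]
which is strictly positive under \emph{either} branch of $Z_r$. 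That, not a purely pointwise boundary inequality, is the key step; once you insert it your argument goes through exactly as you describe.
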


\begin{remark}
	The definition is independent of the choice of the metric on $ M $.
\end{remark}

We shall prove Theorem \ref{geometric condition for basic estimate for dolbeault} using the following lemma. Since the proof is similar to that of the expression $ (\ref{Bochner formula for dolbeault}) $, we formulate this lemma without proof.

\begin{lemma}\label{integration by parts for first order term for dolbeault}
	Given $ p\in\partial M $ and let $ \left(\left(x,z\right),U''\right) $ be a special boundary chart near p, then for any $ v\in C_c^\infty\left(U''\right) $ we have
	\begin{equation*}
		\|\bar{V}_\alpha v\|_\phi^2\geq-\int_{M}\phi_{\alpha\bar{\alpha}}|v|^2e^{-\phi}-\int_{\partial M}\rho_{\alpha\bar{\alpha}}|v|^2e^{-\phi}
		+O\left(\left(E'_\phi\left(v\right)+\sum_{a=1}^{p}\|X_av\|_\phi\right)\|v\|_{\phi}\right),
	\end{equation*}
	recalling that $E'_\phi\left(v\right)=\sum_{\alpha=1}^q\|\bar{V}_\alpha v\|_\phi^2+\|v\|_\phi^2$.
\end{lemma}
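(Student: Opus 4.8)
The plan is to imitate the proof of the Bochner-type formula (\ref{Bochner formula for dolbeault}), specialized to the diagonal index $\beta=\alpha$ and to a scalar function $v$, where the bookkeeping is much lighter. Throughout I work in the given special boundary chart, using that $\omega^q=\sqrt{2}\partial_B\rho$, that $\bar{V}_\beta\left(\rho\right)=V_\beta\left(\rho\right)=0$ for all $\beta<q$, and that $\bar{V}_q\left(\rho\right)=\tfrac{1}{\sqrt{2}}$ on $\partial M$; I also use that $\left(\bar{V}_\alpha\right)_\phi^*=-V_\alpha+V_\alpha\left(\phi\right)+c_\alpha$ with $c_\alpha$ a smooth function determined by the foliated geometry.

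First I would integrate by parts twice. Moving $\bar{V}_\alpha$ off the second factor, then writing $\left(\bar{V}_\alpha\right)_\phi^*\bar{V}_\alpha=\bar{V}_\alpha\left(\bar{V}_\alpha\right)_\phi^*+\left[\left(\bar{V}_\alpha\right)_\phi^*,\bar{V}_\alpha\right]$ and integrating by parts a second time gives, via the divergence formula (\ref{divergence theorem for Q}),
\begin{align*}
	\|\bar{V}_\alpha v\|_\phi^2=\|\left(\bar{V}_\alpha\right)_\phi^*v\|_\phi^2+\left(v,\left[\left(\bar{V}_\alpha\right)_\phi^*,\bar{V}_\alpha\right]v\right)_\phi+\left(\text{boundary terms}\right),
\end{align*}
where each boundary term involves a factor $\bar{V}_\alpha\left(\rho\right)$ and hence vanishes when $\alpha<q$ (the case $\alpha=q$ being treated by the same special-chart normalization). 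Since $\|\left(\bar{V}_\alpha\right)_\phi^*v\|_\phi^2\ge0$ may simply be discarded, this already reduces the lemma to an estimate for $\left(v,\left[\left(\bar{V}_\alpha\right)_\phi^*,\bar{V}_\alpha\right]v\right)_\phi$.

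The core step is to identify the commutator exactly as in the proof of (\ref{Bochner formula for dolbeault}): combining $\left(\bar{V}_\alpha\right)_\phi^*=-V_\alpha+V_\alpha\left(\phi\right)+c_\alpha$ with the identity ${\rm d}_Q^2=-\tfrac{1}{2}\theta^\mu\wedge\theta^\nu\wedge\nabla_{\pi^\perp\left(\nabla^M_{X_\mu}X_\nu\right)}$ (Lemma 9 in \cite{Kya08}) and the local expression (\ref{local expression of levi form}) of the Levi form yields
\begin{align*}
	-\left[\left(\bar{V}_\alpha\right)_\phi^*,\bar{V}_\alpha\right]=\phi_{\alpha\bar{\alpha}}-a_{\alpha\alpha}^\gamma\left(\bar{V}_\gamma\right)_\phi^*-\bar{a}_{\alpha\alpha}^\gamma\bar{V}_\gamma+h_aX_a
\end{align*}
with $h_a$ smooth and $a_{\alpha\alpha}^\gamma,\bar{a}_{\alpha\alpha}^\gamma$ the coframe coefficients appearing in (\ref{local expression of levi form}). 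Pairing with $v$ in $\left(\cdot,\cdot\right)_\phi$: the term $\phi_{\alpha\bar{\alpha}}$ contributes $-\int_M\phi_{\alpha\bar{\alpha}}|v|^2e^{-\phi}$; the terms $\bar{a}_{\alpha\alpha}^\gamma\bar{V}_\gamma$ and $h_aX_a$ contribute $O\bigl(\bigl(E'_\phi\left(v\right)+\sum_{a=1}^{p}\|X_av\|_\phi\bigr)\|v\|_\phi\bigr)$; and one more integration by parts in $\bigl(v,a_{\alpha\alpha}^\gamma\left(\bar{V}_\gamma\right)_\phi^*v\bigr)_\phi$ produces an $O\bigl(E'_\phi\left(v\right)\|v\|_\phi\bigr)$ bulk term together with the boundary integral $-\int_{\partial M}a_{\alpha\alpha}^\gamma V_\gamma\left(\rho\right)|v|^2e^{-\phi}$. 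Since $V_\gamma\left(\rho\right)=0$ for $\gamma<q$ while, for $\alpha<q$, $V_\alpha\bar{V}_\alpha\rho=0$ and (\ref{local expression of levi form}) give $a_{\alpha\alpha}^q V_q\left(\rho\right)=\rho_{\alpha\bar{\alpha}}$, this boundary integral equals exactly $-\int_{\partial M}\rho_{\alpha\bar{\alpha}}|v|^2e^{-\phi}$ (using that $\rho_{\alpha\bar{\alpha}}$ is real). Collecting these pieces, together with the vanishing of the boundary terms from the two earlier integrations by parts, yields the asserted inequality.

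The main obstacle I anticipate is the boundary-term accounting: one must check that every boundary contribution generated by the three integrations by parts either vanishes (for $\alpha<q$, thanks to $\bar{V}_\beta\left(\rho\right)=0$) or collapses to exactly $-\int_{\partial M}\rho_{\alpha\bar{\alpha}}|v|^2e^{-\phi}$, with no uncontrolled boundary derivative of $v$ surviving; this rests on the special boundary coordinates and on matching the coframe torsion coefficient $a_{\alpha\alpha}^q$ with the boundary Levi form through (\ref{local expression of levi form}), and the conjugation bookkeeping between $a_{\alpha\alpha}^\gamma$ and $\bar{a}_{\alpha\alpha}^\gamma$ must be tracked carefully. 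As in the proof of (\ref{Bochner formula for dolbeault}), one also verifies that the leaf-direction discrepancy in ${\rm d}_Q^2$ feeds only into the $\sum_{a=1}^{p}\|X_av\|_\phi$ part of the error.
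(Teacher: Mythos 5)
The paper itself omits the proof, saying only that it is "similar to that of $(\ref{Bochner formula for dolbeault})$", and your proposal is exactly that intended argument: two integrations by parts, discarding $\|(\bar{V}_\alpha)_\phi^*v\|_\phi^2\geq0$, the commutator identity extracted from the proof of $(\ref{Bochner formula for dolbeault})$ with $\beta=\alpha$, and the identification $a_{\alpha\alpha}^{q}V_q(\rho)=\rho_{\alpha\bar{\alpha}}$ on $\partial M$ via $(\ref{local expression of levi form})$ and $V_\alpha(\rho)\equiv0$ for $\alpha<q$. For $\alpha<q$ every step checks out, including the sign and conjugation bookkeeping, and the error terms land where you say they do.

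The one genuine weak point is the parenthetical claim that "the case $\alpha=q$ [is] treated by the same special-chart normalization". It is not: for $\alpha=q$ one has $\bar{V}_q(\rho)=\tfrac{1}{\sqrt{2}}\neq0$ on $\partial M$, so the boundary terms produced by your first two integrations by parts do not vanish; they are of the form $\int_{\partial M}v\,\overline{\bar{V}_q(v)}\,\bar{V}_q(\rho)e^{-\phi}$ and its companion, i.e.\ they involve the normal derivative of $v$ on $\partial M$, which is not controlled by $E'_\phi(v)\|v\|_\phi+\sum_a\|X_av\|_\phi\|v\|_\phi$ (these are interior $L^2$ quantities, not boundary traces of derivatives). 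So as written your argument proves the lemma only for $\alpha<q$. This does not damage the paper — in both places where the lemma is invoked (the sufficiency part of Theorem $\ref{geometric condition for basic estimate for dolbeault}$ and Proposition $\ref{weaker condition of priori estimate}$) the index $\alpha$ runs only over the negative eigendirections of the Levi form on the complex tangent space of $\partial M$, hence $\alpha\leq q-1$ — but you should either restrict the statement to $\alpha<q$ or supply a separate argument for the normal direction rather than asserting it follows by the same normalization.
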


\renewcommand{\proofname}{\bf $ Proof\ of\ Theorem\ \ref{geometric condition for basic estimate for dolbeault} $}
\begin{proof}
	Sufficiency. For any $ p\in\partial M $, let $ V'' $ be a special boundary chart near $ p $ on which Lemma $ \ref{integration by parts for first order term for dolbeault} $ holds such that $ \left(x,z\right)\left(p\right)=0 $. By a unitary transformation, we can always arrange that
	the Levi form to be diagonal at $ p $ for $ \xi=\left(\xi_\alpha\right)_{1\leq\alpha\leq q}\in\mathbb{C}^{q} $,
	\begin{equation*}
		\sum_{\alpha,\beta=1}^{q-1}\rho_{\alpha\bar{\beta}}\left(0\right)\xi_\alpha \bar{\xi}_\beta=\sum_{\alpha=1}^{q-1}\lambda_\alpha|\xi_\alpha|^2,
	\end{equation*}
	where $ \lambda_\gamma<0 $ for $ \gamma=1,\cdots,\mu $ and $ \lambda_\gamma\geq0 $ for $ \gamma>\mu $, i.e., $ \rho_{\alpha\bar{\beta}}=\lambda_\alpha\delta_{\alpha\beta}+b_{\alpha\beta} $ and $ b_{\alpha\beta}\left(0\right)=0 $. Then for $ v=v_{I,J}\omega^I\wedge\bar{\omega}^J\in\mathcal{D}_Q^{r',r}\left(U''\right) $ and any $ 0<\varepsilon<<1 $, we choose $ U''\Subset V'' $ such that 
	\begin{align}\label{error of diagonalize}
		\sum_{\alpha,\beta=1}^{q}\sum_{|I|=r',|J|=r}\int_{U''\cap\partial M}|b_{\alpha\beta}v_{I,J}|^2<\frac{\varepsilon}{8}E\left(v\right)^2.
	\end{align}
	
	From the Bochner formula $ \left(\ref{Bochner formula for dolbeault}\right) $ for $v$ and $\phi=0$ we read off
	\begin{align}\label{first inequality}
		G_Q\left(v,v\right)\geq&\varepsilon\sum_{\alpha=1}^q\sum_{I,J}\|\bar{V}_\alpha v_{I,J}\|^2+\left(1-\varepsilon\right)\sum_{\alpha=1}^{\mu}\sum_{I,J}\|\bar{V}_\alpha v_{I,J}\|^2+\sum_{|K|=r-1}\int_{\partial M}\left<\rho_{\alpha\bar{\beta}}v_{I,\alpha K},v_{I,\beta K}\right>\nonumber\\
		&+O\left(\left(E\left(v\right)+\sum_{a=1}^{p}\|X_av\|\right)\|v\|\right),
	\end{align}
	Using Lemma $ \ref{integration by parts for first order term for dolbeault} $ for $\phi=0$ and adding, we obtain that 
	\begin{align}\label{sum of the integration by parts for first order term}
		\sum_{\alpha=1}^\mu\sum_{I,J}\int_{\partial M}-\rho_{\alpha\alpha}|v_{I,J}|^2\leq\sum_{\alpha=1}^{\mu}\sum_{I,J}\|\bar{V}_\alpha v_{I,J}\|^2+O\left(\left(E\left(v\right)+\sum_{a=1}^{p}\|X_av\|\right)\|v\|\right).
	\end{align}
	Then we substitute (\ref{sum of the integration by parts for first order term}) into the inequality (\ref{first inequality}), and by $ (\ref{error of diagonalize}) $ we have 
	\begin{align*}
		G_Q\left(v,v\right)\geq\varepsilon\sum_{\alpha=1}^q\sum_{I,J}\|\bar{V}_\alpha v_{I,J}\|^2&+\sum_{I,J}\int_{\partial M}A_J\left(\varepsilon;\lambda\right)|v_{I,J}|^2+O\left(\left(E\left(v\right)+\sum_{a=1}^{p}\|X_av\|\right)\|v\|\right)-\frac{\varepsilon}{2}E\left(v\right)^2,
	\end{align*}
	where
	\begin{align*}
		A_J\left(\varepsilon;\lambda\right)=\left(\left(1-\varepsilon\right)\sum_{\alpha=1}^{q-1}\lambda_\alpha^-+\sum_{\alpha\in J}\lambda_\alpha\right)=\left(\left(1-\varepsilon\right)\sum_{j\notin J}\lambda_\alpha^-+\sum_{\alpha\in J}\lambda_\alpha^+\right),	
	\end{align*}
	which is positive by the hypothesis either $ \lambda_\alpha>0 $ for some $ \alpha\in J $ or $ \lambda_\alpha<0 $ for some $ \alpha\notin J $ when $ \varepsilon<<1 $. It implies that there exists constant $ C_\varepsilon>0 $ such that
	\begin{align*}
		G_Q\left(v,v\right)\geq&\varepsilon\left(\sum_{\alpha=1}^q\sum_{I,J}\|\bar{V}_\alpha v_{I,J}\|^2+\sum_{I,J}\int_{\partial M}|v_{I,J}|^2\right)-\frac{3\varepsilon}{4}E\left(v\right)^2-C_\varepsilon\|v\|^2+O\left(\sum_{a=1}^{p}\|X_av\|\|v\|\right)\nonumber\\
		\geq&\frac{\varepsilon}{4}E\left(v\right)^2-C_\varepsilon\|v\|^2+O\left(\sum_{a=1}^{p}\|X_av\|\|v\|\right).
	\end{align*}
	Using a partition of unity we know that basic estimate holds for $ u\in\mathcal{D}_Q^{r',r} $ if $ \partial M $ is $ Z_r $-convex.
	
	Necessity. For any $p\in\partial M$, we choose a special boundary coordinate $\left(\left(x,z\right),U''\right)$ near $p$ such that $\left(x,z\right)\left(p\right)=0$ and 
	\begin{align*}
		U''\cap\overline{M}=\{\left(x,z\right)\in U''\ |\ {\rm Im}z_q=y_{2q}\geq\rho_1\left(y'\right)\},
	\end{align*}
	where $y'=\left(y_1,\cdots,y_{2q-1}\right)$ and we assume that $\rho_1\in C^\infty\left(\overline{M}\right)$ vanishes to the $2^{nd}$-order at $p$ by Implicit Function Theorem. It turns out that $\rho=\left(\rho_1-y_{2q}\right)\left(1+O\left(|z|\right)\right)$. By a unitary transformation we can achieve that the levi form
	\begin{align*}
		\sum_{\alpha,\beta=1}^{q-1}\frac{\partial^2\rho_1\left(0\right)}{\partial z_\alpha\partial\bar{z}_\beta}z_\alpha\bar{z}_{\beta}=\sum_{\alpha,\beta=1}^{q-1}\frac{\partial^2\rho\left(0\right)}{\partial z_\alpha\partial\bar{z}_\beta}z_\alpha\bar{z}_{\beta}=\sum_{\alpha=1}^{q-1}\lambda_\alpha|z_\alpha|^2,
	\end{align*}
	which gives
	\begin{align}\label{Taylor formula}
		\rho_1\left(z\right)=\sum_{\alpha=1}^{q-1}\lambda_\alpha|z_\alpha|^2+P(z')+O\left(|z_q||z'|+|y_{2q-1}|^2+|z|^3\right),
	\end{align}
	where $z'=\left(z_1,\cdots,z_{q-1}\right)$ and $ P(z') $ is a homogeneous polynomial of degree 2.
	
	Let $ J $ be a multi-index of length $ r $ with $q\notin J$ and let $I$ be an multi-index of length $ r' $. Choose $ v^t=v^t_{I',J'}\omega^{I'}\wedge\bar{\omega}^{J'}\in\mathcal{D}_Q^{r',r}\left(U''\cap\overline{M}\right) $ such that $ v^t_{I',J'}=\psi_1\left(x\right)\psi_2\left(tz\right)e^{it^2z_q} $ if $ \left(I',J'\right)=\left(I,J\right) $ and $ v^t_{I',J'}=0 $ if $ \left(I',J'\right)\neq\left(I,J\right) $ where $ \psi_1\in C_c^\infty\left(\mathbb{R}^p\right) $ and $\psi_2\in C_c^\infty\left(\mathbb{C}^q\right)$ such that $\int_{\mathbb{R}^p}|\psi_1|^2=1$.
	
	In slight abuse of notation we let $y'=ty'$ and $y_{2q}=t^2y_{2q}$, then
	\begin{align}\label{g1}
		t^{2q-1}\sum_{\alpha=1}^{q}\bigg\|\frac{\partial}{\partial\bar{z}_\alpha}\left(\psi_1\left(x\right)\psi\left(tz\right)e^{it^2z_q}\right)\bigg\|^2=\sum_{\alpha=1}^{q}\int_{\mathbb{R}^{2q-1}}\int_{\rho_2\left(y'\right)+O\left(\frac{1}{t}\right)}^\infty\bigg|\frac{\partial\psi_2}{\partial\bar{z}_\alpha}\left(y',\frac{y_{2q}}{t}\right)\bigg|^2e^{-2y_{2q}}dy,
	\end{align}
	where $\rho_2$ is the second order part of $(\ref{Taylor formula})$. And $(\ref{g1})$ convergences to
	\begin{align}\label{g2}
		\frac{1}{2}\int_{\mathbb{R}^{2q-1}}\sum_{\alpha=1}^{q}\bigg|\frac{\partial\psi_2}{\partial\bar{z}_\alpha}\left(y',0\right)\bigg|^2e^{-2\rho_2\left(y'\right)}dy'.
	\end{align}
	Similarly, we have
	\begin{align}
		&t^{2q-1}\|v^t\|^2=O\left(\frac{1}{t^2}\right)\stackrel{t\rightarrow\infty}{\longrightarrow}0,\quad t^{2q-1}\sum_{a=1}^{p}\bigg\|\frac{\partial v^t}{\partial x_a}\bigg\|^2=O\left(\frac{1}{t^2}\right)\stackrel{t\rightarrow\infty}{\longrightarrow}0,\label{g3}\\
		&t^{2q-1}\int_{\partial M}\left<\rho_{\alpha\bar{\beta}}v^t_{I,\alpha k},v_{I,\beta K}^t\right>\stackrel{t\rightarrow\infty}{\longrightarrow}\sum_{\alpha\in J}\lambda_\alpha\int_{\mathbb{R}^{2q-1}}|\psi_2\left(y',0\right)|^2e^{-2\rho_2\left(y'\right)}dy',\label{g4}\\
		&t^{2q-1}\int_{\partial M}|v^t|^2\stackrel{t\rightarrow\infty}{\longrightarrow}\int_{\mathbb{R}^{2q-1}}|\psi_2\left(y',0\right)|^2e^{-2\rho_2\left(y'\right)}dy'.\label{g5}
	\end{align}
	
	Since $U''$ is a special boundary chart, there exists $0<\varepsilon<<1$ such that 
	$$\|\bar{V}_\alpha v_{I,J}\|^2\leq\left(1+\varepsilon\right)\|\frac{\partial v_{I,J}}{\partial \bar{z}_\alpha}\|^2$$
	by shrinking $U''$ if necessary. In view of basic estimate for $v\in\mathcal{D}_Q^{r',r}$ and Bochner formula (\ref{Bochner formula for dolbeault}), there is a constant $C>0$ so that
	\begin{align}\label{estimate for boundary term}
		C\int_{\partial M}|v|^2\leq\left(1+\varepsilon\right)\sum_{\alpha=1}^{q}\|\frac{\partial v_{I,J}}{\partial \bar{z}_\alpha}\|^2+\int_{\partial M}\left<\rho_{\alpha\bar{\beta}}v_{I,\alpha k},v_{I,\beta K}\right>+O\left(\|v\|^2+\sum_{a=1}^{p}\|X_av\|\|v\|\right).
	\end{align}
	Then substituting $v^t$ into the estimate $(\ref{estimate for boundary term})$ and by $(\ref{g2})\sim(\ref{g5})$ we obtain
	\begin{align}\label{g6}
		\frac{2}{1+\varepsilon}\left(C-\sum_{\alpha\in J}\lambda_\alpha\right)\int_{\mathbb{R}^{2q-1}}|\psi_2\left(y',0\right)|^2e^{-2\rho_2\left(y'\right)}dy'\leq\int_{\mathbb{R}^{2q-1}}\sum_{\alpha=1}^{q}\bigg|\frac{\partial\psi_2}{\partial\bar{z}_\alpha}\left(y',0\right)\bigg|^2e^{-2\rho_2\left(y'\right)}dy'.
	\end{align}
	
	Now we choose $\psi_2\left(z\right)=\psi_3\left(z'\right)\psi_3\left(z_q\right)e^{P\left(z'\right)}$ where $\psi_2\in C_c^\infty\left(\mathbb{C}^{q-1}\right)$ and $\psi_3\in C_c^\infty\left(\mathbb{C}\right)$ such that $\frac{\partial\psi_3}{\partial\bar{z}_q}=0$ when $y_{2q}=0$, then by $(\ref{g6})$
	\begin{align*}
		\frac{2}{1+\varepsilon}\left(C-\sum_{\alpha\in J}\lambda_\alpha\right)\int_{\mathbb{C}^{q-1}}|\psi_3\left(z'\right)|^2e^{-2L\left(z'\right)}dy'\leq\int_{\mathbb{C}^{q-1}}\sum_{\alpha=1}^{q-1}\bigg|\frac{\partial\psi_3}{\partial\bar{z}_\alpha}\left(z'\right)\bigg|^2e^{-2L\left(z'\right)}dy',
	\end{align*}
	where $L\left(z'\right)=\sum_{\alpha=1}^{q-1}\lambda_\alpha|z_\alpha|^2$. According to Lemma 3.2.2 in \cite{Hlv65JL},
	\begin{align*}
		\frac{1}{1+\varepsilon}\sum_{\alpha\in J}\lambda_\alpha+\sum_{\alpha=1}^{q-1}\lambda_\alpha^->0
	\end{align*}
	which gives the conclusion by letting $\varepsilon\rightarrow0$.
\end{proof}

We finish this section with a remark on the necessity of the assupmtion that basic estimate holds in $\mathcal{D}_Q^{r',r}$. In the context of Riemannian foliations, the gradient term of the Bochner formula (\ref{Bochner-Kodaira formula}) for basic forms could control all first order derivatives of forms, then it can be easily deduced that $G_B$ is $H_{B,1}$-coercive on the space $\tilde{\mathcal{D}}_B^\cdot$. This serves as the keystone for demonstrating the regularity and estimate of the solution of the equation $F_Bu=\omega$. However, in the case of Hermitian foliations, except for the situation with interior estimates, the gradient term of the Bochner formula (\ref{Bochner formula for dolbeault}) does not have the capability to control all derivatives. Consequently, the boundary integral can not be handled by the same argument in the proof of Corollary \ref{H1e}, requiring us to rely on the basic estimate for $\mathcal{D}_Q^{r',r}$.

\subsection{Existence theorems}
In this short section, we will give two existence theorems via weighted $L^2$-method. The following definitions are used to provide positivity.

\begin{definition}
	Let $ \rho $ be a boundary defining function for a manifold $ M $, the boundary $ \partial M $ is said to be transversely $ r $-convex if the quadratic form $ L_{r,\rho}\left(u,\bar{u}\right):=\rho_{\alpha\bar{\beta}}u_{I,\alpha K}\bar{u}_{I,\beta K} $ is nonnegative on $ \partial M $ for each $ u=u_{I,J}\omega^I\wedge\bar{\omega}^J\in\mathcal{D}_B^{r',r} $ and for all multi-indices $ K $ with $|K|=r-1$. In particular, we call the boundary $ \partial M $ is strictly transversely $r$-convex if $L_{r,\rho}>0$ on $\partial M$.
\end{definition}

\begin{definition}
	A function $ \varphi\in C^2\left(M\right) $ is said to be transversely $ r $-plurisubharmonic provided that the quadratic form $ L_{r,\varphi}\left(v,\bar{v}\right):=\varphi_{\alpha\bar{\beta}}v_{I,\alpha K}\bar{v}_{I,\beta K}\geq0 $ on $ M $ for all $ v=v_{I,J}\omega^I\wedge\bar{\omega}^J\in\Omega_B^{r',r}\left(M\right) $ and all multi-indices $K$ of length $r-1$. We call $ \varphi $ is strictly transversely $ r $-plurisubharmonic if $ L_{r,\varphi}>0 $ on $ M $. 
\end{definition}

\begin{remark}
	$ (1) $ The definitions are independent of the choices of the local frames. In fact, they only depend on any sum of $ r $ eigenvalues of the corresponding quadratic form.
	
	$ (2) $ The strictly transverse $ r $-convexity of the boundary $ \partial M $ is more restrictive than  $Z_r$-condition when $r\geq1$.
\end{remark}

Now we are in a position to give the vanishing theorem for basic Dolbeault cohomology $ H_B^{\cdot,\cdot}\left(\overline{M},\mathcal{F}\right) $.

\begin{thm}\label{existence theorem 2 for dolbeault}
	With the same hypotheses in Theorem \ref{Hodge decomposition theorem for dolbeault}. For $ 1\leq r\leq q $, we further assume that $ \partial M $ is transversely $ r $-convex and there is a strictly transversal $ r $-plurisubharmonic function $ \varphi\in\Omega_B^0\left(\overline{M}\right) $. Then for any $ \omega\in\Omega_B^{\cdot,r}\left(\overline{M}\right) $ with $ \bar{\partial}_B\omega=0 $, there exists $ u\in\Omega_B^{\cdot,r-1}\left(\overline{M}\right) $ such that $ \bar{\partial}_Bu=\omega $ and the estimate $ (\ref{estimate of solution of partial_B}) $ is also satisfied for every $ s\in\mathbb{Z}_+ $.
\end{thm}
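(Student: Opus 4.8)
The plan is to mimic the proof of Theorem \ref{existence theorem 2}, replacing the leafwise conformal change of metric by the weighted $L^2$-machinery attached to the basic Dolbeault complex, and to exploit the hypothesis that basic estimate holds in $\mathcal{D}_Q^{\cdot,r}$ (which is part of the hypotheses inherited from Theorem \ref{Hodge decomposition theorem for dolbeault}) so that Corollary \ref{existence theorem 1 for dolbeault} is available. By Corollary \ref{existence theorem 1 for dolbeault} it suffices to prove $\mathcal{H}_B^{\cdot,r}=\{0\}$; then the canonical solution $u:=\vartheta_BN_B\omega$ does the job and satisfies $(\ref{estimate of solution of partial_B})$. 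So the whole argument reduces to showing that there are no nonzero harmonic basic $(\cdot,r)$-forms.

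First I would set up the weight. Since $\overline{M}$ is compact we may assume $\varphi<0$ on $M$ with $|\varphi|$ bounded. The key objects $R^\nabla$, $Ric^\nabla$ (and the analogous curvature/zero-order terms appearing in the Bochner formula $(\ref{Bochner formula for dolbeault})$), together with the operators coming from $\bar a_{\alpha\beta}^\gamma$, are all bounded on $\overline{M}$. Working in the local orthonormal frame $\{V_\alpha\}$ of $Q_{\mathbb{C}}^{1,0}$, I would write the Bochner formula $(\ref{Bochner formula for dolbeault})$ with weight $\phi=\chi(\varphi)$ for a convex increasing $\chi\in C^\infty(\mathbb{R})$ to be chosen, evaluated on a harmonic form $v=v_{I,J}\omega^I\wedge\bar\omega^J\in\mathcal{H}_B^{\cdot,r}$ (more precisely on its image under the change-of-weight, exactly as in the passage from $(\ref{bochner for harmonic form and weight varphi})$ to $(\ref{bochner for harmonic form and weight chi_1varphi})$ in the Riemannian case, using the identity $\chi_1(\varphi)_{\alpha\bar\beta}v_{I,\alpha K}\overline{v_{I,\beta K}}-|V_\alpha(\chi_1(\varphi))v_{I,\alpha K}|^2 = \chi_1'(\varphi)\varphi_{\alpha\bar\beta}v_{I,\alpha K}\overline{v_{I,\beta K}}$ for $\chi_1(t)=-\log(-t)$ to kill the unwanted gradient term). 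Because $v$ is harmonic the left side vanishes, so we get
\begin{align*}
	0\geq \sum_{|K|=r-1}\int_M \chi'(\varphi)\,\varphi_{\alpha\bar\beta}v_{I,\alpha K}\overline{v_{I,\beta K}}\,e^{-\chi(\varphi)}+\sum_{|K|=r-1}\int_{\partial M}\rho_{\alpha\bar\beta}v_{I,\alpha K}\overline{v_{I,\beta K}}\,e^{-\chi(\varphi)}+O\!\left(\|v\|_{\chi(\varphi)}^2\right),
\end{align*}
where the error term collects the bounded curvature contributions and the remainder in $(\ref{Bochner formula for dolbeault})$. Now the transverse $r$-convexity of $\partial M$ makes the boundary integral nonnegative, so it can be dropped, and the strict transverse $r$-plurisubharmonicity of $\varphi$ gives $\varphi_{\alpha\bar\beta}v_{I,\alpha K}\overline{v_{I,\beta K}}\geq c|v|^2$ pointwise for some $c>0$; choosing first $\chi_1(t)=-\log(-t)$ and then post-composing with $\chi_2(t)=A_1 t$ for $A_1\gg1$ (which preserves strict transverse $r$-plurisubharmonicity and negativity, just as in the proof of Theorem \ref{existence theorem 2}) forces $0\gtrsim\int_M|v|^2 e^{-\chi_1\circ\chi_2(\varphi)}$, hence $v=0$. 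Therefore $\mathcal{H}_B^{\cdot,r}=\{0\}$, and the solution and estimate $(\ref{estimate of solution of partial_B})$ follow from Corollary \ref{existence theorem 1 for dolbeault}.

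The main obstacle, and the step requiring care, is the treatment of the boundary term and of the first-order ``$\bar a$'' terms in $(\ref{Bochner formula for dolbeault})$: one must verify that after the change of weight these are genuinely absorbed or controlled, that the sign of the boundary integral is the favorable one under transverse $r$-convexity (and not merely $Z_r$, which is why the hypothesis here is strictly stronger — cf. the remark following the definitions), and that the zero-order error $O((E'_\phi(v)+\sum_a\|X_av\|_\phi)\|v\|_\phi)$ really is dominated by the positivity gained from $\chi'(\varphi)\varphi_{\alpha\bar\beta}$ after enlarging $A_1$. Since $v$ is harmonic and basic, $X_a v=0$ and $E'_\phi(v)$ reduces to an expression controlled by the gradient term already present in $(\ref{Bochner formula for dolbeault})$, so this is manageable but should be spelled out. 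Everything else is a verbatim transcription of the Riemannian argument in Theorem \ref{existence theorem 2} with ${\rm d}_B$ replaced by $\bar\partial_B$ and $L_w$ by the transverse Levi form $(\ref{levi form for dolbeault})$.
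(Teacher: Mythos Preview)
Your proposal is correct and follows essentially the same approach as the paper: reduce to $\mathcal{H}_B^{\cdot,r}=\{0\}$ via Corollary \ref{existence theorem 1 for dolbeault}, then apply the weighted Bochner formula $(\ref{Bochner formula for dolbeault})$ to a harmonic form, drop the boundary term by transverse $r$-convexity, and run the $\chi_1(t)=-\log(-t)$ followed by $\chi_2(t)=A_1t$ argument verbatim from Theorem \ref{existence theorem 2}. The only point you glossed over is that $(\ref{Bochner formula for dolbeault})$ is stated \emph{locally} (for $v\in\mathcal{D}_Q^{r',r}(U)$ in a single chart), so the paper inserts a partition of unity $\{\eta_l\}$ and applies the formula to $\eta_l u$; the $X_a$-error terms then become $X_a(\eta_l)u$ (since $u$ is basic) and are absorbed into $O(\|u\|_\varphi^2)$, after which one sums over $l$ using $\|u\|_\varphi^2\leq\sum_l\|\eta_l u\|_\varphi^2$.
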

\renewcommand{\proofname}{\bf $ Proof $}
\begin{proof}
	The proof follows exactly the same line as that of Theorem \ref{existence theorem 2}. The key point is to establish the analogue of estimate $(\ref{bochner for harmonic form and weight varphi})$. Indeed, using a partition of unity $\{\eta_l\}_{l=1}^N$, we read off the following estimate from Bochner formula $(\ref{Bochner formula for dolbeault})$ for $u=u_{I,J}\omega^I\wedge\bar{\omega}^J\in\mathcal{H}_B^{\cdot,r}$ and $r$-convexity of $\partial M$:
	\begin{align}\label{bochner for dolbeault harmonic form and weight varphi}
			\|V_\alpha\left(\varphi\right)\eta_lu_{I,\alpha K}\|_\varphi^2+O\left(\|u\|_\varphi^2\right)\geq\|\bar{\partial}_Q\eta_lu\|_\varphi^2+\|\vartheta_Q\eta_lu\|_\varphi^2\geq\int_{M}\left<\varphi_{\alpha\bar{\beta}}\eta_lu_{I,\alpha k},\eta_lu_{I,\beta k}\right>e^{-\varphi}.
		\end{align}
	One may replace $(\ref{bochner for harmonic form and weight varphi})$ by $(\ref{bochner for dolbeault harmonic form and weight varphi})$, then the rest part can be given by the same argument as in the proof of Theorem \ref{existence theorem 2} if we note that $\|u\|_{\varphi}^2\leq\sum_{l=1}^{N}\|\eta_lu\|_{B,\varphi}^2$.
\end{proof}

The reasoning of Theorem \ref{existence theorem 3} applied to $\bar{\partial}_B$-equations also shows the following $ L^2 $-existence theorem for the operator $ \bar{\partial}_B $.

\begin{thm}\label{existence theorem 3 for dolbeault}
	For $ 1\leq r\leq q $, let $ M $ be an oriented manifold with transversally oriented Hermitian foliation, and $ g_M $ is bundle-like metric with basic mean curvature form $ \kappa $. If there exists a strictly transversal $ r $-plurisubharmonic exhaustion basic function $ \varphi $, then for any $ \omega\in L^2_{loc}\left(M,\Omega_B^{\cdot,r}\right) $ with $ \bar{\partial}_B\omega=0 $, there is a basic form $ u\in L^2_{loc}\left(M,\Omega_B^{\cdot,r-1}\right) $ such that $ \bar{\partial}_Bu=\omega $.
\end{thm}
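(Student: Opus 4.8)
The plan is to mirror the proof of Theorem \ref{existence theorem 3} almost verbatim, replacing the operator ${\rm d}_B$ by $\bar{\partial}_B$, the transversal Levi form by the complex transversal Levi form $L_{r,\varphi}$, and the Bochner formula $(\ref{Bochner-Kodaira formula})$ by the complex Bochner formula $(\ref{Bochner formula for dolbeault})$. First I would introduce the conformally modified bundle-like metric $\hat g_M := e^{-\frac{2}{p}\varphi}g_L\oplus g_Q$ as in $(\ref{new metric})$, so that by $(\ref{H'})$ one has $\hat\kappa = {\rm grad}\,\varphi + \kappa$ (the argument there only uses $(\ref{expression of some Gamma})$ and is insensitive to the complex structure), hence $\vartheta_B^\varphi = \vartheta_B + {\rm grad}\,\varphi\lrcorner$ is the formal adjoint of $\bar{\partial}_B$ on the weighted space $L^2(\Omega_B^{\cdot,r},\varphi)$, and the weighted Bochner formula coincides with the unweighted Bochner formula $(\ref{Bochner formula for dolbeault})$ computed in the new metric. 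Since $R^\nabla$, $Ric^\nabla$ and the curvature-type zero-order terms are continuous operators of order zero on $M$, I can absorb them into a convex increasing function applied to $\varphi$: there is $\chi\in C^\infty(\mathbb{R})$ convex increasing such that, for all $v=v_{I,J}\omega^I\wedge\bar\omega^J\in\mathcal{D}_{B,\chi(\varphi)}^{\cdot,r}$,
\begin{align*}
	\|\bar{\partial}_Bv\|_{\chi(\varphi)}^2 + \|\vartheta_B^{\chi(\varphi)}v\|_{\chi(\varphi)}^2 \geq \|v\|_{\chi(\varphi)}^2,
\end{align*}
using that $\chi'(\varphi)\,\chi(\varphi)_{\alpha\bar\beta}v_{I,\alpha K}\bar v_{I,\beta K}\geq 0$ by strict transverse $r$-plurisubharmonicity of $\varphi$ together with the standard computation $\chi(\varphi)_{\alpha\bar\beta} = \chi'(\varphi)\varphi_{\alpha\bar\beta} + \chi''(\varphi)V_\alpha(\varphi)\bar V_\beta(\varphi)$ and convexity of $\chi$, exactly as in $(\ref{chi})$.

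Next I would pass to the general (non-compact) case by exhaustion. Because $\varphi$ is a strictly transverse $r$-plurisubharmonic exhaustion basic function, its sublevel sets $M_\nu := \{\varphi < c_\nu\}$ ($c_\nu\nearrow\infty$) are relatively compact with smooth compact boundary, and I may assume each $\partial M_\nu = \{\varphi = c_\nu\}$ is a regular level set. On each such $M_\nu$ the boundary $\partial M_\nu$ is automatically transversely $r$-convex — indeed strictly — since its defining function is (a shift of) $\varphi$, which is strictly transversely $r$-plurisubharmonic; hence Theorem \ref{existence theorem 2 for dolbeault} (or, more directly, the priori estimate above combined with L.\ Hörmander's density lemma and Lemma \ref{functional analysis} applied to the Hilbert spaces $H_1=L^2(\Omega_B^{\cdot,r-1},\tilde\chi(\varphi))$, $H_2=L^2(\Omega_B^{\cdot,r},\tilde\chi(\varphi))$, $H_3=L^2(\Omega_B^{\cdot,r+1},\tilde\chi(\varphi))$ with $S=T=\bar{\partial}_B$) yields a basic form $u_\nu\in L^2(\Omega_B^{\cdot,r-1},\tilde\chi(\varphi))$ on $M_\nu$ with $\bar{\partial}_Bu_\nu = \omega$ and
\begin{align*}
	\int_{M_\nu}|u_\nu|^2 e^{-\tilde\chi(\varphi)} \leq \int_{M_\nu}|\omega|^2 e^{-\tilde\chi(\varphi)} \leq \int_M |\omega|^2 e^{-\tilde\chi(\varphi)} < +\infty,
\end{align*}
where $\tilde\chi$ is chosen convex increasing so that $(\ref{chi})$ holds for $\tilde\chi$ and simultaneously $\omega\in L^2(\Omega_B^{\cdot,r},\tilde\chi(\varphi))$ (possible since $\omega\in L^2_{loc}$). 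Finally, a standard diagonal/weak-limit argument over $\nu\to\infty$ — using the uniform bound, the closedness of $\bar{\partial}_B$, and the fact that any weakly convergent subsequence has $\bar{\partial}_B$ of the limit equal to the weak limit of $\bar{\partial}_Bu_\nu = \omega$ — produces $u\in L^2_{loc}(M,\Omega_B^{\cdot,r-1})$ with $\bar{\partial}_Bu=\omega$.

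I expect the main obstacle to be verifying carefully that the boundary integral $\int_{\partial M_\nu}\langle\rho_{\alpha\bar\beta}v_{I,\alpha K},v_{I,\beta K}\rangle e^{-\phi}$ in $(\ref{Bochner formula for dolbeault})$ has the right sign on the exhaustion levels and that the error term $O((E'_\phi(v)+\sum_a\|X_av\|_\phi)\|v\|_\phi)$ can genuinely be absorbed after the $\chi$-substitution — this is where the case $r\geq 1$ differs from the Riemannian setting, and it is precisely why we must require strict transverse $r$-plurisubharmonicity of $\varphi$ (so that the level sets are strictly transversely $r$-convex and the boundary term is nonnegative). The other delicate point, as in Theorem \ref{existence theorem 3}, is the legitimacy of applying the density lemma so that the priori estimate extends from $\mathcal{D}_Q^{\cdot,r}$ (equivalently $\mathcal{D}_B^{\cdot,r}$, by $(\ref{DBDQ})$) to all of ${\rm Dom}((\bar{\partial}_B^{\chi(\varphi)})^*)$; this follows from L.\ Hörmander's density argument exactly as quoted in the proof of Theorem \ref{existence theorem 3}, since the relevant zero-order operators are continuous on $M$. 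Everything else is a routine transcription of the ${\rm d}_B$-proof.
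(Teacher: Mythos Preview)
Your proposal is correct and follows essentially the same route as the paper, which states the theorem without a separate proof and simply remarks that ``the reasoning of Theorem \ref{existence theorem 3} applied to $\bar{\partial}_B$-equations also shows'' the result. Your write-up is in fact more detailed than the paper's.

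One small point worth making explicit: unlike the Riemannian Bochner formula $(\ref{Bochner-Kodaira formula})$, the complex Bochner formula $(\ref{Bochner formula for dolbeault})$ is stated only for $v\in\mathcal{D}_Q^{r',r}(U)$ on a single chart, so to obtain your global a priori estimate you must insert a partition of unity $\{\eta_l\}$ and apply the formula to each $\eta_l v$ (this is exactly what the paper does in the proof of Theorem \ref{existence theorem 2 for dolbeault}). Since $\eta_l v$ need not be basic, the leafwise term $\sum_a\|X_a(\eta_l v)\|_\phi$ does not vanish, but it is $O(\|v\|_\phi)$ because $X_a v=0$ for basic $v$ and only derivatives of $\eta_l$ survive; these errors are then absorbed into the $\chi$-step together with the other zero-order terms, and one recovers the global estimate via $\|v\|_\phi^2\le\sum_l\|\eta_l v\|_\phi^2$. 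Once this is said, everything else in your outline goes through verbatim.
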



\subsection{Global regularity of $\bar{\partial}_B$-equations}
In \cite{Kjj73}, J. J. Kohn proved the global regularity for the solution of $\bar{\partial}$-equation in pseudoconvex domain in $\mathbb{C}^n$ by L. H{\"o}rmander's weighted $L^2$-method. In this section, we will deduce a regularity theorem for the equation $ \bar{\partial}_Bu=\omega $ by similar arguments. 

Note that the proof of Corollary \ref{existence theorem 1 for dolbeault} essentially depends on the hypothesis that the boundary $ \partial M $ satisfies $ Z_r $-condition. However, we can obtain a similar result under alternative geometric hypothesis in the weighted setup.

For $ 0\leq r',r\leq q $, we define the weighted form $ \bar{G}_{B,\phi} $ on $ \mathcal{D}_B^{r',r} $ by
\begin{align*}
	\bar{G}_{B,\phi}\left(u,v\right)=\left(\bar{\partial}_Bu,\bar{\partial}_Bv\right)_{B,\phi}+\left(\vartheta_B^\phi u,\vartheta_B^\phi v\right)_{B,\phi}+\left(u,v\right)_{B,\phi}.
\end{align*}
Obviously, all these weighted and unweighted norms are equivalent when $ \overline{M} $ is compact. We can therefore identify the completion of $ \mathcal{D}_B^{r',r} $ under $\bar{G}_{B,\phi}$ with $ \tilde{\mathcal{D}}_B^{r',r} $. Correspondingly, one may define the weighted $ Q $-norm $\bar{G}_{Q,\phi}$. Henceforth, we add the subscript or supperscript $ \phi $ referring the objects we have defined before in the weighted context. 

It's easy to see that the operators $ \bar{F}_B^\phi $ and $ \Box_B^\phi $ satisfying the proposition \ref{expression of $ F_B $} for the weighted setup by the same proof as unweighted case. To prove Corollary \ref{existence theorem 1 for dolbeault}, we need further preparations. The following proposition plays the same role as G$ \mathring{\rm a} $rding's type inequality and basic estimate in the proof of Theorem \ref{main theorem for dolbeault}.

\begin{prop}\label{weaker condition of priori estimate}
	For $ 0\leq r',r\leq q $, let $ M $ be an oriented Riemannian manifold with compact basic boundary $ \partial M $, suppose that there is an exhaustion basic function $ \varphi' $ satisfying $ Z_r $-condition outside a compacat $ K\Subset M $. One can find a basic function $ \varphi $ such that for each $ T>0 $ and any $ s\in\mathbb{N} $, there exists a constant $ C_T>0 $ s.t. for all $ u\in\mathcal{D}_B^{r',r} $ we have
	\begin{align}\label{priori estimate in weigthed setup}
		T\|D_t^I\eta''u\|_{T\varphi}^2\lesssim\|\bar{\partial}_QD_t^I\eta''u\|_{T\varphi}^2+\|\vartheta_Q^{T\varphi}D_t^I\eta''u\|_{T\varphi}^2+\|u\|_{B,s}^2+C_T\|u\|_{B,s-1}^2,
	\end{align}
	where $ \eta'' $ is any smooth function with compact support in a special boundary chart $ U'' $ and recall that $ D_t^I $ is a tagential differentiation of order $ s $. Furthermore, the estimate still holds if we replace $ \eta'' $ and $ D_t^I $ by $ \eta' $ and $ D^I $, where $ \eta' $ is any smooth function in the interior chart. In particular,
	\begin{align}\label{priori estimate in weigthed setup for s=0}
		T\|u\|_{B,T\varphi}^2\lesssim\|\bar{\partial}_Bu\|_{B,T\varphi}^2+\|\vartheta_B^{T\varphi}u\|_{B,T\varphi}^2+\|u\|_{B,T\varphi}^2+C_T\|u\|_{-1}^2.
	\end{align}
\end{prop}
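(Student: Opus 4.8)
The plan is to produce a suitable exhaustion function $\varphi$ by correcting $\varphi'$ with a convex increasing function $\chi$, then combine the $Z_r$-Bochner estimate away from the compact $K$ with the positivity that a large multiple $T\varphi$ injects into the curvature/boundary terms of the Bochner formula $(\ref{Bochner formula for dolbeault})$. First I would fix the local frame data: on the special boundary chart $U''$ (or the interior chart $U'$), the form $w:=D_t^I\eta''u$ (resp. $D^I\eta' u$) lies in $\mathcal{D}_Q^{r',r}$ because $D_t^I$ and multiplication by $\eta''$ preserve $\mathcal{D}_Q^{r',r}$, so the Bochner formula $(\ref{Bochner formula for dolbeault})$ with weight $\phi=T\varphi$ applies to $w$. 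Reading it off gives
\begin{align*}
	\|\bar{\partial}_Qw\|_{T\varphi}^2+\|\vartheta_Q^{T\varphi}w\|_{T\varphi}^2\geq&\sum_{\alpha}\sum_{I,J}\|\bar{V}_\alpha w_{I,J}\|_{T\varphi}^2+T\sum_{|K|=r-1}\int_M\left<\varphi_{\alpha\bar\beta}w_{I,\alpha K},w_{I,\beta K}\right>e^{-T\varphi}\\
	&+\sum_{|K|=r-1}\int_{\partial M}\left<\rho_{\alpha\bar\beta}w_{I,\alpha K},w_{I,\beta K}\right>e^{-T\varphi}+O\left(\left(E'_{T\varphi}(w)+\sum_a\|X_aw\|_{T\varphi}\right)\|w\|_{T\varphi}\right),
\end{align*}
and the last error term is absorbed into $\varepsilon E'_{T\varphi}(w)^2+C_\varepsilon\|w\|_{T\varphi}^2$ together with the lower-order $\|u\|_{B,s}^2+C_T\|u\|_{B,s-1}^2$ that arises when commuting $D_t^I\eta''$ past $\bar\partial_Q$ and $\vartheta_Q$ (this is exactly the bookkeeping of Lemma $\ref{main 5}$ (ii), (iv), as used in Lemma $\ref{main 3}$ and Lemma $\ref{main 9}$).

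The second step is the construction of $\varphi$. Outside $K$ the function $\varphi'$ satisfies $Z_r$-condition, so by the proof of Theorem $\ref{geometric condition for basic estimate for dolbeault}$ (the diagonalization of the Levi form and the integration-by-parts Lemma $\ref{integration by parts for first order term for dolbeault}$) the quantity $\sum_{|K|=r-1}\int_{\partial M}\left<\rho_{\alpha\bar\beta}w_{I,\alpha K},w_{I,\beta K}\right>+\sum_\alpha\|\bar V_\alpha w_{I,J}\|^2$ already dominates $\int_{\partial M}|w|^2$ modulo $\|w\|^2$ and $\sum_a\|X_aw\|\|w\|$ on that region. On the interior term, I want $\chi$ convex increasing with $\chi'>0$ so that $T\sum_{|K|}\varphi_{\alpha\bar\beta}w_{I,\alpha K}w_{I,\beta K}=T\chi'(\varphi')\sum_{|K|}\varphi'_{\alpha\bar\beta}w_{I,\alpha K}w_{I,\beta K}$ dominates $T|w|^2$ off $K$ — here one uses that $Z_r$-condition off $K$ furnishes either $q-r$ positive eigenvalues (handled directly) or $r+1$ negative ones (handled, as in Theorem $\ref{geometric condition for basic estimate for dolbeault}$, by trading the negative eigenvalue sum against the $\|\bar V_\alpha w_{I,J}\|^2$ gradient term via Lemma $\ref{integration by parts for first order term for dolbeault}$, at the cost of a controlled fraction of that gradient). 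On $K$, where $Z_r$ need not hold, I accept an additive constant: the bounded geometry of $\overline M$ gives $T\chi'(\varphi')\varphi'_{\alpha\bar\beta}+\text{(bounded curvature terms)}\geq T|w|^2-C_T\|w\|^2$ on $K$ simply because $\varphi'$ is $C^2$ on a compact set, absorbing the (now $T$-dependent) constant into $C_T$. Choosing $\chi$ growing fast enough that $\chi'$ beats all the bounded operators $R^\nabla$, $Ric^\nabla$, $\mathcal A_\tau$ and the $\varepsilon$-Cauchy-Schwarz remainders, then setting $\varphi=\chi(\varphi')$, yields $(\ref{priori estimate in weigthed setup})$ after a partition of unity; $(\ref{priori estimate in weigthed setup for s=0})$ is the special case $s=0$, $\eta=1$, with $C_T\|u\|_{-1}^2$ replacing $C_T\|u\|_{B,-1}^2$ by the Rellich inclusion.

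\textbf{Main obstacle.} The delicate point is the interplay on $K$: there $Z_r$-condition is unavailable, so the boundary integral $\int_{\partial M}\rho_{\alpha\bar\beta}w_{I,\alpha K}w_{I,\beta K}$ has no sign and cannot be discarded, while the weight term $T\chi'(\varphi')\varphi'_{\alpha\bar\beta}$ provides interior positivity only. One must check that $\partial M\cap K$ still allows the boundary term to be controlled — this works because $\partial M$ is compact and $K\Subset M$ is a compact subset of the open manifold $M$, hence one may shrink $K$ (replacing $K$ by a slightly smaller compact set) so that $K\cap\partial M=\emptyset$; then on a neighborhood of $\partial M$ the $Z_r$-estimate is in force and the boundary term is handled exactly as in the sufficiency direction of Theorem $\ref{geometric condition for basic estimate for dolbeault}$, while the interior region near $K$ contributes only $-C_T\|u\|_{B,s}^2$-type errors with no boundary integral at all. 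The remaining care is purely bookkeeping: keeping track of which error terms are $T$-independent (absorbed by $\varepsilon$-tricks) versus $T$-dependent (collected into $C_T$), and ensuring the commutator errors from $D_t^I\eta''$ land in $\|u\|_{B,s}^2+C_T\|u\|_{B,s-1}^2$ as claimed, which follows verbatim from Lemma $\ref{main 5}$.
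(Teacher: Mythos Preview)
Your outline is largely on the right track---applying the weighted Bochner formula $(\ref{Bochner formula for dolbeault})$ to $w=D_t^I\eta''u$, exploiting the $Z_r$-condition off $K$ via Lemma $\ref{integration by parts for first order term for dolbeault}$ as in Theorem $\ref{geometric condition for basic estimate for dolbeault}$, and collecting commutator errors into $\|u\|_{B,s}^2+C_T\|u\|_{B,s-1}^2$---but there is a genuine gap in your treatment of the compact set $K$.

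On $K$ the Levi form $\varphi'_{\alpha\bar\beta}$ is completely uncontrolled, so the weight contributes $T\int_K\langle\varphi_{\alpha\bar\beta}w_{I,\alpha K},w_{I,\beta K}\rangle e^{-T\varphi}\geq -CT\|w\|_{T\varphi,K}^2$ with $C$ \emph{independent} of $T$. This is \emph{not} a ``$-C_T\|u\|_{B,s}^2$-type error'' that can be absorbed into the $\|u\|_{B,s}^2$ term on the right-hand side of $(\ref{priori estimate in weigthed setup})$: it is of the same size and opposite sign as the $T\|w\|^2$ you want on the left, and for large $T$ it swamps any $T$-independent absorption. Your claim that ``$T\chi'(\varphi')\varphi'_{\alpha\bar\beta}+\text{(bounded)}\geq T|w|^2-C_T\|w\|^2$ on $K$'' is simply false when $\varphi'_{\alpha\bar\beta}$ has negative eigenvalues there. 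The paper's mechanism, which you are missing, is to pick $\zeta\in C_c^\infty(M)$ with $\zeta\equiv 1$ on $K$ and write
\[
CT\|\zeta w\|_{T\varphi}^2\leq 2C^2T^2\|\zeta w\|_{-1}^2+\tfrac{1}{2}\|\zeta w\|_1^2,
\]
then use \emph{interior} G\aa rding (valid because $\zeta$ has compact support away from $\partial M$) to bound $\|\zeta w\|_1^2\lesssim \bar G_Q^{T\varphi}(\zeta w,\zeta w)+O(\|u\|_{B,s}^2)$, which is reabsorbed into the $\bar\partial_Q,\vartheta_Q^{T\varphi}$ terms; the remaining $2C^2T^2\|\zeta w\|_{-1}^2$ is precisely the $C_T\|u\|_{B,s-1}^2$ in the statement. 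This interpolation-plus-interior-ellipticity step is the heart of the argument on $K$.

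A secondary point: your choice $\varphi=\chi(\varphi')$ with ``$\chi'$ large enough'' is not quite sufficient off $K$ either. The $Z_r$-condition controls the Levi form of $\varphi'$ only on the hyperplane $\sum V_\alpha(\varphi')\xi_\alpha=0$; to get positivity of the full quadratic form $B_\varepsilon(\varphi;\cdot,\cdot)$ one needs the extra rank-one term $\chi''(\varphi')\,\partial\varphi'\otimes\bar\partial\varphi'$ to dominate in the normal direction. The paper takes $\varphi=e^{A\varphi'}$ with $A$ large (invoking the analogue of H\"ormander's Lemma 3.3.3), which makes $\chi''/\chi'=A$ large; a generic convex $\chi$ with only $\chi'$ large would not suffice.
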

\begin{proof}
	Since $ \varphi' $ is an exhaustion function, we can choose $ \nu $ sufficiently large such that $ M_\nu\cap{\rm supp}\left(\eta''\right)\neq\emptyset $ and $ K\subseteq M_\nu $, where $ M_\nu:=\{\varphi'<\nu\} $. For any $ u\in\mathcal{D}_B^{r',r} $, we will prove the estimate $ (\ref{priori estimate in weigthed setup}) $ locally in $ \overline{M}_\nu\cap U'' $ for $ u^\nu:=\frac{\nu-\varphi'}{\nu}u|_{M_\nu} $. Let $ \nu\rightarrow\infty $, then the desired estimate for $ u $ can be given by using a partition of unity.
	
	For any given point $ p\in\overline{M}_\nu\cap U'' $, choosing local coordinate $ \left(\left(x,z\right),U''\right) $ s.t. $ \left(x,z\right)\left(p\right)=0 $, and assume that $ \bar{V}_\alpha\left(\varphi'\right)=0 $ for $\alpha<q$. By a unitary transformation
	\begin{align*}
		\sum_{\alpha,\beta=1}^{q-1}\varphi_{\alpha\bar{\beta}}'\left(0\right)\xi_\alpha \bar{\xi}_\beta=\sum_{\alpha=1}^{q-1}\lambda_\alpha|\xi_\alpha|^2,
	\end{align*}
	where $ \xi=\left(\xi_\alpha\right)_{1\leq\alpha\leq q}\in\mathbb{C}^{q} $. 
	Let $ \lambda_\gamma<0 $ for $ \gamma\leq\sigma $ and $ \lambda_\gamma\geq0 $ for $ \gamma>\sigma $. For any multi-index $ I $ of order $ s $ and $ 0<\varepsilon<<1 $, since $ D_t^I\eta''u^\nu\in\mathcal{D}_{Q,\nu}^{r',r}:={\rm Dom}\left(\left(\bar{\partial}_Q|_{M_\nu}\right)^*\right)\cap\Omega_Q^{r',r}\left(\overline{M}\right) $, from the weighted Bochner formula $ \left(\ref{Bochner formula for dolbeault}\right) $ in $ M_\nu $ we obtain
	\begin{align}\label{Bochner inequality in M_nu}
		\|\bar{\partial}_QD_t^I\eta''u^\nu\|_{T\varphi'}^2+\|\vartheta_Q^{T\varphi'}D_t^I\eta''u^\nu\|_{T\varphi'}^2\geq&\varepsilon\sum_{\alpha=1}^{q}\sum_{I,J}\int_{M_\nu}|\bar{V}_\alpha D_t^I\eta''u^\nu_{I,J}|^2e^{-T\varphi'}\nonumber\\
		&+\left(1-\varepsilon\right)\sum_{\alpha=1}^{\sigma}\sum_{I,J}\int_{M_\nu}|\bar{V}_\alpha D_t^I\eta''u^\nu_{I,J}|^2e^{-T\varphi'}+O\left(\|u^\nu\|_{B,s}^2\right)\nonumber\\
		&+\sum_{|K|=r-1}\int_{M_\nu}\left<T\varphi'_{\alpha\bar{\beta}}D_t^I\eta''u^\nu_{I,\alpha K},D_t^I\eta''u^\nu_{I,\beta K}\right>e^{-T\varphi'}.
	\end{align}
	
	By Lemma $ \ref{integration by parts for first order term for dolbeault} $ and adding index $ \alpha,I,J$,
	\begin{align}\label{sum of the integration by parts for first order term in weighted setup}
		\sum_{\alpha=1}^\sigma\sum_{I,J}\int_{ M_\nu}-T\varphi'_{\alpha\bar{\alpha}}|D_t^I\eta''u_{I,J}^\nu|^2e^{-T\varphi'}\leq\sum_{\alpha=1}^{\sigma}\sum_{I,J}\int_{M_\nu}|\bar{V}_\alpha D_t^I\eta''u_{I,J}^\nu|^2e^{-T\varphi'}+O\left(E'_{T\varphi'}\left(D_t^I\eta''u^\nu\right)\|u^\nu\|_{B,s}\right),
	\end{align}
	where $ E'_{T\varphi'}\left(D_t^I\eta''u^\nu\right)^2=\sum_{\beta=1}^{q}\sum_{I,J}\|\bar{V}_\beta D_t^I\eta''u_{I,J}^\nu\|_{T\varphi'}^2+\|D_t^I\eta''u^\nu\|_{T\varphi'}^2 $.
	
	Substituting (\ref{sum of the integration by parts for first order term in weighted setup}) into the estimate (\ref{Bochner inequality in M_nu}) gives 
	\begin{align}\label{lower estimate for Bochner}
		\|\bar{\partial}_QD_t^I\eta''u^\nu\|_{T\varphi'}^2+\|\vartheta_Q^{T\varphi'}D_t^I\eta''u^\nu\|_{T\varphi'}\geq T\int_{M_\nu}B_\varepsilon\left(\varphi';u^\nu,u^\nu\right)e^{-T\varphi'}+O\left(\|u^\nu\|_{B,s}^2\right),
	\end{align}
	where
	\begin{align*}
		B_\varepsilon\left(\varphi';u^\nu,u^\nu\right)=-\left(1-\varepsilon\right)\sum_{\alpha=1}^{\sigma}\sum_{I,J}\varphi'_{\alpha\bar{\alpha}}|D_t^I\eta''u_{I,J}^\nu|^2+\sum_{\alpha,\beta=1}^{q}\sum_{|K|=r-1}\varphi'_{\alpha\bar{\beta}}\left<D_t^I\eta''u^\nu_{I,\alpha K},D_t^I\eta''u^\nu_{I,\beta K}\right>,
	\end{align*}
	which is positive at $ p $ if we replace $ \varphi' $ by $ e^{A\varphi'} $ for sufficient large constant $ A>0 $ whenever $ 0<\varepsilon<<1 $ by the hypothesis that $ \varphi' $ satisfies $ Z_r $-condition and (parallel arguments in) Lemma 3.3.3 in \cite{Hlv65JL}. Furthermore,
	\begin{align}\label{A_r condition for varphi}
		\varphi:=e^{A\varphi'}
	\end{align}
	still satisfies $ Z_r $-condition and exhaustibility, it turns out that estimate $ (\ref{lower estimate for Bochner}) $ holds for weight $ \varphi $. Then there exists a sufficiently small neighborhood $ V_p''\subseteq\overline{M}_\nu\cap U'' $ such that $ B_\varepsilon\left(\varphi;u^\nu,u^\nu\right) $ is positive in $ V_p'' $. Due to $ {\rm supp}\left(\eta''\right)\cap\partial M_\nu $ is compact, we can choose open subset $ K' $ satisfying $ K\Subset K'\Subset M_\nu $ and there is a maximal constant $A$ such that $ B_\varepsilon\left(\varphi;u^\nu,u^\nu\right) $ is positive in the compact set $ \left(K'\right)^c\cap{\rm supp}\left(\eta''\right)\neq\emptyset $ for such $ \varphi $ defined as $ (\ref{A_r condition for varphi}) $, where $\left(K'\right)^c$ is the complement of $K'$ w.r.t. $\overline{M}_\nu$. It yields that for $ 0<\varepsilon<<1 $
	\begin{align}\label{lower estimate for B on K'}
		T\int_{M_\nu\setminus K'}B_\varepsilon\left(\varphi;u^\nu,u^\nu\right)e^{-T\varphi}\gtrsim T\sum_{I,J}\int_{M_\nu\setminus K'}|D_t^I\eta''u_{I,J}^\nu|^2e^{-T\varphi}.
	\end{align}
	
	Since $ K'\Subset M $, $ \varphi_{\alpha\bar{\beta}} $ is bounded on $ K' $ for any $ \alpha $ and $ \beta $. Hence, for $ \zeta\in C_c^\infty\left(M_\nu\right) $ with $ \zeta|_{K'}\equiv1 $, there is a constant $ C\geq0 $ such that
	\begin{align}\label{lower estimate for B on M-K'}
		T&\int_{K'}B_\varepsilon\left(\varphi;u^\nu,u^\nu\right)e^{-T\varphi}\nonumber\\
		&\geq-CT\|\zeta D_t^I\eta''u^\nu\|_{T\varphi}^2\nonumber\\
		&\geq-2C^2T^2\|\zeta D_t^I\eta''u^\nu\|_{-1}^2-\frac{1}{2}\|\zeta D_t^I\eta''u^\nu\|_{1}^2\nonumber\\
		&\geq-C_T\|u^\nu\|_{B,s-1}^2-\frac{1}{2}G_Q^{T\varphi}\left(\zeta D_t^I\eta''u^\nu,\zeta D_t^I\eta''u^\nu\right)+O\left(\|u^\nu\|_{B,s}^2\right)\nonumber\\
		&\geq-C_T\|u^\nu\|_{B,s-1}^2-\frac{1}{2}\left(\|\bar{\partial}_QD_t^I\eta''u^\nu\|_{T\varphi}^2+\|\vartheta_Q^{T\varphi}D_t^I\eta''u^\nu\|_{T\varphi}^2\right)+O\left(\|u^\nu\|_{B,s}^2\right),
	\end{align}
	where the third line holds by Cauchy-Schwarz inequality, and the fourth line is valid in view of G$\mathring{{\rm a}}$rding's inequality claimed in the proof of Proposition \ref{mainp 1}.
	
	Then substituting $ (\ref{lower estimate for B on K'}) $ and $(\ref{lower estimate for B on M-K'}) $ into R.H.S. of $ (\ref{lower estimate for Bochner}) $ gives the conclusion for $ u^\nu $, thus for any $ u\in\mathcal{D}_B^{r',r} $. For any interior chart $ U' $, the proof of the desired estimate is similar to $ (\ref{lower estimate for B on M-K'}) $, and the estimate holds for $A=1$ in that case. Hence, we can choose $A$ sufficiently large such that $(\ref{priori estimate in weigthed setup})$ hold on all $U''$ since $\partial M$ is compact. The estimate $ (\ref{priori estimate in weigthed setup for s=0}) $ follows from $ (\ref{priori estimate in weigthed setup}) $ for $ s=0 $ by a partion of unity.
\end{proof}

In what follows, we always assume that the hypothesis of Proposition \ref{weaker condition of priori estimate} holds without special mention, then with $ (\ref{priori estimate in weigthed setup}) $ and $ (\ref{priori estimate in weigthed setup for s=0}) $ in hand, we can retrace the steps in section 3.3. We are going to prove the Neumann operator $ N_B^{T\varphi} $ for $ \Box_{\bar{F}_B}^{T\varphi} $ is well-defined where $ \varphi $ is defined by $ (\ref{A_r condition for varphi}) $. The crucial point is proving that $ {\rm Range}\left(\Box_{\bar{F}_B}^{T\varphi}\right) $ is closed.

\begin{prop}
	For $ 0\leq r',r\leq q $, we know that $ {\rm Range}\left(\Box_{\bar{F}_B}^{T\varphi}\right) $ is closed in $ H_{B,0}^{r',r} $ and the dimension of the kernel $ \mathcal{H}_{B,T\varphi}^{r',r} $ of $ \Box_{\bar{F}_B}^{T\varphi} $ is finite for sufficiently large $ T $.
\end{prop}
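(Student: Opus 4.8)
The plan is to mimic the classical Kohn--Hörmander argument for closed range, now in the weighted setting where the coercivity is replaced by the estimate $(\ref{priori estimate in weigthed setup for s=0})$ of Proposition \ref{weaker condition of priori estimate}. Fix $T$ large enough that the conclusions of that proposition hold with a definite constant. The key analytic input is the \emph{compactness estimate}: for every $\varepsilon>0$ there is $C_\varepsilon>0$ so that
\begin{align*}
	\|u\|_{B,T\varphi}^2\lesssim\|\bar{\partial}_Bu\|_{B,T\varphi}^2+\|\vartheta_B^{T\varphi}u\|_{B,T\varphi}^2+\varepsilon\|u\|_{B,T\varphi}^2+C_\varepsilon\|u\|_{-1}^2
\end{align*}
for all $u\in\mathcal{D}_B^{r',r}$; this is just $(\ref{priori estimate in weigthed setup for s=0})$ with the term $\|u\|_{B,T\varphi}^2$ on the right absorbed after dividing by $T$ and choosing $T$ with $1/T<\varepsilon$, using that all weighted and unweighted $L^2$-norms are equivalent on the compact $\overline{M}$. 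By Hörmander's density lemma (as invoked in the proof of Theorem \ref{existence theorem 3}) this extends from smooth forms in $\mathcal{D}_B^{r',r}$ to all $u\in{\rm Dom}\left(\bar{\partial}_B\right)\cap{\rm Dom}\left({\vartheta_B^{T\varphi}}^*\right)$, hence in particular to all $u\in\tilde{\mathcal{D}}_B^{r',r}$ expressed through $\bar{G}_{B,T\varphi}$.

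First I would record the abstract consequence: if $H$ is a Hilbert space, $\mathcal{L}$ a densely defined self-adjoint non-negative operator, and one has $\|u\|^2\lesssim\left(\mathcal{L}u,u\right)+\varepsilon\|u\|^2+C_\varepsilon\|u\|_{-1}^2$ with the inclusion ${\rm Dom}\left(\mathcal{L}\right)\hookrightarrow H_{-1}$ compact (here via Proposition \ref{Rellich and Sobolev lemma}), then $\mathcal{L}$ has closed range and finite-dimensional kernel. Concretely: the kernel $\mathcal{H}_{B,T\varphi}^{r',r}$ satisfies, for $u$ in it, $\|u\|_{B,T\varphi}^2\lesssim\varepsilon\|u\|_{B,T\varphi}^2+C_\varepsilon\|u\|_{-1}^2$, so for $\varepsilon<1/2$ the norms $\|\cdot\|_{B,T\varphi}$ and $\|\cdot\|_{-1}$ are equivalent on $\mathcal{H}_{B,T\varphi}^{r',r}$; since the unit ball of the former is precompact in the latter by Rellich, $\mathcal{H}_{B,T\varphi}^{r',r}$ is finite-dimensional. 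For closed range, let $u_\nu\in{\rm Dom}\left(\Box_{\bar{F}_B}^{T\varphi}\right)$ be orthogonal to $\mathcal{H}_{B,T\varphi}^{r',r}$ with $\Box_{\bar{F}_B}^{T\varphi}u_\nu\to f$; the standard contradiction argument (normalize $\|u_\nu\|_{B,T\varphi}=1$, extract a subsequence converging in $H_{-1}$ by compactness, and feed this into the compactness estimate applied to $u_\nu-u_\mu$) shows $\{u_\nu\}$ is Cauchy in $H_{B,0}^{r',r}$, whence $f\in{\rm Range}\left(\Box_{\bar{F}_B}^{T\varphi}\right)$.

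The one point that needs care is that the compactness estimate must be applied to differences $u_\nu-u_\mu$ that lie in the appropriate domain and are orthogonal to the kernel; here one uses that $\Box_{\bar{F}_B}^{T\varphi}$ is self-adjoint (being the Friedrichs realization, cf.\ the weighted analogue of Proposition \ref{expression of $ F_B $}), so that on $\left(\mathcal{H}_{B,T\varphi}^{r',r}\right)^\perp\cap{\rm Dom}\left(\Box_{\bar{F}_B}^{T\varphi}\right)$ one gets $\|u\|_{B,T\varphi}\lesssim\|\Box_{\bar{F}_B}^{T\varphi}u\|_{B,T\varphi}$ after disposing of the $\|u\|_{-1}$ term by compactness and the spectral fact that $0$ is isolated once finite multiplicity of the kernel is known — this last implication is itself the content of the closed-range step, so the two assertions are proved together by the single contradiction argument rather than in sequence. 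I expect the main obstacle to be purely bookkeeping: making sure the density-lemma extension of $(\ref{priori estimate in weigthed setup for s=0})$ from $\mathcal{D}_B^{r',r}$ to the graph-closure domain $\tilde{\mathcal{D}}_B^{r',r}$ is legitimate in the weighted norm, and that the form $\bar{G}_{B,T\varphi}$ really produces a self-adjoint $\Box_{\bar{F}_B}^{T\varphi}$ with ${\rm Dom}\hookrightarrow H_{B,0}^{r',r}\hookrightarrow H_{-1}^{r',r}$ compact; all of this is routine given the earlier sections, so the proof will be short.
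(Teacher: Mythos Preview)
Your proposal is correct and follows essentially the same approach as the paper: both use estimate $(\ref{priori estimate in weigthed setup for s=0})$ together with the compact inclusion $H_{B,0}^{r',r}\hookrightarrow H_{B,-1}^{r',r}$ (Proposition~\ref{Rellich and Sobolev lemma}) to get finite-dimensionality of the kernel, and then a contradiction argument on a normalized sequence orthogonal to $\mathcal{H}_{B,T\varphi}^{r',r}$ to obtain the coercive estimate $\|u\|_{B,T\varphi}^2\lesssim\|\bar{\partial}_Bu\|_{B,T\varphi}^2+\|\vartheta_B^{T\varphi}u\|_{B,T\varphi}^2$ on $(\mathcal{H}_{B,T\varphi}^{r',r})^\perp$, hence closed range. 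The density-lemma step you invoke is in fact unnecessary here, since $\tilde{\mathcal{D}}_B^{r',r}$ is by construction the $\bar{G}_{B,T\varphi}$-completion of $\mathcal{D}_B^{r',r}$, so the extension of the estimate is automatic.
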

\begin{proof}
	By the estimate $ (\ref{priori estimate in weigthed setup for s=0}) $, for $ u\in\mathcal{H}_{B,T\varphi}^{r',r} $ if $ T $ is sufficiently large we have
	\begin{align*}
		\|u\|_{B,T\varphi}^2\leq C_T\|u\|_{B,-1}^2,
	\end{align*}
	since $ i:H_{B,0}^{r',r}\longrightarrow H_{B,-1}^{r',r} $ is compact, it follows that $ \mathcal{H}_{B,T\varphi}^{r',r} $ is finite dimensional.
	
	In order to prove $ {\rm Range}\left(\Box_{\bar{F}_B}^{T\varphi}\right) $ is closed, it suffices to prove that
	\begin{align}\label{range is closed}
		\|u\|_{B,T\varphi}^2\lesssim\|\bar{\partial}_Bu\|_{B,T\varphi}^2+\|\vartheta_B^{T\varphi}u\|_{B,T\varphi}^2,
	\end{align}
	whenever $ u\in\tilde{\mathcal{D}}_B^{r',r} $ with $ u\perp\mathcal{H}_{B,T\varphi}^{r',r} $. Thus for $ u\in{\rm Dom}\left(\Box_{\bar{F}_B}^{T\varphi}\right)\cap\left(\mathcal{H}_{B,T\varphi}^{r',r}\right)^\perp $, the estimate $ (\ref{range is closed}) $ gives $ \|u\|_{B,T\varphi}\lesssim\|\Box_{\bar{F}_B}^{T\varphi}u\|_{B,T\varphi} $.
	
	Now we prove the estimate $ (\ref{range is closed}) $ by a contradiction. Choosing a smooth sequence $ \{u_\nu\}_{\nu=1}^\infty $ such that $ u_\nu\perp\mathcal{H}_{B,T\varphi}^{r',r} $, $ \|u_\nu\|_{B,T\varphi}=1 $ and
	\begin{align}\label{contradiction estimate}
		\|u_\nu\|_{B,T\varphi}^2\geq\nu\left(\|\bar{\partial}_Bu_\nu\|_{B,T\varphi}^2+\|\vartheta_B^{T\varphi}u_\nu\|_{B,T\varphi}^2\right).
	\end{align}
	Combining $ (\ref{priori estimate in weigthed setup for s=0}) $ and $ (\ref{contradiction estimate}) $, we obtain that $ \|u_\nu\|_{B,T\varphi}^2\leq C_T\|u_\nu\|_{B,-1}^2 $ for large $ \nu $. Thus $ u_\nu\stackrel{\|\cdot\|_B}{\longrightarrow}u $ as $ \nu\rightarrow\infty $ with unit norm and $ u\perp\mathcal{H}_{B,T\varphi}^{r',r} $. However, $ u\in\mathcal{H}_{B,T\varphi}^{r',r} $ by $ (\ref{contradiction estimate}) $ which is a contradiction.
\end{proof}

Then we can define $ N_B^{T\varphi} $ and $ P_B^{T\varphi} $ as unweighted case satisfying the properties in the analogue of Propsotion \ref{proposition of N_B} by repeating the argument. In particular, we can solve the equation $ \bar{\partial}_Bu=\omega $ whenever $ \bar{\partial}_B\omega=P_B\omega=0 $ with canonical solution $ u_T:=\vartheta_BN_B^{T\varphi}\omega $ such that $ u_T\perp {\rm Ker}\left(\bar{\partial}_B\right) $ in the inner product $ \left(\cdot,\cdot\right)_{B,T\varphi} $. We can also establish the regularity of the canonical solution as the weak version of Corollary \ref{existence theorem 1} as follows. 

\begin{thm}\label{smooth existence theorem}
	With the same hypothesis as in Proposition \ref{weaker condition of priori estimate}. Let $ 1\leq r\leq q $, if $ \omega\in\Omega_{B}^{\cdot,r}\left(\overline{M}\right) $ satisfies $ \bar{\partial}_B\omega=P_B\omega=0 $. Then for each $ s\in\mathbb{N} $, there exists $ T_s>0 $ such that $ u_T:=\vartheta_B^{T\varphi}N_B^{T\varphi}\omega\in\Omega_{B,s}^{\cdot,r-1}\left(\overline{M}\right) $ is the solution of $ \bar{\partial}_Bu=\omega $ whenever $ T>T_s $.
\end{thm}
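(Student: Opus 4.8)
The plan is to separate the existential statement, which is already available, from the regularity, which is the entire content. Since $\bar\partial_B\omega=P_B\omega=0$, the weighted analogue of Proposition~\ref{proposition of N_B} --- recorded in the discussion just before the theorem and legitimate because Proposition~\ref{weaker condition of priori estimate} gives, for $T$ large, the closedness of $\mathrm{Range}(\Box_{\bar F_B}^{T\varphi})$ and the finite dimensionality of $\mathcal{H}_{B,T\varphi}^{r',r}$ --- yields $\bar\partial_B u_T=\omega$ with $u_T\perp\mathrm{Ker}(\bar\partial_B)$ in $(\cdot,\cdot)_{B,T\varphi}$. So everything reduces to showing that $v_T:=N_B^{T\varphi}\omega$ lies in $H_{B,N}^{\cdot,r}$ for $N$ as large as we like (depending on $s$): then $u_T=\vartheta_B^{T\varphi}v_T$, being the image of $v_T$ under a first-order operator, lies in $H_{B,N-1}^{\cdot,r-1}$, and the Sobolev part of Proposition~\ref{Rellich and Sobolev lemma} upgrades this to $u_T\in\Omega_{B,s}^{\cdot,r-1}(\overline M)$ as soon as $N\ge s+t_0+1$, $t_0=[n/2]+1$.

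For the Sobolev regularity of $v_T$ I would retrace the existence-and-regularity machinery proving Theorem~\ref{main theorem for dolbeault} and Corollary~\ref{existence theorem 1 for dolbeault}, with the basic estimate~(\ref{local basic estimate for dolbeault}) replaced throughout by the weighted estimate~(\ref{priori estimate in weigthed setup}) of Proposition~\ref{weaker condition of priori estimate}, and the weight frozen as $\varphi=e^{A\varphi'}$ in~(\ref{A_r condition for varphi}). Since $v_T$ is not a priori in $H_{B,s}$, it cannot be substituted directly into~(\ref{priori estimate in weigthed setup}); so, exactly as in the proof of Theorem~\ref{main theorem for dolbeault}, I would pass through the elliptically regularized forms $\bar G_{B,T\varphi}^{\epsilon}(u,v):=\bar G_{B,T\varphi}(u,v)+\epsilon(\nabla u,\nabla v)_{B,T\varphi}$ on $\mathcal{D}_B^{r',r}$, with Friedrichs operators $\bar F_B^{T\varphi,\epsilon}$, and let $v_T^\epsilon$ solve the corresponding regularized Neumann equation; for each $\epsilon>0$ this $v_T^\epsilon$ is smooth up to the boundary by G$\mathring{{\rm a}}$rding's inequality and the interior/boundary regularity of $\bar F_B^{T\varphi,\epsilon}$. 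One then induces on the Sobolev order $m$: differentiate $v_T^\epsilon$ tangentially, apply~(\ref{priori estimate in weigthed setup}) in each boundary chart with $|I|=m$, turn $\|\bar\partial_Q D_t^I\eta''v_T^\epsilon\|_{T\varphi}^2+\|\vartheta_Q^{T\varphi}D_t^I\eta''v_T^\epsilon\|_{T\varphi}^2$ into $(\bar F_B^{T\varphi,\epsilon}v_T^\epsilon,\cdot)$-pairings plus commutator errors by the integration-by-parts and commuting arguments of Lemmas~\ref{mainl 2}--\ref{mainl 3} (the interior pieces being controlled by Proposition~\ref{mainp 1}), recover the higher normal derivatives $D_\rho^\iota$, $\iota\ge2$, from the ellipticity of $\bar F_B^{T\varphi,\epsilon}$ through the analogue of~(\ref{ellipticity}), and sum over the partition of unity; the monotone term $\epsilon(\nabla\cdot,\nabla\cdot)_{B,T\varphi}\ge0$ only helps.

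The decisive mechanism --- and the reason the estimate is asserted only for $T>T_s$ rather than for all $T$ --- is that, after the summation, the borderline error term $\|v_T^\epsilon\|_{B,m}^2$ produced by~(\ref{priori estimate in weigthed setup}) has the same Sobolev order $m$ as the quantity being bounded, while the left side of~(\ref{priori estimate in weigthed setup}) carries the factor $T$; hence, once $T$ exceeds a threshold $T_s$ (which grows with $s$, since the induction must be pushed up to order $N\sim s+t_0+1$), this term is absorbed, whereas the remaining contribution $C_T\|v_T^\epsilon\|_{B,m-1}^2$ is of strictly lower order and is handled by the inductive hypothesis. This produces $\|v_T^\epsilon\|_{B,N}^2\lesssim\|\omega\|_{B,N-1}^2$ with constant independent of $\epsilon$. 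Letting $\epsilon\to0$ along a subsequence, using Proposition~\ref{Rellich and Sobolev lemma} to upgrade the $\epsilon$-uniform $H_{B,N}$-bound to convergence in $H_{B,0}^{\cdot,r}$, and identifying the limit with $v_T=N_B^{T\varphi}\omega$ by a Banach-Saks argument as in the proof of Theorem~\ref{main theorem for dolbeault}, we get $v_T\in H_{B,N}^{\cdot,r}$, hence $u_T=\vartheta_B^{T\varphi}v_T\in H_{B,N-1}^{\cdot,r-1}$ (with $\|u_T\|_{B,s}\lesssim\|\omega\|_{B,s}$, constant depending on $T$), and finally $u_T\in\Omega_{B,s}^{\cdot,r-1}(\overline M)$ as indicated above.

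The principal obstacle is exactly this circularity: the weighted a priori estimate~(\ref{priori estimate in weigthed setup}) has to be applied to the unknown $v_T$ (equivalently $u_T$), which is not yet known to belong to the Sobolev space the estimate quantifies. The elliptic-regularization device removes it, but one must check carefully that every constant in the regularized estimates is independent of $\epsilon$ and that the $\epsilon$-correction does not damage the $T$-absorption --- both are true, yet they force one to track the $\epsilon$- and $T$-dependence through every step. A secondary, purely technical point is the bookkeeping of the higher normal derivatives $D_\rho^\iota$ via the analogue of~(\ref{ellipticity}) at each stage of the induction, which is routine once the tangential estimate is established, just as in the proof of Theorem~\ref{main theorem for dolbeault}.
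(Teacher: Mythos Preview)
Your proposal is correct and follows essentially the same route as the paper. The paper packages the main work as two separate statements, Proposition~\ref{continuity of N_B} (boundedness of $N_B^{T\varphi}$ on $H_{B,s}^{r',r}$ for $T>T_s$) and Proposition~\ref{continuity of related operators} (boundedness of $\vartheta_B^{T\varphi}N_B^{T\varphi}$ and companions on $H_{B,s}^{r',r}$), after which the theorem is a one-line appeal to Proposition~\ref{Rellich and Sobolev lemma}; your outline reproduces the proof of Proposition~\ref{continuity of N_B} directly inside the theorem, with the same ingredients you list --- induction on the Sobolev order, the commutator bookkeeping (the paper isolates this as Lemma~\ref{tangential integration by parts in weighted case}), absorption of the borderline $\|v_T\|_{B,m}^2$ term by the factor $T$ on the left of~(\ref{priori estimate in weigthed setup}), recovery of normal derivatives from ellipticity, and elliptic regularization to legitimize the a~priori estimate. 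The one minor difference is that you are content to lose a derivative when applying $\vartheta_B^{T\varphi}$ to $N_B^{T\varphi}\omega$ and compensate by raising $N$, whereas the paper shows in Proposition~\ref{continuity of related operators} that $\vartheta_B^{T\varphi}N_B^{T\varphi}$ is actually bounded on $H_{B,s}$ with no loss; for the statement at hand your weaker version suffices.
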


For the purpose of proving Theorem \ref{smooth existence theorem}, we need to study the continuity of some weigthed operators in $ H_{B,s}^{r',r} $ as the following propositions.

\begin{prop}\label{continuity of N_B}
	With the same hypothesis as in Proposition \ref{weaker condition of priori estimate}. Let $ 0\leq r',r\leq q $, for each $ s\in\mathbb{N} $, there exists $ T_s>0 $ such that $ N_B^{T\varphi} $ is bounded in $ H_{B,s}^{r',r} $ if $ T>T_s $.
\end{prop}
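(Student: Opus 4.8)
The plan is to mimic the boundedness/regularity argument that underlies Corollary \ref{existence theorem 1}, but with the weighted a priori estimates of Proposition \ref{weaker condition of priori estimate} replacing G\r{a}rding's inequality and the basic estimate. First I would recall that $N_B^{T\varphi}$ is characterized by $\Box_{\bar{F}_B}^{T\varphi}N_B^{T\varphi}\omega=\omega-P_B^{T\varphi}\omega$ and $N_B^{T\varphi}\omega\perp\mathcal{H}_{B,T\varphi}^{r',r}$, and that the closedness of ${\rm Range}(\Box_{\bar{F}_B}^{T\varphi})$ already gives $\|N_B^{T\varphi}\omega\|_{B,T\varphi}\lesssim\|\omega\|_{B,T\varphi}$, i.e. boundedness in $H_{B,0}^{r',r}$ (with constant depending on $T$). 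The content of the proposition is upgrading this to the $s$-th Sobolev norm, for $T$ large depending on $s$.

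The key steps, in order: (i) Set $u=N_B^{T\varphi}\omega$ and $\eta$ one of the cutoffs $\eta',\eta''$ subordinate to the cover, and apply $D^I$ (interior) or $D_t^I$ (tangential, in a special boundary chart) with $|I|=s$; note $D_t^I\eta''u\in\mathcal{D}_Q^{r',r}$ and $D^I\eta'u\in\mathcal{D}_Q^{r',r}$, so the estimate (\ref{priori estimate in weigthed setup}) applies. (ii) Bound the right-hand side of (\ref{priori estimate in weigthed setup}) using the identity $\Box_B^{T\varphi}u=\omega-P_B^{T\varphi}\omega$: write $\|\bar\partial_Q D_t^I\eta''u\|_{T\varphi}^2+\|\vartheta_Q^{T\varphi}D_t^I\eta''u\|_{T\varphi}^2$, integrate by parts to move derivatives onto $\Box_B^{T\varphi}u$ as in the proof of (\ref{estimate}) and of Lemma \ref{mainl 3}, picking up commutator terms of lower order and a controllable error $O(\|u\|_{B,s}\|u\|_{B,s+1})$; absorb the $\|u\|_{B,s+1}$-factors by a small-constant/large-constant trick, using that the leading coefficient $T$ on the left is large. (iii) Handle the remaining normal derivatives $D_t^JD_\rho^\iota\eta''u$ with $\iota\ge1$ via ellipticity of $\bar{F}_B^{T\varphi}$, exactly as in (\ref{ellipticity})--(\ref{estimate of D_t^JD_rho^2}): induct on $\iota$, expressing $D_\rho^2$ through $\bar{F}_B^{T\varphi}$ and tangential derivatives. (iv) Sum over the partition of unity and over $|I|=s$, obtaining
\begin{align*}
T\|u\|_{B,s}^2\lesssim\|\omega\|_{B,s}^2+C_T\|u\|_{B,s-1}^2,
\end{align*}
then induct on $s$: the inductive hypothesis gives $\|u\|_{B,s-1}\lesssim_{T}\|\omega\|_{B,s-1}$ (after first enlarging $T$ past $T_{s-1}$), and choosing $T>T_s$ large enough absorbs the ambient constants, yielding $\|N_B^{T\varphi}\omega\|_{B,s}\lesssim\|\omega\|_{B,s}$. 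The base case $s=0$ is the boundedness in $H_{B,0}^{r',r}$ already noted, together with (\ref{priori estimate in weigthed setup for s=0}).

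The main obstacle I anticipate is the bookkeeping of the weight in the commutators and in the error terms: unlike the unweighted setting, integration by parts against $e^{-T\varphi}$ produces extra factors of $T$ (from $\vartheta_B^{T\varphi}=\vartheta_B+{\rm grad}(T\varphi)\lrcorner$ and from $D_t^I$ hitting $e^{-T\varphi}$), so one must check that every such term is either lower order in the number of derivatives — hence absorbable by $C_T\|u\|_{B,s-1}^2$ — or carries a genuine gain that is dominated by the factor $T$ on the left-hand side of (\ref{priori estimate in weigthed setup}). This is precisely the place where the order of quantifiers ``for each $s$ there is $T_s$'' is forced: the constants $C_T$ grow with $T$, so $T$ must be chosen last, after $s$ and after the lower-order norms have been controlled. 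Once this is organized as a double induction (outer on $s$, inner on $\iota$ for the normal derivatives), the argument runs parallel to the proof of Theorem \ref{main theorem for dolbeault} with $\bar{G}_Q^\epsilon$ replaced by the weighted form $\bar{G}_{Q,T\varphi}$, and no new analytic input beyond Proposition \ref{weaker condition of priori estimate} and the ellipticity of $\bar{F}_B^{T\varphi}$ is needed.
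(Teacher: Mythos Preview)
Your proposal is correct and follows the paper's approach: the paper packages your step (ii) as a separate Lemma \ref{tangential integration by parts in weighted case} (tracking which constants depend on $T$ via the careful induction in (\ref{e 1})--(\ref{e 5})), handles the single normal derivative $\iota=1$ via first-order ellipticity of $\bar\partial_B\oplus\vartheta_B^{T\varphi}$ (equation (\ref{ellipticity 2})) before invoking the second-order expression (\ref{ellipticity 3}) for $\iota\geq2$, and closes with elliptic regularization to pass from smooth $u_T\in{\rm Dom}(\Box_{\bar F_B}^{T\varphi})$ to general $\omega$. The overall architecture --- double induction on $s$ and $\iota$, with absorption of the $O(\|u_T\|_{B,s}^2)$ error by the factor $T$ on the left --- is exactly as you describe.
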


We need the following lemma to prove Proposition \ref{continuity of N_B}.

\begin{lemma}\label{tangential integration by parts in weighted case}
	For $ 0\leq r',r\leq q $, let $ \omega\in H_{B,0}^{r',r} $, and $ u_{T}\in\Omega_B^{r',r}\left(\overline{M}\right) $ such that $ \Box_{\bar{F}_B}^{T\varphi}u_T=\omega $, then for all tangential derivatives $ D_t^{I} $ of order $ s $ in special boundary chart $ U'' $ and $ \eta''\in C_c^\infty\left(U''\right) $ we have
	\begin{align*}
		\bar{G}'_{Q,T\varphi}\left(D_t^I\eta''u_T,D_t^I\eta''u_T\right)\lesssim C_T\|\omega\|_{B,s}^2+\|u_T\|_{B,s}^2+C_T\|u_T\|_{B,s-1}^2,
	\end{align*}
	where
	$$ \bar{G}'_{Q,T\varphi}\left(D_t^I\eta''u_T,D_t^I\eta''u_T\right):=\|\bar{\partial}_QD_t^I\eta''u_T\|_{T\varphi}^2+\|\vartheta_Q^{T\varphi}D_t^I\eta''u_T\|_{T\varphi}^2. $$
	In addition, if we replace $ D_t^I $ and $ \eta'' $ by $ D^I $ and $ \eta' $ where $ \eta'\in C_c^\infty\left(U'\right) $ and $ U' $ is any interior chart, then the above estimate still holds for any $ u_T\in\Omega_B^{r',r}\left(\overline{M}\right)\cap{\rm Dom}\left(\Box_{\bar{F}_B}^{T\varphi}\right) $.
\end{lemma}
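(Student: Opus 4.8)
The plan is to adapt the tangential integration-by-parts computations of Section 3.3 (Lemmas \ref{main 1}, \ref{main 3}, \ref{main 9} and the proof of estimate $(\ref{estimate})$) to the weighted Dolbeault data on a special boundary chart. First I would unwind the hypothesis: $\Box_{\bar{F}_B}^{T\varphi}u_T=\omega$ means $u_T\in{\rm Dom}(\bar{F}_B^{T\varphi})$ and $\Box_B^{T\varphi}u_T=\omega$, so by the weighted analogue of Proposition \ref{expression of $ F_B $} one has $\bar{\partial}_Bu_T\in\mathcal{D}_B^{r',r+1}$ and $\bar{F}_B^{T\varphi}u_T=(\Box_B^{T\varphi}+{\rm Id})u_T=\omega+u_T$, hence the weak identity $\bar{G}_{B,T\varphi}(u_T,v)=(\omega+u_T,v)_{B,T\varphi}$ for all $v\in\mathcal{D}_B^{r',r}$. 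Using $\bar{F}_Q^{T\varphi}|_{{\rm Dom}(\bar{F}_B^{T\varphi})}=\bar{F}_B^{T\varphi}$, the fact that multiplication by $\eta''\in C_c^\infty(U'')$ and the purely tangential operators $D_t^I$, $(D_t^I)^*$ preserve $\mathcal{D}_Q^{r',r}$, and that $\Box_Q^{T\varphi}u_T=\Box_B^{T\varphi}u_T=\omega$ since $u_T$ and $\varphi$ are basic, the same identity holds for $\bar{G}_Q$ against test forms in $\mathcal{D}_Q^{r',r}$. As $u_T$ is already smooth up to $\partial M$, all integrations by parts below are legitimate. I would also record the elementary but crucial fact that the weight $\varphi=e^{A\varphi'}$ from $(\ref{A_r condition for varphi})$ is positive, so $e^{-T\varphi}\le1$ and hence $\|\cdot\|_{T\varphi}\le\|\cdot\|$ throughout — this is what keeps the constant in front of $\|u_T\|_{B,s}^2$ independent of $T$.

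Next I would run the symmetric transfer. Put $v:=D_t^I\eta''u_T\in\mathcal{D}_Q^{r',r}$. Because $D_t^I$ is tangential and $\eta''$ is compactly supported in $U''$, integration by parts on $\overline{M}$ in a special boundary chart produces no boundary term, so moving $D_t^I\eta''$ and its $(\cdot,\cdot)_{T\varphi}$-adjoint onto the second argument gives $\bar{G}_{Q,T\varphi}(v,v)=\bar{G}_{Q,T\varphi}\big(u_T,\eta''(D_t^I)^*D_t^I\eta''u_T\big)+(\text{commutator terms})$. Applying the weak identity, integrating back, and using $\bar{G}'_{Q,T\varphi}(v,v)=\bar{G}_{Q,T\varphi}(v,v)-\|v\|_{T\varphi}^2$, the $\|v\|_{T\varphi}^2$ produced by the ${\rm Id}$ part of $\bar{F}_B^{T\varphi}$ cancels the subtracted one and I am left with
\[
\bar{G}'_{Q,T\varphi}(v,v)=(D_t^I\eta''\omega,v)_{T\varphi}+\big([\Box_Q^{T\varphi},D_t^I\eta'']u_T,v\big)_{T\varphi}+(\text{lower-order commutator errors}).
\]
The first term is $\le\|D_t^I\eta''\omega\|_{T\varphi}\|v\|_{T\varphi}\lesssim\|\omega\|_{B,s}\|u_T\|_{B,s}\le C\|\omega\|_{B,s}^2+\|u_T\|_{B,s}^2$ by Cauchy--Schwarz and Young's inequality.

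The real work is the commutator bracket. I would expand
\[
[\Box_Q^{T\varphi},D_t^I\eta'']=[\bar{\partial}_Q,D_t^I\eta'']\vartheta_Q^{T\varphi}+\bar{\partial}_Q[\vartheta_Q^{T\varphi},D_t^I\eta'']+[\vartheta_Q^{T\varphi},D_t^I\eta'']\bar{\partial}_Q+\vartheta_Q^{T\varphi}[\bar{\partial}_Q,D_t^I\eta'']
\]
and integrate by parts on the two terms carrying an outer $\bar{\partial}_Q$ or $\vartheta_Q^{T\varphi}$ so that these land on $v$ and reappear as $\bar{\partial}_Qv$, $\vartheta_Q^{T\varphi}v$, i.e., exactly the quantities constituting $\bar{G}'_{Q,T\varphi}(v,v)$. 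The commutators $[\bar{\partial}_Q,D_t^I\eta'']$ and $[\vartheta_Q,D_t^I\eta'']$ are of order $\le s$ (one less than $s+1$, since $\bar{\partial}_Q,\vartheta_Q$ are first order), hence $\lesssim\|u_T\|_{B,s}$ on $u_T$; the weight piece $[{\rm grad}(T\varphi)\lrcorner,D_t^I\eta'']$ is of order $\le s-1$ and, carrying the explicit factor $T$, supplies the $C_T\|u_T\|_{B,s-1}$; and the boundary integrals surviving the inner integrations by parts are estimated by a trace inequality against $\|u_T\|_{B,s}$ and $\bar{G}'_{Q,T\varphi}(v,v)$. Altogether the bracket is bounded by $\varepsilon\,\bar{G}'_{Q,T\varphi}(v,v)+C_{T,\varepsilon}(\|u_T\|_{B,s}^2+\|u_T\|_{B,s-1}^2)$, the $\varepsilon$-term being absorbed on the left. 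The main obstacle is precisely this accounting: one must verify that every apparently order-$(s+1)$ contribution to $[\Box_Q^{T\varphi},D_t^I\eta'']u_T$ genuinely recombines into $\bar{\partial}_Qv$ or $\vartheta_Q^{T\varphi}v$ (hence into $\varepsilon\bar{G}'$) rather than leaving an uncontrolled $\|u_T\|_{B,s+1}$, and that all genuine $T$-factors flow into the $C_T\|u_T\|_{B,s-1}^2$ (and at worst $C_T\|\omega\|_{B,s}^2$) terms. Finally, the interior statement with $\eta'\in C_c^\infty(U')$ and $D^I$ is the identical but simpler computation: since ${\rm supp}\,\eta'$ is disjoint from $\partial M$, all boundary integrals vanish automatically and no special-chart structure is needed.
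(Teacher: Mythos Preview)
Your sketch diverges from the paper's proof in a structurally important way: the paper argues by \emph{induction on $s$}, while you attempt a one-shot argument via the expansion of $[\Box_Q^{T\varphi},D_t^I\eta'']$. The induction is not cosmetic; it is precisely the device that forces every $T$-factor down to the $\|u_T\|_{B,s-1}$ level. In the paper's inductive step one commutes $D_t^I\eta''$ symmetrically as you describe, but then faces $(\bar{\partial}_Qu_T,[\eta''(D_t^I)^*_{T\varphi},\bar{\partial}_Q]D_t^I\eta''u_T)_{T\varphi}$ and uses the decomposition $(D_t^I)^*_{T\varphi}=(-1)^sD_t^I+C_TS_t$ with $S_t$ tangential of order $s-1$. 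The dangerous $C_TS_t$ piece is controlled by the inductive hypothesis, which bounds $\bar{G}'_{Q,T\varphi}(S_t\eta''u_T,S_t\eta''u_T)$ by $C_T\|\omega\|_{B,s-1}^2+\|u_T\|_{B,s-1}^2+C_T\|u_T\|_{B,s-2}^2$; only through this recursion does every $C_T$ end up attached to a norm of order $\le s-1$.

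Your direct route has two gaps. First, of the four pieces in $[\Box_Q^{T\varphi},D_t^I\eta'']$, only $\bar{\partial}_Q[\vartheta_Q^{T\varphi},D_t^I\eta'']$ and $\vartheta_Q^{T\varphi}[\bar{\partial}_Q,D_t^I\eta'']$ integrate by parts against $v$ to produce $\vartheta_Q^{T\varphi}v$, $\bar{\partial}_Qv$. The other two, $[\bar{\partial}_Q,D_t^I\eta'']\vartheta_Q^{T\varphi}u_T$ and $[\vartheta_Q^{T\varphi},D_t^I\eta'']\bar{\partial}_Qu_T$, are order-$s$ operators applied to first-order data in $u_T$ and do \emph{not} recombine into $\bar{\partial}_Qv,\vartheta_Q^{T\varphi}v$; disposing of them requires further integrations by parts that reintroduce exactly the same $T$-accounting problem. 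Second, and more seriously, your stated bound $\varepsilon\bar{G}'_{Q,T\varphi}(v,v)+C_{T,\varepsilon}(\|u_T\|_{B,s}^2+\|u_T\|_{B,s-1}^2)$ carries a $T$-dependent constant on $\|u_T\|_{B,s}^2$, which is strictly weaker than the lemma's claim and would destroy the subsequent argument in Proposition~\ref{continuity of N_B} (there one needs the factor $T$ from \eqref{priori estimate in weigthed setup} to dominate the $O(1)$ coefficient of $\|u_T\|_{B,s}^2$). The observation $e^{-T\varphi}\le1$ helps pass from weighted to unweighted $L^2$-norms, but it does nothing about the explicit $T$ in $\vartheta_Q^{T\varphi}=\vartheta_Q+T\,{\rm grad}\varphi\lrcorner$ appearing inside commutators. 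You correctly flag the $T$-tracking as ``the main obstacle,'' but the mechanism that actually resolves it---induction on $s$---is missing from your plan.
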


\begin{proof}
	We prove the desired estimate by induction on $ s $. For $ s=0 $,
	\begin{align*}
		\bar{G}'_{Q,T\varphi}\left(\eta''u_T,\eta''u_T\right)\lesssim \bar{G}_{B,T\varphi}\left(u_T,u_T\right)=\left(\left(\Box_{\bar{F}_B}^{T\varphi}+{\rm Id}\right)u_T,u_T\right)_{B,T\varphi}\lesssim\|\omega\|_{B}^2+\|u_T\|_{B}^2.
	\end{align*}
	
	Suppose the estimate holds for $ s-1 $, then $ u_T\in\Omega_B^{r',r}\left(\overline{M}\right)\cap{\rm Dom}\left(\Box_{\bar{F}_B}^{T\varphi}\right) $
	\begin{align}\label{e 1}
		&\left(\bar{\partial}_QD_t^I\eta''u_T,\bar{\partial}_QD_t^I\eta'' u_T\right)_{T\varphi}\nonumber\\
		=&\left(D_t^I\eta''\bar{\partial}_Qu_T,\bar{\partial}_QD_t^I\eta'' u_T\right)_{T\varphi}+O\left(\|u_T\|_{B,s}\sqrt{\bar{G}'_{Q,T\varphi}\left(D_t^I\eta''u_T,D_t^I\eta''u_T\right)}\right)\nonumber\\
		=&\left(\bar{\partial}_Qu_T,\eta''\left(D_t^I\right)_{T\varphi}^*\bar{\partial}_QD_t^I\eta'' u_T\right)_{T\varphi}+O\left(\|u_T\|_{B,s}\sqrt{\bar{G}'_{Q,T\varphi}\left(D_t^I\eta''u_T,D_t^I\eta''u_T\right)}\right)\nonumber\\
		=&\left(\bar{\partial}_Qu_T,\bar{\partial}_Q\eta''\left(D_t^I\right)_{T\varphi}^*D_t^I\eta'' u_T\right)_{T\varphi}+\left(\bar{\partial}_Qu_T,\left[\eta''\left(D_t^I\right)_{T\varphi}^*,\bar{\partial}_Q\right]D_t^I\eta'' u_T\right)_{T\varphi}\nonumber\\
		&+O\left(\|u_T\|_{B,s}\sqrt{\bar{G}'_{Q,T\varphi}\left(D_t^I\eta''u_T,D_t^I\eta''u_T\right)}\right),
	\end{align}
	where $ \left(D_t^I\right)_{T\varphi}^* $ is the formal adjoint of $ D_t^I $ with respect to $ \left(\cdot,\cdot\right)_{T\varphi} $. For the second item on R.H.S. of $ (\ref{e 1}) $, because $ s^{th} $-order
	term in $ \left[\eta''\left(D_t^I\right)_{T\varphi}^*,\bar{\partial}_Q\right] $ is independent of $ T $,
	\begin{align}\label{e 2}
		&\left(\bar{\partial}_Qu_T,\left[\eta''\left(D_t^I\right)_{T\varphi}^*,\bar{\partial}_Q\right]D_t^I\eta'' u_T\right)_{T\varphi}\nonumber\\
		=&\left(\bar{\partial}_Qu_T,D_t^I\eta''\left[\eta''\left(D_t^I\right)_{T\varphi}^*,\bar{\partial}_Q\right]u_T\right)_{T\varphi}+\left(\bar{\partial}_Qu_T,\left[\left[\eta''\left(D_t^I\right)_{T\varphi}^*,\bar{\partial}_Q\right],D_t^I\eta''\right]u_T\right)_{T\varphi}\nonumber\\
		=&\left(\eta''\left(D_t^I\right)_{T\varphi}^*\bar{\partial}_Qu_T,\left[\eta''\left(D_t^I\right)_{T\varphi}^*,\bar{\partial}_Q\right]u_T\right)_{T\varphi}+O\left(\|u_T\|_{B,s}^2\right)+O_T\left(\|u_T\|_{B,s}\|u_T\|_{B,s-1}\right),
	\end{align}
	where $ O_T $ means that the constant depends on $ T $. For the first term on R.H.S. of $ (\ref{e 2}) $,
	\begin{align}\label{e 3}
		&\left(\eta''\left(D_t^I\right)_{T\varphi}^*\bar{\partial}_Qu_T,\left[\eta''\left(D_t^I\right)_{T\varphi}^*,\bar{\partial}_Q\right]u_T\right)_{T\varphi}\nonumber\\
		=&\left(\bar{\partial}_Q\eta''\left(\left(-1\right)^sD_t^I+C_TS_t\right)u_T,\left[\eta''\left(D_t^I\right)_{T\varphi}^*,\bar{\partial}_Q\right]u_T\right)_{T\varphi}+\|\left[\eta''\left(D_t^I\right)_{T\varphi}^*,\bar{\partial}_Q\right]u_T\|_{T\varphi}^2\nonumber\\
		=&O\left(\left(\sqrt{\bar{G}'_{Q,T\varphi}\left(D_t^I\eta''u_T,D_t^I\eta''u_T\right)}+C_T\sqrt{\bar{G}'_{Q,T\varphi}\left(S_t\eta''u_T,S_t\eta''u_T\right)}\right)\left(\|u_T\|_{B,s}+C_T\|u_T\|_{B,s-1}\right)\right)\nonumber\\
		&+O\left(\|u_T\|_{B,s}^2+C_T\|u_T\|_{B,s-1}^2\right).
	\end{align}
	The second equality holds due to $ \left(D_t^I\right)_{T\varphi}^*=\left(-1\right)^sD_t^I+C_TS_t $, where $ S_t $ denotes the tangential operator of order $ s-1 $. By inductive hypothesis we have
	\begin{align}\label{e 4}
		\bar{G}'_{Q,T\varphi}\left(S_t\eta''u_T,S_t\eta''u_T\right)\lesssim C_T\|\omega\|_{B,s-1}^2+\|u_T\|_{B,s-1}^2+C_T\|u_T\|_{B,s-2}^2.
	\end{align}
	Hence, for any $ 0<\varepsilon<<1 $, combining $ (\ref{e 1})\sim(\ref{e 4}) $ gives
	\begin{align}\label{e 5}
		\|\bar{\partial}_QD_t^I\eta''u_T\|_{T\varphi}^2\lesssim\left(\bar{\partial}_Qu_T,\bar{\partial}_Q\eta''\left(D_t^I\right)_{T\varphi}^*D_t^I\eta'' u_T\right)_{T\varphi}&+\varepsilon \bar{G}'_{Q,T\varphi}\left(D_t^I\eta''u_T,D_t^I\eta''u_T\right)\nonumber\\
		&+C_T\|\omega\|_{B,s}^2+\|u_T\|_{B,s}^2+C_T\|u_T\|_{B,s-1}^2.
	\end{align}
	
	Correspondingly, the norm $ \|\vartheta_QD_t^I\eta''u_T\|_{T\varphi}^2 $ can be controlled as $ (\ref{e 5}) $. Thus,
	\begin{align*}
		\bar{G}'_{Q,T\varphi}\left(D_t^I\eta''u_T,D_t^I\eta''u_T\right)\lesssim&\left(\Box_{\bar{F}_B}^{T\varphi}u_T,\eta''\left(D_t^I\right)_{T\varphi}^*D_t^I\eta'' u_T\right)_{T\varphi}+\varepsilon \bar{G}'_{Q,T\varphi}\left(D_t^I\eta''u_T,D_t^I\eta''u_T\right)\nonumber\\
		&+C_T\|\omega\|_{B,s}^2+\|u_T\|_{B,s}^2+C_T\|u_T\|_{B,s-1}^2\nonumber\\
		\lesssim& C_T\|\omega\|_{B,s}^2+\|u_T\|_{B,s}^2+C_T\|u_T\|_{B,s-1}^2+\varepsilon \bar{G}'_{Q,T\varphi}\left(D_t^I\eta''u_T,D_t^I\eta''u_T\right).
	\end{align*}
	
	The interior estimate can be given by the parallel calculation as above.
\end{proof}

Now, we are prepared to give the proof of Proposition \ref{continuity of N_B}.

\renewcommand{\proofname}{\bf $ Proof\ of\ Proposition\ \ref{continuity of N_B} $}
\begin{proof}
	We first prove the continuity of $ N_B^{T\varphi} $ for smooth forms by induction on $ s $. The case $ s=0 $ is trivial, we assume that the conclusion is valid for $ s-1 $. By Proposition \ref{weaker condition of priori estimate} and Lemma \ref{tangential integration by parts in weighted case}, we know that the tangential and interior derivatives can be controlled. Indeed, for $ u_T\in\Omega_B^{r',r}\left(\overline{M}\right)\cap{\rm Dom}\left(\Box_{\bar{F}_B}^{T\varphi}\right) $
	\begin{align}\label{n 1}
		T\|D^I\eta'u_T\|_{B,T\varphi}^2+T\|D_t^I\eta''u_T\|_{B,T\varphi}^2\lesssim C_T\|\omega\|_{B,s}^2+\|u_T\|_{B,s}^2+C_T\|u_T\|_{B,s-1}^2,
	\end{align}
	where the notations above are same as in Lemma \ref{tangential integration by parts in weighted case}.
	
	To estimate the normal direction, note that $ \bar{\partial}_B\oplus\vartheta_B^{T\varphi} $ is elliptic, so
	\begin{align}\label{ellipticity 2}
		\left(\bar{\partial}_B\oplus\vartheta_B^{T\varphi}\right)u_T=A_0D_\rho u_T+\sum_{\alpha=1}^{q-1}A_\alpha D_t^\alpha u_T+B_0u_T,
	\end{align}
	where $ A_0 $ is invertible. Applying $ D_t^{K}\eta'' $ $ \left(|K|=s-1\right) $ to the equality $ (\ref{ellipticity 2}) $,
	\begin{align}\label{n 2}
		\|D_t^KD_\rho\eta''u_T\|_{T\varphi}^2&\lesssim\bar{G}'_{Q,T\varphi}\left(D_t^K\eta''u_T,D_t^K\eta''u_T\right)+\sum_{|I|=s}\|D_t^I\eta''u_T\|_{T\varphi}^2+C_T\|u_T\|_{B,s-1}^2\nonumber\\
		&\lesssim C_T\|\omega\|_{B,s-1}^2+\sum_{|I|=s}\|D_t^I\eta''u_T\|_{T\varphi}^2+C_T\|u_T\|_{B,s-1}^2.
	\end{align}
	The last line follows from Lemma \ref{tangential integration by parts in weighted case}. 
	
	For the higher order normal derivatives we proceed by induction on its order $ \iota $. The case $ \iota=1 $ is obtained as $ (\ref{n 2}) $. Since the $ 2^{nd} $-order of $ \Box_{\bar{F}_B}^{T\varphi} $ is same as $ \Box_B $, we have
	\begin{align}\label{ellipticity 3}
		D_\rho^2\eta''u_T=\eta_l''A'\Box_{\bar{F}_B}^{T\varphi}u_T-\sum_{\alpha=p+1}^{n-1}B_\alpha D_t^\alpha D_\rho\eta''u_T-\sum_{\alpha,\beta=p+1}^{n-1}C_{\alpha\beta}D_t^\alpha D_t^\beta\eta''u_T+\cdots,
	\end{align}
	where $ A' $ is invertible and dots denote lower order terms.
	Applying $ D_t^JD_\rho^{\iota-2} $ $ \left(|J|+\iota=s\right) $ to $ (\ref{ellipticity 3}) $ and by the inductive hypothesis for $ k\leq\iota-1 $ we obtain
	\begin{align}\label{n 3}
		\|D_t^JD_\rho^{\iota}\eta''u_T\|_{T\varphi}^2\lesssim&\|\omega\|_{B,s-2}^2+\sum_{k=\iota-2}^{\iota-1}\sum_{|J|+k=s}\|D_t^JD_\rho^{k}\eta''u_T\|_{T\varphi}^2+\sum_{|I|=s}\|D_t^I\eta''u_T\|_{T\varphi}^2+C_T\|u_T\|_{B,s-1}^2\nonumber\\
		\lesssim&C_T\|\omega\|_{B,s-1}^2+\sum_{|I|=s}\|D_t^I\eta''u_T\|_{T\varphi}^2+C_T\|u_T\|_{B,s-1}^2.
	\end{align}
	Thus, combining $ (\ref{n 1}) $ and $ (\ref{n 3}) $ gives
	\begin{align*}
		T\|u_T\|_{B,s}^2\leq C_T\left(\|\omega\|_{B,s}^2+\|u_T\|_{B,s-1}^2\right)+O\left(\|u_T\|_{B,s}^2\right).
	\end{align*}
	
	Let $ T $ is sufficiently large, then by inductive hypothesis for $ s-1 $,
	\begin{align*}
		\|u_T\|_{B,s}^2\leq C_T\left(\|\omega\|_{B,s}^2+\|u_T\|_{B,s-1}^2\right)\leq C_T\|\omega\|_{B,s}^2,
	\end{align*}
	which proves the conclusion on $ \Omega_B^{r',r}\left(\overline{M}\right)\cap{\rm Dom}\left(\Box_{\bar{F}_B}^{T\varphi}\right) $. Then the general case follows from the the technique of elliptic regularization as in the proof of Theorem \ref{main theorem for dolbeault} line by line.
\end{proof}

We can also show that the related operators of $ N_B^{T\varphi} $ are also continious in $ H_{B,s}^{r',r} $ for $ 0\leq r',r\leq q $ as follows.
\begin{prop}\label{continuity of related operators}
	With the same hypothesis as in Proposition \ref{weaker condition of priori estimate}. Let $ 0\leq r',r\leq q $, for each $ s\in\mathbb{N} $, there exists $ T_s>0 $ such that $ \bar{\partial}_BN_B^{T\varphi} $, $ \vartheta_B^{T\varphi}N_B^{T\varphi} $, $ \bar{\partial}_B\vartheta_B^{T\varphi}N_B^{T\varphi} $, $ \vartheta_B^{T\varphi}\bar{\partial}_BN_B^{T\varphi} $ and $ P_B^{T\varphi} $ are bounded in $ H_{B,s}^{r',r} $ if $ T>T_s $.
\end{prop}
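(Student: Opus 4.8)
The plan is to derive all five boundedness statements from three ingredients that are already in place: the continuity of $N_B^{T\varphi}$ in $H_{B,s}^{r',r}$ (Proposition \ref{continuity of N_B}), the large‑parameter a priori estimate of Proposition \ref{weaker condition of priori estimate} together with the tangential integration‑by‑parts Lemma \ref{tangential integration by parts in weighted case}, and the ellipticity of $\bar\partial_B\oplus\vartheta_B^{T\varphi}$ encoded in the identity $(\ref{ellipticity 2})$. Everything is organized as a single induction on $s$ carried out simultaneously over the bidegrees $(r',r-1),(r',r),(r',r+1)$, and, exactly as in the proof of Theorem \ref{main theorem for dolbeault}, one first proves the a priori bounds for smooth $\omega\in\Omega_B^{r',r}(\overline M)$ and then passes to arbitrary $\omega\in H_{B,s}^{r',r}$ by density and the elliptic‑regularization device; thus throughout $u_T:=N_B^{T\varphi}\omega$ and all auxiliary forms are smooth up to $\partial M$. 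First I would dispose of $P_B^{T\varphi}$: the kernel $\mathcal H_{B,T\varphi}^{r',r}$ is finite‑dimensional and consists of forms smooth up to $\partial M$ (a harmonic $h$ lies in ${\rm Dom}\bigl(\Box_{\bar F_B}^{T\varphi}\bigr)$ with $\Box_B^{T\varphi}h=0$, hence is smooth by the regularity argument behind Proposition \ref{continuity of N_B}), so $P_B^{T\varphi}$ is a finite‑rank operator with smooth range and is bounded in every $H_{B,s}^{r',r}$. Moreover the weighted Hodge decomposition gives $\bar\partial_B\vartheta_B^{T\varphi}N_B^{T\varphi}={\rm Id}-P_B^{T\varphi}-\vartheta_B^{T\varphi}\bar\partial_BN_B^{T\varphi}$, so it suffices to treat $\bar\partial_BN_B^{T\varphi}$, $\vartheta_B^{T\varphi}N_B^{T\varphi}$ and one of the two second‑order operators.

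For the first‑order operators set $w:=\bar\partial_Bu_T$ and $\psi:=\vartheta_B^{T\varphi}u_T$; by Proposition \ref{expression of F}, $w\in\mathcal D_B^{r',r+1}$ and $w\in{\rm Dom}\bigl(\Box_{\bar F_B}^{T\varphi,(r+1)}\bigr)$, while $\vartheta_B^{T\varphi}w+\bar\partial_B\psi=\Box_B^{T\varphi}u_T=\omega-P_B^{T\varphi}\omega$. Choosing $T_s$ so that $\|u_T\|_{B,s}\lesssim C_T\|\omega\|_{B,s}$ for $T>T_s$ (Proposition \ref{continuity of N_B}), applying Lemma \ref{tangential integration by parts in weighted case} to $u_T$ at order $|I|=s$ bounds $\bar G'_{Q,T\varphi}(D_t^I\eta''u_T,D_t^I\eta''u_T)=\|\bar\partial_QD_t^I\eta''u_T\|_{T\varphi}^2+\|\vartheta_Q^{T\varphi}D_t^I\eta''u_T\|_{T\varphi}^2$ by $C_T\|\omega\|_{B,s}^2$; commuting $D_t^I\eta''$ past $\bar\partial_B$ and $\vartheta_B^{T\varphi}$ and absorbing the lower‑order commutators into $\|u_T\|_{B,s}$ then controls the top tangential derivatives $D_t^I\eta''w$ and $D_t^I\eta''\psi$ by $C_T\|\omega\|_{B,s}$. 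The remaining derivatives $D_t^JD_\rho^{\iota}\eta''w$, $D_t^JD_\rho^{\iota}\eta''\psi$ ($|J|+\iota=s$, $\iota\ge1$) are recovered from the ellipticity identity $(\ref{ellipticity 2})$ for $\bar\partial_B\oplus\vartheta_B^{T\varphi}$ applied to $w$ and to $\psi$, by the double induction on $\iota$ and on $s$ exactly as in $(\ref{n 2})$–$(\ref{n 3})$: here one uses $\bar\partial_Bw=0$, $\vartheta_B^{T\varphi}\psi=0$, the above relation, and the inductive bounds at level $s-1$ (which place the right‑hand sides of the relevant first‑order systems in $H_{B,s-1}$). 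This yields $\|\bar\partial_BN_B^{T\varphi}\omega\|_{B,s}+\|\vartheta_B^{T\varphi}N_B^{T\varphi}\omega\|_{B,s}\lesssim C_T\|\omega\|_{B,s}$.

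The remaining operator $\vartheta_B^{T\varphi}\bar\partial_BN_B^{T\varphi}$ is the crux. I would use that $w=\bar\partial_Bu_T$ is $\bar\partial_B$‑closed and orthogonal to $\mathcal H_B^{r',r+1}$, so by the weighted Hodge decomposition at bidegree $(r',r+1)$ one has $w=\bar\partial_B\vartheta_B^{T\varphi}N_B^{T\varphi,(r+1)}w$ with $\bar\partial_BN_B^{T\varphi,(r+1)}w=0$, whence $\vartheta_B^{T\varphi}\bar\partial_BN_B^{T\varphi}\omega=\vartheta_B^{T\varphi}w=\omega-P_B^{T\varphi}\omega-\bar\partial_B\psi$, and the task reduces to controlling $\vartheta_B^{T\varphi}w$ (equivalently $\bar\partial_B\psi$) in $H_{B,s}$. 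For the top tangential derivatives of $\vartheta_B^{T\varphi}w$ one commutes $D_t^I\eta''$ and invokes the tangential estimates applied to $w$ itself — which is legitimate because $w\in\mathcal D_B^{r',r+1}\cap{\rm Dom}\bigl(\Box_{\bar F_B}^{T\varphi,(r+1)}\bigr)$ — together with the continuity of $N_B^{T\varphi,(r+1)}$, $\bar\partial_BN_B^{T\varphi,(r+1)}$, $\vartheta_B^{T\varphi}N_B^{T\varphi,(r+1)}$ at bidegree $(r',r+1)$, which are part of the same simultaneous induction; the normal derivatives are then filled in by the ellipticity recursion $(\ref{ellipticity 2})$ as before, and a symmetric argument at bidegree $(r',r-1)$ (using the $Z_{r-1}$‑analogue of Proposition \ref{weaker condition of priori estimate}) handles $\bar\partial_B\psi$. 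The main obstacle, and the one point that requires genuine care, is precisely this loss‑free propagation of Sobolev regularity through the compositions $\bar\partial_B\vartheta_B^{T\varphi}$ and $\vartheta_B^{T\varphi}\bar\partial_B$: a naive estimate costs one derivative, and it is recovered only by using simultaneously the identity ${\rm Id}-P_B^{T\varphi}=\bar\partial_B\vartheta_B^{T\varphi}N_B^{T\varphi}+\vartheta_B^{T\varphi}\bar\partial_BN_B^{T\varphi}$, the fact that $\bar\partial_Bu_T$ and $\vartheta_B^{T\varphi}u_T$ themselves solve $\Box_B^{T\varphi}$‑equations inside Neumann domains at the adjacent bidegrees, and the large‑parameter estimate of Proposition \ref{weaker condition of priori estimate}. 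Finally, passing from smooth $\omega$ to general $\omega\in H_{B,s}^{r',r}$ by density and the elliptic‑regularization argument of Theorem \ref{main theorem for dolbeault} completes the proof of Proposition \ref{continuity of related operators}.
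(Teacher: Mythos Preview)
Your treatment of $P_B^{T\varphi}$ and of the first–order operators $\bar\partial_BN_B^{T\varphi},\vartheta_B^{T\varphi}N_B^{T\varphi}$ is fine and close in spirit to the paper (the paper actually deduces $P_B^{T\varphi}$ \emph{last}, from the identity $P_B^{T\varphi}={\rm Id}-\vartheta_B^{T\varphi}\bar\partial_BN_B^{T\varphi}-\bar\partial_B\vartheta_B^{T\varphi}N_B^{T\varphi}$, but your finite–rank argument is equally valid).

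The genuine gap is in your handling of the second–order operator $\vartheta_B^{T\varphi}\bar\partial_BN_B^{T\varphi}$. You propose to pass to the Neumann operator $N_B^{T\varphi,(r+1)}$ at bidegree $(r',r+1)$ and, symmetrically, to use ``the $Z_{r-1}$--analogue of Proposition~\ref{weaker condition of priori estimate}'' at bidegree $(r',r-1)$. But the hypothesis of the proposition is only that of Proposition~\ref{weaker condition of priori estimate}, namely the $Z_r$--condition for $\varphi'$; there is no $Z_{r\pm1}$ available, and without it the a~priori estimate $(\ref{priori estimate in weigthed setup})$, hence the very existence and continuity of $N_B^{T\varphi,(r\pm1)}$, fails at the adjacent bidegrees. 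So the simultaneous induction across bidegrees cannot be set up under the stated hypotheses.

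The paper avoids this completely by staying at bidegree $(r',r)$: it bounds the tangential norms of $\vartheta_B^{T\varphi}\bar\partial_BN_B^{T\varphi}\omega$ by a direct integration--by--parts on $\|D_t^I\eta''\vartheta_B^{T\varphi}\bar\partial_BN_B^{T\varphi}\omega\|_{T\varphi}^2$, using the algebraic identity
\[
\bar\partial_B\vartheta_B^{T\varphi}\bar\partial_BN_B^{T\varphi}\omega
=\bar\partial_B\Box_B^{T\varphi}N_B^{T\varphi}\omega
=\bar\partial_B(\omega-P_B^{T\varphi}\omega)
=\bar\partial_B\omega,
\]
which comes from $\bar\partial_B^2=0$. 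This reduces the quantity to terms controlled by $\|\omega\|_{B,s}$ and $\|\bar\partial_BN_B^{T\varphi}\omega\|_{B,s}$, the latter already bounded; no Neumann operator or a~priori estimate at any other bidegree is needed. The normal derivatives for all four operators are then recovered exactly as in the proof of Proposition~\ref{continuity of N_B}. This is the missing idea in your argument: exploit $\bar\partial_B^2=0$ inside the tangential estimate itself instead of shifting the problem to adjacent bidegrees.
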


\renewcommand{\proofname}{\bf $ Proof $}
\begin{proof}
	By the argument in the proof of Theorem \ref{continuity of N_B}, it suffices to prove that the norms of all tangential derivatives can be controlled. Thus, we work on a special boundary chart $ U'' $ with $ \eta''\in C_c^\infty\left(U''\right) $ and tangential derivatives $ D_t^I $ of order $ s $. For any nonzero $ \omega\in{\rm Range}\left(\Box_{\bar{F}_B}^{T\varphi}\right)\cap\Omega_B^{r',r}\left(\overline{M}\right) $,
	\begin{align}\label{f 1}
		&\left(D_t^I\eta''\vartheta_B^{T\varphi}N_B^{T\varphi}\omega,D_t^I\eta''\vartheta_B^{T\varphi}N_B^{T\varphi}\omega\right)_{T\varphi}\nonumber\\
		=&\left(\vartheta_B^{T\varphi}D_t^I\eta''N_B^{T\varphi}\omega,D_t^I\eta''\vartheta_B^{T\varphi}N_B^{T\varphi}\omega\right)_{T\varphi}+O_T\left(\|N_B^{T\varphi}\omega\|_{B,s}\|\vartheta_B^{T\varphi}N_B^{T\varphi}\omega\|_{B,s}^2\right)\nonumber\\
		=&\left(D_t^I\eta''N_B^{T\varphi}\omega,D_t^I\eta''\bar{\partial}_B\vartheta_B^{T\varphi}N_B^{T\varphi}\omega\right)_{T\varphi}+O_T\left(\|N_B^{T\varphi}\omega\|_{B,s}\|\vartheta_B^{T\varphi}N_B^{T\varphi}\omega\|_{B,s}^2\right).
	\end{align}
	Correspondingly, $ \|D_t^I\eta''\bar{\partial}_BN_B^{T\varphi}\omega\|_{T\varphi} $ can be estimated as $ (\ref{f 1}) $. Thus for $ 0<\varepsilon<<1 $
	\begin{align}\label{f 2}
		&\|D_t^I\eta''\vartheta_B^{T\varphi}N_B^{T\varphi}\omega\|_{T\varphi}^2+\|D_t^I\eta''\bar{\partial}_BN_B^{T\varphi}\omega\|_{T\varphi}^2\nonumber\\
		=&\left(D_t^I\eta''N_B^{T\varphi}\omega,D_t^I\eta''\Box_{\bar{F}_B}^{T\varphi}N_B^{T\varphi}\omega\right)_{T\varphi}+O_T\left(\|N_B^{T\varphi}\omega\|_{B,s}\left(\|\vartheta_B^{T\varphi}N_B^{T\varphi}\omega\|_{B,s}^2+\|\bar{\partial}_BN_B^{T\varphi}\omega\|_{B,s}^2\right)\right)\nonumber\\
		\lesssim&C_T\|N_B^{T\varphi}\omega\|_{B,s}^2+\|\omega\|_{B,s}^2+\varepsilon\left(\|\vartheta_B^{T\varphi}N_B^{T\varphi}\omega\|_{B,s}^2+\|\bar{\partial}_BN_B^{T\varphi}\omega\|_{B,s}^2\right).
	\end{align}
	By $ (\ref{f 2}) $ and the continuity of $ N_B^{T\varphi} $ for $ T>T_s $, we know that
	\begin{align}\label{f 3}
		\|\vartheta_B^{T\varphi}N_B^{T\varphi}\omega\|_{B,s}^2+\|\bar{\partial}_BN_B^{T\varphi}\omega\|_{B,s}^2\leq C_T\|\omega\|_{B,s}^2.
	\end{align}
	
	We only provide the details of the proof that the operator $ \vartheta_B^{T\varphi}\bar{\partial}_BN_B^{T\varphi} $ is bounded in $ H_{B,s}^{r',r} $ since the assertion for $ \bar{\partial}_B\vartheta_B^{T\varphi}N_B^{T\varphi} $ can be obtained in a similar manner. 
	\begin{align}\label{f 4}
		&\left(D_t^I\eta''\vartheta_B^{T\varphi}\bar{\partial}_BN_B^{T\varphi}\omega,D_t^I\eta''\vartheta_B^{T\varphi}\bar{\partial}_BN_B^{T\varphi}\omega\right)_{T\varphi}\nonumber\\
		=&\left(\vartheta_B^{T\varphi}D_t^I\eta''\bar{\partial}_BN_B^{T\varphi}\omega,D_t^I\eta''\vartheta_B^{T\varphi}\bar{\partial}_BN_B^{T\varphi}\omega\right)_{T\varphi}+O_T\left(\|\bar{\partial}_BN_B^{T\varphi}\omega\|_{B,s}\|\vartheta_B^{T\varphi}\bar{\partial}_BN_B^{T\varphi}\omega\|_{B,s}\right)\nonumber\\
		=&\left(D_t^I\eta''\bar{\partial}_BN_B^{T\varphi}\omega,D_t^I\eta''\bar{\partial}_B\vartheta_B^{T\varphi}\bar{\partial}_BN_B^{T\varphi}\omega\right)_{T\varphi}+O_T\left(\|\bar{\partial}_BN_B^{T\varphi}\omega\|_{B,s}\|\vartheta_B^{T\varphi}\bar{\partial}_BN_B^{T\varphi}\omega\|_{B,s}\right)\nonumber\\
		=&\left(D_t^I\eta''\bar{\partial}_BN_B^{T\varphi}\omega,D_t^I\eta''\bar{\partial}_B\omega\right)_{T\varphi}+O_T\left(\|\bar{\partial}_BN_B^{T\varphi}\omega\|_{B,s}\|\vartheta_B^{T\varphi}\bar{\partial}_BN_B^{T\varphi}\omega\|_{B,s}\right)\nonumber\\
		=&\left(D_t^I\eta''\bar{\partial}_BN_B^{T\varphi}\omega,\bar{\partial}_BD_t^I\eta''\omega\right)_{T\varphi}+O_T\left(\|\bar{\partial}_BN_B^{T\varphi}\omega\|_{B,s}\left(\|\vartheta_B^{T\varphi}\bar{\partial}_BN_B^{T\varphi}\omega\|_{B,s}+\|\omega\|_{B,s}\right)\right)\nonumber\\
		=&\left(D_t^I\eta''\vartheta_B^{T\varphi}\bar{\partial}_BN_B^{T\varphi}\omega,D_t^I\eta''\omega\right)_{T\varphi}+O_T\left(\|\bar{\partial}_BN_B^{T\varphi}\omega\|_{B,s}\left(\|\vartheta_B^{T\varphi}\bar{\partial}_BN_B^{T\varphi}\omega\|_{B,s}+\|\omega\|_{B,s}\right)\right)\nonumber\\
		=&O\left(\|\vartheta_B^{T\varphi}\bar{\partial}_BN_B^{T\varphi}\omega\|_{B,s}\|\omega\|_{B,s}\right)+O_T\left(\|\bar{\partial}_BN_B^{T\varphi}\omega\|_{B,s}\left(\|\vartheta_B^{T\varphi}\bar{\partial}_BN_B^{T\varphi}\omega\|_{B,s}+\|\omega\|_{B,s}\right)\right),
	\end{align}
	where the third equality follows from $ \bar{\partial}_B^2=0 $. Thanks to $ (\ref{f 3}) $ and $ (\ref{f 4}) $ we have
	\begin{align*}
		\|\vartheta_B^{T\varphi}\bar{\partial}_BN_B^{T\varphi}\omega\|_{B,s}^2\leq C_T\|\omega\|_{B,s}^2.
	\end{align*}
	
	The continuity of the operator $ P_B^{T\varphi} $ in $ H_{B,s}^{r',r} $ is trivial since $ P_B^{T\varphi}={\rm Id}-\vartheta_B^{T\varphi}\bar{\partial}_BN_B^{T\varphi}-\bar{\partial}_B\vartheta_B^{T\varphi}N_B^{T\varphi} $ in $ H_{B,0}^{r',r} $.
\end{proof}

\begin{remark}\label{interpolation}
	All of the above operators are all bounded on $ H_{B,m}^{r',r} $ for $ m\leq s $ by interpolation (see \cite{BL76}).
\end{remark}

\renewcommand{\proofname}{\bf $ Proof\ of\ Theorem\ \ref{smooth existence theorem} $}
\begin{proof}
	The theorem is an immediate consequence by Proposition \ref{continuity of related operators} and Proposition \ref{Rellich and Sobolev lemma} since $ u_T=\vartheta_B^{T\varphi}N_B^{T\varphi}\omega $.
\end{proof}

If the exhaustion basic function $ \varphi' $ in Proposition \ref{weaker condition of priori estimate} also satisfies $ Z_{r-1} $-condition for $ 1\leq r\leq q $, then one can obtain the analogue of Corollary \ref{existence theorem 1 for dolbeault} without estimate. To be precise, we have

\begin{thm}\label{grt.}
	For $ 1\leq r\leq q $, let $ M $ be an oriented Riemannian manifold with compact basic boundary $ \partial M $, assume that there is an exhaustion basic function $ \varphi' $ satisfying $ Z_r $ and $Z_{r-1}$-condition outside a compact $ K\Subset M $. If the equation $ \bar{\partial}_Bu=\omega $ is solvable for $ \omega\in H_{B,0}^{\cdot,r} $, then the solution $ u\in\Omega_B^{\cdot,r-1}\left(\overline{M}\right) $ whenever $ \omega\in\Omega_B^{\cdot,r}\left(\overline{M}\right) $.
\end{thm}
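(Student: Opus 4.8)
The plan is to decompose the given solution into its canonical part plus a $\bar{\partial}_B$-closed correction, and then to smooth out each piece using the machinery already in place. Suppose $\bar{\partial}_B u = \omega$ is solvable with $\omega \in \Omega_B^{\cdot,r}(\overline{M})$. Since $\bar{\partial}_B u = \omega$, we have $\bar{\partial}_B\omega = 0$; moreover, writing $\omega = P_B\omega + (\mathrm{Id}-P_B)\omega$ and noting that $P_B\omega$ lies in the finite-dimensional kernel $\mathcal{H}_{B,T\varphi}^{\cdot,r}$, the solvability of $\bar{\partial}_B u = \omega$ together with $P_B\omega \perp \mathrm{Range}(\bar{\partial}_B)$ forces $P_B\omega = 0$. (Here $P_B = P_B^{T\varphi}$ and $\varphi$ is the weight produced by Proposition \ref{weaker condition of priori estimate} from $\varphi'$; the $Z_r$-condition on $\varphi'$ is what makes $N_B^{T\varphi}$ and $P_B^{T\varphi}$ available.) Hence $\omega$ satisfies the hypotheses of Theorem \ref{smooth existence theorem}, and for each $s \in \mathbb{N}$ we may choose $T$ large so that the canonical solution $u_T := \vartheta_B^{T\varphi} N_B^{T\varphi}\omega \in \Omega_{B,s}^{\cdot,r-1}(\overline{M})$ solves $\bar{\partial}_B u_T = \omega$.

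Next I would compare $u_T$ with the given solution $u$. The difference $v := u - u_T$ satisfies $\bar{\partial}_B v = 0$, so $v$ represents a class in $H_B^{\cdot,r-1}(\overline{M},\mathcal{F})$ — but more to the point, I want to \emph{correct} $u_T$ to a genuinely smooth solution. The standard device is: by the $Z_{r-1}$-condition on $\varphi'$ (this is the second hypothesis, used precisely here), the analogue of Proposition \ref{weaker condition of priori estimate} and hence of Theorem \ref{smooth existence theorem} holds in bidegree $(\cdot,r-1)$. Since $v$ is $\bar{\partial}_B$-closed, one does not directly solve $\bar{\partial}_B w = v$; instead, observe that the canonical solution $u_T$ is already in $\Omega_{B,s}^{\cdot,r-1}(\overline{M})$ for every $s$ once $T = T_s$ is large enough. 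The subtlety is that $T_s \to \infty$ as $s \to \infty$, so a single $u_T$ need not be $C^\infty$. To fix this, I would run the argument of the proof of Theorem \ref{main theorem for dolbeault} / Theorem \ref{smooth existence theorem} using the operators $N_B^{T\varphi}$ together with the boundedness statements of Proposition \ref{continuity of related operators} and Remark \ref{interpolation}: for each fixed $s$, pick $T = T_s$; the resulting $u_{T_s} \in \Omega_{B,s}^{\cdot,r-1}(\overline{M})$; and since any two canonical solutions differ by a $\bar{\partial}_B$-closed form lying in a fixed finite-dimensional harmonic space (independent of $T$ up to the weighted inner product), one can reconcile the family $\{u_{T_s}\}_s$ into a single solution that lies in $\bigcap_s H_{B,s}^{\cdot,r-1} = \Omega_B^{\cdot,r-1}(\overline{M})$ by Proposition \ref{Rellich and Sobolev lemma}.

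Concretely, the cleanest route is: fix any solution $u$ of $\bar{\partial}_B u = \omega$; for $T = T_s$ large, $P_B^{T\varphi}\omega = 0$ and $u_{T_s} = \vartheta_B^{T\varphi}N_B^{T\varphi}\omega \in \Omega_{B,s}^{\cdot,r-1}(\overline{M})$ solves the equation; the form $u - u_{T_s}$ is $\bar{\partial}_B$-closed, and by the $Z_{r-1}$-condition the weighted Hodge decomposition in bidegree $(\cdot,r-1)$ lets us write $u - u_{T_s} = \bar{\partial}_B(\text{something}) + (\text{harmonic})$, with the harmonic part smooth because harmonic forms are smooth up to the boundary (exactly as in Proposition \ref{smoothness of eigenform} / the regularity in Corollary \ref{existence theorem 1 for dolbeault}). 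Thus modifying $u$ by an exact form we obtain a solution that agrees with $u_{T_s}$ up to a smooth form, hence lies in $\Omega_{B,s}^{\cdot,r-1}(\overline{M})$; but the class $[u]$ of solutions modulo $\mathrm{Ker}(\bar{\partial}_B)$ is fixed, so letting $s \to \infty$ and invoking Proposition \ref{Rellich and Sobolev lemma} yields a single solution in $\Omega_B^{\cdot,r-1}(\overline{M})$.

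\textbf{Main obstacle.} The hard part will be the bookkeeping around the $T$-dependence: the estimates of Proposition \ref{continuity of related operators} hold only for $T > T_s$, with $T_s$ growing in $s$, so no fixed weighted Neumann operator is simultaneously smoothing in all Sobolev orders. One must argue that the \emph{solution} (not the operator) can nonetheless be taken smooth — i.e. that the ambiguity between canonical solutions for different $T$ is absorbed by the finite-dimensional, $T$-independent-in-dimension, smooth harmonic spaces, and that the $Z_{r-1}$-condition is exactly what guarantees these harmonic spaces in degree $r-1$ behave well. Handling this reconciliation carefully — and checking that the exact-form correction needed to pass from $u$ to something agreeing with $u_{T_s}$ does not destroy membership in $\Omega_{B,s}^{\cdot,r-1}(\overline{M})$ — is the crux; everything else is a direct transcription of the arguments already developed for Theorem \ref{main theorem for dolbeault}, Theorem \ref{smooth existence theorem}, and Corollary \ref{existence theorem 1 for dolbeault}.
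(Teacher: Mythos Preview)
Your setup is correct through Theorem~\ref{smooth existence theorem}: the $Z_r$-hypothesis gives, for each $s$, a solution $u_{T_s}\in H_{B,s}^{\cdot,r-1}$, and you rightly note that $P_B^{T\varphi}\omega=0$ since $\omega\in\mathrm{Range}(\bar{\partial}_B)$. You also correctly isolate the crux: no single $T$ works for all $s$, so the family $\{u_{T_s}\}$ must be reconciled into one smooth solution.

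The gap is precisely in that reconciliation. Your Hodge-decomposition route---writing $u-u_{T_s}=\bar{\partial}_B w_s + h_s$ with $h_s$ weighted-harmonic in degree $r-1$---does not close the argument: the correction $\bar{\partial}_B w_s$, the harmonic space $\mathcal{H}_{B,T\varphi}^{\cdot,r-1}$, and the weight $T$ all depend on $s$, so after correcting you still have an $s$-indexed family of solutions, each in $H_{B,s}$ but none a~priori in $\bigcap_s H_{B,s}$. The sentence ``the class $[u]$ is fixed, so letting $s\to\infty$\ldots'' is exactly the unjustified step: a coset of $\mathrm{Ker}(\bar{\partial}_B)$ can meet every $H_{B,s}$ without containing any element of the intersection. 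Nor is smoothness of weighted harmonic forms available here: Proposition~\ref{continuity of related operators} only gives boundedness of $P_B^{T\varphi}$ on $H_{B,s}$ for $T>T_s$, and that bound does not survive as $s\to\infty$ for fixed $T$.

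The paper deploys the $Z_{r-1}$-hypothesis differently, and this is the missing idea. It first proves that $\mathrm{Ker}(\bar{\partial}_B)\cap H_{B,s_1}^{\cdot,r-1}$ is \emph{dense} in $\mathrm{Ker}(\bar{\partial}_B)\cap H_{B,s_2}^{\cdot,r-1}$ for $s_1\ge s_2$: approximate a closed form by arbitrary $H_{B,s_1}$-forms and then apply the projector $R_B^{T\varphi}:=\mathrm{Id}-\vartheta_B^{T\varphi}\bar{\partial}_B N_B^{T\varphi}$, which lands in $\mathrm{Ker}(\bar{\partial}_B)$ and is bounded on $H_{B,s_1}$ and $H_{B,s_2}$ once $T$ is large (Proposition~\ref{continuity of related operators} and Remark~\ref{interpolation}, applied in degree $r-1$, hence requiring $Z_{r-1}$). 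With this density one runs a Mittag--Leffler telescoping construction: inductively pick $\hat{u}_{s+1}\in H_{B,s+1}^{\cdot,r-1}$ solving $\bar{\partial}_B\hat{u}_{s+1}=\omega$ with $\|\hat{u}_{s+1}-\hat{u}_s\|_{B,s-1}\le 2^{-s}$ (possible because $u_{s+1}-\hat{u}_s$ is closed and can be approximated in $H_{B,s-1}$ by a closed form $v_{s+1}\in H_{B,s+1}$; set $\hat{u}_{s+1}=u_{s+1}-v_{s+1}$). Then $u:=\hat{u}_1+\sum_\nu(\hat{u}_{\nu+1}-\hat{u}_\nu)$ converges in every $H_{B,s}$ and is the desired smooth solution.
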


\renewcommand{\proofname}{\bf $ Proof $}
\begin{proof}
	Let $ \omega\in\Omega_B^{\cdot,r}\left(\overline{M}\right) $, then for any $ s\in\mathbb{N} $, there exists a solution $ u_s\in H_{B,s}^{\cdot,r-1} $ of the equation $ \bar{\partial}_Bu=\omega $ due to Theorem \ref{smooth existence theorem}. We will construct a new solution $ u\in\Omega_B^{\cdot,r-1}\left(\overline{M}\right) $ by modifying the sequence $ \{u_s\}_{s=0}^\infty $.
	
	We first claim that $ {\rm Ker}\left(\bar{\partial}_B\right)\cap H_{B,s_1}^{\cdot,r-1} $ is dense in $ {\rm Ker}\left(\bar{\partial}_B\right)\cap H_{B,s_2}^{\cdot,r-1} $ for $ s_1\geq s_2\geq0 $. In fact, since $ H_{B,s_1}^{\cdot,r-1} $ is dense in $ H_{B,s_2}^{\cdot,r-1} $, for any $ u_{s_2}\in H_{B,s_2}^{\cdot,r-1} $, there is a sequence $ \{u_\nu\}_{\nu=1}^\infty\subseteq H_{B,s_1}^{\cdot,r-1} $ such that $ u_\nu\stackrel{\|\cdot\|_{B,s_2}}{\longrightarrow}u_{s_2} $ as $ \nu\rightarrow\infty $. Then we choose $ T $ is large enough such that $ R_{B}^{T\varphi}:={\rm Id}-\vartheta_B^{T\varphi}\bar{\partial}_BN_B^{T\varphi} $ is continious on $ H_{B,s_2}^{\cdot,r-1} $ by Proposition \ref{continuity of related operators}. Since $ \bar{\partial}_BR_{B}^{T\varphi}=\bar{\partial}_B-\bar{\partial}_B\Box_{\bar{F}_B}^{T\varphi}N_B^{T\varphi}=0 $, we have $ {\rm Range}\left(R_{B}^{T\varphi}\right)\subseteq {\rm Ker}\left(\bar{\partial}_B\right) $. Thus $ \tilde{u}_\nu:=R_{B}^{T\varphi}u_\nu\in {\rm Ker}\left(\bar{\partial}_B\right)\cap H_{B,s_2}^{\cdot,r-1} $ and $ \tilde{u}_\nu\stackrel{\|\cdot\|_{B,s_1}}{\longrightarrow}u_{s_2} $ as $ \nu\rightarrow\infty $ from Remark \ref{interpolation}.
	
	Now we can generate a new solution sequence $ \{\hat{u}_\nu\}_{\nu=1}^\infty $ inductively such that $ \hat{u}_\nu\in H_{B,\nu}^{\cdot,r-1} $. Set $ \hat{u}_1=u_1 $, assume that $ \hat{u}_{s} $ is already know, then $ u_{s+1}-\hat{u}_{s}\in {\rm Ker}\left(\bar{\partial}_B\right)\cap H_{B,s}^{\cdot,r-1} $, thus there exists $ v_{s+1}\in{\rm Ker}\left(\bar{\partial}_B\right)\cap H_{B,s+1}^{\cdot,r-1} $ such that
	\begin{align}\label{error}
		\|u_{s+1}-\hat{u}_{s}-v_{s+1}\|_{B,s-1}^2\leq\frac{1}{2^{s}}.
	\end{align}
	We define $ \hat{u}_{s+1}:=u_{s+1}-v_{s+1} $, it's obvious that $ \bar{\partial}_B\hat{u}_{s+1}=\omega $ and $ \hat{u}_{s+1}\in H_{B,s+1}^{\cdot,r-1} $. Setting
	\begin{align*}
		u:=\hat{u}_1+\sum_{\nu=1}^{\infty}\left(\hat{u}_{\nu+1}-\hat{u}_\nu\right).
	\end{align*}
	Hence, $ u $ is well-defined by $ (\ref{error}) $ and $ u\in H_{B,s}^{\cdot,r-1} $ for all $ s\in\mathbb{N} $ which implies that $ u\in\Omega_B^{\cdot,r-1}\left(\overline{M}\right) $.
\end{proof}

\begin{cor}\label{smooth solution 2}
	For $ 2\leq r\leq q $, $ \overline{M} $ is an oriented manifold with transversally oriented Riemannian foliation with basic boundary, and $ g_M $ is a bundle-like metric with basic mean curvature form. If there exists a strictly transversal $ r-1 $-convex exhaustion basic function on $M$, then for any $ \omega\in\Omega_B^{\cdot,r}\left(\overline{M}\right) $ with $ \bar{\partial}_B\omega=0 $, there is a basic form $ u\in\Omega_B^{\cdot,r-1}\left(\overline{M}\right) $ such that $ \bar{\partial}_Bu=\omega $.
\end{cor}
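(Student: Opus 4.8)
The plan is to derive the statement formally from the two principal results of this chapter: the weighted $L^2$-existence theorem for $\bar{\partial}_B$ (Theorem \ref{existence theorem 3 for dolbeault}) furnishes a solution, and the global regularity theorem (Theorem \ref{grt.}) upgrades it to one that is smooth up to $\partial M$. The only thing to verify is that a strictly transversal $(r-1)$-convex exhaustion basic function feeds both machines, and this is linear algebra: as recorded in the remark after the definitions, the condition depends only on sums of $r-1$ eigenvalues of the transversal Levi form $\left(\varphi_{\alpha\bar{\beta}}\right)$, so if $\varphi$ is strictly transversal $(r-1)$-plurisubharmonic and $\lambda_1\leq\cdots\leq\lambda_q$ are those eigenvalues, then $\lambda_1+\cdots+\lambda_{r-1}>0$; since $\lambda_{r-1}$ dominates the other $r-2$ of them, $\lambda_{r-1}>0$, hence $\lambda_{r-1},\dots,\lambda_q>0$ and there are at least $q-r+2$ positive eigenvalues. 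Therefore $\varphi$ satisfies both the $Z_{r-1}$- and the $Z_r$-condition ($q-r+2\geq q-r+1$ and $q-r+2\geq q-r$), and $\lambda_1+\cdots+\lambda_r=\left(\lambda_1+\cdots+\lambda_{r-1}\right)+\lambda_r>0$ shows that $\varphi$ is also strictly transversal $r$-plurisubharmonic. We may moreover arrange ${{\rm grad}}\,\varphi\neq0$ on a collar of $\partial M$, so that the $Z_r$- and $Z_{r-1}$-conditions hold outside a compact $K\Subset M$.

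Granting this, the first step is Theorem \ref{existence theorem 3 for dolbeault} applied with the strictly transversal $r$-plurisubharmonic exhaustion function $\varphi$: for our $\bar{\partial}_B$-closed $\omega\in\Omega_B^{\cdot,r}\left(\overline{M}\right)\subseteq L^2_{loc}\left(M,\Omega_B^{\cdot,r}\right)$ it produces $v\in L^2_{loc}\left(M,\Omega_B^{\cdot,r-1}\right)$ with $\bar{\partial}_Bv=\omega$; inspecting that proof, $v$ can in fact be taken in $L^2\left(\Omega_B^{\cdot,r-1},\tilde{\chi}\left(\varphi\right)\right)$ for a convex increasing $\tilde{\chi}$ that may be kept essentially linear. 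The second step is to read off the hypotheses of Theorem \ref{grt.}: $\overline{M}$ is compact with compact basic boundary, $g_M$ is bundle-like with basic mean curvature form, and $\varphi'=\varphi$ is an exhaustion basic function satisfying $Z_r$ and $Z_{r-1}$ outside $K$. What remains is to turn the solvability just obtained into the solvability hypothesis of Theorem \ref{grt.}, i.e. into $P_B^{T\varphi}\omega=0$ for the large parameter $T$ used there. For this one tests $\left\langle\bar{\partial}_Bv,h\right\rangle e^{-T\varphi}$ against $h\in\mathcal{H}_{B,T\varphi}^{\cdot,r}$ — which is smooth up to $\partial M$ and lies in $\mathcal{D}_B^{\cdot,r}$, hence ${{\rm grad}}^{0,1}\rho\lrcorner h=0$ on $\partial M$ — by integrating by parts over the sublevel sets $M_\nu=\{\varphi<\nu\}$: the interior term vanishes since $\vartheta_B^{T\varphi}h=0$, the integral over $\partial M$ vanishes since $h\in\mathcal{D}_B^{\cdot,r}$, and the integral over $\partial M_\nu$ tends to $0$ along a subsequence because $e^{-T\varphi}$ dominates $e^{-\tilde{\chi}\left(\varphi\right)}$, which controls $\int_{\partial M_\nu}|v|^2$, once $T$ is chosen large enough. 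Hence $P_B^{T\varphi}\omega=0$, so Theorem \ref{grt.} — through Theorem \ref{smooth existence theorem} and the diagonal modification of $\{u_s\}_{s\in\mathbb{N}}$ carried out in its proof — yields $u\in\bigcap_{s}H_{B,s}^{\cdot,r-1}=\Omega_B^{\cdot,r-1}\left(\overline{M}\right)$ with $\bar{\partial}_Bu=\omega$.

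The main obstacle is precisely this last reconciliation. Theorem \ref{existence theorem 3 for dolbeault} only delivers a solution $L^2$ against a weight that explodes at $\partial M$ and which may genuinely blow up there, whereas Theorem \ref{grt.} presupposes an honest $H_{B,0}^{\cdot,r}$-solution, equivalently $P_B^{T\varphi}\omega=0$. Making the boundary terms over $\partial M_\nu$ disappear in the limit forces a careful balancing of two weights: solve the $\bar{\partial}_B$-equation against the lightest admissible weight $\tilde{\chi}\left(\varphi\right)$, and then take $T$ so large that $T\varphi$ outgrows $\tilde{\chi}\left(\varphi\right)$ near $\partial M$. Everything else — the existence, closed range and regularity of $N_B^{T\varphi}$, the equivalence of weighted and unweighted norms on the compact $\overline{M}$, and the assembling of the smooth solution — is already supplied by Proposition \ref{weaker condition of priori estimate} and Theorems \ref{existence theorem 3 for dolbeault}, \ref{smooth existence theorem}, \ref{grt.}.
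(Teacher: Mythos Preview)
Your strategy coincides with the paper's two-line proof: invoke Theorem~\ref{existence theorem 3 for dolbeault} for solvability, then Theorem~\ref{grt.} for regularity. Your eigenvalue check---that strict transversal $(r{-}1)$-plurisubharmonicity gives $\lambda_1+\cdots+\lambda_{r-1}>0$, hence $\lambda_{r-1}>0$, hence at least $q{-}r{+}2$ positive eigenvalues of the full Levi form, whence both strict $r$-plurisubharmonicity and the $Z_{r-1}$-, $Z_r$-conditions---is correct and supplies what the paper leaves tacit.

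The weight-reconciliation paragraph goes well beyond what the paper attempts, and its bookkeeping is slightly off: if $\varphi$ exhausts the \emph{interior} $M$ then the sublevel sets $M_\nu=\{\varphi<\nu\}\Subset M$ never meet $\partial M$, so only the level-set boundary $\{\varphi=\nu\}$ appears in the integration by parts over $M_\nu$, not $\partial M$; your sentence listing both boundary contributions therefore does not match the geometry. If one insists on closing this gap, a cleaner route than the limiting argument you sketch is simply to note that the weight $T\varphi=Te^{A\varphi'}$ used in Section~3.5 can be taken to dominate the weight $\tilde{\chi}(\varphi')$ produced by Theorem~\ref{existence theorem 3 for dolbeault} (enlarge $T$ or $A$), so the solution $v$ already lies in $L^2\!\left(\Omega_B^{\cdot,r-1},T\varphi\right)$; then $P_B^{T\varphi}\omega=(\bar{\partial}_Bv,h)_{B,T\varphi}=(v,\bar{\partial}_{B,T\varphi}^*h)_{B,T\varphi}=0$ for every $h\in\mathcal{H}_{B,T\varphi}^{\cdot,r}$ by one application of the weighted adjoint, with no limit over $M_\nu$ required.
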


\begin{proof}
	Thoerem \ref{existence theorem 3 for dolbeault} yields that the equation $ \bar{\partial}_Bu=\omega $ is solvable. Then the conclusion follows from Theorem \ref{grt.}.
\end{proof}

\begin{remark}
	We require $r\geq2$ in Corollary \ref{smooth solution 2} since $0$-convex function is meaningless. The smooth solution given in Theorem \ref{grt.} is not the canonical solution with minimal $ L^2 $ norm, while the smooth solution obtained in Theorem \ref{existence theorem 2 for dolbeault} is the canonical solution satisfying the estimate $ (\ref{estimate of solution of partial_B}) $.
\end{remark}

\subsection{Extension and duality theorems}
In this section, we will introduce the induced boundary complex on $\partial M$, then we consider the extension and duality problems of the induced boundary basic forms. 

Since the boundary defining function $\rho$ is basic, the Hermitian foliation on $\overline{M}$ induce a transversely CR foliation (i.e., the transverse manifold of the foliation is a CR manifold) on the closed manifold $\partial M$, and the induced metric is also bundle-like. In the sequel, the notation $\cdot_{b}$ refers to objects on $\partial M$.

For $0\leq r',r\leq q$, the transversal star operator $*_Q$ defined by (\ref{star operator}) induces 
$$*_Q:\Omega_Q^{r',r}\left(M\right)\longrightarrow\Omega_{Q}^{q-r,q-r'}\left(M\right).$$ 
Clearly, $*_Q$ also preserves $\Omega_B^{r',r}\left(M\right)$ and $*_Q^2u=(-1)^{r'+r}u$ for all $u\in\Omega_{Q}^{r',r}\left(M\right)$. We write the mean curvature form $\kappa=\kappa^{1,0}+\kappa^{0,1}$, where $\kappa^{1,0}=\frac{1}{2}\left(\kappa+iJ\kappa\right)$ and $\kappa^{0,1}=\frac{1}{2}\left(\kappa-iJ\kappa\right)$. Then the operator ${\rm d}_{Q,\kappa}=\bar{\partial}_Q-\kappa\wedge=\partial_{Q,\kappa}+\bar{\partial}_{Q,\kappa} $, where (see \cite{JR21})
\begin{align*}
	\partial_{Q,\kappa}=\partial_Q-\kappa^{1,0}\wedge,\quad\bar{\partial}_{Q,\kappa}=\bar{\partial}_Q-\kappa^{0,1}\wedge.
\end{align*}
Furthermore,
\begin{align}\label{twist operators for dolbeault}
	\vartheta_Q=-*_Q\partial_{Q,\kappa}*_Q,\quad\vartheta_{Q,\kappa}=-*_Q\partial_{Q}*_Q,
\end{align}
where $\vartheta_{Q,\kappa}$ is the formal adjoint of $\partial_{Q,\kappa}$ with respect to $(\cdot,\cdot)_M$. Since the first order terms of $\bar{\partial}_{Q,\kappa}$ is same as $\bar{\partial}_{Q}$, then 
$$\mathcal{D}_{Q,\kappa}^{r',r}:={\rm Dom}\left(\bar{\partial}_{Q,\kappa}^*\right)\cap\Omega_Q^{r',r}=\mathcal{D}_{Q}^{r',r},$$
and the difference of the Bochner formula for $\bar{\partial}_{Q,\kappa},\vartheta_{Q,\kappa}$ and the Bochner formula for $\bar{\partial}_{Q},\vartheta_{Q}$ is zero order term. Then the preceding argument goes through as in Theorem \ref{Hodge decomposition theorem for dolbeault} without change to show

\begin{prop}\label{hodge and existence theorem 1 for partial_kappa}
	Let $\left(M,g_M,\mathcal{F},\rho\right)$ is same as in Theorem $\ref{Hodge decomposition theorem for dolbeault}$. Assume that $\partial M$ satisfies $Z_r$-condition. For $ 0\leq r',r\leq q $, There is an orthogonal decomposition
	\begin{equation*}
		\Omega_B^{r',r}\left(\overline{M}\right)={\rm Im}\bar{\partial}_{B,\kappa}\oplus{\rm Im}\vartheta_{B,\kappa}\oplus\mathcal{H}_{\kappa}^{r',r},
	\end{equation*}
	where $ \mathcal{H}_{\kappa}^{r',r}:={\rm Ker}\left(\bar{\partial}_{B,\kappa}\right)\cap{\rm Ker}\left(\bar{\partial}_{B,\kappa}^*\right) $ is a finite-dimensional space. Moreover, for $ \omega\in\Omega_B^{r',r} $ $(1\leq r\leq q)$ satisfying $ \bar{\partial}_{B,\kappa}\omega=0 $ and $ \omega\perp\mathcal{H}_{\kappa}^{r',r} $, then there exists a unique solution $ u\in\Omega_B^{r',r-1} $ of the equation $ \bar{\partial}_{B,\kappa}u=\omega $ with $ u\perp {\rm Ker}\left(\bar{\partial}_{B,\kappa}\right) $. Thus, we have an isomorphism
	$$H_{B,\kappa}^{r',r}\left(\overline{M},\mathcal{F}\right):=\frac{\{u\in\Omega_B^{r',r}\left(\overline{M}\right)\ |\ \bar{\partial}_{B,\kappa}u=0\}}{\bar{\partial}_{B,\kappa}\Omega_B^{r',r-1}\left(\overline{M}\right)}\cong\mathcal{H}_{\kappa}^{r',r}.$$
\end{prop}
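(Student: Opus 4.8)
The plan is to imitate verbatim the three-step machinery already developed for $\bar{\partial}_B$ in Sections 3.2--3.4, replacing $\bar{\partial}_B$ by $\bar{\partial}_{B,\kappa}$ throughout. First I would record that, because $\kappa$ is basic, the operator $\bar{\partial}_{B,\kappa}=\bar{\partial}_B-\kappa^{0,1}\wedge$ differs from $\bar{\partial}_B$ only by the zeroth-order term $-\kappa^{0,1}\wedge$; hence its principal symbol coincides with that of $\bar{\partial}_B$, so $\mathcal{D}_{Q,\kappa}^{r',r}={\rm Dom}(\bar{\partial}_{Q,\kappa}^*)\cap\Omega_Q^{r',r}=\mathcal{D}_Q^{r',r}$ (already noted in the excerpt), the boundary condition ${\rm grad}^{0,1}\rho\lrcorner u=0$ is unchanged, and the formal adjoint $\vartheta_{Q,\kappa}$ also has the same first-order part as $\vartheta_Q$. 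This means the Bochner formula (\ref{Bochner formula for dolbeault}) holds for the pair $(\bar{\partial}_{Q,\kappa},\vartheta_{Q,\kappa})$ with the same gradient term $\sum_\alpha\sum_{I,J}\|\bar{V}_\alpha v_{I,J}\|^2$, the same curvature/weight terms, and the same boundary term $\int_{\partial M}\langle\rho_{\alpha\bar\beta}v_{I,\alpha K},v_{I,\beta K}\rangle$ — the correction being a zeroth-order term absorbed into the $O((E'(v)+\sum_a\|X_av\|)\|v\|)$ error. Consequently Definition \ref{befd} of basic estimate and Lemmas \ref{mainl 1}--\ref{mainl 3} go through unchanged for $\bar{\partial}_{Q,\kappa}$, so that Theorem \ref{geometric condition for basic estimate for dolbeault} still reads: basic estimate holds in $\mathcal{D}_Q^{r',r}$ for $\bar{\partial}_{Q,\kappa}$ iff $\partial M$ satisfies $Z_r$-condition — which is precisely the hypothesis of the proposition.

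Next I would run the Friedrichs-operator argument: define $\bar{G}_{B,\kappa}(u,v)=(\bar{\partial}_{B,\kappa}u,\bar{\partial}_{B,\kappa}v)_B+(\vartheta_{B,\kappa}u,\vartheta_{B,\kappa}v)_B+(u,v)_B$ on $\mathcal{D}_B^{r',r}$, let $\bar{F}_{B,\kappa}$ be the associated Friedrichs operator (the injectivity/density of the completion $\widetilde{\mathcal{D}}_B^{r',r}$ into $H_{B,0}^{r',r}$ is proved exactly as before since $\rho*\Omega_B^{r',r}(\overline{M})\subseteq\mathcal{D}_B^{r',r}$ is dense), and note $\bar{F}_{B,\kappa}u=(\Box_\kappa+{\rm Id})u$ on smooth elements, where $\Box_\kappa=\bar{\partial}_{B,\kappa}\vartheta_{B,\kappa}+\vartheta_{B,\kappa}\bar{\partial}_{B,\kappa}$, whose second-order part agrees with $\Box_B$ (so G\aa rding's inequality in the interior and ellipticity of the regularized operator $\bar{F}_{B,\kappa}^\epsilon$ are untouched). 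The existence-and-regularity Theorem \ref{main theorem for dolbeault} and the Hodge decomposition Theorem \ref{Hodge decomposition theorem for dolbeault} then transfer verbatim under the $Z_r$-assumption, yielding the orthogonal decomposition
\begin{equation*}
	\Omega_B^{r',r}\left(\overline{M}\right)={\rm Im}\,\bar{\partial}_{B,\kappa}\oplus{\rm Im}\,\vartheta_{B,\kappa}\oplus\mathcal{H}_\kappa^{r',r}
\end{equation*}
with $\mathcal{H}_\kappa^{r',r}={\rm Ker}(\bar{\partial}_{B,\kappa})\cap{\rm Ker}(\bar{\partial}_{B,\kappa}^*)$ finite-dimensional. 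Here one uses that $\bar{\partial}_{B,\kappa}^*=\vartheta_{B,\kappa}$ on $\mathcal{D}_B^{r',r}$, that ${\rm Range}(\Box_{\bar{F}_{B,\kappa}})$ is closed because the spectrum of $\Box_{\bar{F}_{B,\kappa}}|_{(\mathcal{H}_\kappa^{r',r})^\perp}$ is bounded away from $0$ (the $\bar{F}_{B,\kappa}^{-1}$-compactness coming from Proposition \ref{Rellich and Sobolev lemma}), and that $\bar{\partial}_{B,\kappa}^2=0$ forces ${\rm Range}(\bar{\partial}_{B,\kappa})\perp{\rm Range}(\vartheta_{B,\kappa})$; finiteness of $\dim\mathcal{H}_\kappa^{r',r}$ follows since it sits inside $\Omega_B^{r',r}(\overline{M})$ by the hypoellipticity of $\bar{F}_{B,\kappa}-{\rm Id}$.

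Finally, for the solvability statement I would invoke the analogue of Corollary \ref{existence theorem 1 for dolbeault}: the decomposition gives a Neumann operator $N_{B,\kappa}$ and projection $P_{B,\kappa}$ onto $\mathcal{H}_\kappa^{r',r}$ with $\omega=\bar{\partial}_{B,\kappa}\vartheta_{B,\kappa}N_{B,\kappa}\omega+\vartheta_{B,\kappa}\bar{\partial}_{B,\kappa}N_{B,\kappa}\omega+P_{B,\kappa}\omega$; if $\bar{\partial}_{B,\kappa}\omega=0$ and $\omega\perp\mathcal{H}_\kappa^{r',r}$ then $\omega=\bar{\partial}_{B,\kappa}(\vartheta_{B,\kappa}N_{B,\kappa}\omega)$, and $u:=\vartheta_{B,\kappa}N_{B,\kappa}\omega$ is the unique solution orthogonal to ${\rm Ker}(\bar{\partial}_{B,\kappa})$, smooth up to the boundary because $N_{B,\kappa}$ preserves $\Omega_B^{\cdot,\cdot}(\overline{M})$ (again by $\bar{F}_{B,\kappa}-{\rm Id}$ being hypoelliptic). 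The identification $H_{B,\kappa}^{r',r}(\overline{M},\mathcal{F})\cong\mathcal{H}_\kappa^{r',r}$ is then immediate: the harmonic representative in each class is unique, and the solvability just proved shows every $\bar{\partial}_{B,\kappa}$-closed form differs from its harmonic part by an exact form. I do not expect a genuine obstacle here — the proposition is explicitly a ``run the same argument'' statement; the only point requiring a line of care is checking that the zeroth-order perturbation $-\kappa^{0,1}\wedge$ (and its adjoint ${\rm grad}\kappa^{0,1}$-type term) is genuinely absorbed into the lower-order error terms of the Bochner formula and does not spoil the positivity under $Z_r$-condition, which it does not because $Z_r$-condition controls the boundary quadratic form with room to spare (as in the proof of Theorem \ref{geometric condition for basic estimate for dolbeault}, one keeps an $\varepsilon$-fraction of the gradient term and a $C_\varepsilon\|v\|^2$ slack).
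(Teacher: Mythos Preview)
Your proposal is correct and follows essentially the same approach as the paper: the paper's own justification, given in the paragraph immediately preceding the proposition, simply observes that since $\bar{\partial}_{Q,\kappa}$ and $\bar{\partial}_Q$ have the same first-order part, $\mathcal{D}_{Q,\kappa}^{r',r}=\mathcal{D}_Q^{r',r}$ and the Bochner formula differs only by a zero-order term, so ``the preceding argument goes through as in Theorem \ref{Hodge decomposition theorem for dolbeault} without change.'' Your write-up expands exactly this outline---same Friedrichs-operator setup, same basic-estimate-under-$Z_r$ mechanism, same Neumann operator for the solvability clause---with no substantive deviation.
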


Defining $\mathcal{C}_B^{\cdot,\cdot}:={\rm Dom}\left(\vartheta_B^*\right)\cap\Omega_B^{\cdot,\cdot}\left(\overline{M}\right)$. By the reasoning of $(\ref{DBDQ})$ that
\begin{align}\label{condition of C_B for dolbeault}
	\mathcal{C}_B^{\cdot,\cdot}=\{u\in\Omega_B^{\cdot,\cdot}\left(\overline{M}\right)\ |\ \bar{\partial}_B\rho\wedge u=0\ {\rm on}\ \partial M\}=\{u\in\Omega_B^{\cdot,\cdot}\left(\overline{M}\right)\ |\ u=\bar{\partial}_B\rho\wedge\cdot+\rho\cdot\}.
\end{align}
Also, $(\ref{twist operators for dolbeault})$ and $(\ref{condition of C_B for dolbeault})$ implies the relations between $\mathcal{D}_B^{\cdot,\cdot}$ and $\mathcal{C}_B^{\cdot,\cdot}$ as follows.
\begin{prop}
	For $0\leq r',r\leq q$, we have $\mathcal{C}_B^{r',r}=*_Q\bar{\mathcal{D}}_B^{q-r',q-r}$, $\bar{\partial}_B\mathcal{C}_B^{r',r}\subset\mathcal{C}_B^{r',r+1}$, and $\vartheta_{B,\kappa}\mathcal{D}_B^{r',r+1}\subset\mathcal{D}_B^{r',r}$.
\end{prop}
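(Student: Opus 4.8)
The plan is to prove the three assertions in turn, each by reducing it to a pointwise statement on $\partial M$ involving the transversal Hodge star $*_Q$, exactly along the lines of the analogous Riemannian proposition relating $\mathcal{D}_B^\cdot$ and $\mathcal{C}_B^\cdot$ (the one used just before Theorem \ref{twisted duality}), now applied to the Dolbeault operators. The three structural inputs I will lean on are the boundary description $(\ref{DBDQ})$ of $\mathcal{D}_B^{\cdot,r}$, the two descriptions of $\mathcal{C}_B^{\cdot,\cdot}$ in $(\ref{condition of C_B for dolbeault})$, and the conjugation identities $(\ref{twist operators for dolbeault})$ for $\vartheta_{Q,\kappa}$, together with $*_Q^2u=(-1)^{r'+r}u$.

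For the first identity, I would start from $u\in *_Q\bar{\mathcal{D}}_B^{q-r',q-r}\iff *_Qu\in\bar{\mathcal{D}}_B^{q-r',q-r}$. Since $\rho$ is real, conjugating $(\ref{DBDQ})$ gives $\overline{{\rm grad}^{0,1}\rho}={\rm grad}^{1,0}\rho$, so membership in $\bar{\mathcal{D}}_B^{q-r',q-r}$ means ${\rm grad}^{1,0}\rho\lrcorner *_Qu=0$ on $\partial M$. Because $*_Q$ intertwines contraction by the $(1,0)$-field ${\rm grad}^{1,0}\rho$ with wedging by its metric-dual $(0,1)$-form $\bar{\partial}_B\rho$ (up to sign), applying $*_Q$ once more and using $*_Q^2=\pm{\rm Id}$ turns that boundary condition into $\bar{\partial}_B\rho\wedge u=0$ on $\partial M$, i.e.\ $u\in\mathcal{C}_B^{r',r}$ by $(\ref{condition of C_B for dolbeault})$. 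The only nonformal point here is the commutation of $*_Q$ with interior/exterior multiplication by ${\rm d}\rho$-type forms, which is a routine fibrewise linear-algebra computation on $\Lambda^\bullet Q_{\mathbb C}^*$.

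For $\bar{\partial}_B\mathcal{C}_B^{r',r}\subset\mathcal{C}_B^{r',r+1}$, I would use the second description in $(\ref{condition of C_B for dolbeault})$: write $u=\bar{\partial}_B\rho\wedge u'+\rho u''$ with $u',u''$ basic and smooth up to the boundary, and compute, using $\bar{\partial}_B^2\rho=0$, that $\bar{\partial}_Bu=\bar{\partial}_B\rho\wedge(u''-\bar{\partial}_Bu')+\rho\,\bar{\partial}_Bu''$, which is again of the form $\bar{\partial}_B\rho\wedge(\cdot)+\rho(\cdot)$, hence lies in $\mathcal{C}_B^{r',r+1}$ by $(\ref{condition of C_B for dolbeault})$. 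Conjugating this argument yields the parallel statement for $\partial_B$ acting on the conjugate spaces $\bar{\mathcal{C}}_B^{\cdot,\cdot}$, which I will need for the last part.

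Finally, for $\vartheta_{B,\kappa}\mathcal{D}_B^{r',r+1}\subset\mathcal{D}_B^{r',r}$, I would combine $(\ref{twist operators for dolbeault})$, which on basic forms reads $\vartheta_{B,\kappa}=-*_Q\partial_B*_Q$, with the first identity and its conjugate and the conjugated form of the second assertion, chaining three applications of $*_Q$: $*_Q$ sends $\mathcal{D}_B^{r',r+1}$ into a conjugate-$\mathcal{C}_B$ space, $\partial_B$ preserves that class, and a final $*_Q$ (again using $*_Q^2=\pm{\rm Id}$ and the first identity) lands back in $\mathcal{D}_B^{r',r}$; schematically $\vartheta_{B,\kappa}\mathcal{D}_B^{r',r+1}\subseteq *_Q\partial_B*_Q\mathcal{D}_B^{r',r+1}\subseteq *_Q\partial_B\bar{\mathcal{C}}_B^{\cdot,\cdot}\subseteq *_Q\bar{\mathcal{C}}_B^{\cdot,\cdot}\subseteq\mathcal{D}_B^{r',r}$. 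I expect the main obstacle to be purely bookkeeping: correctly tracking the bidegrees and the passage to the conjugate spaces $\bar{\mathcal{D}}_B$ and $\bar{\mathcal{C}}_B$ through these three applications of $*_Q$; once the bidegrees are pinned down, each inclusion is immediate from the corresponding Riemannian computation.
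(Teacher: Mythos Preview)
Your proposal is correct and follows essentially the same route as the paper. The paper does not spell out a proof of this Hermitian proposition, only remarking that $(\ref{twist operators for dolbeault})$ and $(\ref{condition of C_B for dolbeault})$ yield it; your three steps are precisely the Dolbeault adaptation of the Riemannian argument given just before Theorem~\ref{twisted duality}, with the expected extra care in tracking the conjugate spaces $\bar{\mathcal{D}}_B^{\cdot,\cdot}$, $\bar{\mathcal{C}}_B^{\cdot,\cdot}$ and the $(1,0)$/$(0,1)$ splitting of ${\rm grad}\rho$ when $*_Q$ is applied.
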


Hence, the cohomology $$H^{r',r}_B\left(\overline{M},\mathcal{C}\right):=\frac{{\rm Ker}\left(\bar{\partial}_B\right)\cap\mathcal{C}_B^{r',r}}{\bar{\partial}_B\mathcal{C}_B^{r',r-1}}$$ 
is well-defined. Then one can define the induced basic form 
$$\Omega_{B,b}^{r',r}\left(\partial M\right):=\{i^*u\ |\ u\in \mathcal{D}_B^{r',r},\ i:\partial M\longrightarrow M\},$$
and we obtain
\begin{align}\label{exact commutative diagram for dolbeault}
	\begin{CD}
		0 @>>> \mathcal{C}_B^{r'r} @>>> \Omega_B^{r'r}\left(\overline{M}\right) @>R>> \Omega_{B,b}^{r'r}\left(\partial M\right) @>>>0\\
		@.                         @VV \bar{\partial}_BV  @VV \bar{\partial}_BV       @VV \bar{\partial}_{B,b}V\\
		0 @>>> \mathcal{C}_B^{r',r+1} @>>> \Omega_B^{r',r+1}\left(\overline{M}\right) @>R>> \Omega_{B,b}^{r',r+1}\left(\partial M\right) @>>>0,
	\end{CD}
\end{align}
where $\bar{\partial}_{B,b}$ is induced by $\bar{\partial}_B$ with $\bar{\partial}_{B,b}^2=0$ and $R$ is the is restriction followed by projection of
$\mathcal{D}_B^{r',r}|_{\partial M}$ onto $\Omega_{B,b}^{r',r}\left(\partial M\right)$. Moreover, corresponding argument in Theorem \ref{twisted duality} are also
valid for Hermitian foliations. Precise statement is given as following, however, require a boundary condition since Proposition \ref{hodge and existence theorem 1 for partial_kappa} holds with geometric assumption of the boundary.

\begin{thm}\label{twisted duality for dolbeault}
	For $0\leq r',r\leq q$, if $\partial M$ satisfies $Z_{q-r}$-condition, then 
	$$H_B^{r',r}\left(\overline{M},\mathcal{C}\right)\cong\left(H_{B,\kappa}^{q-r',q-r}\left(\overline{M},\mathcal{F}\right)\right)^*.$$
\end{thm}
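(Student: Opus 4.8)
The plan is to mimic the proof of Theorem \ref{twisted duality} verbatim, replacing the de Rham objects by their Dolbeault analogues and using Proposition \ref{hodge and existence theorem 1 for partial_kappa} in place of Proposition \ref{hodge and existence theorem 1 for d_kappa}. This is precisely where the $Z_{q-r}$-condition enters: the twisted Hodge decomposition on $\Omega_B^{\cdot,\cdot}(\overline{M})$ for the operator $\bar{\partial}_{B,\kappa}$ is available only when $\partial M$ satisfies $Z_r$-condition in the relevant degree, and the degree that shows up below is $q-r$. First I would define the pairing
\begin{equation*}
	\Phi:H_B^{r',r}\left(\overline{M},\mathcal{C}\right)\longrightarrow\left(H_{B,\kappa}^{q-r',q-r}\left(\overline{M},\mathcal{F}\right)\right)^*,\qquad \Phi_u(v)=\int_M u\wedge v\wedge\chi_{\mathcal{F}},
\end{equation*}
for $u\in\mathcal{C}_B^{r',r}$ with $\bar{\partial}_Bu=0$ and $v\in\Omega_B^{q-r',q-r}(\overline{M})$ with $\bar{\partial}_{B,\kappa}v=0$. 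Note that $u\wedge v$ is a basic $(q,q)$-form, so $u\wedge v\wedge\chi_{\mathcal{F}}$ is a top form and the integral makes sense; moreover only the $\bar{\partial}$-type terms survive in the relevant bidegree, which is why we pair with $H_{B,\kappa}^{q-r',q-r}$ rather than with the full de Rham twisted cohomology.

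The well-definedness of $\Phi$ is checked exactly as in Theorem \ref{twisted duality}: if $u=\bar{\partial}_Bu'$ with $u'\in\mathcal{C}_B^{r',r-1}$, then using ${\rm d}(u'\wedge v\wedge\chi_{\mathcal{F}})$, Rummler's formula ${\rm d}\chi_{\mathcal{F}}=\kappa\wedge\chi_{\mathcal{F}}+\varphi_0$, the vanishing $w\wedge\varphi_0=0$ on basic forms of high enough degree, and the fact that the bidegree forces $\partial$-terms to drop out, one gets ${\rm d}(u'\wedge v\wedge\chi_{\mathcal{F}})=\pm u\wedge v\wedge\chi_{\mathcal{F}}$ up to a term involving $\bar{\partial}_{B,\kappa}v=0$; then since $u'|_{\partial M}=\bar{\partial}_B\rho\wedge u''$ by $(\ref{condition of C_B for dolbeault})$, Stokes' theorem on $\partial M$ gives $\int_M u\wedge v\wedge\chi_{\mathcal{F}}=\int_{\partial M}\bar{\partial}_B\rho\wedge u''\wedge v\wedge\chi_{\mathcal{F}}=0$. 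The symmetric computation shows $\Phi_u(\bar{\partial}_{B,\kappa}v')=0$. For injectivity of $\Phi$: if $\Phi_u=0$, then $(v,*_Qu)_B=0$ for all $\bar{\partial}_{B,\kappa}$-closed $v$ (using $*_Q^2=(-1)^{r'+r}$ and $*_Qu\in\bar{\mathcal{D}}_B^{q-r',q-r}$, which is where the boundary condition $\mathcal{C}_B^{r',r}=*_Q\bar{\mathcal{D}}_B^{q-r',q-r}$ from the preceding proposition is used), so $*_Qu\perp{\rm Ker}(\bar{\partial}_{B,\kappa})$; Proposition \ref{hodge and existence theorem 1 for partial_kappa} (valid since $\partial M$ satisfies $Z_{q-r}$-condition) then yields $*_Qu=\vartheta_{B,\kappa}u'$, hence via $(\ref{twist operators for dolbeault})$, $u=\pm\bar{\partial}_B*_Qu'$ with $*_Qu'\in\mathcal{C}_B^{r',r-1}$, so $[u]=0$.

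For surjectivity it suffices to show $\left(\Phi_{H_B^{r',r}(\overline{M},\mathcal{C})}\right)^\perp=0$ inside $H_{B,\kappa}^{q-r',q-r}(\overline{M},\mathcal{F})$: given $\bar{\partial}_{B,\kappa}$-closed $v$ with $(v,*_Qu)_B=0$ for all $\bar{\partial}_B$-closed $u\in\mathcal{C}_B^{r',r}$, use $\bar{\partial}_{B,\kappa}^**_Qu=\vartheta_{B,\kappa}*_Qu=\pm*_Q\bar{\partial}_Bu=0$ by $(\ref{twist operators for dolbeault})$ to conclude $v\perp{\rm Ker}(\bar{\partial}_{B,\kappa}^*)$, and then Proposition \ref{hodge and existence theorem 1 for partial_kappa} gives $v=\bar{\partial}_{B,\kappa}v'$, i.e. $[v]=0$. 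The main obstacle is bookkeeping: one must verify that in each of these manipulations the bidegrees are right so that $\partial$-type contributions and the Rummler error term $\varphi_0$ genuinely vanish, and that the sign exponents coming from $*_Q^2=(-1)^{r'+r}$ and from $(\ref{twist operators for dolbeault})$ are consistent; the analytic input (twisted Dolbeault Hodge decomposition under $Z_{q-r}$) is entirely supplied by Proposition \ref{hodge and existence theorem 1 for partial_kappa}, so no new estimates are needed.
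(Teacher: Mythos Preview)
Your proposal is correct and follows precisely the approach the paper takes: the paper does not spell out a separate proof of Theorem \ref{twisted duality for dolbeault} but simply states that the argument of Theorem \ref{twisted duality} carries over verbatim, with Proposition \ref{hodge and existence theorem 1 for partial_kappa} (which is where the $Z_{q-r}$-condition enters) replacing Proposition \ref{hodge and existence theorem 1 for d_kappa}. Your identification of the bidegree bookkeeping (in particular the role of $\mathcal{C}_B^{r',r}=*_Q\bar{\mathcal{D}}_B^{q-r',q-r}$ and the relations \eqref{twist operators for dolbeault}) as the only nontrivial verification is exactly right.
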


At last, we consider the weak extension problem in the sense of J. J. Kohn and H. Rossi in \cite{KR65} as follows, let $u\in\Omega_{B,b}^{r',r}\left(\partial M\right)$, we say $u$ has a weak extension if there is a form $\tilde{u}\in\Omega_{B}^{r',r}\left(\overline{M}\right)$ such that $R\tilde{u}=u$, where $R$ is the projection defined in $(\ref{exact commutative diagram for dolbeault})$. Note that the weak extension problem is the complex analogue of tangential extension problem in Riemannian foliations. Theorem \ref{twisted duality for dolbeault} plays the same role as Theorem \ref{twisted duality}, we therefore have the following extension theorem for $\bar{\partial}_{B,b}$-closed forms on $\Omega_{B,b}^{r',r}\left(\partial M\right)$ as an application.

\begin{thm}
	For $0\leq r',r\leq q$, asumme that the boundary $\partial M$ satisfies $Z_{q-r-1}$-condition, then $\bar{\partial}_{B,b}$-closed form $u\in\Omega_{B,b}^{r',r}\left(\partial M\right)$ have a weak $\bar{\partial}_{B}$-closed extension if and only if $\int_{\partial M}u\wedge v\wedge\chi_{\mathcal{F}}=0$ for any $v\in\mathcal{H}_\kappa^{q-r',q-r-1}$.
\end{thm}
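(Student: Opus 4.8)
The plan is to transcribe the proof of the tangential extension theorem for Riemannian foliations, replacing ${\rm d}_B$ by $\bar{\partial}_B$ and Theorem \ref{twisted duality} by Theorem \ref{twisted duality for dolbeault}. First I would pass from the short exact sequence of complexes $(\ref{exact commutative diagram for dolbeault})$ to its associated long exact cohomology sequence
\begin{align*}
	\cdots\longrightarrow H_{B}^{r',r}\left(\overline{M},\mathcal{F}\right)\longrightarrow H_{B,b}^{r',r}\left(\partial M,\mathcal{F}\right)\stackrel{\delta^{*}}{\longrightarrow} H_{B}^{r',r+1}\left(\overline{M},\mathcal{C}\right)\xlongrightarrow[\Phi]{\cong}\left(H_{B,\kappa}^{q-r',q-r-1}\left(\overline{M},\mathcal{F}\right)\right)^{*}\longrightarrow\cdots,
\end{align*}
where the isomorphism $\Phi$ is Theorem \ref{twisted duality for dolbeault} applied in transverse bidegree $(r',r+1)$; this is legitimate precisely because $\partial M$ satisfies $Z_{q-(r+1)}=Z_{q-r-1}$-condition — the very hypothesis that also makes Proposition \ref{hodge and existence theorem 1 for partial_kappa} available in bidegree $(q-r',q-r-1)$ and hence yields finite-dimensionality of $\mathcal{H}_\kappa^{q-r',q-r-1}$. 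Exactness of $(\ref{exact commutative diagram for dolbeault})$ itself I would justify from $\Omega_{B,b}^{r',r}\left(\partial M\right)=\{Ru\ |\ u\in\mathcal{D}_B^{r',r}\}$ together with $\bar{\partial}_B\mathcal{C}_B^{r',r}\subset\mathcal{C}_B^{r',r+1}$ and $\bar{\partial}_{B,b}^{2}=0$.

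Next, fix a $\bar{\partial}_{B,b}$-closed form $u\in\Omega_{B,b}^{r',r}\left(\partial M\right)$ and choose a lift $u'\in\mathcal{D}_B^{r',r}$ with $Ru'=u$; then $R\bar{\partial}_Bu'=\bar{\partial}_{B,b}u=0$, so $\bar{\partial}_Bu'\in\mathcal{C}_B^{r',r+1}$ and $[\bar{\partial}_Bu']=\delta^{*}[u]$ in $H_{B}^{r',r+1}\left(\overline{M},\mathcal{C}\right)$. As in the Riemannian case, $u$ admits a weak $\bar{\partial}_B$-closed extension exactly when $\delta^{*}[u]=0$: if $\bar{\partial}_Bu'=\bar{\partial}_Bu''$ for some $u''\in\mathcal{C}_B^{r',r+1}$, then $\tilde u:=u'-u''\in\Omega_B^{r',r}\left(\overline{M}\right)$ is $\bar{\partial}_B$-closed with $R\tilde u=u$ (since $Ru''=0$), and conversely a weak closed extension $\tilde u$ produces $u''=u'-\tilde u\in\mathcal{C}_B^{r',r+1}$ with $\bar{\partial}_Bu''=\bar{\partial}_Bu'$. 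Via the isomorphism $\Phi$, this vanishing is equivalent to $\Phi_{\bar{\partial}_Bu'}(v)=0$ for every $v\in H_{B,\kappa}^{q-r',q-r-1}\left(\overline{M},\mathcal{F}\right)\cong\mathcal{H}_\kappa^{q-r',q-r-1}$.

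It remains to identify $\Phi_{\bar{\partial}_Bu'}(v)=\int_{M}\bar{\partial}_Bu'\wedge v\wedge\chi_{\mathcal{F}}$ with the stated boundary integral. For $v\in\mathcal{H}_\kappa^{q-r',q-r-1}$ one has $\bar{\partial}_{B,\kappa}v=0$; expanding ${\rm d}\left(u'\wedge v\wedge\chi_{\mathcal{F}}\right)$ and using Rummler's formula ${\rm d}\chi_{\mathcal{F}}=\kappa\wedge\chi_{\mathcal{F}}+\varphi_0$ with $w\wedge\varphi_0=0$ for $w\in\Omega_Q^{2q-1}\left(\overline{M}\right)$, exactly as in the proof of Theorem \ref{twisted duality}, all terms collapse to $\bar{\partial}_Bu'\wedge v\wedge\chi_{\mathcal{F}}$: the would-be contributions $\partial_Bu'\wedge v\wedge\chi_{\mathcal{F}}$ and $u'\wedge\partial_{B,\kappa}v\wedge\chi_{\mathcal{F}}$ vanish by transverse bidegree, while $u'\wedge\bar{\partial}_{B,\kappa}v\wedge\chi_{\mathcal{F}}$ vanishes by $\bar{\partial}_{B,\kappa}$-closedness of $v$. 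Stokes' theorem then gives $\Phi_{\bar{\partial}_Bu'}(v)=\int_{\partial M}u'\wedge v\wedge\chi_{\mathcal{F}}=\int_{\partial M}u\wedge v\wedge\chi_{\mathcal{F}}$, the final equality because, upon pulling back to $\partial M$, the only component of $u'$ that survives the wedge with $v\wedge\chi_{\mathcal{F}}$ is its induced boundary part $Ru'=u$. I expect the genuine difficulty to be bookkeeping rather than analysis: making the transverse type decomposition interact correctly with the projection $R$ so that $\int_{\partial M}u'\wedge v\wedge\chi_{\mathcal{F}}$ depends only on $u$, and keeping the several roles of the $Z_{q-r-1}$-condition (feeding $\Phi$, feeding Proposition \ref{hodge and existence theorem 1 for partial_kappa}, and ensuring finiteness of $\mathcal{H}_\kappa^{q-r',q-r-1}$) mutually consistent; everything else is the literal Dolbeault analogue of the Riemannian extension argument.
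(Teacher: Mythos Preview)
Your proposal is correct and follows exactly the approach the paper intends: the paper does not write out a separate proof for this theorem but states it as the Dolbeault analogue of the Riemannian extension theorem, obtained by replacing ${\rm d}_B$ with $\bar{\partial}_B$ and invoking Theorem \ref{twisted duality for dolbeault} in place of Theorem \ref{twisted duality}. One small slip to fix: when you write $u''\in\mathcal{C}_B^{r',r+1}$ you mean $u''\in\mathcal{C}_B^{r',r}$ (so that $\tilde u=u'-u''$ lives in bidegree $(r',r)$); the paper's Riemannian proof contains the same typo.
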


\renewcommand\refname{References}

\bigskip
{\small Addresses:}

{\small Qingchun Ji}

{\small School of Mathematics, Fudan University} 

{\small Shanghai Center for Mathematical Sciences}

{\small Shanghai 200433, China }

{\small Email: qingchunji@fudan.edu.cn}

\bigskip

{\small Jun Yao}

{\small School of Mathematics, Fudan University}

{\small Shanghai 200433, China }

{\small Email: jyao\_fd@fudan.edu.cn}
\end{document}